\newcommand{\tx}{\tilde{x}}
\newcommand{\pa}{\partial}
\newcommand{\ri}{{\rm i}}
\newcommand{\re}{{\rm e}}
\newcommand{\GH}{\Gamma_H}
\newcommand{\be}{\begin{eqnarray}}
\newcommand{\ben}{\begin{eqnarray*}}
\newcommand{\en}{\end{eqnarray}}
\newcommand{\enn}{\end{eqnarray*}}
\newcommand{\cF}{{\cal F}}
\newcommand{\hphi}{\hat{\phi}}
\newcommand{\hpsi}{\hat{\psi}}
\newcommand{\real}{\mathbb{R}}
\newcommand{\R}{\real}
\newcommand{\complex}{\mathbb{C}}
\newcommand{\C}{\mathbb{C}}
\newcommand{\dist}{{\mathrm{dist}}}
\newcommand{\supp}{{\mathrm{supp}}}
\newtheorem{remark}{Remark}[chapter]
\newtheorem{lemma}{Lemma}[chapter]
\newtheorem{theorem}{Theorem}[chapter]
\newcommand{\norm}[1]{\| #1\|}
\renewcommand{\Im}{{\rm Im}}
\renewcommand{\Re}{{\rm Re}}
\newcommand{\bnu}{\mbox{\boldmath $\nu$}}
\newtheorem{corollary}{Corollary}[chapter]
\newtheorem{definition}{Definition}[chapter]
\begin{document}
\renewcommand{\baselinestretch}{1.6}
\title{THE UNIVERSITY OF READING 
\newline {\sc Department of Mathematics}
\bigskip
\newline PhD Thesis}
\author{Thomas Baden-Riess
\\ 
\bigskip
A thesis submitted for the degree of Doctor of Philosophy}
\date{September 2006}
\maketitle

\begin{abstract}

In this thesis we study four problems in the area of scattering of time harmonic acoustic or electromagnetic waves by unbounded rough surfaces/unbounded inhomogeneous layers. Specifically 
the four problems we study are:
\newline i) A boundary value problem for the Helmholtz equation, in both 2 and 3 dimensions, modelling scattering of time harmonic waves due to a source that lies within a finite distance of the boundary and which decays along the boundary, by a layer of spatially varying refractive index above an unbounded rough surface on which the field vanishes. In particular, in the 2D case, the boundary value problem models the scattering of time harmonic electromagnetic waves by an inhomogeneous conducting or dielectric layer above a perfectly conducting unbounded rough surface, with the magnetic permeability a fixed positive constant in the media, in the transverse electric polarization case; 
\newline ii) a boundary value problem for the Helmholtz equation with an impedance boundary condition, in 2 and 3 dimensions, modelling the scattering of time harmonic acoustic waves due to a source that lies within a finite distance of the boundary and which decays along the boundary, by an unbounded rough impedance surface; 
\newline iii) a problem of scattering of time harmonic waves by a layer of spatially varying refractive index at the interface between semi-infinite half-spaces of fixed positive refractive index (the waves arising due to a source that lies within a finite distance of the layer and which decays along the layer). In the 2D case this models the scattering of time harmonic electromagnetic waves by an infinite inhomogeneous dielectric layer at the interface between semi-infinite homogeneous dielectric half-spaces, with the magnetic permeability a fixed positive constant in the media, in the transverse electric polarization case;
\newline iv) a boundary value problem for the Helmholtz equation with a Dirichlet boundary condition, in 3 dimensions, modelling the scattering of time harmonic acoustic waves due to a point source, by an unbounded, rough, sound soft surface.         

We study problems i), ii) and iiii) by variational methods; via analysis of equivalent variational formulations we prove these problems to be well-posed in the following cases: For i) we show that the problem is well-posed for arbitrary rough surfaces that are a finite perturbation of an infinite plane, in the case that the frequency is small or when the medium in the layer has some energy absorption; and when the rough surface is such that the resulting domain has the property that if $x$ is in the domain then so to is every point above $x$, we show the problem to be well-posed for arbitrary large frequency with certain restrictions on the rate of change of the refractive index; for ii) we show that the problem is well-posed for arbitrary rough Lipschitz surfaces that are a finite perturbation of an infinite plane, in the case that the frequency is small; and when the rough surface is the graph of a bounded Lipschitz function, we show the problem to be well-posed for arbitrary frequency; for iii) we establish that the problem is well-posed under certain restrictions on the variation of the index of refraction.

We study problem iv) via a Brakhage-Werner type integral equation formulation, based on an ansatz for the solution as a combined single- and double-layer potential, but replacing the usual fundamental solution of the Helmholtz equation with an appropriate half-space Green's function. We establish, in the case that the rough surface is the graph of a bounded Lipschitz function, that the problem is well-posed for arbitrary frequency.

An attractive feature of our results is that the bounds we derive, on the inf-sup constants of the sesquilinear forms in problems i), ii) and iii), and on the inverse operator associated with the single- and double-layer potentials in problem iv), are explicit in terms of the index of refraction, the geometry of the scatterer and the other parameters of the respective problems.        

\end{abstract}






\tableofcontents
\chapter{Introduction}\label{intro}
\section{Preamble}
Consider, if you will, the following problem: An aeroplane flying above the surface of the earth, generates some noise. This noise travels through the air striking the rough surface of the earth beneath it. The noise bounces off or scatters from the surface. Given that we know the exact nature of the noise produced by the aeroplane, and given that we know the exact shape of the earth's surface beneath it, can we predict the resultant propagation of noise as it strikes the earth's surface and scatters?

The above problem is a typical example of what are known as \textit{rough surface scattering problems.} Rough surface scattering problems arise frequently in the natural world and the study of these problems has been borne out of research in many diverse areas of science. The above example shows their importance to the science of sound propagation and noise control; on a much smaller scale, in the field of nano-technology, they are relevant in the study of the scattering of light from the surface of materials; and in the technology of solar heating, their understanding is important for the correct choice of solar paneling; in addition these problems crop up in medical imaging and seismic exploration.  

It is the overall aim then, of the mathematical and engineering community, to resolve these problems. This thesis is intended as a contribution to this subject. We are concerned primarily with the initial, mathematical and theoretical questions that should -- to a mathematician at least -- be answered, in this field, prior to the implementation of numerical and computational techniques that will simulate the process of rough surface scattering, and ultimately give answers to the problems stated above.

Thus, in what follows, we are concerned with the mathematical aspects of rough surface scattering problems. In particular we are interested in the correct mathematical formulation of these problems and we intend to analyze under which conditions they are well-posed. Specifically we study four rough surface scattering problems. Three of these we study by variational methods -- these are acoustic scattering by an impedance surface; electromagnetic scattering by inhomogeneous layers above a perfectly conducting rough surface (the Transverse electric polarization case); and the transmission problem -- and one by integral equation methods: acoustic scattering by a sound soft surface. We will shortly take a more precise look at what these problems are and look at the mathematical models that govern acoustic and electromagnetic propagation.


 We wish to end this first section by introducing some nomenclature and notation used throughout and by setting the scene of our scattering problems. In accordance with the terminology of the engineering literature, we use the phrase \textit{rough surface} to denote a surface which is a (usually non-local) perturbation of an infinite plane surface, such that the whole surface lies within a finite distance of the original plane.
 
  Let $x=(x_1, \dots ,x_n)$ denote a point in $\mathbb{R}^n$, $(n=2,3)$, and let $\tilde x = (x_1, \dots ,x_{n-1})$ so that $x=(\tilde x,x_n)$. Further, for $H \in \mathbb{R} $ let $U_H= \{x:x_n>H\}$ and $\Gamma_H=\{x:x_n = H\}$. We will denote the region of space in which the acoustic or electromagnetic waves propagate, i.e. the air above the earth's surface in the aeroplane problem we described above, as $D$. Thus $D \subset \mathbb{R}^n$, 
and $D$ will be assumed to be a connected open set or \emph{domain}. Moreover we'll assume there exist constants $f_- < f_+$ such that 
\[
U_{f_+} \subset D  \subset U_{f_-}.
\]
 We let $\Gamma$ denote the boundary of $D$, i.e. the rough surface. The unit outward normal to $D$ will be denoted $\nu$. Finally, we define $S_H:= D\backslash \overline{U_H}$.
\begin{figure}
 \psfrag{bound}{$\Gamma=\partial D$}
\psfrag{DD}{{\large $D$}}
\psfrag{GammaH}{$\Gamma_H$}
\psfrag{UH}{$U_H$}
\psfrag{SH}{$S_H$}
\psfrag{Gammaf}{$\Gamma_{f_-}$}
\psfrag{xlabel}{$\tilde x = (x_1,\dots,x_{n-1})$}
\psfrag{ylabel}{$x_n$}
    \begin{center}
    \epsfig{file=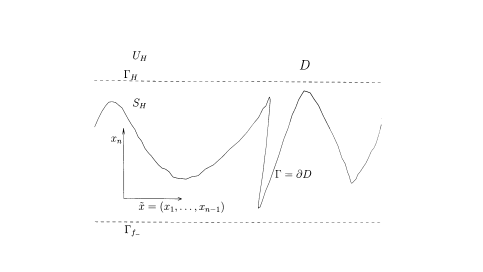}
    
    \end{center}


\caption{Sketch of the geometry.}\label{fig1}
  \end{figure}
 
 This definition of $D$ is rather complicated -- see the picture below. It may help the reader, in coming to terms with this abstract description to keep in mind the following case, included in our definition of $D$. This is the case where the scattering surface $\Gamma$ is the graph of some bounded continuous function. For example in the 3D case, we have that for a bounded and continuous function $f:\mathbb{R}^{2} \to \mathbb{R}$,
\begin{equation}\label{Gamma_graph}
\Gamma:= \{x=(x_1,x_2,x_3) \in \mathbb{R}^3:x_3=f(x_1,x_2)\},
\end{equation}
after which the domain $D$ is given by    
\[
D= \{x=(x_1,x_2,x_3) \in \mathbb{R}^3:x_3>f(x_1,x_2)\}.
\]
Of course, in general one wishes to consider rough surfaces that are not simply the graphs of functions, but of more complex geometry that one encounters in reality. It is a major point of this thesis that we establish well-posedness results for wave scattering by rough surfaces that are \emph{not} the graphs of functions, under certain other restrictions (low frequency of the waves, for example). Nevertheless a great deal of the well-posedness results we establish in this thesis are restricted to this setting, where the rough surface is the graph of a bounded continuous function.

We aim in the next sections to turn our attention to the mathematical modelling of these problems. We'll begin by looking at acoustics; then we'll take a look at electromagnetics.


  

\section {The mathematical description of the scattering problems}
\subsection{Acoustics} 
The wave equation is the classical model that describes acoustic propagation. Let $U(x,t): D \times \mathbb{R} \to \mathbb{C}$ denote the perturbation of pressure at a point $x$ in $D$ and at a time $t>0$. It is this that we seek to find. In other words, for example, it represents the resultant sound distribution that we desired to predict in the aeroplane example earlier. Let $G(x,t):D\times \mathbb{R} \to \mathbb{C}$ denote the source of acoustic disturbance, i.e. the noise produced by the aeroplane, which we suppose we know or are given. Then the inhomogeneous wave equation relates the two via,
\begin{equation} \label{wave_eq}
\frac{1}{c^2}\frac{\partial ^2 U}{\partial t^2} - \Delta U = -G, \mbox{ in } D \times \mathbb{R}.
\end{equation} 
Here $c$ is the speed of sound in the medium. We may wish to assume that $c$ is a constant, which is appropriate if the region $D$ is occupied by one medium, such as air which is at rest; but also we will consider the case when the medium in $D$ varies throughout $D$, in which case $c$ depends on position. 
We note also at this point that the density perturbation $\rho$ of the air, also satisfies (\ref{wave_eq}), though with a different function $G$, and, provided the wave motion is initially irrotational, then, the velocity, $v$, of the air is the gradient of a scalar field $\Psi$ the \emph{velocity potential}, which also satisfies the wave equation, with a yet different function $G$. Further, the relationships between these three quantities are given by
\begin{equation} \label{relations}
v = \nabla \Psi, \mbox{   }  U = -\rho_0\Psi_{t}, \mbox{  } U=c^2\rho,
\end{equation}   
where $\rho_0$ denotes the density of the unperturbed state. We will suppose that our waves are time harmonic. This means we will assume that $G(x,t)=\Re(g(x)e^{-i\omega t})$ for a function $g: D\to \mathbb{C}$ and then look for solutions to the wave equation in the form $U(x,t) = \Re(u(x)e^{-i\omega t})$ for some function $u: D \to \mathbb{C}$. Here $\omega>0$ denotes the angular frequency of the waves. On making this assumption one sees that equation (\ref{wave_eq}) reduces to 
\begin{equation} \label{Helm_1}
\Delta u + k^2 u = g, \mbox{ in } D,
\end{equation}
where $k:=\omega/c$, is the \emph{wavenumber}. Equation (\ref{Helm_1}) is known as the inhomogeneous Helmholtz equation. We should mention at this stage that a slightly different model for acoustical scattering that takes into account the effect of dampening in the region $D$, and which takes as it's starting point the \emph{dissipative wave equation}, leads once again, with similar manipulations to those above, to the inhomogeneous Helmholtz equation (\ref{Helm_1}), but this time with $k$ being a complex valued function (see \cite{Col83a} pages 66-67). Thus the case when $k$ is complex valued in (\ref{Helm_1}) is also of interest in acoustics.
    
Thus given $D \subset \mathbb{R}^n$ and given $g:D \to \mathbb{C}$ our aim will be to find 
$u:D  \to \mathbb{C}$ satisfying (\ref{Helm_1}). That (\ref{Helm_1}) be satisfied in a 
classical sense, requires that we should find $u$ belonging to the space $C^2(D).$ Alternatively we may, to get a handle on solving the problem, seek to find a solution $u$ satisfying (\ref{Helm_1}) in a weaker sense, for example a distributional sense, in which case it would be more appropriate to look for a solution $u$ in $C^1(D)$ perhaps. When we precisely pose our problems, later on, it will be important to know in what exact function space to look for $u$. This will depend on the function space setting of $g$. For the meantime though we wish to brush over such issues and set up the basic problem. However we should mention here something about the nature of $g$. In our motivating problem of the aeroplane it is appropriate to view $g$ as a function of compact support. In fact, in our variational formulations of the problem, $g$ will be given more generally as a function in $L^2(D)$ although its support will lie at a finite distance from the boundary $\Gamma$. This means that the support of $g$ will lie in $\overline{S_H}$ for some $H\geq f_+$.  

Another source of acoustic excitation that we will consider in this thesis, is that due to a \emph{point source}. Here $g = \delta_y$ -- a delta function situated at $y \in D$. The interest in this sort of excitation again stems from the wish to study wave sources of compact support: any such source can be represented as superpositions of point sources located in the compact support.

Finally we should mention one important type of acoustic incidence, not covered in the work of this thesis. This is plane wave incidence. Generally the analysis that we apply in this thesis requires that sources should decay along the boundary; as such none of our results apply to scattering of plane waves.
 

\subsubsection{Boundary conditions.}
Finding a unique solution to equation (\ref{Helm_1}) will not be possible without requiring the solution $u$ to satisfy an appropriate boundary condition. We will look at two such boundary conditions: the Dirichlet boundary condition and the impedance boundary condition.

\subsubsection{Dirichlet boundary condition.} Here we require that $u=0$ on the boundary $\Gamma$. In this case $\Gamma$ is said to be a \emph{sound soft} surface. Physically it corresponds to there being no pressure on the surface. It is appropriate to assume this when there is a huge jump in pressure across a surface. An example of this arises in underwater acoustics: If we imagine a submarine beneath the sea emitting sound and if we wish to know just how this sound propagates, then, the problem domain $D$ would be the region occupied by the sea and the rough surface $\Gamma$ would be the surface of the sea. Given the large drop in pressure as one moves from the sea to the air above it, then, it would be appropriate in this case to assume that the pressure is zero on this rough surface $\Gamma$. 
  
\subsubsection{Impedance boundary condition.} In order to motivate this boundary condition let us just note that the relation between pressure $U$ and velocity potential $\Psi$ given in (\ref{relations}), can be simplified, on making our time harmonic assumptions that 
\[
U= \Re(u(x)e^{-i\omega t}) \mbox{ and } \Psi= \Re(\psi(x)e^{-i\omega t})
\]
for $u,\psi : D \to \mathbb{C}$.
The new relation between  $u$ and $\psi$ is then 
\begin{equation} \label{u_psi_relation}
u=i\omega\rho_0\psi. 
\end{equation}
 
Now, the classical Neumann boundary condition, that 
$$
\frac{\partial \psi}{\partial \nu}=0
$$
on the boundary $\Gamma$, assumes that the component of fluid velocity normal to the surface vanishes. This makes sense for a rigid surface. But for more general surfaces, the normal velocity is non-zero and the quantity $Z_s$, defined by
\begin{equation}\label{specific}
Z_s = \frac{u}{  \partial \psi/\partial \nu}
\end{equation}
is finite on the boundary. $Z_s$ is called the \emph{surface impedance} (e.g.\ \cite{Morse}). In general $Z_s$ depends on the variation of the acoustic field throughout the medium of propagation. Often however $Z_s$ depends only on the properties of the boundary surface (and on the angular frequency $\omega$, but we will assume that this is constant in our study of the impedance problem): specifically, for a given stretch of surface, for example a concrete road surface, the ratio $u/(\partial \psi/ \partial \nu)$ is constant; and then for a different stretch of surface, for example that covered by a field, it assumes yet a different constant value.  In this thesis we will always assume that $Z_s$ is independent of the distribution of the acoustic field, in which case we say that the boundary is \emph{locally reacting}. 

Thus defining $\beta$ as 
\begin{equation}\label{beta_def}
\beta = \frac{\rho_0 c}{Z_s}
\end{equation}
we see that, from the above discussion, $\beta$ is a function of the boundary $\Gamma$, and we will suppose that $\beta \in L^{\infty}(\Gamma)$. 
Using (\ref{beta_def}) and (\ref{u_psi_relation}), equation (\ref{specific}) can be rewritten as,
\[
\frac{\partial \psi}{\partial \nu}= ik\beta \psi \mbox{    or   }  \frac{\partial u}{\partial \nu}= ik\beta u, \mbox{ on } \Gamma, 
\]     
and we have arrived at the impedance boundary condition, also known as the Robin boundary condition or the third boundary condition.

It can be shown -- see for example \cite{Simon's_thesis} page 24 -- that if the ground/boundary is not to be a source of energy then a necessary condition on $\beta$ is that 
\[
\Re \beta \geq 0.
 \] 
In chapter 3 we'll make -- and to some extent justify -- some extra assumptions on $\beta$.

\subsubsection{The Radiation condition}
The solution to (\ref{Helm_1}) will still not be unique even when we impose one of the boundary conditions. A radiation condition is also required, many authors referring to this as an extra boundary condition at infinity. The role of the radiation condition is to pick out the \textit{physically realistic} solution. 

In this thesis we make use of a radiation condition called the \emph{upward propagating radiation condition}, (UPRC). To state this we introduce the fundamental solutions to the Helmholtz equation (\ref{Helm_1}) in the case when the wavenumber $k$ is a positive constant i.e. $k=k_+>0$. This is $\Phi$, given by   
\[\label{fund_sol}
\Phi(x,y)=\left\{\begin{array}{ll}\displaystyle\frac{\ri}{4}H_0^{(1)}(k_+|x-y|),&n=2,\\\nonumber
\displaystyle\frac{\exp(\ri k_+|x-y|)}{4\pi
|x-y|},&n=3,\end{array}\right. 
\]
for $x,y\in\real^n$, $x\neq y$, where $H_0^{(1)}$ is the Hankel
function of the first kind of order zero. $\Phi(x,y)$ is a solution to the Helmholtz equation
(\ref{Helm_1}) with $k=k_+$ in the special case when $D= \mathbb{R}^n$ and $g= \delta_y$, a point source located at $y \in \mathbb{R}^n$.
The (UPRC) then states that 
\begin{equation}
u(x)=2 \int_{\GH}\frac{\pa\Phi(x,y)}{\pa x_n} u(y)\,
ds(y),\label{uprc} \quad x\in U_H,
\end{equation}
for all $H$ such that the support of $g$ is contained in $S_H$. 

The (UPRC) was proposed in \cite{chandprs99}. In the case that the wavenumber $k$ has imaginary part, one can derive this representation for the solution of the Helmholtz equation in $U_H$, under mild assumptions on the growth of the solution at infinity: see \cite{chandmmas97}. 

In the case that $u|_{\Gamma_H}\in L^2(\Gamma_H)$ we can rewrite
(\ref{uprc}) in terms of the Fourier transform of $u|_{\Gamma_H}$.
For $\phi\in L^2(\Gamma_H)$, which we identify with
$L^2(\R^{n-1})$, we denote by $\hat \phi = {\cal F}\phi$ the
Fourier transform of $\phi$ which we define by
\begin{equation}
\cF\phi(\xi)=(2\pi)^{-(n-1)/2}\int_{\real^{n-1}}\exp(-\ri\tx\cdot\xi)\phi(\tx)\,d\tx,\quad
\xi \in \real^{n-1}.\label{FT}
\end{equation}
Our choice of normalization of the Fourier transform ensures that
$\cal F$ is a unitary operator on $L^2(\R^{n-1})$, so that, for
$\phi, \psi\in L^2(\R^{n-1})$,
\begin{equation} \label{plancherel}
\int_{\R^{n-1}} \phi\bar\psi d\tilde x = \int_{\R^{n-1}} \hat
\phi\bar{\hat \psi} d\xi.
\end{equation}
If $F_H:=u|_{\Gamma_H}\in L^2(\Gamma_H)$ then (see
\cite{chandsjam98,are04a} in the case $n=2$),  (\ref{uprc}) can be
rewritten as
\begin{eqnarray}\nonumber
u(x)=\frac{1}{(2\pi)^{(n-1)/2}}\int_{\real^{n-1}}\exp(\ri[(x_n-H)\sqrt{k_+^2-\xi^2}+
\tx\cdot\xi])\hat F_H(\xi)\,d\xi,\;\; x\in U_H. \\\label{uprcstar}
\end{eqnarray}
In this equation $\sqrt{k_+^2-\xi^2}=\ri\sqrt{\xi^2-k_+^2}$, when
$|\xi|>k_+$.

Equation (\ref{uprcstar}) is a representation for $u$, in the
upper half-plane $U_H$, as a superposition of upward propagating
homogeneous and inhomogeneous plane waves. A requirement that
(\ref{uprcstar}) holds is commonly used (e.g. \cite{DeSanto02}) as
a formal radiation condition in the physics and engineering
literature on rough surface scattering. The meaning of
(\ref{uprcstar}) is clear when $F_H\in L^2(\real^{n-1})$ so that
$\hat F_H\in L^2(\real^{n-1})$; indeed the integral
(\ref{uprcstar}) exists in the Lebesgue sense for all $x\in U_H$.
Recently Arens and Hohage \cite{are04a} have explained, in the
case $n=2$, in what precise sense (\ref{uprcstar}) can be
understood when $F_H\in BC(\Gamma_H)$ so that $\hat F_H$ must be
interpreted as a tempered distribution. Arens and Hohage also show the equivalence of this radiation condition with another known as the Pole Condition.


In summary, our acoustic problems will be to look for a solution to the Helmholtz equation (\ref{Helm_1}), satisfying the radiation condition (\ref{uprcstar}), and satisfying one of the boundary conditions: if it is the Dirichlet boundary condition, then we will refer to this problem as the Dirichlet problem for the Helmholtz equation, or simply the Dirichlet problem; in the case where we use an impedance boundary condition, we will refer to the problem as the impedance problem for the Helmholtz equation or simply the impedance problem.

\subsection{Electromagnetics} 
In classical electromagnetics, Maxwell's equations relate the electric field intensity $\mathbf{E}(x,t) :D \times \mathbb{R} \to \mathbb{C}^n$, the magnetic field intensity $\mathbf{H}(x,t):D \times \mathbb{R} \to \mathbb{C}^n$, the electric displacement $\mathbf{D}(x,t) :D \times \mathbb{R} \to \mathbb{C}^n$ and the magnetic induction $\mathbf{B} :D \times \mathbb{R} \to \mathbb{C}^n$, to the cause of electromagnetic excitation, namely the charge density function $\rho :D \times \mathbb{R} \to \mathbb{C}$ and the current density function $\mathbf{J}:D \times \mathbb{R} \to \mathbb{C}^n$. Maxwell's equations are (see \cite{Monk} pages 1-9):
\begin{equation} \label{Max_1}
\frac{\partial \mathbf{B}}{\partial t} + \nabla \times \mathbf{E} = 0  \mbox{ in } D,
\end{equation}
\begin{equation}\label{Max_2}
\nabla.\mathbf{D} = \rho \mbox{ in } D,
\end{equation}
\begin{equation}\label{Max_3}
\frac{\partial \mathbf{D}}{\partial t} - \nabla \times \mathbf{H} = -\mathbf{J} \mbox{ in } D,
\end{equation}
\begin{equation}\label{Max_4}
\nabla .\mathbf{B} =0 \mbox{ in } D.
\end{equation}
Note that equations (\ref{Max_2}) and (\ref{Max_3}) can be combined to give
\begin{equation}\label{source_rel}
\nabla .\mathbf{J} + \frac{\partial \rho}{\partial t} =0.
\end{equation}

Making the assumption that the current density and charge density are time harmonic, i.e. that 
\[
\mathbf{J}(x,t)  = \Re (\exp(-i\omega t)\hat{\mathbf{J}}(x))
\]
and that 
\[
\rho(x,t) = \Re (\exp(-i\omega t)\hat{\rho}(x))
\]
for known $\hat{\mathbf{J}}:D \to \mathbb{C}^n$ and $\hat{\rho}:D \to \mathbb{C}$, and that also 
\[
\mathbf{E}(x,t)  = \Re (\exp(-i\omega t)\hat{\mathbf{E}}(x))
\] 
\[
\mathbf{D}(x,t)  = \Re (\exp(-i\omega t)\hat{\mathbf{D}}(x))
\]
\[
\mathbf{H}(x,t)  = \Re (\exp(-i\omega t)\hat{\mathbf{H}}(x))
\]
\[
\mathbf{B}(x,t)  = \Re (\exp(-i\omega t)\hat{\mathbf{B}}(x))
\]
for unknown functions $\hat{\mathbf{E}}:D \to \mathbb{C}^n$, $\hat{\mathbf{D}}:D \to \mathbb{C}^n$, $\hat{\mathbf{H}}:D \to \mathbb{C}^n$, and $\hat{\mathbf{B}}:D \to \mathbb{C}^n$, we obtain the time harmonic Maxwell's equations:
\begin{equation}\label{Max_5}
-i\omega \hat{\mathbf{B}} + \nabla \times \hat{\mathbf{E}} = 0 \mbox{ in }D,
\end{equation}
\begin{equation}\label{Max_6}
\nabla .\hat{\mathbf{D}} = \hat{\rho}  \mbox{ in } D,
\end{equation}
\begin{equation}\label{Max_7}
-i\omega \hat{\mathbf{D}}-\nabla \times \hat{\mathbf{H}} = -\hat{\mathbf{J}}\mbox{ in }D,
\end{equation}
\begin{equation}\label{Max_8}
\nabla .\hat{\mathbf{B}} = 0 \mbox{ in } D.
\end{equation}
We now reduce these 4 equations down to 2, eliminating the quantities $\hat{\mathbf{D}}$ and $\hat{\mathbf{B}}$, by supposing there hold two \emph{constitutive} laws that relate $\hat{\mathbf{E}}$ and $\hat{\mathbf{H}}$ to $\hat{\mathbf{D}}$ and $\hat{\mathbf{B}}$, respectively. These laws depend on the matter in the domain $D$ occupied by the electromagnetic field. In this thesis we suppose that the material occupying $D$ is \emph{inhomogeneous}, that is, it is a composition of different materials (e.g copper, air etc.); that the material is \emph{isotropic}, in other words the material properties do not depend on the direction of the field; and also we assume the material is linear. It then follows that the constitutive equations are (\cite{Monk} page 5)
\begin{equation}\label{const_1}
\hat{\mathbf{D}}= \epsilon \hat{\mathbf{E}}
\end{equation}
and 
\begin{equation}\label{const_2}
\hat{\mathbf{B}}= \mu \hat{\mathbf{H}},
\end{equation} 
where $\epsilon: D \to \mathbb{R}$ is positive and bounded and is known as the \emph{electric permittivity}; whilst $\mu>0$ is the \emph{magnetic permeability}, and is assumed to be a constant. One further constitutive equation is that 
\begin{equation}\label{const_3}
\hat{\mathbf{J}}= \sigma \hat{\mathbf{E}} + \hat{\mathbf{J}}_a, \mbox{ in D}
\end{equation}
where $\sigma:D \to \mathbb{R}$ is non-negative and is called the \emph{conductivity} and the vector function $\hat{\mathbf{J}}_a$ is the \emph{applied current density}. Regions of $D$ where $\sigma$ is strictly positive are termed \emph{conducting}.  Where $\sigma=0$ the material in $D$ is termed \emph{dielectric}.   

Now using the constitutive relations (\ref{const_1}), (\ref{const_2}), (\ref{const_3}) and also using equation (\ref{source_rel}) in its time harmonic form,
\[
\nabla.\hat{\mathbf{J}}- i\omega \hat{\rho}=0  \mbox{ in } D,
 \]
in the time harmonic Maxwell's equations (\ref{Max_5}), (\ref{Max_6}), (\ref{Max_7}), (\ref{Max_8}),
we derive that
\begin{equation} \label{Max_9}
-i\omega \mu\hat{\mathbf{H}} + \nabla \times \hat{\mathbf{E}} = 0 \mbox{ in } D,
\end{equation}
\begin{equation} \label{Max_10}
\nabla .(\epsilon \hat{\mathbf{E}})= \frac{1}{i\omega}\nabla .(\sigma \hat{\mathbf{E}} + \hat{\mathbf{J}}_a) \mbox{ in } D,
\end{equation}
\begin{equation} \label{Max_11}
-i\omega \epsilon\hat{\mathbf{E}} + \sigma \hat{\mathbf{E}}- \nabla \times \hat{\mathbf{H}} = -\hat{\mathbf{J}}_a \mbox{ in } D,
\end{equation}
\begin{equation} \label{Max_12}
\nabla .(\mu \hat{\mathbf{H}}) =0  \mbox{ in } D.
\end{equation}

We remark that by supposing the constitutive relations to hold, equations (\ref{Max_10}) and 
(\ref{Max_12}) are now redundant; they can be derived by taking the divergence of (\ref{Max_11}) and (\ref{Max_9}) respectively. Moreover we can eliminate the variable  $\hat{\mathbf{H}}$ by substituting (\ref{Max_9}) into (\ref{Max_11}), to arrive at one equation for $\hat{\mathbf{E}}$:
\begin{equation}\label{Max_13}
\nabla \times (\nabla \times \hat{\mathbf{E}}) -\omega^2\mu\epsilon\left[1 + \frac{i\sigma}{\omega \epsilon}\right]\hat{\mathbf{E}} = i\omega \mu\hat{\mathbf{J}}_a \mbox{ in } D,
\end{equation}
recalling that $\mu$ is assumed to be constant.
Letting $\mathbf{G}=i\omega \mu\hat{\mathbf{J}}_a$,
and letting 
\begin{equation}\label{k_def_em}
k^2 = \omega^2\mu\epsilon\left[1 + \frac{i\sigma}{\omega \epsilon}\right]
\end{equation}
we see that (\ref{Max_13}) becomes 
\begin{equation}\label{Max_14}
\nabla \times (\nabla \times \hat{\mathbf{E}}) - k^2\hat{\mathbf{E}} = \mathbf{G}.
\end{equation}
In this thesis we assume that the problem is two dimensional: precisely we suppose that the electric permittivity $\epsilon$, the magnetic permeability $\mu$ and the conductivity $\sigma$  are invariant in the $x_3$ direction. Moreover we only study the Transverse Electric (T.E.) case. In the T.E.\ case we seek the electric field intensity in the form $\hat{\mathbf{E}} = (0,0,E)$ where $E$ is supposed to be independent of the $x_3$ variable and also we assume that $\mathbf{G}= (0,0,-g)$. On making these assumptions we see that (\ref{Max_14}) becomes 
\begin{equation} \label{Helm_elect}
\Delta E +k^2 E =g  \mbox{ in } D.
\end{equation}
It is the solution $E$ to this equation that we will seek to find when we are given $g$. We see that we have once more arrived at the inhomogeneous Helmholtz equation. As in the last section it must be supplemented by boundary and radiation conditions.

\subsubsection{Boundary and radiation conditions.}
We will look at two, two-dimensional problems involving the scattering of electromagnetic waves. The first involves scattering by a perfectly conducting rough surface. In this case the appropriate boundary condition is that 
\[
\nu. \hat{\mathbf{E}}= 0 \mbox{ on } \Gamma.
\] 
Since we are assuming that $\hat{\mathbf{E}} = (0,0,E)$, this means we should require that $E=0$ on $\Gamma$. In addition we then impose the radiation condition (\ref{uprcstar}); for this it's necessary to assume that outside a neighbourhood of the boundary $\Gamma$ the quantity $k$ in (\ref{Helm_elect}) takes on  a constant positive value $k_+$. We will call this problem, that of scattering by an unbounded rough inhomogeneous layer.

The second electromagnetic problem we wish to study will be known as the transmission problem. Here the domain $D$ of electromagnetic propagation is assumed to be the whole of $\mathbb{R}^n$. As such no boundary conditions are required, but rather we impose the radiation condition both in the upward and downward directions, assuming that the function $k$ in (\ref{Helm_elect}) assumes positive constant values $k_+$ and $k_-$ above and below a strip of finite height within which $k$ may vary. We will return to to this idea and elaborate on it when we come to study the transmission problem in chapter 4.

\section{Hadamard's criterion and numerical implementation}
It is our general aim to solve the problems that we posed in the last section. It is instructive to consider a little why we cannot find explicit solutions to these problems, via mathematical techniques. There are essentially two answers to this question. One is that, put simply, these problems are too difficult. Another involves the complex nature of the problem: for if, in the earlier aeroplane example, we consider the scattering surface to possess a complicated, that is realistic, geometry, and if the source of acoustic waves is similarly of a complex and realistic nature, then one can hardly expect that the scattered acoustic field will have such a simple form as to be able to be described by an explicit mathematical function. Indeed, such complicated scattered fields are best described by pictures generated on a computer.

Thus to solve such problems numerical and computational techniques are essential. On the theoretical side we should ensure that the mathematical problems we set are well-posed, in that they satisfy Hadamard's criterion. This states that for a given mathematical model:
\newline{1)there should exist a solution;
\newline2)the solution should be unique;  
\newline3)the solution should depend continuously on the data. For example any solution $u$ to equation (\ref{Helm_1}) should satisfy an inequality
\[
\Vert u \Vert_X \leq C \Vert g\Vert_Y, 
\]
where $C>0$ is a constant and $X$ and $Y$ are normed spaces to which $u$ and $g$ respectively belong.}

In this thesis we are primarily concerned with showing that the problems we state are well-posed and not with the approximate solution to these problems using numerical techniques. 
However, our approach to our problems is geared toward the ultimate goal of numerical computation of the solution: specifically, the variational formulations that we derive from our problems in chapters 2, 3 and 4 should be suitable for finite element implementation; similarly the boundary integral equation that we derive from our problem in chapter 5 should be suitable for solution via boundary element methods. In particular the explicit bounds we establish, on the sesquilinear form, in chapters 2, 3 and 4, and on the inverse operator associated with the double- and single-layer potentials in chapter 5, should prove helpful in the analysis of the numerical implementation.

\section{Literature review: Overview}
We conclude this introduction with a broad review of the literature.

 The problem of rough surface scattering has long been studied and there have been many contributors to the subject. Principally research has focused on the use of numerical methods to solve these problems. The review of Warnick and Chew \cite{warnick01}, summarises numerical strategies, implemented over the past 30 years or so, that seek to simulate the scattering of electromagnetic (and also acoustic) waves by rough surfaces. Warnick and Chew roughly group these numerical strategies into three categories: differential equation methods, boundary integral equation methods and numerical methods based on analytical scattering approximations. 
A critical survey of scattering approximations is carried out in the review of Elfouhaily and Guerin \cite{elfguerin}.

In the review \cite{saillardsent01}, Saillard and Sentenac are interested in formulating rough surface scattering problems from a statistical point of view. Here, the rough surface is not a known quantity in the problem, but rather, one only has information on certain statistical properties of the surface, so that the shape of the rough surface is described by a random function of space coordinates and time. The problem is then to determine the statistical properties of the scattered field, such as it's mean value and mean intensity, as functions of the statistical properties of the surface. Obviously such a problem is of interest, since in reality, one often will not know the precise shape of the rough surface. In this thesis however, we always assume that the rough surface is known. Saillard and Sentenac then proceed to describe approximate methods for solving these problems numerically. They point out, however, that few authors have undertaken a rigourous mathematical study of the problem.  

In \cite{ogil91} Ogilvy reviews research in this area, again with an emphasis on random rough surfaces and on numerical techniques.  See also the books by Voronovich \cite{voron94}, Petit \cite{petit80} and Wilcox \cite{Wilcox84} and the review of DeSanto \cite{DeSanto02}.

Finally we should make mention of the closely related field of scattering by bounded obstacles. A very complete theory of this class of problems has been developed, especially by use of boundary integral equation methods, see for example \cite{Col83a}. 

We will return to the literature review, with a much closer scrutiny of the papers that are related to the work in this thesis, as we tackle the various problems in chapters 2, 3, 4 and 5.

\part{Variational Methods}
In the next three chapters we apply variational methods to three of our scattering problems. 
An excellent introduction to the theory of variational methods can be found in Lawrence C. Evans's book `Partial Differential Equations' \cite{Evans} chapters 5 and 6. 

There are two main theorems that we will require:

\begin{theorem}\textbf{Lax-Milgram.} [e.g.\ \cite{Monk} Lemma 2.21.]
Let $H$ be a Hilbert space, with norm and inner product given by $\Vert \cdot\Vert$, $(\quad   , \quad )$ respectively. Suppose that $b:H\times H\to \mathbb{C}$ is a bounded sesquilinear form such that
for some $\alpha>0$ it holds that
\[
|b(u,u)| \geq \alpha \Vert u\Vert^2, \quad u \in H.
\]
Then for each $G \in H^*$ there exists a unique $u \in H$ such that
\[
b(u,v) = G(v) \quad v \in H,
\]  
and 
\[
\Vert u \Vert \leq \alpha^{-1}\Vert G \Vert_{H^*},
\]
where $\Vert \cdot \Vert_{H^*}$ denotes the norm of $H^*$.
\end{theorem}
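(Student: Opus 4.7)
The plan is to convert the variational equation $b(u,v)=G(v)$ into an operator equation $Au=w$ on $H$ using the Riesz representation theorem, and then to show that the resulting linear operator $A$ is a bounded bijection with $\|A^{-1}\|\leq\alpha^{-1}$.

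First I would apply Riesz twice. For each fixed $u\in H$, the map $v\mapsto b(u,v)$ is a bounded conjugate-linear functional on $H$, of norm at most $M\|u\|$ where $M$ is the boundedness constant of $b$, so there is a unique element $Au\in H$ with $b(u,v)=(Au,v)$ for every $v\in H$. Sesquilinearity of $b$ in its first argument makes $u\mapsto Au$ linear, and substituting $v=Au$ yields $\|Au\|^2=|b(u,Au)|\leq M\|u\|\|Au\|$, so $A$ is a bounded linear operator on $H$ with $\|A\|\leq M$. Applying Riesz a second time to $G\in H^*$ produces $w\in H$ with $G(v)=(w,v)$ for all $v$ and $\|w\|=\|G\|_{H^*}$. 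The variational equation is then equivalent to solving $Au=w$.

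Next I would extract from the coercivity hypothesis the key a priori bound
\[
\alpha\|u\|^2\leq|b(u,u)|=|(Au,u)|\leq\|Au\|\,\|u\|,
\]
so that $\|Au\|\geq\alpha\|u\|$ for every $u\in H$. This single inequality simultaneously gives injectivity of $A$, closedness of its range $R(A)$, and, once surjectivity has been established, the desired norm bound $\|u\|\leq\alpha^{-1}\|Au\|=\alpha^{-1}\|w\|=\alpha^{-1}\|G\|_{H^*}$.

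The main obstacle, and really the only nontrivial step, is proving that $A$ is surjective; I would argue this by contradiction using the closed-range observation above. Since $R(A)$ is closed in $H$, if $R(A)\neq H$ then the orthogonal complement $R(A)^\perp$ contains some $v_0\neq 0$. Then $(Au,v_0)=0$ for every $u\in H$, i.e.\ $b(u,v_0)=0$ for every $u\in H$; choosing $u=v_0$ and invoking coercivity yields $\alpha\|v_0\|^2\leq|b(v_0,v_0)|=0$, contradicting $v_0\neq 0$. Hence $R(A)=H$, $A^{-1}$ exists as a bounded operator with the claimed norm bound, and setting $u=A^{-1}w$ gives the required solution. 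Uniqueness then follows either from injectivity of $A$ or, equivalently, by subtracting two solutions and applying coercivity directly to their difference.
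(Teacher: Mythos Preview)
Your argument is correct and is the standard Riesz-representation proof of the Lax--Milgram lemma: represent $b(u,\cdot)$ and $G$ by elements $Au$ and $w$ of $H$, derive $\|Au\|\ge\alpha\|u\|$ from coercivity, and use closed range plus an orthogonal-complement argument for surjectivity. The paper itself does not prove this theorem; it is simply quoted with a reference to \cite{Monk}, so there is nothing to compare against.
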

\begin{theorem}\textbf{Generalized Lax-Milgram Theorem.} [e.g.\ \cite{ihlenburg} Theorem 2.15.]
Let $H$ be a Hilbert space, with norm and inner product given by $\Vert \cdot\Vert$, $(\quad  , \quad )$ respectively. Suppose that $b:H\times H\to \mathbb{C}$ is a bounded sesquilinear form such that
for some $\alpha>0$ the \emph{inf-sup} condition holds:
\begin{equation}
 \alpha := \inf_{0\not=u\in H}\sup_{0\not=v\in
H}\frac{|b(u,v)|}{\Vert u\Vert\Vert v\Vert} >0;
\label{infsup}
\end{equation}
and the\emph{ transposed inf-sup} condition holds:
\begin{equation}\label{trans_is}
\sup_{0\not=u\in H}\frac{|b(u,v)|}{\Vert u\Vert}>0.
\end{equation}
Then for each $G \in H^*$ there exists a unique $u \in H$ such that
\[
b(u,v) = G(v) \quad v \in H,
\]  
and 
\[
\Vert u \Vert \leq \alpha^{-1}\Vert G \Vert_{H^*}.
\]

\end{theorem}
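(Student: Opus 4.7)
The plan is to reduce the problem to the standard Riesz representation theorem by encoding the sesquilinear form as a bounded linear operator on $H$ and then showing that the two inf-sup hypotheses force this operator to be a bijection with controlled inverse.

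First I would invoke the Riesz representation theorem: since $b$ is bounded and sesquilinear, for each fixed $u\in H$ the map $v\mapsto b(u,v)$ is a bounded antilinear functional (or linear, depending on the convention chosen for $b$), so there is a unique $Bu\in H$ with $b(u,v)=(Bu,v)$ for every $v\in H$. Linearity and boundedness of $b$ in the first argument pass to $B$, giving a bounded linear operator $B:H\to H$ with $\|B\|\le\|b\|$. Similarly, by Riesz, the functional $G\in H^*$ is represented by a unique $w\in H$ with $G(v)=(w,v)$ and $\|w\|=\|G\|_{H^*}$. The problem then reduces to solving $Bu=w$ with the estimate $\|u\|\le\alpha^{-1}\|w\|$.

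Next I would translate the two inf-sup conditions into properties of $B$. The inf-sup hypothesis (\ref{infsup}) rewrites as
\[
\alpha\|u\|\;\le\;\sup_{0\neq v\in H}\frac{|(Bu,v)|}{\|v\|}\;=\;\|Bu\|,\qquad u\in H,
\]
so $B$ is bounded below. This immediately gives injectivity, and a standard Cauchy-sequence argument using the lower bound shows the range $R(B)$ is closed. The transposed inf-sup (\ref{trans_is}) says that for every $v\neq 0$ there exists $u$ with $(Bu,v)\neq 0$, i.e.\ no nonzero $v$ is orthogonal to $R(B)$; hence $R(B)^\perp=\{0\}$. Combining closedness of $R(B)$ with $R(B)^\perp=\{0\}$ yields $R(B)=H$, so $B$ is a bijection.

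Finally I would set $u:=B^{-1}w$, which gives $b(u,v)=(Bu,v)=(w,v)=G(v)$ for all $v\in H$, with uniqueness inherited from injectivity of $B$. The lower bound $\|Bu\|\ge\alpha\|u\|$ yields $\|B^{-1}\|\le\alpha^{-1}$, whence $\|u\|\le\alpha^{-1}\|w\|=\alpha^{-1}\|G\|_{H^*}$, as required. I expect the only subtle point to be the bookkeeping around the sesquilinear convention, so that $B$ is genuinely complex-linear (rather than conjugate-linear) and the orthogonal complement argument in the Hilbert space $H$ is applied on the correct side; once that is fixed, the rest is essentially the classical Banach closed range argument packaged with Riesz representation.
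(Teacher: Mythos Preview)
Your proof is correct and is essentially the standard argument for the Babu\v{s}ka--Ne\v{c}as generalized Lax--Milgram theorem. Note, however, that the paper does not actually prove this statement: it is quoted as a known result with a reference to \cite[Theorem 2.15]{ihlenburg}, and is then used as a black box in the subsequent chapters. So there is no ``paper's own proof'' to compare against; your argument is simply the classical one that the cited reference would contain.
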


\chapter{Scattering by unbounded, rough, inhomogeneous layers}\label{layer}
\section{Literature review} In this chapter we study, via variational methods, a boundary value problem for the Helmholtz equation modelling scattering of time harmonic waves by a layer of spatially-varying refractive index above a rough surface on which the field vanishes (we called this the problem of scattering by unbounded, rough, inhomogeneous layers in chapter 1 -- see the electromagnetics section). We recall from chapter 1 that in the 2D case this problem models the scattering of time harmonic electromagnetic waves by an inhomogeneous conducting or dielectric layer above a perfectly conducting rough surface in the transverse electric polarization case.
Moreover it is a model, in 2 and 3 dimensions, of time harmonic acoustic scattering by a rough surface in a medium in which the wavespeed varies with position or in which there is dissipation. 

We commence with a thorough survey of the literature on this problem. In fact this problem, in which we study the Helmholtz equation (\ref{Helm_1}) with $k$ a function of position, seems to have received little attention with the exception of \cite{chandprs98}. Mainly this problem has been studied in the special case when $k$ is constant throughout the region $D$, (in which case the problem reduces to what we have called the Dirichlet problem for the Helmholtz equation in chapter 1.) Thus, let us begin this survey by looking at contributions to this problem when $k$ is assumed constant. 
 
  The pioneering paper on this subject seem to be the uniqueness proof of Rellich \cite{Rel_2}. Here Rellich assumed that the rough surface roughly resembled a paraboloid. In \cite{odeh} Odeh proves uniqueness of solution in the case that the rough surface is smooth and is either a cone or approaches a flat boundary at infinity.  Willers proves the existence of a unique solution to this problem, in \cite{willers87}, making the assumption that the boundary is $C^2$ and is flat outside a compact set. 
  
  In another, somewhat related body of work existence of solution to the Dirichlet problem is established by the limiting absorption method, via a priori estimates in weighted Sobolev spaces (see Eidus and Vinnik \cite {eidvin}, Vogelsang \cite{vogel}, Minskii \cite{minskii} and the references therein.) The results obtained are still however limited in that one must assume that the rough surface approaches a flat boundary sufficiently rapidly at infinity and/or that the sign of $x.\nu(x)$ is constant on $\partial D$ outside a large sphere, where $\nu(x)$ denotes the unit normal at $x \in \partial D$.



The most recent, and indeed most complete results in this field, have been developed by Chandler-Wilde and his collaborators. Principally Chandler-Wilde et al have concentrated on employing boundary integral equation (BIE) techniques to settle the question of unique existence of solution to these problems. However, the loss of compactness of the associated boundary integral operators in the case when the boundary is infinite, meant that the theory of boundary integral equations for scattering by bounded obstacles did not translate easily to the problem of rough surface scattering (c.f.\ the literature review in chapter 5). As such generalizations of part of the Riesz theory of compact operators have been developed - see the work of Arens, Chandler-Wilde and Haseloh, \cite{arenshcwI}, \cite{arenshcwII} - requiring only that the associated boundary integral operators be locally compact, and ensuring that existence of solution to the boundary integral equation follows from uniqueness.

In \cite{chandip95} Chandler-Wilde and Ross prove a uniqueness theorem for the Dirichlet problem, in an arbitrary domain $D$ with the assumption that $\Im k >0$. The same authors in \cite{chandmmas96}, then derive some existence results in 2D for mildly rough surfaces, using BIE techniques. 

In \cite{chandsjam98}, Chandler-Wilde and Zhang show uniqueness to the Dirichlet problem in a non-locally perturbed half-plane with piecewise Lyapunov boundary. This time the wavenumber $k$ is assumed real and the problem is formulated with a radiation condition. Moreover an integral equation formulation is proposed and existence of solution is established, for mildly rough surfaces, in 2 dimensions, by using the results of \cite{chandmmas96}. Finally, in \cite{chandprs99}, Chandler-Wilde, Ross and Zhang show existence of solution for domains with Lyapunov boundary, in 2D, but this time with no limit on the surface slope or amplitude. They do this by employing novel solvability results on integral equations, contained in the paper. See also \cite{zhangworking}, where similar results are obtained but with an alternative integral equation formulation.   

Recently Chandler-Wilde, Heinemeyer and Potthast looked at the Dirichlet problem in 3 dimensions, again by an integral equation approach, and were able in \cite{chandpottheim}, \cite{chandpottheim2}, to establish that the problem was well-posed in the case that the boundary is the graph of a Lyapunov function. We will in fact extend these results, to the case when the boundary is the graph of a Lipschitz function in chapter 5. Indeed, see the literature review in chapter 5 for further details on the use of BIE techniques.   


In other recent work \cite{chandmonk} Chandler-Wilde and Monk adopted a different approach to this problem. Using variational methods they were able to establish the well-posedness of the Dirichlet problem, in both 2 and 3 dimensions, for much more general boundaries: specifically, for small wavenumber $k$ they showed the problem to be well-posed for totally general boundaries, that were not the graphs of functions and requiring no regularity; and for arbitrary wavenumber $k$ they established the same for those domains $D$ having the property that if $x \in D$ then so too is every point above $x$. To prove their results they first reformulated their boundary value problem as an equivalent variational problem on a strip; they then analyzed the variational problem and made use of the Lax-Milgram and generalised Lax-Milgram theorem of Babu\v{s}ka, to prove well-posedness. A key ingredient in their proofs was the derivation of an a priori bound on the solution.   

The purpose then of this first chapter is to extend these results to the problem of scattering by rough inhomogeneous layers; indeed we will make use of a lot of the results contained in \cite{chandmonk} and mimic their methods throughout.  

We should mention some papers, that are related, in terms of the methods they employ, to the paper of Chandler-Wilde and Monk. These are \cite{kirschipmp93} by Kirsch, and \cite{els02} by Elschner and Yamamoto, who study the Dirichlet problem; and the papers \cite{Bonnetmmas94} of Bonnet-Bendhia and Starling and \cite{Szembergmmas98} of Szemberg-Strycharz who study the diffraction grating or transmission problem as we've called it here. In all of these papers a variational approach is used. All of the authors begin by reducing their problems to a variational problem on a strip; however the assumption made in all of these papers, that the scattering surface/diffraction grating is periodic and that the source $g$ is quasi-periodic, leads to a variational problem over a bounded region, so that compact embedding arguments can be applied and the sesquilinear form that arises satisfies a G\aa rding inequality which simplifies the mathematical arguments. However we should say that the approach adopted in  \cite{chandmonk} -- and the one that we adopt here -- is very similar to the one adopted in \cite{kirschipmp93}, \cite{els02},  \cite{Bonnetmmas94} and \cite{Szembergmmas98}; in particular the use of the Dirichlet to Neumann map (see (\ref{Tdef}) and (\ref{DtN}) below) and the exploitation of its properties was done in \cite{kirschipmp93}, \cite{els02},  \cite{Bonnetmmas94} and \cite{Szembergmmas98}. 

 An attractive feature of our results and indeed of those in \cite{chandmonk} is the explicit bounds we obtain on the solution in terms of the data $g$, which exhibit explicitly dependence of constants on the wave number and on the geometry of the domain. Our methods of argument to obtain these bounds are inspired in part by the work of Melenk \cite{Melenk}, Cummings and Feng \cite{cumfeng}, Feng and Sheen \cite{fengsheen} and Chandler-Wilde and Monk \cite{chandmonk}. 
 
Finally we should also discuss the paper \cite{chandprs98} of Chandler-Wilde and Zhang; the authors here deal with the same problem as we deal with here, (i.e.\ the wavenumber $k$ varies throughout the domain) and employ an integral equation approach to establish well-posedness in 2D when the surface is flat so that the domain is a half-space. This paper would appear to present the best results on this problem to date. We should point out in what ways our results are an improvement on these: 
\newline 1) Our results work in both 2 and 3 dimensions; 
\newline 2) our rough surfaces are much more general: Specifically, when the maximal value of $k$ is small or $k$ has strictly positive imaginary part we show the problem to be well-posed for totally general boundaries, that are constrained to lie in a strip and which require no regularity; and for arbitrary $k \in L^{\infty}(D)$ subject to assumption 1 (see below) we establish well-posedness for those domains $D$ having the property that if $x \in D$ then so too is every point above $x$;
\newline 3) the assumption (see assumption 1 below) that we make in order to prove theorem \ref{th_main1} is slightly more general than the assumption 2.4 that is used in \cite{chandprs98}.     
 
Finally we should mention some other spin-off papers of \cite{chandmonk}: these are \cite{chandmonk2} in which the same authors apply similar methods to scattering by bounded obstacles; and \cite{HH} in which Claeys and Haddar use the same approach to tackle the problem of scattering from infinite rough tubular surfaces. 

Note that the results contained in the section `$V_H$-ellipticity of the sesqulinear form' were presented at the Waves 2005 Conference.  

\section{The boundary value problem and variational formulation}\label{varf}


In this section we introduce the boundary value problem and its equivalent
variational formulation that will be analyzed in later sections. For
$x=(x_1,\ldots,x_n)\in\real^n$ ($n=2,3$) let
$\tx=(x_1,\ldots,x_{n-1})$ so that $x=(\tx,x_n)$.  For $H\in \real$
let $U_H=\left\{x\;:\; x_n>H\right\}$ and
$\GH=\left\{x\;:\;x_n=H\right\}$.  Let $D\subset \real^n$ be a
connected open set such that for some constants $f_-<f_+$ it holds
that
\begin{equation}
U_{f_+}\subset D \subset U_{f_-},
\end{equation}
and let $\Gamma=\partial D$ denote the boundary of $\partial D$. The variational
problem will be posed on the open set
$S_H:=D\setminus\overline{U}_H$, for some $H\ge f_+$, and we denote
the unit outward normal to $S_H$ by $\nu$.

Let $H_0^1(D)$
denote the standard Sobolev space, the completion of $C_0^\infty(D)$
in the norm $\|\cdot\|_{H^1(D)}$ defined by 
$$\Vert
u\Vert_{H^1(D)}=\left\{\int_{D}(|\nabla u|^2+|u|^2)dx\right\}^{1/2}.
$$ 
The main
function space in which we set our problem will be the Hilbert space
$V_H$, defined, for $H\ge f_+$, by $V_H:=\{\phi|_{S_H}\;:\;\phi\in
H_0^1(D)\}$, on which we will impose a wave number dependent scalar
product $(u,v)_{V_H} := \int_{S_H} (\nabla u \cdot \overline{\nabla
v} +k_+^2 u\bar v)\,dx$ and norm,
 $\Vert u\Vert_{V_H}=\{\int_{S_H}(|\nabla
u|^2+k_+^2|u|^2)dx\}^{1/2}$.

Recalling the basic model from chapter 1, we will make the assumption that the variation in $k$ is confined to a neighbourhood of the boundary. The following then is our exact formulation:

{\sc The Boundary Value Problem.} {\em Given $g\in L^2(D)$, and $k
\in L^{\infty}(D)$ such that for some $H\ge f_+$, it holds that the
support of $g$ lies in $\overline{S_H}$, and that $k(x)=k_+$,
$x\in\overline{U_H}$, for some $k_+>0$,
 find $u:D\to \complex$ such that $u|_{S_a}\in V_a$ for every $a>f_+$,
$$
\Delta u+k^2u=g \mbox{ in } D
$$ 
in a distributional sense, and the
radiation condition (\ref{uprcstar}) holds, with
$F_H=u|_{\Gamma_H}$.}
\begin{remark}
We note that, as one would hope, the solutions of the above
problem do not depend on the choice of $H$. Precisely, if $u$ is a
solution to the above problem for one value of $H\ge f_+$ for
which $\supp g\subset \overline{S_H}$ and $k(x)=k_+$, $x \in \overline{U_H}$ then $u$ is a solution for
all $H\ge f_+$ with this property. To see that this is true is a
matter of showing that, if (\ref{uprcstar}) holds for one $H$ with
$\supp g\subset \overline{S_H}$ and $k(x)=k_+$, $x \in \overline{U_H}$ 
then (\ref{uprcstar}) holds for
all $H$ with this property. It is shown in Lemma \ref{lemma3p2}
below that if (\ref{uprcstar}) holds, with $F_H=u|_{\Gamma_H}$,
for some $H\ge f_+$, then it holds for all larger values of $H$.
One way to show that (\ref{uprcstar}) holds also for every smaller
value of $H$, $\tilde H$ say, for which $\tilde H\ge f_+$ and
$\supp g\subset \overline{S_{\tilde H}}$ and $k(x)=k_+$, $x \in \overline{U_{\tilde{H}}}$, is to consider the
function
\begin{eqnarray*}
v(x) & := & u(x) -\\
&&\frac{1}{(2\pi)^{(n-1)/2}}\int_{\real^{n-1}}\exp(\ri[(x_n-\tilde
H)\sqrt{k_+^2-\xi^2}+ \tx\cdot\xi])\hat F_{\tilde H}(\xi)\,d\xi,\;\;
x\in U_{\tilde H},
\end{eqnarray*}
with $F_{\tilde H}:=u|_{\Gamma_{\tilde H}}$, and show that $v$ is
identically zero. To see this we note that, by Lemma
\ref{lemma3p2}, $v$ satisfies the above boundary value problem
with $D=U_{\tilde H}$ and $g=0$. That $v\equiv 0$ then follows
from Theorem \ref{th_main1} below.
\end{remark}


We should give some motivation for looking for a solution $u$ such that $u|_{S_a}\in V_a$ for every $a>f_+$. In the special case when the boundary is flat, so that $D$ is a half-space, $D=U_0$ say,  $k$ is constant and $g$ is smooth and has compact support we can explicitly construct the solution. Suppose $n=3$. Then if $\delta_y$ denotes a point source located at  $y= (\tilde y,y_3) \in D$, with $y_3>0$, then a solution to the problem, find $u :D \to \mathbb{C}$, such that 
  $$
  \Delta u + k^2u=\delta_y \mbox{ in } D,
  $$ 
and $u = 0 \mbox{ on } \Gamma$, is given by







$$
u(x) = G(x,y) := \Phi(x,y)-\Phi(x,y^\prime),
$$
where $y'= (y_1,y_2,-y_3)$ is the reflection of $y$ in the boundary $\Gamma: =\{ (x\in \mathbb{R}^3: x_3=0\}$,
and $G$ is the \emph {Green's function} for $U_0$. Note that   
\begin{equation}\label{motivation_bd}
|u(x)| \le \,C\,\frac{(1+x_3)(1+y_3)}{|x-y|^{2}}
\end{equation}
for some $C>0$, (see for example chapter 5, (\ref{Gbound})).





Moreover, a solution to the problem, find $u:D \to \mathbb{C}$ such that  
$$
\Delta u + k^2u=g \mbox{  in  } D,
$$ 
and 
$$
u=0 \mbox{  on  } \Gamma,
$$
is, for compactly supported and smooth $g\in L^2(D)$, given by
$$
u(x) = \int_D G(x,y)g(y)dy, \quad x\in D.
$$
It follows from the bound (\ref{motivation_bd})
that $u\in L^2(S_H)$ for every $H>0$, where $S_H := D\setminus
\overline{U_H}$ (one can deduce this by using the techniques of section 5.4 of chapter 5, for example). Further, by an application of Green's theorem it follows also that
$u\in H^1(S_H)$, for every $H>f_+$. This motivates, that in the general case, we seek a solution such that $u|_{S_a} \in V_a$, for all $a>f_+$. 

\bigskip

We now derive a variational formulation of the boundary value
problem above. To derive this alternative formulation we require a
preliminary lemma. In this lemma and subsequently throughout the thesis, we use standard
fractional Sobolev space notation, except that we adopt a wave
number dependent norm, equivalent to the usual norm, and reducing
to the usual norm if the unit of length measurement is chosen so
that $k_+=1$. Thus, identifying $\Gamma_H:=\{x:x_n = H\}$ with $\real^{n-1}$,
$H^s(\Gamma_H)$, for $s\in \real$, denotes the completion of
$C_0^\infty(\Gamma_H)$ in the norm $\|\cdot\|_{H^s(\Gamma_H)}$
defined by
$$
 \|\phi\|_{H^s(\Gamma_H)} = \left(
\int_{\real^{n-1}}(k_+^2+\xi^2)^s|{\cal F} \phi(\xi)|^2\,
d\xi\right)^{1/2}.
$$
  We recall \cite{adamsSS} that, for all $a>H\ge f_+$, there
exist continuous embeddings $\gamma_+:H^1(U_H\setminus U_a)\to
H^{1/2}(\Gamma_H)$ and $\gamma_-:V_H\to H^{1/2}(\Gamma_H)$ (the
trace operators) such that $\gamma_\pm \phi$ coincides with the
restriction of $\phi$ to $\Gamma_H$ when $\phi$ is $C^\infty$. In
the case when $H=f_+$, when $\Gamma_H$ may not be the boundary of
$S_H$ (if part of $\partial D$ coincides with $\Gamma_H$) we
understand this trace by first extending $\phi\in V_H$ by zero to
$U_{f_-}\setminus \overline U_{f_+}$. We recall also that, if $u_+\in
H^1(U_H\setminus U_{a})$, $u_-\in V_H$, and $\gamma_+u_+ =
\gamma_- u_-$, then $v\in V_a$, where $v(x) := u_+(x)$, $x \in
U_H\setminus U_a$, $:= u_-(x)$, $x \in S_H$. Conversely, if $v\in
V_a$ and $u_+:= v|_{U_H\setminus U_a}$, $u_-:= v|_{S_H}$, then
$\gamma_+u_+=\gamma_-u_-$. We introduce the operator $T$, which
will prove to be a Dirichlet to Neumann map on $\Gamma_H$, (see (\ref{DtN}) below), defined
by
\begin{equation}
T:=\cF^{-1}M_z\cF, \label{Tdef}
\end{equation}
where $M_z$ is the operation of multiplying by
\[
z(\xi):=\left\{\begin{array}{ll}-\ri\sqrt{k_+^2-\xi^2}&\mbox{if
}|\xi|\le k_+,
\\[3pt]
\sqrt{\xi^2-k_+^2}&\mbox{for }|\xi|>k_+.
\end{array}\right.
\]
We shall prove shortly in Lemma \ref{LTB} that
$T:H^{1/2}(\Gamma_H) \to H^{-1/2}(\Gamma_H)$ and is bounded.
We now state lemma 2.2 from \cite{chandmonk}. For completeness we include the proof.
\begin{lemma} \label{lemma3p2}
If (\ref{uprcstar}) holds, with $F_H\in H^{1/2}(\Gamma_H)$, then
$u\in H^1(U_H\setminus U_a)\cap C^2(U_H)$, for every $a>H$,
$$
\Delta u + k_+^2 u = 0 \mbox{ in } U_H,
$$
$\gamma_+u = F_H$, and
\begin{equation} \label{exact_rc}
\int_{\Gamma_H}  \bar vT\gamma_+u\,ds + k_+^2\int_{U_H} u\bar v \,dx
- \int_{U_H} \nabla u\cdot \nabla \bar v\, dx = 0, \quad v \in
C_0^{\infty}(D).
\end{equation}
Further, the restrictions of $u$ and $\nabla u$ to $\Gamma_a$ are
in $L^2(\Gamma_a)$, for all $a> H$, and
\begin{equation} \label{lem32}
\int_{\Gamma_a} \left[ \left|\frac{\partial u}{\partial
x_n}\right|^2 - \left|\nabla_{\tilde x} u\right|^2 +
k_+^2|u|^2\right]\,ds \le 2k_+\Im \int_{\Gamma_a} \bar u
\frac{\partial u}{\partial x_n}\, ds.
\end{equation}
Moreover, for all $a>H$, where $F_a\in H^{1/2}(\Gamma_a)$ denotes
the restriction of $u$ to $\Gamma_a$, (\ref{uprcstar}) holds with
$H$ replaced by $a$.
\end{lemma}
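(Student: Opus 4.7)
The plan is to analyze the representation (\ref{uprcstar}) directly as an inverse Fourier transform with symbol $m(x_n,\xi):=\exp(\ri(x_n-H)\sqrt{k_+^2-\xi^2})$, which is unimodular for $|\xi|\le k_+$ and decays as $\exp(-(x_n-H)\sqrt{\xi^2-k_+^2})$ for $|\xi|>k_+$. Every assertion about $u$ then translates, via Plancherel, into a pointwise symbol estimate combined with the membership $\hat F_H\in L^2(\real^{n-1},(k_+^2+\xi^2)^{1/2}\,d\xi)$ implied by $F_H\in H^{1/2}(\GH)$.

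First I would show $u\in C^2(U_H)$ with $\Delta u+k_+^2u=0$ by differentiating under the integral sign: on any compact subset of $U_H$ the exponential decay at high frequencies dominates all polynomial factors in $\xi$, and since $(\Delta+k_+^2)\exp(\ri[\tx\cdot\xi+(x_n-H)\sqrt{k_+^2-\xi^2}])\equiv 0$, the Helmholtz equation is immediate. For $H^1$ membership on $U_H\setminus U_a$ I would use Plancherel layer-by-layer to express $\int_{\Gamma_{x_n}}|u|^2$, $\int|\partial_{x_n}u|^2$ and $\int|\nabla_{\tx}u|^2$ in terms of $|m(x_n,\xi)|^2|\hat F_H(\xi)|^2$ weighted respectively by $1$, $|k_+^2-\xi^2|$ and $\xi^2$, each dominated by $(k_+^2+\xi^2)|\hat F_H|^2$; Fubini then gives the required integrability. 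For $\gamma_+u=F_H$ I would establish $m(x_n,\cdot)\hat F_H\to\hat F_H$ in the $H^{1/2}$ norm as $x_n\downarrow H$ by pointwise convergence together with the uniform bound $|m|\le 1$ and dominated convergence.

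To obtain (\ref{exact_rc}), the key point is to identify the normal-derivative trace on $\GH$ as $-T\gamma_+u$ in $H^{-1/2}(\GH)$. By Plancherel the Fourier symbol of $\partial u/\partial x_n|_{\GH}$ is $\ri\sqrt{k_+^2-\xi^2}$; comparing with the symbol $z(\xi)$ of $T$ shows $\ri\sqrt{k_+^2-\xi^2}=-z(\xi)$ on both the propagating regime $|\xi|\le k_+$ and the evanescent regime $|\xi|>k_+$ (where $\ri\sqrt{k_+^2-\xi^2}=-\sqrt{\xi^2-k_+^2}$). Hence $\partial u/\partial x_n|_{\GH}=-T\gamma_+u$. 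Applying Green's first identity on the bounded strip $U_H\setminus U_a$, for $a$ chosen so that $\supp v$ lies below $\Gamma_a$, the $\Gamma_a$ boundary term vanishes and pairing the $\GH$ boundary term with outward normal $-e_n$ produces (\ref{exact_rc}) at once.

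For the inequality (\ref{lem32}) on $\Gamma_a$ a direct symbol computation suffices: the bracketed integrand equals $(|k_+^2-\xi^2|-\xi^2+k_+^2)|m(a,\xi)|^2|\hat F_H|^2$, which vanishes on $|\xi|>k_+$ and equals $2(k_+^2-\xi^2)|\hat F_H|^2$ on $|\xi|\le k_+$, while the right-hand side reduces, again by Plancherel, to $2k_+\int_{|\xi|\le k_+}\sqrt{k_+^2-\xi^2}|\hat F_H|^2\,d\xi$, and the elementary bound $\sqrt{k_+^2-\xi^2}\le k_+$ completes the estimate. Finally, (\ref{uprcstar}) with $H$ replaced by $a$ follows by the group property of the symbol: splitting $x_n-H=(x_n-a)+(a-H)$ in the exponent and noting $\hat F_a(\xi)=m(a,\xi)\hat F_H(\xi)$ gives the claimed representation. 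The main subtlety I expect is the rigorous reconciliation of the Fourier-side formulas with the distributional trace theory needed to make $\partial u/\partial x_n|_{\GH}\in H^{-1/2}(\GH)$ pair correctly with $v\in H^{1/2}(\GH)$ in Green's identity; once that is set up, every other step is a direct symbol calculation.
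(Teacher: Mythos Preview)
Your approach is essentially the same as the paper's: both analyze the Fourier representation (\ref{uprcstar}) via the multiplier $m(x_n,\xi)$, differentiate under the integral sign for the Helmholtz equation and $C^2$ regularity, integrate the layer-wise Plancherel identities in $x_n$ for the $H^1$ bound, compute both sides of (\ref{lem32}) as symbol integrals (noting the evanescent contribution vanishes because $|k_+^2-\xi^2|-\xi^2+k_+^2=0$ there), and invoke the semigroup property $m(x_n,\xi)=m(x_n-a,\xi)m(a-H,\xi)$ for the final statement.

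The one genuine tactical difference concerns exactly the point you flag as subtle. The paper does \emph{not} attempt to make sense of $\partial u/\partial x_n|_{\GH}$ as an $H^{-1/2}$ object for general $F_H$; instead it first restricts to $F_H\in C_0^\infty(\GH)$, where $u\in C^\infty(\overline{U_H})$, the identity $T\gamma_+u=-\partial u/\partial x_n|_{\GH}$ is classical, and (\ref{exact_rc}) follows from the ordinary Green's theorem. It then passes to general $F_H\in H^{1/2}(\GH)$ by density, using the already-proved $H^1(U_H\setminus U_a)$ estimate (which is continuous in $F_H$) and the boundedness of $T$ and $\gamma_+$. The same density route handles $\gamma_+u=F_H$. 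This cleanly sidesteps the distributional-trace issue you anticipate; your direct approach works too, but the paper's density argument is the more economical way to close that gap.
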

\begin{proof}
If $F_H\in L^2(\Gamma_H)$ then, as a function of $\xi$,
$\exp(i[(x_n-H)\sqrt{k_+^2-\xi^2}$ $+ \tx\cdot\xi])\hat
F_H(\xi)(1+\xi^2)^s\in L^1(\real^{n-1})$ for every $x\in U_H$ and
$s\ge 0$. It follows that (\ref{uprcstar}) is well-defined for
every $x\in U_H$, and that $u\in C^2(U_H)$, with all partial
derivatives computed by differentiating under the integral sign,
so that $\Delta u+k_+^2 u = 0$ in $U_H$. Thus, for $a>H$ and almost
all $\xi \in \real^{n-1}$,
\begin{eqnarray} \label{ft_ugama}
{\cal F}(u|_{\Gamma_a})(\xi) & = &
\exp(\ri(a-H)\sqrt{k_+^2-\xi^2}\,)\hat F_H(\xi),\\
\label{ft_unorm} {\cal F}\left(\left.\frac{\partial u}{\partial
x_n}\right|_{\Gamma_a}\right)(\xi) & = &
\ri\sqrt{k_+^2-\xi^2}\exp(\ri(a-H)\sqrt{k_+^2-\xi^2}\,)\hat F_H(\xi),\\
\nonumber {\cal F}(\nabla_{\tx}u|_{\Gamma_a})(\xi) & = & \ri
\xi\exp(\ri(a-H)\sqrt{k_+^2-\xi^2}\,)\hat F_H(\xi).
\end{eqnarray}
Therefore, by the Plancherel identity (\ref{plancherel}),
$u|_{\Gamma_a}$, $\nabla u|_{\Gamma_a}\in L^2(\Gamma_a)$ with
$$
\int_{\Gamma_a}|u|^2 ds = \int_{\real^{n-1}}
|\exp(2\ri(a-H)\sqrt{k_+^2-\xi^2}\,)|\; |\hat F_H(\xi)|^2\, d\xi \le
\int_{\Gamma_H}|F_H|^2\, ds
$$
and
\begin{equation} \label{nabu_bound1}
\hspace*{2ex}\int_{\Gamma_a}|\nabla u|^2 ds \le
\int_{\real^{n-1}}[|k_+^2-\xi^2|+\xi^2]
|\exp(2\ri(a-H)\sqrt{k_+^2-\xi^2}\,)|\; |\hat F_H(\xi)|^2\, d\xi,
\end{equation}
while
$$
\int_{\Gamma_a} \left[\left|\frac{\partial u}{\partial
x_n}\right|^2 -|\nabla_{\tx} u|^2 + k_+^2|u|^2\right]\,ds = 2
\int_{|\xi|<k_+} (k_+^2-\xi^2)|\hat F_H(\xi)|^2\,d\xi
$$
and
$$
\Im \int_{\Gamma_a} \bar u \frac{\partial u}{\partial x_n}\,ds =
\int_{|\xi|<k_+} \sqrt{k_+^2-\xi^2}\,|\hat F_H(\xi)|^2\,d\xi.
$$
Thus (\ref{lem32}) holds and
\begin{equation} \label{u_bound}
\int_{U_H\setminus U_a}|u|^2\, dx \le
(a-H)\int_{\Gamma_H}|F_H|^2\,ds.
\end{equation}
Further, from (\ref{nabu_bound1}) it follows that
\begin{eqnarray} \nonumber
\int_{U_H\setminus U_a}|\nabla u|^2\, dx & \le &
(a-H)k_+^2\int_{|\xi|<k_+}|\hat F_H(\xi)|^2\,d\xi + \\ \nonumber & &
\int_{|\xi|>k_+}\xi^2
\frac{1-\exp(-2[a-H]\sqrt{\xi^2-k_+^2}\,)}{\sqrt{\xi^2-k_+^2}}|\hat
F_H(\xi)|^2\,d\xi \\ \nonumber
\label{nabu_bound2} & \le & \int_{\real^{n-1}}
(2(a-H)k_+^2+\sqrt{2}|\xi|)|\hat F_H(\xi)|^2 \, d\xi,
\end{eqnarray}
since $1-\re^{-z} \le z$ for $z\ge 0$ and $\sqrt{\xi^2-k_+^2}\ge
|\xi|/\sqrt{2}$ for $\xi^2\ge 2k_+^2$. Thus $u\in H^1(U_H\setminus
U_a)$ if $F_H\in H^{1/2}(\Gamma_H)$. That $u|_{\Gamma_H} = F_H$ is
clear when $F_H\in C_0^\infty(\Gamma_H)$, and $\gamma_+u = F_H$ for
all $F_H\in H^{1/2}(\Gamma_H)$ follows from the continuity of
$\gamma_+$, (\ref{u_bound}) and (\ref{nabu_bound2}), and the density
of $C_0^\infty(\Gamma_H)$ in $H^{1/2}(\Gamma_H)$. Similarly, in the
case that $F_H\in C_0^\infty(\Gamma_H)$ so that $u\in
C^\infty(\overline{U_H})$, it is easily seen that 
\begin{equation}\label{DtN}
T\gamma_+u =
-\partial u/\partial x_n|_{\Gamma_H}
\end{equation}
 and (\ref{exact_rc}) follows
by Green's theorem. The same equation for the general case follows
from the density of $C_0^\infty(\Gamma_H)$ in $H^{1/2}(\Gamma_H)$,
(\ref{u_bound}) and (\ref{nabu_bound2}) and the continuity of the
operator $T$.

That (\ref{uprcstar}) holds with $H$ replaced by $a$, for all $a>H$,
is clear from (\ref{ft_ugama}).
\end{proof}

Now suppose that $u$ satisfies the boundary value problem. Then
$u|_{S_a}\in V_a$ for every $a >f_+$ and, by definition, since
$\Delta u+k^2u=g$ in a distributional sense,
\begin{equation} \label{he_ds}
\int_D[g\bar v + \nabla u\cdot\nabla \bar v - k^2 u\bar v] dx = 0,
\quad v\in C_0^\infty(D).
\end{equation}
Applying Lemma \ref{lemma3p2}, and defining $w := u|_{S_H}$, it
follows that
$$
\int_{S_H} [g\bar v + \nabla w\cdot \nabla \bar v - k^2 w\bar v]\,
dx + \int_{\Gamma_H} \bar v T\gamma_- w \, ds = 0, \quad v\in
C_0^\infty(D).
$$
From the denseness of $\{\phi|_{S_H}:\phi\in C_0^\infty(D)\}$ in
$V_H$ and the continuity of $\gamma_-$, it follows that this
equation holds for all $v\in V_H$.

Let $\|\cdot\|_2$ and $(\cdot,\cdot)$ denote the norm and scalar
product on $L^2(S_H)$, so that $\Vert
v\Vert_2=\sqrt{\int_{S_H}|v|^2\,dx}$ and
$(u,v)=\int_{S_H}u\overline{v}\,dx$, and define the sesquilinear
form $b:V_H\times V_H\to \complex$ by
\begin{equation} \label{sesqui}
b(u,v) = (\nabla u,\nabla v) - (k^2u,v) + \int_{\GH} \gamma_-\bar v
T\gamma_-u \, ds.
\end{equation}
Then we have shown that if $u$ satisfies the boundary value
problem then $w:= u|_{S_H}$ is a solution of the following
variational problem: find $u\in V_H$ such that
\begin{equation} \label{weak_form}
b(u,v) = -(g,v), \quad v\in V_H.
\end{equation}

Conversely, suppose that $w$ is a solution to the variational
problem and define $u(x)$ to be $w(x)$ in $S_H$ and to be the
right hand side of (\ref{uprcstar}), with $F_H:= \gamma_- w$, in
$U_H$. Then, by Lemma \ref{lemma3p2}, $u\in H^1(U_H\setminus U_a)$
for every $a>H$, with $\gamma_+u = F_H = \gamma_-w$. Thus
$u|_{S_a}\in V_a$, $a\ge f_+$. Further, from (\ref{exact_rc}) and
(\ref{weak_form}) it follows that (\ref{he_ds}) holds, so that
$\Delta u + k^2u = g$ in $D$ in a distributional sense. Thus $u$
satisfies the boundary value problem.

We have thus proved the following theorem.
\begin{theorem} \label{th_equiv} If $u$ is a solution of the boundary value problem then
$u|_{S_H}$ satisfies the variational problem.  Conversely, if $u$
satisfies the variational problem, $F_H := \gamma_- u$, and the
definition of $u$ is extended to $D$ by setting $u(x)$ equal to the
right hand side of (\ref{uprcstar}), for $x\in U_H$, then
 the extended function satisfies the boundary value problem, with $g$ extended by zero from $S_H$ to $D$ and $k$ extended from $S_H$ to $D$ by taking the value $k_+$ in $\overline{U_H}$.
\end{theorem}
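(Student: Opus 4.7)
The plan is to follow the two directions of the equivalence in turn, leveraging Lemma \ref{lemma3p2} to handle the radiation condition and the density of smooth compactly supported test functions to pass between the distributional and variational formulations.

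For the forward direction, suppose $u$ solves the boundary value problem. Since $u|_{S_a}\in V_a$ for every $a>f_+$, the identity $\Delta u+k^2u=g$ in the distributional sense can be tested against any $v\in C_0^\infty(D)$, which after applying Green's first identity in $D$ yields
\[
\int_D[g\bar v + \nabla u\cdot\nabla \bar v - k^2 u\bar v]\,dx = 0.
\]
The idea is to split this integral along $\Gamma_H$. On $S_H$ the integral stays as it is, but on $U_H$ Lemma \ref{lemma3p2} (applied with $F_H:=u|_{\Gamma_H}\in H^{1/2}(\Gamma_H)$, which holds because $u|_{S_a}\in V_a$ for $a>H$ forces the trace to lie in $H^{1/2}(\Gamma_H)$) shows that the radiation condition forces $u$ to satisfy the Helmholtz equation in $U_H$ and to obey the identity (\ref{exact_rc}). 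This identity is exactly what converts the $U_H$-contribution into the boundary term $\int_{\Gamma_H}\gamma_-\bar v\, T\gamma_- w\, ds$ with $w:=u|_{S_H}$. This yields the variational equation (\ref{weak_form}) for all $v\in C_0^\infty(D)$ restricted to $S_H$. Density of $\{\phi|_{S_H}:\phi\in C_0^\infty(D)\}$ in $V_H$, together with the continuity of $\gamma_-:V_H\to H^{1/2}(\Gamma_H)$ and the boundedness of $T$ (from the forthcoming Lemma \ref{LTB}), extends the equation to all $v\in V_H$.

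For the converse, given $w\in V_H$ satisfying the variational problem, one defines $u$ on $U_H$ by the right-hand side of (\ref{uprcstar}) with $F_H:=\gamma_- w$ and sets $u=w$ on $S_H$. Lemma \ref{lemma3p2} then provides three crucial pieces of information: $u\in H^1(U_H\setminus U_a)\cap C^2(U_H)$ for every $a>H$; the trace $\gamma_+ u$ equals $F_H=\gamma_- w$, which together with the gluing property of Sobolev functions guarantees $u|_{S_a}\in V_a$ for all $a>f_+$; and the integral identity (\ref{exact_rc}) holds for all $v\in C_0^\infty(D)$. Adding (\ref{exact_rc}) to the variational equation (\ref{weak_form}) cancels the boundary term on $\Gamma_H$ and reassembles the pieces on $S_H$ and $U_H$ into the single identity
\[
\int_D[g\bar v + \nabla u\cdot \nabla\bar v - k^2 u\bar v]\,dx = 0, \quad v\in C_0^\infty(D),
\]
after $g$ is extended by zero and $k$ by $k_+$ on $U_H$. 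This is precisely the distributional form of $\Delta u+k^2 u=g$ in $D$. The radiation condition is built into the construction of $u$ on $U_H$, so the extended function solves the boundary value problem.

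The main obstacle is not analytical depth but bookkeeping: one must be careful that the trace operators $\gamma_\pm$ produce compatible values on $\Gamma_H$ so that the concatenated function lies in $V_a$ and may legitimately be tested against $v\in C_0^\infty(D)$, and one must verify that the density-and-continuity argument extending (\ref{weak_form}) from smooth compactly supported test functions to all of $V_H$ respects the boundary term involving $T$. Both of these are handled by the continuity of $\gamma_-$, the boundedness of $T$, and the bounds (\ref{u_bound}) and (\ref{nabu_bound2}) supplied by Lemma \ref{lemma3p2}.
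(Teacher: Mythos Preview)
Your proposal is correct and follows essentially the same approach as the paper: both directions rely on Lemma~\ref{lemma3p2} to convert between the $U_H$ contribution and the $\Gamma_H$ boundary term involving $T$, together with the density of $\{\phi|_{S_H}:\phi\in C_0^\infty(D)\}$ in $V_H$ and the trace-matching/gluing argument. One small wording issue: the displayed identity $\int_D[g\bar v+\nabla u\cdot\nabla\bar v-k^2u\bar v]\,dx=0$ is simply the definition of $\Delta u+k^2u=g$ holding distributionally against $v\in C_0^\infty(D)$; no appeal to Green's first identity is needed (or quite makes sense, since $u$ need not have the regularity for a classical integration by parts on $D$).
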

It remains to prove the mapping properties of $T$.
\begin{lemma}\label{LTB}
The Dirichlet-to-Neumann map $T$ defined by (\ref{Tdef}) is a
bounded linear map from $H^{1/2}(\Gamma_H)$ to
$H^{-1/2}(\Gamma_H)$, with $\|T\|=1$.
\end{lemma}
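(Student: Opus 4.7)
The plan is to reduce everything to an explicit computation on the Fourier side, exploiting that $T$ is a Fourier multiplier with symbol $z(\xi)$ and that the wave-number dependent Sobolev norms on $\Gamma_H\cong\real^{n-1}$ are themselves defined as weighted $L^2$-norms of the Fourier transform. Linearity is immediate from $T=\cF^{-1}M_z\cF$ together with the linearity of $\cF$ and $M_z$, so the whole task is to pin down the operator norm.

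First I would write out, for $\phi\in H^{1/2}(\Gamma_H)$,
\[
\|T\phi\|_{H^{-1/2}(\Gamma_H)}^2=\int_{\real^{n-1}}(k_+^2+\xi^2)^{-1/2}|z(\xi)|^2|\hat\phi(\xi)|^2\,d\xi,
\]
using the definition of $T$, the fact that $\cF$ is unitary on $L^2(\real^{n-1})$, and the definition of $\|\cdot\|_{H^{-1/2}(\Gamma_H)}$ via $(k_+^2+\xi^2)^{-1/2}$. The key observation is that from the case split in the definition of $z$,
\[
|z(\xi)|^2=\bigl|\xi^2-k_+^2\bigr|\qquad\text{for all }\xi\in\real^{n-1}.
\]
Since $a,b\ge 0$ implies $|a-b|\le a+b$, I have $|z(\xi)|^2\le k_+^2+\xi^2$, hence
\[
\|T\phi\|_{H^{-1/2}(\Gamma_H)}^2\le\int_{\real^{n-1}}(k_+^2+\xi^2)^{1/2}|\hat\phi(\xi)|^2\,d\xi=\|\phi\|_{H^{1/2}(\Gamma_H)}^2,
\]
which gives $T\in\mathcal{B}(H^{1/2}(\Gamma_H),H^{-1/2}(\Gamma_H))$ with $\|T\|\le 1$.

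The remaining step, showing the bound is sharp, will not really be an obstacle: it suffices to exhibit a sequence $(\phi_m)\subset H^{1/2}(\Gamma_H)$ for which the ratio $\|T\phi_m\|_{H^{-1/2}}/\|\phi_m\|_{H^{1/2}}$ tends to $1$. Since $|\xi^2-k_+^2|/(k_+^2+\xi^2)\to 1$ as $|\xi|\to\infty$, I would pick $\hat\phi_m$ to be the indicator function (or a smooth bump) supported in an annulus $\{m\le|\xi|\le m+1\}$; then by the monotone ratio just noted,
\[
\frac{\|T\phi_m\|_{H^{-1/2}(\Gamma_H)}^2}{\|\phi_m\|_{H^{1/2}(\Gamma_H)}^2}\ge\frac{m^2-k_+^2}{m^2+k_+^2+2m+1}\longrightarrow 1,
\]
so $\|T\|\ge 1$. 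Combining the two bounds gives $\|T\|=1$. The only point requiring care is to verify the identity $|z(\xi)|^2=|\xi^2-k_+^2|$ in both regimes $|\xi|\le k_+$ and $|\xi|>k_+$; everything else is routine Plancherel-style bookkeeping.
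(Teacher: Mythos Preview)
Your proof is correct and follows essentially the same approach as the paper: both identify $\|T\|$ with the supremum of the multiplier ratio $|z(\xi)|/(k_+^2+\xi^2)^{1/2}$, the paper compressing this into the single line $\|T\|=\max_\xi |\sqrt{k_+^2-\xi^2}|/|\sqrt{k_+^2+\xi^2}|=1$ while you unpack the Plancherel computation and supply an explicit high-frequency sequence for sharpness. One small remark: the ratio $|\xi^2-k_+^2|/(\xi^2+k_+^2)$ also equals $1$ at $\xi=0$, so a sequence of bumps concentrating near the origin would work equally well, and this is presumably why the paper writes $\max$ rather than $\sup$.
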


\begin{proof} From the definitions of $T$ and the Sobolev norms we see that, as a map from
$H^{1/2}(\Gamma_H)$ to $H^{-1/2}(\Gamma_H)$,
\begin{equation} \label{Tnorm}
\Vert T\Vert =\max_{\xi\in
\real^{n-1}}\frac{|\sqrt{k_+^2-\xi^2}|}{|\sqrt{k_+^2+\xi^2}|} =1.
\end{equation}
\end{proof}
\section{$V_H$-Ellipticity of the sesquilinear form}\label{lowk}
In this section we shall investigate under what conditions the sesquilinear form $b$ is
$V_H$-elliptic (we shall give explicit restrictions on  $k \in
L^{\infty}(S_H)$ to guarantee this). From the point of view of
numerical solution by e.g.\ finite element methods, the ellipticity
we establish is of course highly desirable, guaranteeing, by C\'ea's
lemma, unique existence and stability of the numerical solution
method.

Let $V_H^*$ denote the dual space of $V_H$, i.e. the space of
continuous anti-linear functionals on $V_H$. Then our analysis will
also apply to the following slightly more general problem: given
$\mathcal{G}\in V_H^*$ find $u\in V_H$ such that
\begin{equation} \label{var_prob2}
b(u,v) = \mathcal{G}(v), \quad v\in V_H.
\end{equation}
It will be assumed in the remainder of this chapter that $k\in
L^\infty(D)$ satisfies that $\Re(k^2)\geq 0$, $\Im(k^2)\geq 0$, which
is certainly the case in the electromagnetic case where $k^2$ is
given by (\ref{k_def_em}), i.e.
\[
k^2 = \omega^2\mu\epsilon[1+\ri\sigma/(\omega\epsilon)].
\]
Under these assumptions there exist constants $k_\infty\ge k_0 \ge
0$ and $\theta \in [0,\pi/2]$ such that
$$
k_0 \le |k(x)|\le k_\infty, \quad \arg (k^2(x)) \ge \theta,
$$
for almost all $x\in S_H$. It is convenient to introduce the dimensionless parameters
\[
\kappa_{\infty}:=k_{\infty}(H-f_-), \quad \kappa_0 := k_0(H-f_-), \mbox{ and } \kappa_+:=k_+(H-f_-).
\]

We shall prove the following theorem.
\begin{theorem}\label{thm1}
Suppose that either $ \kappa_{\infty} <\sqrt{2}$ or $\theta>0$. Then, for some constant $\alpha>0$,
$$
|b(u,u)| \ge \alpha ||u||^2_{V_H}, \quad u\in V_H,
$$
so that the variational problem (\ref{var_prob2}) is
uniquely solvable. Moreover, the solution satisfies the estimate
\begin{equation} \label{ap_2}
\Vert u\Vert_{V_H}\leq C \Vert \mathcal{G}\Vert_{V_H^*}
\end{equation}
where $C:= \alpha^{-1}$ satisfies $C\leq (2+\kappa_+^2)/(2-
\kappa_{\infty}^2)$ if $\kappa_\infty<\sqrt{2}$, and satisfies $C
\le \csc \theta (1+\kappa_+^2/\max(2,\kappa_0^2))$ if $\theta>0$. In
particular, the scattering problem (\ref{weak_form}) is uniquely
solvable and the solution satisfies the bound
\begin{equation} \label{apriori2_2}
k_+\Vert u\Vert_{V_H}\leq
\frac{\kappa_+}{\sqrt{2}}\,C\Vert
g\Vert_{2}.
\end{equation}
\end{theorem}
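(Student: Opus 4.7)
The plan is to verify the hypotheses of the Lax--Milgram theorem for $b$ on $V_H$, reading off the asserted constant $C=\alpha^{-1}$ in each of the two cases. Two preliminary tools are needed. First, a Friedrichs-type inequality: any $u\in V_H$, extended by zero to $U_{f_-}\setminus\overline{U_{f_+}}$, vanishes for $x_n\le f_-$, so writing $u(\tx,x_n)=\int_{f_-}^{x_n}\pa_n u(\tx,s)\,ds$ and applying Cauchy--Schwarz and Fubini yields
\[
\|u\|_2^2\le\tfrac12(H-f_-)^2\|\pa_n u\|_2^2\le\tfrac12(H-f_-)^2\|\nabla u\|_2^2.
\]
Second, Plancherel applied to the definition (\ref{Tdef}) of $T$ gives
\[
\int_{\GH}\gamma_-\bar u\,T\gamma_- u\,ds=\int_{\real^{n-1}}z(\xi)|\hat\phi(\xi)|^2\,d\xi,\quad\phi:=\gamma_- u,
\]
whose real part is $\ge 0$ and whose imaginary part is $\le 0$, by the definition of $z$. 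Setting $v=u$ in (\ref{sesqui}) and separating real and imaginary parts therefore gives $\Re b(u,u)\ge\|\nabla u\|_2^2-\int_{S_H}\Re(k^2)|u|^2\,dx$ and $-\Im b(u,u)\ge\int_{S_H}\Im(k^2)|u|^2\,dx$.

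In the regime $\kappa_\infty<\sqrt 2$ the first inequality alone suffices. Bounding $\Re(k^2)\le|k|^2\le k_\infty^2$ and applying Friedrichs converts it to $\Re b(u,u)\ge(1-\kappa_\infty^2/2)\|\nabla u\|_2^2$, while a second use of Friedrichs gives $\|u\|_{V_H}^2\le\tfrac12(2+\kappa_+^2)\|\nabla u\|_2^2$; together these yield $|b(u,u)|\ge\Re b(u,u)\ge[(2-\kappa_\infty^2)/(2+\kappa_+^2)]\|u\|_{V_H}^2$, the claimed $\alpha$.

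The dissipative case $\theta>0$ is where I expect the \emph{main obstacle}, because matching the precise constant $\csc\theta(1+\kappa_+^2/\max(2,\kappa_0^2))$ requires combining the real and imaginary parts of $b(u,u)$ in two different ways. Writing $k^2=|k|^2 e^{\ri\phi(x)}$ with $\phi\in[\theta,\pi/2]$ almost everywhere, I would first use the rotation bound $|b(u,u)|\ge\Re(e^{\ri(\pi/2-\theta)}b(u,u))=\sin\theta\,\Re b(u,u)-\cos\theta\,\Im b(u,u)$; substituting the two splittings above, the $|k|^2|u|^2$ term acquires the pointwise-nonnegative coefficient $\sin(\phi-\theta)$ and drops out, leaving $|b(u,u)|\ge\sin\theta\,\|\nabla u\|_2^2$. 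Separately, $|b(u,u)|\ge-\Im b(u,u)\ge\sin\theta\,k_0^2\|u\|_2^2$. Taking whichever of the Friedrichs estimate $\|u\|_2^2\le\tfrac12(H-f_-)^2\|\nabla u\|_2^2$ or the direct estimate $\|u\|_2^2\le|b(u,u)|/(\sin\theta\,k_0^2)$ is sharper then gives $k_+^2\|u\|_2^2\le[|b(u,u)|/\sin\theta]\cdot\kappa_+^2/\max(2,\kappa_0^2)$, and adding this to $\|\nabla u\|_2^2\le|b(u,u)|/\sin\theta$ produces the desired lower bound on $|b(u,u)|$ in terms of $\|u\|_{V_H}^2$.

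With $V_H$-ellipticity established, the Lax--Milgram theorem yields unique solvability of (\ref{var_prob2}) together with the estimate (\ref{ap_2}). For the source functional $\mathcal G(v):=-(g,v)$ of the scattering problem (\ref{weak_form}), Cauchy--Schwarz and Friedrichs give $|\mathcal G(v)|\le[(H-f_-)/\sqrt 2]\|g\|_2\|v\|_{V_H}$, hence $\|\mathcal G\|_{V_H^*}\le[\kappa_+/(k_+\sqrt 2)]\|g\|_2$, and multiplying (\ref{ap_2}) through by $k_+$ produces (\ref{apriori2_2}).
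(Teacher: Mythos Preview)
Your proof is correct and arrives at the same constants as the paper. The case $\kappa_\infty<\sqrt 2$ is essentially identical to the paper's Lemma~\ref{WL5}(i). In the dissipative case $\theta>0$ your route differs slightly from the paper's: you use two separate lower bounds for $|b(u,u)|$ (the rotation $e^{\ri(\pi/2-\theta)}$ giving $\sin\theta\,\|\nabla u\|_2^2$, and the imaginary part directly giving $\sin\theta\,k_0^2\|u\|_2^2$) and then add the resulting upper bounds for $\|\nabla u\|_2^2$ and $k_+^2\|u\|_2^2$. The paper instead rotates by a single variable angle $\pi/2-\beta$ with $\tan\beta=\sin\theta/(\alpha+\cos\theta)$, which produces both pieces simultaneously as $\sin\beta\bigl(\|\nabla u\|_2^2+\alpha k_0^2\|u\|_2^2\bigr)$; two choices of the free parameter $\alpha$ then give the two branches of $\max(2,\kappa_0^2)$. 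Your two-estimate approach is a touch more elementary, the paper's is a bit more unified, but they are equivalent and yield the identical final bound.

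One small omission: you invoke Lax--Milgram but never verify that $b$ is bounded. This is Lemma~\ref{WL4} in the paper and is routine (Cauchy--Schwarz plus the trace bound and $\|T\|=1$), but it should be mentioned.
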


We begin by recalling some results from \cite{chandmonk}; namely a trace theorem and a Friedrich's inequality, that are needed to prove Theorem \ref{thm1}.

\begin{lemma} \label{WL3} For all $u\in V_H$,
$$
\|\gamma_- u\|_{H^{1/2}(\Gamma_H)} \le
\|u\|_{V_H} \mbox{ and }
\|u\|_2 \leq\frac{H-f_-}{\sqrt{2}} \left\| \frac{\partial
u}{\partial x_n}\right\|_2.
$$
\end{lemma}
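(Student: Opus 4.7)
The plan is to deduce both inequalities from the fact that a function $u \in V_H$, when extended by zero to the slab $U_{f_-} \setminus \overline{U_H}$, yields an $H^1$ function in the slab whose trace on $\Gamma_{f_-}$ vanishes. Indeed, by definition $u$ is the restriction to $S_H$ of some $\phi \in H_0^1(D)$, and since $D \subset U_{f_-}$ the zero extension across $\partial D$ remains in $H^1$ of the slab and identically vanishes near $\Gamma_{f_-}$. By the density of $\{\phi|_{S_H} : \phi \in C_0^\infty(D)\}$ in $V_H$, it suffices to verify both estimates for such $\phi$ and then pass to the limit.

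For the Friedrichs inequality, fix $\tx \in \real^{n-1}$. Since $u(\tx, f_-)=0$, the fundamental theorem of calculus and the Cauchy--Schwarz inequality give
\[
|u(\tx,x_n)|^2 \;=\; \left|\int_{f_-}^{x_n}\frac{\pa u}{\pa y_n}(\tx,y_n)\,dy_n\right|^2 \;\le\; (x_n - f_-)\int_{f_-}^{H}\left|\frac{\pa u}{\pa y_n}(\tx,y_n)\right|^2 dy_n.
\]
Integrating in $x_n$ over $(f_-,H)$ produces the factor $\tfrac12 (H-f_-)^2$, and then integrating in $\tx$ and using that $u$ vanishes outside $S_H$ yields the stated bound $\|u\|_2 \le \tfrac{H-f_-}{\sqrt 2}\,\|\pa u/\pa x_n\|_2$.

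For the trace estimate, I apply the Fourier transform in the tangential variable. Writing $\hat u(\xi,y_n) := \cF(u(\cdot,y_n))(\xi)$ and again using that $\hat u(\xi,f_-)=0$, integration by parts in $y_n$ yields
\[
|\hat u(\xi,H)|^2 \;=\; 2\,\Re \int_{f_-}^{H} \hat u(\xi,y_n)\,\overline{\frac{\pa \hat u}{\pa y_n}(\xi,y_n)}\,dy_n.
\]
Multiplying by $(k_+^2+\xi^2)^{1/2}$ and applying $2ab\le a^2+b^2$ to $a=(k_+^2+\xi^2)^{1/2}|\hat u|$ and $b=|\pa \hat u/\pa y_n|$ gives
\[
(k_+^2+\xi^2)^{1/2}|\hat u(\xi,H)|^2 \;\le\; \int_{f_-}^{H}\left[(k_+^2+\xi^2)|\hat u(\xi,y_n)|^2 + \left|\frac{\pa \hat u}{\pa y_n}(\xi,y_n)\right|^2\right] dy_n.
\]
Integrating over $\xi\in\real^{n-1}$ and invoking Plancherel (\ref{plancherel}) converts the right-hand side into $\int_{U_{f_-}\setminus U_H}(k_+^2|u|^2+|\nabla u|^2)\,dx$, which equals $\|u\|_{V_H}^2$ since the extended $u$ vanishes off $S_H$. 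Thus $\|\gamma_- u\|_{H^{1/2}(\GH)}\le \|u\|_{V_H}$, with the sharp constant $1$ coming from the AM--GM step.

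No real obstacle is expected; the only delicate point is securing the constant $1$ in the trace estimate, which forces the choice of the weight $(k_+^2+\xi^2)^{1/2}$ and the application of AM--GM in exactly the form above. Verifying that the zero extension sits in $H^1$ of the slab with vanishing trace on $\Gamma_{f_-}$ is purely technical and handled by the density argument.
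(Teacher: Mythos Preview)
Your argument is correct for both inequalities. The paper itself does not prove this lemma: it simply recalls the result from \cite{chandmonk} (``We begin by recalling some results from \cite{chandmonk}; namely a trace theorem and a Friedrich's inequality''), so there is no in-paper proof to compare against. Your proof is precisely the natural one: extending by zero to the slab $U_{f_-}\setminus\overline{U_H}$ so that $u(\cdot,f_-)=0$, and then exploiting this vanishing both for the one-dimensional Poincar\'e/Friedrichs estimate and, via the identity $|\hat u(\xi,H)|^2=2\Re\int_{f_-}^H \hat u\,\overline{\partial_n\hat u}\,dy_n$, for the trace bound with sharp constant~$1$. The closest analogue actually proved in the thesis is the trace lemma in the Appendix (for the impedance problem), which uses the same Fourier-in-$\tx$ technique but must insert a linear cutoff $\phi$ because functions in $H^1(S_H)$ need not vanish on $\Gamma$; this is what produces the extra factor $(1+1/(k\mu))^{1/2}$ there. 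Your observation that membership in $V_H$ gives vanishing on $\Gamma_{f_-}$ after zero extension is exactly what eliminates that factor here.
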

We next state another lemma from \cite{chandmonk} whose proof we include for completeness.
\begin{lemma}\label{WL1}
For all $\phi, \psi \in H^{1/2}(\GH)$,
$$
\int_{\GH} \phi T \psi ds = \int_{\GH} \psi T \phi ds.
$$
 For all $\phi\in H^{1/2}(\GH)$,
\[
\Re\int_{\GH}\bar\phi\,T\phi\,ds\ge 0, \quad   \Im\int_{\GH}\bar\phi\,T\phi\,ds\leq 0.
\]
\end{lemma}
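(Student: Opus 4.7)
The plan is to transfer both identities to the Fourier side, where $T$ acts as multiplication by the symbol $z(\xi)$, and then exploit two elementary structural facts about $z$: first, $z$ depends only on $|\xi|$ and is therefore an even function; second, the explicit piecewise formula for $z$ yields $\Re z\ge 0$ and $\Im z\le 0$ pointwise. By density of $C_0^{\infty}(\GH)$ in $H^{1/2}(\GH)$ and continuity of $T:H^{1/2}(\GH)\to H^{-1/2}(\GH)$ established in Lemma \ref{LTB}, it is enough to work with Schwartz-class $\phi,\psi$, where all integrals are genuine Lebesgue integrals on $\real^{n-1}$.

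For the symmetry relation, the obstacle is that the pairing $\int\phi\,T\psi\,ds$ carries no complex conjugation, while the Plancherel identity (\ref{plancherel}) is stated in conjugated form. First I would establish the unconjugated Parseval identity
\[
\int_{\real^{n-1}} fg\,d\tx = \int_{\real^{n-1}} \hat f(\xi)\,\hat g(-\xi)\,d\xi,
\]
by writing $g=\overline{\bar g}$ and using the elementary relation $\widehat{\bar g}(\xi)=\overline{\hat g(-\xi)}$, which follows directly from the definition (\ref{FT}). Applying this with $f=\phi$ and $g=T\psi$, and noting that $\cF(T\psi)(\xi)=z(\xi)\hat\psi(\xi)$, gives
\[
\int_{\GH}\phi\,T\psi\,ds = \int_{\real^{n-1}}\hat\phi(\xi)\,z(-\xi)\,\hat\psi(-\xi)\,d\xi.
\]
Evenness of $z$ lets me replace $z(-\xi)$ by $z(\xi)$; a change of variable $\xi\mapsto -\xi$ then brings this into the form $\int\hat\phi(-\xi)\,z(\xi)\,\hat\psi(\xi)\,d\xi$, which by the same unconjugated Parseval identity (with roles of $\phi$ and $\psi$ exchanged) is $\int_{\GH}\psi\,T\phi\,ds$.

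For the sign statement, the ordinary conjugated form of Plancherel applied to $\phi$ and $T\phi$ gives directly
\[
\int_{\GH}\bar\phi\,T\phi\,ds = \int_{\real^{n-1}} z(\xi)\,|\hat\phi(\xi)|^2\,d\xi.
\]
Now from the definition of $z$, on $\{|\xi|\le k_+\}$ one has $z(\xi)=-\ri\sqrt{k_+^2-\xi^2}$, so $\Re z(\xi)=0$ and $\Im z(\xi)\le 0$; on $\{|\xi|>k_+\}$, $z(\xi)=\sqrt{\xi^2-k_+^2}\ge 0$ is real and non-negative, so $\Re z(\xi)\ge 0$ and $\Im z(\xi)=0$. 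Hence pointwise $\Re z\ge 0$ and $\Im z\le 0$, and integrating against the non-negative weight $|\hat\phi(\xi)|^2$ preserves both signs, delivering the required inequalities.

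The only genuinely delicate bookkeeping is the derivation of the unconjugated Parseval identity needed for the symmetry claim; once that is set up, the evenness of $z$ closes the first part and the explicit signs of $\Re z$ and $\Im z$ close the second part immediately.
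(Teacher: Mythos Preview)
Your proof is correct and follows essentially the same route as the paper: both transfer to the Fourier side via Plancherel together with the identity $\widehat{\bar g}(\xi)=\overline{\hat g(-\xi)}$, then use evenness of $z$ for the symmetry and the explicit sign of $z$ on the two regions $\{|\xi|\le k_+\}$, $\{|\xi|>k_+\}$ for the inequalities. The only cosmetic difference is that you isolate the unconjugated Parseval identity as a separate preliminary step and invoke density explicitly, whereas the paper folds these into the main computation.
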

\begin{proof}  Let $\hphi=\cF\phi$, $\hat \psi = \cF\psi$.  Then
$\cF(T\phi)=z\hphi$. Thus, using the Plancherel identity
(\ref{plancherel}) and since ${\hat{\bar \psi}}(\xi) =
\overline{\hat\psi(-\xi)}$ and $z$ is even,
\begin{eqnarray*}
\int_{\GH}\psi\,T\phi\,ds=
\int_{\real^{n-1}}\hpsi(-\xi)z(\xi)\hphi(\xi)\,d\xi=
\int_{\real^{n-1}}\hpsi(\xi)z(\xi)\hphi(-\xi)\,d\xi=
\int_{\GH}\phi\,T\psi\,ds.
\end{eqnarray*}
In particular, putting $\psi = \bar\phi$,
\begin{eqnarray*}
\int_{\GH}\bar\phi\,T\phi\,ds&=&
\int_{\real^{n-1}}z(\xi)|\hphi(\xi)|^2\,d\xi\\
&=&
\int_{|\xi|>k}\sqrt{\xi^2-k^2}|\hphi(\xi)|^2\,d\xi-\ri\int_{|\xi|<k}
\sqrt{k^2-\xi^2}|\hphi(\xi)|^2\,d\xi,
\end{eqnarray*}
from which the second result follows.
\end{proof}

The above lemma implies that 
$b(\cdot,\cdot)$ has the
following important symmetry property.

\begin{corollary} \label{symmetry}
For all $u,v\in V_H$,
$
b(v,u) = b(\bar u, \bar v).
$
\end{corollary}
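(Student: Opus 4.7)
The plan is to expand both sides of the claimed identity $b(v,u)=b(\bar u,\bar v)$ directly from the definition of $b$ in \eqref{sesqui}, and then match the three constituent pieces (the gradient term, the $k^2$ term, and the boundary term involving $T$) using nothing more than elementary complex-conjugation identities together with the symmetry of $T$ supplied by Lemma~\ref{WL1}.

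Writing the $L^2(S_H)$ inner product in the form $(f,g)=\int_{S_H} f\bar g\,dx$, the first step is to record
\[
b(v,u) = \int_{S_H}\nabla v\cdot\nabla\bar u\,dx - \int_{S_H} k^2 v\,\bar u\,dx + \int_{\GH}\gamma_-\bar u\, T\gamma_- v\,ds,
\]
and
\[
b(\bar u,\bar v) = \int_{S_H}\nabla\bar u\cdot\overline{\nabla \bar v}\,dx - \int_{S_H} k^2\bar u\,\overline{\bar v}\,dx + \int_{\GH}\gamma_- v\, T\gamma_-\bar u\,ds.
\]
Using $\overline{\nabla\bar v}=\nabla v$ and $\overline{\bar v}=v$, the volume integrands in the two expressions become pointwise equal: $\nabla v\cdot\nabla\bar u=\nabla\bar u\cdot\nabla v$ and $k^2 v\,\bar u=k^2\bar u\,v$. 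Crucially, $k^2$ is \emph{not} conjugated in either formula (the second argument of the sesquilinear form is the one that gets conjugated), so the identification of the two volume terms holds even though $k^2$ may be complex-valued.

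It remains to equate the two boundary terms, and this is precisely the content of the first assertion of Lemma~\ref{WL1}: with the choice $\phi=\gamma_-\bar u$ and $\psi=\gamma_- v$, that lemma gives
\[
\int_{\GH}\gamma_-\bar u\, T\gamma_- v\,ds \;=\; \int_{\GH}\gamma_- v\, T\gamma_-\bar u\,ds.
\]
(The hypothesis of the lemma is satisfied since $\gamma_- u,\gamma_- v\in H^{1/2}(\GH)$ by Lemma~\ref{WL3}, and the trace operator commutes with pointwise complex conjugation.) Combining the three identifications yields $b(v,u)=b(\bar u,\bar v)$, which is exactly the claim.

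There is no serious obstacle here: the corollary is a purely algebraic consequence of the definition of $b$ and the symmetry of $T$. The only point requiring any care is keeping track of which slot of the sesquilinear form is being conjugated and noting that $k^2$ itself is never conjugated, so complex $k^2$ causes no difficulty.
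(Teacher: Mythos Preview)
Your proof is correct and follows exactly the approach the paper intends: the paper simply states that the corollary is implied by Lemma~\ref{WL1}, and your argument spells out the straightforward verification, matching the volume terms by elementary conjugation and the boundary terms via the symmetry $\int_{\GH}\phi\,T\psi\,ds=\int_{\GH}\psi\,T\phi\,ds$ from that lemma.
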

We are now in a position to show that the sesquilinear form is bounded, establishing an explicit value for the bound. 
\begin{lemma}\label{WL4}
For all $u,v\in V_H$,
\[
\hspace*{-4ex} |b(u,v)|\leq \left[
\frac{k^2_{\infty}}{k_+^2}
+ 1\right] \Vert u\Vert_{V_H}\Vert v \Vert_{V_H}
\]
so that the sesquilinear form $b(.,.)$ is bounded.
\end{lemma}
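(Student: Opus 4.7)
The plan is to split $b(u,v)$ into its three constituent contributions---the Dirichlet bilinear form $(\nabla u,\nabla v)$, the mass term $-(k^2u,v)$, and the boundary term $\int_{\Gamma_H}\gamma_-\bar v\,T\gamma_- u\,ds$---and bound each one using Cauchy--Schwarz together with results already established. The gradient contribution is controlled directly: Cauchy--Schwarz in $L^2(S_H)$ gives
\[
|(\nabla u,\nabla v)|\le \|\nabla u\|_2\,\|\nabla v\|_2\le \|u\|_{V_H}\|v\|_{V_H}.
\]
For the mass term, the essential bound $|k(x)|\le k_\infty$ on $S_H$ together with Cauchy--Schwarz yields
\[
|(k^2u,v)|\le k_\infty^2\|u\|_2\|v\|_2=\frac{k_\infty^2}{k_+^2}\,(k_+\|u\|_2)(k_+\|v\|_2)\le \frac{k_\infty^2}{k_+^2}\,\|u\|_{V_H}\|v\|_{V_H}.
\]

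For the boundary contribution I would interpret $\int_{\Gamma_H}\gamma_-\bar v\,T\gamma_- u\,ds$ as the duality pairing between $T\gamma_- u\in H^{-1/2}(\Gamma_H)$ and $\gamma_- v\in H^{1/2}(\Gamma_H)$. Lemma \ref{LTB} supplies $\|T\|=1$, and Lemma \ref{WL3} supplies $\|\gamma_-w\|_{H^{1/2}(\Gamma_H)}\le\|w\|_{V_H}$ for every $w\in V_H$, so that
\[
\left|\int_{\Gamma_H}\gamma_-\bar v\,T\gamma_- u\,ds\right|\le \|T\gamma_- u\|_{H^{-1/2}(\Gamma_H)}\|\gamma_- v\|_{H^{1/2}(\Gamma_H)}\le \|u\|_{V_H}\|v\|_{V_H}.
\]

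To achieve the advertised constant $k_\infty^2/k_+^2+1$ rather than the naive sum $k_\infty^2/k_+^2+2$, the slight refinement is to combine the gradient and mass bounds via Cauchy--Schwarz in $\mathbb{R}^2$ before handling the boundary term: setting $a=\|\nabla u\|_2$, $b=k_+\|u\|_2$, $A=\|\nabla v\|_2$, $B=k_+\|v\|_2$, we have $aA+(k_\infty^2/k_+^2)bB\le \max(1,k_\infty^2/k_+^2)(aA+bB)\le \max(1,k_\infty^2/k_+^2)\|u\|_{V_H}\|v\|_{V_H}$, and then add the boundary term. The only real content is bookkeeping; no step is genuinely hard, the mild subtlety being the identification of the boundary integral with a duality pairing so that Lemma \ref{LTB} can be applied with the optimal constant $\|T\|=1$.
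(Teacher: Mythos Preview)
Your argument is correct and is exactly the paper's approach: bound each of the three terms by Cauchy--Schwarz, invoke $\|T\|=1$ from Lemma~\ref{LTB} and the trace bound $\|\gamma_-w\|_{H^{1/2}(\Gamma_H)}\le\|w\|_{V_H}$ from Lemma~\ref{WL3} for the boundary piece, and combine. You are in fact more explicit than the paper, which simply writes down the three-term estimate and asserts ``we obtain the desired estimate''; your $\mathbb{R}^2$ Cauchy--Schwarz step spells out how to avoid the naive constant $k_\infty^2/k_+^2+2$. Note that your refinement yields $\max(1,k_\infty^2/k_+^2)+1$, which equals the paper's stated $k_\infty^2/k_+^2+1$ only when $k_\infty\ge k_+$; since only boundedness (not the precise constant) is used downstream, this is harmless, and the paper appears tacitly to assume $k_\infty\ge k_+$ here.
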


\begin{proof}
From the definition of the sesquilinear form $b(.,.)$ and the
Cauchy-Schwarz inequality we have
\[
|b(u,v)|\leq \|\nabla u\|_2\|\nabla v\|_2+\frac{k^2_{\infty}k_+^2}{k_+^2}\|u\|_2\|v\|_2 +
\|\gamma_-u\|_{H^{1/2}(\Gamma_H)}
\|T\|\,\|\gamma_-v\|_{H^{1/2}(\Gamma_H)}.
\]
Using 
(\ref{Tnorm}), and Lemma
\ref{WL3} we obtain the desired estimate.
\end{proof}

Our last lemma of this section shows that the sesquilinear form $b(.,.)$ is
$V_H$-elliptic provided that $\kappa_{\infty}$ is not too large or $\arg(k^2)$ is strictly positive.

\begin{lemma}\label{WL5}
i)
For all $u\in V_H$,
\[
|b(u,u)| \ge \frac{2- \kappa^2_{\infty}}{2 +\kappa_+^2}\Vert
u\Vert_{V_H}^2.
\]
ii)If $\theta> 0$ then, for all $u\in V_H$,
\[
|b(u,u)| \ge \frac{\sin\theta}{1+\kappa_+^2/\max(2,\kappa_0^2)} \Vert u\Vert_{V_H}^2.
\]
\end{lemma}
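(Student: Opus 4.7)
Both inequalities come from decomposing $b(u,u) = A - B + C$, where $A := \|\nabla u\|_2^2 \ge 0$, $B := \int_{S_H} k^2 |u|^2\,dx$, and $C := \int_{\Gamma_H} \gamma_- \bar u\, T\gamma_- u\,ds$. From the hypotheses on $k$ we have $k^2 = |k|^2 e^{\ri\phi}$ pointwise with $\phi\in[\theta,\pi/2]$, and from Lemma \ref{WL1} we have $\Re C \ge 0$, $\Im C \le 0$.

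For part (i), I would simply take the real part:
$$\Re b(u,u) = \|\nabla u\|_2^2 - \Re B + \Re C \ge \|\nabla u\|_2^2 - k_\infty^2 \|u\|_2^2,$$
then apply Friedrich's inequality from Lemma \ref{WL3} in the form $\|u\|_2^2 \le (H-f_-)^2 \|\nabla u\|_2^2/2$ to obtain $\Re b(u,u) \ge (1-\kappa_\infty^2/2)\|\nabla u\|_2^2$. The same Friedrich's inequality gives $\|u\|_{V_H}^2 \le (1+\kappa_+^2/2)\|\nabla u\|_2^2$, so $\|\nabla u\|_2^2 \ge 2\|u\|_{V_H}^2/(2+\kappa_+^2)$. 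Since $|b(u,u)|\ge \Re b(u,u)$, combining these gives the stated bound.

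For part (ii), the key is the rotation trick: since $(\sin\theta,-\cos\theta)$ is a unit vector,
$$|b(u,u)| \ge \sin\theta\,\Re b(u,u) - \cos\theta\,\Im b(u,u) = \sin\theta\, A + \bigl[\cos\theta\, \Im B - \sin\theta\, \Re B\bigr] + \bigl[\sin\theta\,\Re C - \cos\theta\,\Im C\bigr].$$
The first bracket equals $\int_{S_H} |k|^2 |u|^2 \sin(\phi-\theta)\,dx \ge 0$ because $\phi \ge \theta$, and the second is $\ge 0$ because $\Re C \ge 0$ and $-\Im C \ge 0$. Hence $|b(u,u)| \ge \sin\theta\,\|\nabla u\|_2^2$. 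Separately, $|b(u,u)| \ge -\Im b(u,u) \ge \Im B \ge \sin\theta\int|k|^2|u|^2\,dx \ge \sin\theta\, k_0^2\|u\|_2^2$. These two lower bounds are then combined convexly: $|b(u,u)| \ge \lambda\sin\theta\|\nabla u\|_2^2 + (1-\lambda)\sin\theta\,k_0^2\|u\|_2^2$ for any $\lambda\in[0,1]$. Splitting cases, when $\kappa_0^2 \ge 2$ I choose $\lambda = k_0^2/(k_0^2+k_+^2)$ so the right side equals $\sin\theta/(1+\kappa_+^2/\kappa_0^2)\cdot \|u\|_{V_H}^2$; when $\kappa_0^2 < 2$ I take $\lambda = 1$ and use Friedrich's as in (i) to conclude $|b(u,u)| \ge \sin\theta/(1+\kappa_+^2/2)\cdot \|u\|_{V_H}^2$. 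The two cases merge into $\sin\theta/(1+\kappa_+^2/\max(2,\kappa_0^2))$.

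The well-posedness and the bound (\ref{ap_2}) then follow directly from Lax-Milgram applied to $V_H$ with the ellipticity constant $\alpha$ just derived. Finally, (\ref{apriori2_2}) for the scattering problem is obtained by taking $\mathcal G(v) = -(g,v)$ and bounding $|\mathcal G(v)| \le \|g\|_2 \|v\|_2 \le (H-f_-)\|g\|_2 \|\nabla v\|_2/\sqrt{2} \le (\kappa_+/(k_+\sqrt{2}))\|g\|_2 \|v\|_{V_H}$ via Lemma \ref{WL3}, so $\|\mathcal G\|_{V_H^*} \le \kappa_+/(k_+\sqrt{2})\|g\|_2$. The main technical point is the rotation argument in part (ii); the case split producing the $\max(2,\kappa_0^2)$ factor is the one subtlety that is easy to overlook.
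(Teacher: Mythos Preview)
Your argument is correct. Part (i) is essentially identical to the paper's proof. For part (ii) the paper proceeds slightly differently: it introduces a one-parameter family of rotations $e^{\ri(\pi/2-\beta)}$ with $\tan\beta = \sin\theta/(\alpha+\cos\theta)$, derives the single inequality
\[
\Re\bigl(e^{\ri(\pi/2-\beta)}b(u,u)\bigr) \ge \sin\beta\bigl(\|\nabla u\|_2^2 + \alpha k_0^2\|u\|_2^2\bigr),
\]
and then specialises to $\alpha=\kappa_+^2/\kappa_0^2$ and $\alpha=0$ to obtain the two bounds. You instead obtain the same two lower bounds by using two fixed rotations (by $\pi/2-\theta$ and by $\pi/2$, i.e.\ taking $\sin\theta\,\Re-\cos\theta\,\Im$ and $-\Im$) and then combining convexly. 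Your route is arguably more transparent, since it avoids the auxiliary parameter $\beta$ and makes clear that the two estimates are independent; the paper's parametrised rotation packages both into one computation. Either way, the case split that produces the $\max(2,\kappa_0^2)$ is exactly as you describe. Your closing remarks on Lax--Milgram and the dual-norm bound for $\mathcal G(v)=-(g,v)$ belong to the proof of the surrounding Theorem rather than to this lemma, but they are correct.
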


\begin{proof}
i) By Lemma \ref{WL1}, $\Re \,b(u,u)\ge \Vert
u\Vert_{V_H}^2-k_+^2\Vert u\Vert_{2}^2- k^2_{\infty}\Vert
u\Vert_{2}^2$. The result follows from Lemma \ref{WL3} which implies
that $\|u\|_{V_H}^2 \ge k_+^2(2/\kappa_+^2+1)\|u\|_2^2$.

ii) Choose $\alpha\ge 0$ and define $\beta\in (0,\theta]$ by
$$
\tan\beta = \frac{\sin\theta}{\alpha+\cos\theta},
$$
so that $\alpha\sin\beta = \sin(\theta-\beta)$ and
$$
\sin\beta = \frac{\sin\theta}{\sqrt{\alpha^2+2\alpha\cos\theta +1}} \ge \frac{\sin\theta}{1+\alpha}.
$$
Then, by Lemma \ref{WL1}, and since $\pi/2 -\beta  \in [0,\pi/2),$
\[
\Re \left(\re^{i(\pi/2-\beta )}\int_{\Gamma_H}\gamma_-\bar uT\gamma_-uds\right)\geq 0.
\]
Hence
\begin{eqnarray*}
R & := &\Re \left(\re^{i(\pi/2-\beta )} b(u,u)\right) \ge \sin\beta\Vert \nabla u\Vert_2^2 +
\int_{S_H}\sin(\arg(k^2)-\beta)|k^2|\,|u|^2dx \\
&\ge&
\sin\beta\Vert \nabla u\Vert_2^2 + \sin(\theta-\beta)\frac{k_0^2}{k_+^2}k_+^2||u||_2
= \sin\beta \left(\Vert\nabla u\Vert_2^2 + \alpha\frac{k_0^2}{k_+^2}k_+^2||u||^2_2\right).
\end{eqnarray*}
Thus, and by Lemma \ref{WL3}, for $0\le \gamma\le 1$,
$$
R \ge \sin\beta\,\left(\gamma\,\Vert \nabla u\Vert_2^2 + \frac{2(1-\gamma) +\alpha\kappa_0^2}{\kappa_+^2}\;k_+^2||u||_2\right).
$$

Choosing first $\gamma=1$ and $\alpha=\kappa_+^2/\kappa_0^2$, we see that
$$
R \ge \sin\beta \Vert u\Vert_{V_H}^2\ge \frac{\sin\theta}{1+\kappa_+^2/\kappa_0^2}\,\Vert u\Vert_{V_H}^2.
$$
Alternatively, choosing $\gamma = 2/(2+\kappa_+^2)$ and $\alpha=0$, so that $\beta=\theta$, we see that
$$
R \ge \frac{\sin\theta}{1+\kappa_+^2/2}\,\Vert u\Vert_{V_H}^2.
$$

\end{proof}

Theorem \ref{thm1} now follows from Lemmas \ref{WL4} and \ref{WL5}
and the Lax-Milgram lemma. The final bound (\ref{apriori2_2}) is a
consequence of Lemma \ref{WL3} which implies, in the
 particular case that $\mathcal{G}(v)
:= -(g,v)$, for some $g\in L^2(S_H)$, that
\[
\Vert \mathcal{G}\Vert_{V_H^*}=\sup_{v\in V_H}\frac{|(v,g)|}{\Vert
v\Vert_{V_H}} \leq\sup_{v\in V_H} \frac{\Vert v\Vert_{2}\Vert
g\Vert_{2}}{\Vert v\Vert_{V_H}} \leq \frac{H-f_-}{\sqrt{2}}\Vert
g\Vert_{2}.
\]

\section{Analysis of the variational problem at arbitrary frequency}
\label{hik}

In this section we will consider the case where there is no restriction on $ k_{\infty}$ and where $\theta$ may be identically zero. We do however impose some additional constraints on the vertical decay of $k \in L^{\infty}(D)$, and also on the domain. Under these assumptions we then prove that the boundary value problem and the equivalent
variational problem are uniquely solvable by using the generalized
Lax-Milgram theory of Babu\v{s}ka. 

The domains $D$ for which we will
establish this result are those which, in addition to our assumption
throughout that $U_{f_+}\subset D \subset U_{f_-}$, satisfy the
condition that
\begin{equation} \label{dom_cond}
x \in D \Rightarrow x+s e_n\in D, \mbox{ for all } s>0,
\end{equation}
where $e_n$ denotes the unit vector in the direction $x_n$.
Condition (\ref{dom_cond}) is satisfied if $\Gamma$ is the graph of
a continuous function, but certainly does not require that this be
the case. Nor does (\ref{dom_cond}) impose any regularity on
$\partial D$. 

In what follows we always assume that $k_0>0$, and moreover that $\Re(k^2)\geq k_0^2$. 
Recall that $H\geq f_+$ is such that the support of $g$ lies in $\overline{S_H}$ and such that $k=k_+$ in $\overline{U_H}$. We now state the assumption we make on the vertical decay of $k$ in addition to the assumptions that $k \in L^{\infty}(D)$, takes the value $k_+$ in $\overline{U_H}$, and satisfies $\Re(k^2)\geq k_0^2$:
\newline \textbf{Assumption 1.} There exist $0<\lambda_1<4/(H-f_-)^3$, $0\leq\lambda_2$ such that $k \in L^{\infty}(D)$ satisfies
\[
\Re(k^2(x)) = \pi(x) -\lambda_1 x_n - \int^{x_n}_{-\infty}\lambda_2\Im(k^2)(\tilde x,t) dt\quad \mbox{ for almost all }x \in D,
\]
where $\pi:D \to \mathbb{R}$ is monotonic non-decreasing, i.e.\ for all $h>0$,
\[
\mbox{ess}\inf_{x \in D}[\pi(x+e_nh)-\pi(x)]\geq 0.
\]
Here $\Im(k^2)$ is extended onto $\mathbb{R}^n$ by taking the value zero on $\mathbb{R}^n \backslash D$.
\begin{remark}
If assumption 1 holds and $k^2 \in C^1(D)$, then
\[
\frac{\partial \Re(k^2)}{\partial x_n} \geq -\lambda_1 -\lambda_2\Im(k^2) \quad \mbox{ in } D.
\]
\end{remark}
\begin{remark} Assumption 1 can be justified to some extent: Suppose the domain $D$ to be two dimensional and the boundary $\Gamma$ to be flat. Let $k^2 \in C(\overline{D})$ be such that $\Im(k^2)=0$, such that $\partial k^2/\partial x_1 =0$ and $\partial k^2/\partial x_2= -\lambda_1$ where $\lambda_1> a^3/(H-f_-)^3$, with $-a\approx -1.987$ being the largest negative zero of the Airy function $Ai(z) + Bi(z)/{\sqrt{3}}$ (so that assumption 1 is violated, and note also $\kappa_{\infty}>\sqrt{2}$). In this case, using separation of variables, one can show that the boundary value problem is not well-posed.  
\end{remark}

Our main result in this section is then the following:

\begin{theorem}\label{th_main1} If (\ref{dom_cond}) and Assumption 1 hold then the variational
problem (\ref{var_prob2}) has a unique solution $u\in V_H$ for every
${\cal G}\in V_H^*$ and
\begin{equation}
\Vert u\Vert_{V_H}\le C\Vert {\cal G}\Vert_{V_H^*}
\label{aprioriest}
\end{equation}
where
\[
C=  \left(1+k_0^{-1}\left[k_+ + \frac{k_{\infty}^2}{k_+}\right]\sqrt{[(\kappa_+^2 + \kappa_{\infty}^2)A^{-1}B+1]\kappa_0^2A^{-1}B}\right)
\]
where $A=2-\lambda_1(H-f_-)^3/2$ and $B= 2\kappa_++1+2\sqrt{2}+ \lambda_2(H-f_-) +2\kappa_{\infty}^2A^{-1}$.
In particular, the boundary value problem and the equivalent
variational problem (\ref{weak_form}) have exactly one solution, and
the solution satisfies the bound
\[
k_0\Vert u\Vert_{V_H}\leq  \sqrt{[(\kappa_+^2 + \kappa_{\infty}^2)A^{-1}B+1]\kappa_0^2A^{-1}B}\Vert g\Vert_{2}.
\]
\end{theorem}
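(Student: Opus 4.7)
The plan is to apply the generalized Lax-Milgram theorem to the sesquilinear form $b$. This requires verifying both the inf-sup condition and the transposed inf-sup condition. The transposed condition will follow for free from the symmetry property $b(v,u)=b(\bar u,\bar v)$ established in Corollary \ref{symmetry}: whenever the standard inf-sup holds, applying it to $\bar v$ automatically yields (\ref{trans_is}). So the substantive task is the inf-sup condition, which is equivalent to the a priori estimate (\ref{aprioriest}); once this is proved with constant $C$ as stated, uniqueness follows by taking $\mathcal{G}=0$, existence follows from generalized Lax-Milgram, and the final bound specialized to $\mathcal{G}(v)=-(g,v)$ follows from Lemma \ref{WL3} exactly as in Theorem \ref{thm1}.

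The first ingredient in the a priori bound is the basic energy identity, obtained by setting $v=u$ in (\ref{var_prob2}). Using Lemma \ref{WL1}, the real part controls $\|\nabla u\|_2^2 - \int_{S_H}\Re(k^2)|u|^2$ from above by $|\mathcal{G}(u)|$ plus the non-negative boundary contribution, while the imaginary part yields $\int_{S_H}\Im(k^2)|u|^2\,dx + |{\rm Im}\int_{\Gamma_H}\gamma_-\bar u\,T\gamma_-u\,ds|\le|\mathcal{G}(u)|$. At arbitrary frequency these alone cannot control $\|u\|_{V_H}$, so a second identity is needed.

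The second, and decisive, ingredient is a Rellich-type identity obtained by using a vertical multiplier test function of the form $v=(x_n-f_-)\,\partial u/\partial x_n$ (suitably modified to lie in $V_H$). This is where the geometric condition (\ref{dom_cond}) is indispensable: because every vertical translate upward of a point of $D$ lies in $D$, differentiating in $x_n$ preserves the vanishing trace on $\Gamma$, so that such multipliers are admissible. Integrating by parts against the PDE $\Delta u+k^2u=g$ (interpreted through the variational formulation, with the boundary contribution on $\Gamma_H$ treated via (\ref{lem32}) applied to the extension of $u$ to $U_H$ defined by (\ref{uprcstar})) produces, as its main bulk term, the quantity $\int_{S_H}\partial_{x_n}(\Re k^2)\,|u|^2\,dx$. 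Assumption 1 is precisely designed to control this term: the monotone-nondecreasing component $\pi$ contributes non-negatively, while $\lambda_1$ and $\lambda_2$ produce controlled corrections, the former accounting for the factor $A=2-\lambda_1(H-f_-)^3/2>0$ (positive exactly because $\lambda_1<4/(H-f_-)^3$) and the latter handled by absorbing against the imaginary-part bound from the energy identity. The Rellich identity thereby produces a bound on $\|\partial u/\partial x_n\|_2^2$.

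The main obstacle is the bookkeeping: to obtain the specific constant $C$ in the statement, one must combine the energy identity (which yields $\|k_+u\|_2$, $\|\nabla u\|_2$, and the $\Gamma_H$ trace terms in terms of $\|\mathcal{G}\|_{V_H^*}\|u\|_{V_H}$) with the Rellich identity (which yields $\|\partial u/\partial x_n\|_2$ and, via Lemma \ref{WL3} again, $\|u\|_2$) while keeping all the constants $\kappa_0,\kappa_+,\kappa_\infty,\lambda_1,\lambda_2$ explicit. The key algebraic step is to choose the linear combination of the two identities so that the non-coercive term $-k_+^2\|u\|_2^2$ in the real part of $b(u,u)$ is exactly absorbed by the $\partial_{x_n}(\Re k^2)|u|^2$ term from the Rellich identity — yielding coercivity up to the data $\|\mathcal{G}\|_{V_H^*}$, whence the a priori bound (\ref{aprioriest}) with the explicit $C$ factorising through $A$ and $B$.
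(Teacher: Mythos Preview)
Your overall strategy is right---generalized Lax-Milgram via the inf-sup condition, with the inf-sup obtained from an a priori bound, and the a priori bound coming from combining the energy identity with a Rellich-type identity based on the multiplier $(x_n-f_-)\partial u/\partial x_n$. You also correctly identify that Assumption~1 controls the bulk term $\int \partial_{x_n}(\Re k^2)(x_n-f_-)|u|^2$, with the $\lambda_1$-part producing $A$ and the $\lambda_2$-part handled via the imaginary part of $b(u,u)$. However, two genuine gaps remain.

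First, your explanation of the role of (\ref{dom_cond}) is incorrect. You claim that under (\ref{dom_cond}) ``differentiating in $x_n$ preserves the vanishing trace on $\Gamma$, so that such multipliers are admissible''. This is false: if $u=0$ on $\Gamma$ then only the \emph{tangential} gradient vanishes; on $\Gamma$ one has $\partial u/\partial x_n=\nu_n\,\partial u/\partial\nu$, which is generically nonzero. What actually happens in the Rellich identity (for a smooth graph $\Gamma$) is that the boundary term on $\Gamma$ reduces to $-\int_\Gamma (x_n-f_-)\nu_n|\partial u/\partial\nu|^2\,ds$, which has the right sign because $\nu_n<0$ and $x_n\ge f_-$. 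So the Rellich step does \emph{not} use the multiplier as a test function in $V_H$; it uses integration by parts against the strong equation and discards the $\Gamma$-term by sign.

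Second, and relatedly, the regularity obstacle is not addressed. Since $u$ is only in $V_H\subset H^1$, neither $(x_n-f_-)\partial u/\partial x_n\in V_H$ nor the integration by parts against $\Delta u$ is justified as stated. The paper's proof resolves this by a three-stage approximation that constitutes most of the technical work: (i) first prove the Rellich estimate when $\Gamma$ is a $C^\infty$ graph and $k\in C^\infty(D)$, so that elliptic regularity gives $u\in H^2_{\rm loc}$ and a horizontal cutoff $\phi_A(|\tilde x|)$ makes all manipulations rigorous (Lemma~\ref{rellich}); (ii) extend to general $k\in L^\infty$ satisfying Assumption~1 by mollifying $k^2$ and checking that the mollification preserves Assumption~1 (Lemma~\ref{arb_k}); (iii) extend from smooth graphs to arbitrary domains satisfying (\ref{dom_cond}) by approximating any $w\in V_H$ from within by functions supported in smooth-graph subdomains $D_m\subset D$ (Lemmas~\ref{lem_tech} and \ref{rellich2}). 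It is in step~(iii), not in the Rellich identity itself, that (\ref{dom_cond}) is genuinely used. Your proposal omits all three steps; without them the argument does not go through for rough $\Gamma$ or rough $k$.
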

 
To apply the generalized Lax-Milgram theorem 
we need to show that $b$ is bounded, which we
have done in Lemma \ref{WL4}; to establish the inf-sup condition 
that
\begin{equation}
\alpha := \inf_{0\not=u\in V_H}\sup_{0\not=v\in
V_H}\frac{|b(u,v)|}{\Vert u\Vert_{V_H}\Vert v\Vert_{V_H}} >0;
\label{infsup_layer}
\end{equation}
and to establish the ``transposed'' inf-sup condition. 
It follows
easily from Corollary \ref{symmetry} that the transposed inf-sup
condition follows automatically if (\ref{infsup_layer}) holds.



\begin{lemma}\label{WEXL7} If (\ref{infsup_layer}) holds then, for all non-zero $v\in V_H$,
\[
\sup_{0\not=u\in V_H}\frac{|b(u,v)|}{\Vert u\Vert_{V_H}}>0.
\]
\end{lemma}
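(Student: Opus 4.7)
The proof is essentially an application of the symmetry property $b(u,v)=b(\bar v,\bar u)$ recorded in Corollary \ref{symmetry} (equivalent, after relabelling, to the identity $b(v,u)=b(\bar u,\bar v)$ as stated). The idea is that the inf-sup condition applied to the conjugate $\bar v$ produces a test element whose conjugate will serve as the required $u$.

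First I would note two auxiliary facts that make the argument work. Since $H_0^1(D)$ is closed under complex conjugation, so is $V_H$; moreover, for every $w\in V_H$,
\[
\|\bar w\|_{V_H}^2=\int_{S_H}\bigl(|\nabla \bar w|^2+k_+^2|\bar w|^2\bigr)\,dx=\|w\|_{V_H}^2.
\]
Hence $v\mapsto \bar v$ is a norm-preserving involution of $V_H$ that sends nonzero elements to nonzero elements.

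Now fix $v\in V_H$ with $v\neq 0$. Then $\bar v\in V_H$ is also nonzero, and applying the inf-sup condition (\ref{infsup_layer}) at $\bar v$ yields
\[
\sup_{0\neq w\in V_H}\frac{|b(\bar v,w)|}{\|w\|_{V_H}}\ \ge\ \alpha\,\|\bar v\|_{V_H}\ =\ \alpha\,\|v\|_{V_H}\ >\ 0.
\]
In particular there exists $w_0\in V_H\setminus\{0\}$ with $b(\bar v,w_0)\neq 0$. Set $u:=\bar{w_0}\in V_H\setminus\{0\}$, so that $\bar u=w_0$ and $\|u\|_{V_H}=\|w_0\|_{V_H}$. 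By Corollary \ref{symmetry} (used in the form $b(u,v)=b(\bar v,\bar u)$),
\[
|b(u,v)|=|b(\bar v,\bar u)|=|b(\bar v,w_0)|>0,
\]
so that $\sup_{0\neq u\in V_H}|b(u,v)|/\|u\|_{V_H}>0$, which is the desired transposed inf-sup statement.

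There is no real obstacle here; the only point that needs a moment's thought is to apply the symmetry identity in the correct direction (swapping the arguments and taking conjugates simultaneously) and to use that conjugation is an isometric bijection of $V_H$, so that the witness $w_0$ provided by the inf-sup condition applied at $\bar v$ can be conjugated back to produce a legitimate test element $u$ in $V_H$ of equal norm.
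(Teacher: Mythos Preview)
Your argument is correct and is essentially the same as the paper's: both use the symmetry identity $b(u,v)=b(\bar v,\bar u)$ from Corollary~\ref{symmetry} together with the fact that complex conjugation is a norm-preserving bijection of $V_H$ to transfer the inf-sup condition from $\bar v$ to $v$. The paper simply compresses your steps into the single chain $\sup_{u\neq 0}|b(u,v)|/\|u\|_{V_H}=\sup_{u\neq 0}|b(\bar v,u)|/\|u\|_{V_H}\ge \alpha\|v\|_{V_H}>0$.
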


\begin{proof}
If (\ref{infsup_layer}) holds and $v\in V_H$ is non-zero then
\[
\sup_{0\not=u\in V_H}\frac{|b(u,v)|}{\Vert u\Vert_{V_H}}=
\sup_{0\not=u\in V_H}\frac{|b(\bar v,u)|}{\Vert u\Vert_{V_H}}\geq
\alpha \Vert v\Vert_{V_H}>0.
\]
This proves the lemma.
\end{proof}

The following result follows from \cite[Theorem 2.15]{ihlenburg}
and Lemmas \ref{WL4} and \ref{WEXL7}.

\begin{corollary} \label{cor_infsup} If (\ref{infsup_layer}) holds then
the variational problem (\ref{var_prob2}) has exactly one solution
$u\in V_H$ for all ${\cal G}\in V_H^*$. Moreover
$$
\|u\|_{V_H} \le \alpha^{-1}\|{\cal G}\|_{V_H^*}.
$$
\end{corollary}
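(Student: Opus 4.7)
The plan is to obtain the corollary as an immediate consequence of the Generalized Lax-Milgram Theorem stated in the introduction, applied to the Hilbert space $V_H$ with its wave number dependent inner product and to the sesquilinear form $b$. Under this theorem, once one verifies (i) boundedness of $b$, (ii) the inf-sup condition, and (iii) the transposed inf-sup condition, one obtains simultaneously the existence, uniqueness, and norm bound claimed, with the constant $\alpha^{-1}$ appearing directly in the estimate.

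The first step is to record that $b$ is a bounded sesquilinear form on $V_H\times V_H$; this is already established in Lemma \ref{WL4}, so there is nothing further to do beyond citing it. The second step is to observe that the inf-sup condition (\ref{infsup_layer}) with constant $\alpha>0$ is precisely the hypothesis of the corollary, and coincides with the inf-sup condition required by the Generalized Lax-Milgram Theorem. The third step is to verify the transposed inf-sup condition, i.e.\ that for every non-zero $v\in V_H$ one has $\sup_{0\ne u\in V_H}|b(u,v)|/\|u\|_{V_H}>0$; this is exactly the conclusion of Lemma \ref{WEXL7}, whose proof in turn exploits the symmetry $b(v,u)=b(\bar u,\bar v)$ from Corollary \ref{symmetry} to reduce the transposed statement to the assumed inf-sup condition.

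With the three hypotheses in place, the Generalized Lax-Milgram Theorem furnishes, for each $\mathcal{G}\in V_H^*$, a unique $u\in V_H$ solving $b(u,v)=\mathcal{G}(v)$ for all $v\in V_H$, together with the estimate $\|u\|_{V_H}\le \alpha^{-1}\|\mathcal{G}\|_{V_H^*}$. This is exactly the content of the corollary.

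There is no real obstacle to overcome here: every ingredient has been established in the preceding lemmas, and the work amounts to a citation of three prior results (Lemma \ref{WL4}, Lemma \ref{WEXL7}, and the Generalized Lax-Milgram Theorem) together with the hypothesis (\ref{infsup_layer}). The only point that deserves to be stated explicitly in the write-up is that the transposed inf-sup condition does not need to be assumed separately, since Corollary \ref{symmetry} together with Lemma \ref{WEXL7} shows that it is a free consequence of (\ref{infsup_layer}); this symmetry trick is what makes the corollary as clean as it is.
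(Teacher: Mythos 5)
Your proposal is correct and follows essentially the same route the paper takes: the paper states that the corollary "follows from \cite[Theorem 2.15]{ihlenburg} and Lemmas \ref{WL4} and \ref{WEXL7}", which is precisely your combination of the Generalized Lax-Milgram Theorem, boundedness from Lemma \ref{WL4}, and the transposed inf-sup condition supplied by Lemma \ref{WEXL7}. There is nothing to add; your write-up merely makes explicit the role of Corollary \ref{symmetry} in Lemma \ref{WEXL7}, which the paper leaves implicit at this point.
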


To show (\ref{infsup_layer}) we will establish an
a priori bound for solutions of (\ref{var_prob2}), from which the
inf-sup condition will follow by the following easily established
lemma (see \cite[Remark 2.20]{ihlenburg}).

\begin{lemma} \label{ihl_rem}
Suppose that there exists $C>0$ such that, for all $u\in V_H$ and
${\cal G}\in V_H^*$ satisfying (\ref{var_prob2}) it holds that
\begin{equation} \label{boundA}
\|u\|_{V_H} \le C \|{\cal G}\|_{V_H^*}.
\end{equation}
Then the inf-sup condition (\ref{infsup_layer}) holds with $\alpha\ge
C^{-1}$.
\end{lemma}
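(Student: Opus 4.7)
The plan is to observe that the a priori bound, which is an implication about \emph{any} pair $(u,\mathcal{G})$ related by the variational equation, can be read as an a priori bound on $u$ in terms of the functional $v\mapsto b(u,v)$, which is automatically an element of $V_H^*$ by the boundedness of $b$ (Lemma \ref{WL4}).

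First, fix an arbitrary non-zero $u\in V_H$ and define $\mathcal{G}_u\in V_H^*$ by $\mathcal{G}_u(v):=b(u,v)$ for $v\in V_H$; this is well-defined and continuous by Lemma \ref{WL4}. Tautologically the pair $(u,\mathcal{G}_u)$ satisfies (\ref{var_prob2}), so the hypothesis (\ref{boundA}) applies and yields
\[
\|u\|_{V_H}\le C\,\|\mathcal{G}_u\|_{V_H^*}=C\sup_{0\neq v\in V_H}\frac{|b(u,v)|}{\|v\|_{V_H}}.
\]
Dividing by $\|u\|_{V_H}>0$ gives
\[
\frac{1}{C}\le \sup_{0\neq v\in V_H}\frac{|b(u,v)|}{\|u\|_{V_H}\|v\|_{V_H}},
\]
and taking the infimum over non-zero $u\in V_H$ produces $\alpha\ge C^{-1}$, which is exactly (\ref{infsup_layer}).

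There is no real obstacle here; the only subtlety is recognising that the hypothesis is not restricted to $\mathcal{G}$'s that come from an $L^2$ source via $\mathcal{G}(v)=-(g,v)$, but applies to \emph{every} $\mathcal{G}\in V_H^*$, and in particular to the specific functional $\mathcal{G}_u$ built from $u$ itself. Once this is noted, the argument is a two-line computation. The strategy for actually verifying (\ref{boundA}) in the intended application to Theorem \ref{th_main1} — where Assumption 1 and condition (\ref{dom_cond}) will be exploited together with a Rellich-type identity to control $u$ by $\mathcal{G}$ — is the substantive work, but it is not part of the present lemma.
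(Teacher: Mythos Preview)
Your argument is correct and is precisely the standard proof the paper has in mind: the paper itself does not spell out a proof but simply calls the lemma ``easily established'' and cites \cite[Remark 2.20]{ihlenburg}, where this same construction (take $\mathcal{G}_u(v):=b(u,v)$ and apply the hypothesis) is given. There is nothing to add.
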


The following lemma reduces the problem of establishing
(\ref{boundA}) to that of establishing an a priori bound for
solutions of the special case (\ref{weak_form}).

\begin{lemma} \label{special_case_enough}
Suppose there exists $\tilde C>0$ such that, for all $u\in V_H$ and
$g\in L^2(S_H)$ satisfying (\ref{weak_form}) it holds that
\begin{equation} \label{boundB}
\|u\|_{V_H} \le k_0^{-1}\tilde C\, \|g\|_2.
\end{equation}
Then, for all $u\in V_H$ and ${\cal G}\in V_H^*$ satisfying
(\ref{var_prob2}), the bound (\ref{boundA}) holds with
$$
C  \le \left(1 +  k_0^{-1}\tilde C\left[k_+ + \frac{k_{\infty}^2}{ k_+}\right]\right).
$$
\end{lemma}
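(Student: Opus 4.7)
The plan is to reduce the general variational problem \eqref{var_prob2} to the $L^2$-source case \eqref{weak_form} by subtracting a Riesz representative of $\mathcal{G}$, so that the hypothesis can be invoked. Since $V_H$ is a Hilbert space and $\mathcal{G}\in V_H^*$, the Riesz representation theorem supplies a unique $w\in V_H$ with $\mathcal{G}(v)=(w,v)_{V_H}$ for every $v\in V_H$ and $\|w\|_{V_H}=\|\mathcal{G}\|_{V_H^*}$.

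Set $\phi:=u-w\in V_H$ and use linearity of $b$ together with the Riesz identity to write
\[
b(\phi,v)=b(u,v)-b(w,v)=\mathcal{G}(v)-b(w,v)=(w,v)_{V_H}-b(w,v).
\]
Substituting the definitions of $(\cdot,\cdot)_{V_H}$ and \eqref{sesqui} and cancelling the gradient terms, the aim is to produce an equation of the form $b(\phi,v)=-(g,v)$ for every $v\in V_H$, with $g:=-(k_+^2+k^2)w\in L^2(S_H)$. Using $|k|\le k_\infty$ a.e.\ together with the Friedrichs-type estimate $\|w\|_2\le k_+^{-1}\|w\|_{V_H}$ from Lemma \ref{WL3},
\[
\|g\|_2\le(k_+^2+k_\infty^2)\|w\|_2\le\Bigl(k_++\tfrac{k_\infty^2}{k_+}\Bigr)\|w\|_{V_H}=\Bigl(k_++\tfrac{k_\infty^2}{k_+}\Bigr)\|\mathcal{G}\|_{V_H^*}.
\]

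Applying the hypothesis to the pair $(\phi,g)$ yields $\|\phi\|_{V_H}\le k_0^{-1}\tilde C\|g\|_2$, and the triangle inequality closes the argument:
\[
\|u\|_{V_H}\le\|\phi\|_{V_H}+\|w\|_{V_H}\le\Bigl(1+k_0^{-1}\tilde C\bigl[k_++\tfrac{k_\infty^2}{k_+}\bigr]\Bigr)\|\mathcal{G}\|_{V_H^*},
\]
which is the asserted bound on $C$. The delicate step is the algebraic reduction that produces $b(\phi,v)=-(g,v)$ cleanly: $b(w,v)$ contains a Dirichlet-to-Neumann boundary contribution $\int_{\Gamma_H}\gamma_-\bar v\,T\gamma_- w\,ds$, and one must organize the decomposition so that either this term cancels with a corresponding piece of $(w,v)_{V_H}$, or the Riesz representative is arranged with $\gamma_- w$ under control so that the boundary contribution does not obstruct application of the hypothesis; this bookkeeping is the main technical point of the proof.
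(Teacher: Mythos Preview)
Your approach has a genuine gap, and you correctly flag it yourself at the end: the boundary term does \emph{not} cancel. With $w$ the Riesz representative you get
\[
(w,v)_{V_H}-b(w,v)=\bigl((k_+^2+k^2)w,v\bigr)-\int_{\Gamma_H}\gamma_-\bar v\,T\gamma_- w\,ds,
\]
since $(\cdot,\cdot)_{V_H}$ contains no boundary contribution whatsoever. The residual functional is therefore not of the form $v\mapsto -(g,v)$ for any $g\in L^2(S_H)$, and the hypothesis~\eqref{boundB} cannot be invoked on $\phi$. There is no way to ``arrange'' the Riesz representative so that $\gamma_- w=0$ or so that the DtN term drops out; $w$ is uniquely determined by $\mathcal G$.

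The paper's remedy is exactly to build the DtN term into the auxiliary problem. One introduces the elliptic form
\[
b_0(u,v)=(\nabla u,\nabla v)+k_+^2(u,v)+\int_{\Gamma_H}\gamma_-\bar v\,T\gamma_- u\,ds,
\]
uses Lemma~\ref{WL1} (the sign properties of $T$) to see $\Re\,b_0(v,v)\ge\|v\|_{V_H}^2$, and solves $b_0(u_0,v)=\mathcal G(v)$ by Lax--Milgram with $\|u_0\|_{V_H}\le\|\mathcal G\|_{V_H^*}$. Because $b$ and $b_0$ share the \emph{same} boundary term, $w:=u-u_0$ satisfies $b(w,v)=((k_+^2+k^2)u_0,v)$ exactly, i.e.\ \eqref{weak_form} with $g=-(k_+^2+k^2)u_0$. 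From there your estimates in the last two displays go through verbatim with $u_0$ in place of your $w$. The missing idea is thus to replace the Riesz representation by this $b_0$-representation.
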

\begin{proof}
 Suppose $u\in V_H$ is a solution of
\begin{equation}
b(u,v)={\cal G}(v),\quad v\in V_H, \label{uweak}
\end{equation}
where ${\cal G}\in V_H^*$.   Let $b_0:V_H\times V_H\to \C$ be
defined by
\[
b_0(u,v)=(\nabla u,\nabla
v)+k_+^2(u,v)+\int_{\Gamma_H}\gamma_-\overline{v}\,T\gamma_-u\,ds,
\quad u,v\in V_H.
\]
It follows from Lemma \ref{WL1} that $b_0$ is $V_H$-elliptic, in
fact that
\[
\Re \,b_0(v,v)\ge \Vert v\Vert_{V_H}^2, \quad v\in V_H.
\]
Thus the problem of finding $u_0\in V_H$ such that
\begin{equation}
b_0(u_0,v)={\cal G}(v),\quad v\in V_H, \label{u0weak}
\end{equation}
has a unique solution which satisfies
\begin{equation}
\Vert u_0\Vert_{V_H}\leq \Vert{\cal G}\Vert_{V_H^*}. \label{uoap}
\end{equation}
Furthermore, defining $w=u-u_0$ and using (\ref{uweak}) and
(\ref{u0weak}), we see that
\[
b(w,v)=b(u,v)-b(u_0,v)={\cal G}(v)-({\cal
G}(v)-k_+^2(u_0,v)-(k^2u_0,v))=((k_+^2 + k^2)u_0,v),
\]
for all $v\in V_H$. Thus $w$ satisfies (\ref{weak_form}) with
$g=-(k_+^2 +k^2)u_0$. It follows, using (\ref{uoap}) and (\ref{boundB}),
that
\begin{eqnarray} \label{boundC}
\Vert w\Vert_{V_H}\leq k_0^{-1} \tilde C(k_+^2 +k_{\infty}^2)\Vert u_0\Vert_2\leq k_0^{-1}\tilde C\left[k_+ + \frac{k_{\infty}^2}{k_+ }\right]
\Vert {\cal G}\Vert_{V_H^*}.
\end{eqnarray}
The bound (\ref{boundA}), with 
$$ 
C \le \left(1 +  k_0^{-1}\tilde C\left[k_+ + \frac{k_{\infty}^2}{ k_+}\right]\right),
$$ 
follows
from  (\ref{uoap}) and (\ref{boundC}).
\end{proof}

Following these preliminary lemmas we turn now to establishing the a
priori bound (\ref{boundB}), at first just for the case when
$\Gamma$ is the graph of a smooth function so that 
\begin{equation}\label{Gamma_def_layer}
\Gamma= \{ (\tilde x,f(\tilde x)): \tilde x \in \mathbb{R}^{n-1}\},
\end{equation}
where $f \in C^{\infty}(\mathbb{R}^{n-1})$;
and when $k \in C^{\infty}(D)$. We recall that $\nu$ is
the outward unit normal to $S_H$ and $\nu_n=\nu\cdot e_n$ is the
$n$th (vertical) component of $\nu$.
\begin{lemma}\label{rellich}
Suppose $\Gamma$ is given by (\ref{Gamma_def_layer}) with $f\in
C^\infty(\real^{n-1})$.  Let $H\ge f_+$, $g\in L^2(S_H)$ and
suppose that $k \in C^{\infty}(D)$ is such that $k=k_+$ in $\overline{U_H}$ and such that it satisfies assumption 1. Then, if $w\in V_H$ satisfies
\begin{equation}
b(w,\phi)= -(g,\phi),\quad\phi\in V_H, 
\end{equation}
then
\[
\Vert w\Vert_{V_H}\leq k_0^{-1} \sqrt{[(\kappa_+^2 + \kappa_{\infty}^2)A^{-1}B +1]\kappa_0^2A^{-1}B}\Vert g\Vert_{2}
\]
where $A= 2- {\lambda_1(H-f_-)^3}/{2}$ and $B=2\kappa_+ +1 +2\sqrt{2} +\lambda_2(H-f_-) + 2\kappa_{\infty}^2A^{-1}$.
\end{lemma}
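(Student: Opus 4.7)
The argument is a Rellich-type identity with multiplier $(x_n-f_-)\overline{\partial_n w}$, in the spirit of Chandler-Wilde and Monk \cite{chandmonk}. Under the smoothness assumptions on $k$ and $\Gamma$, standard interior elliptic regularity makes $w$ a classical solution of $\Delta w+k^2w=g$ in $S_H$ with $w=0$ on $\Gamma$, and by Lemma \ref{lemma3p2} we also have $T\gamma_-w=-\partial_n w|_{\Gamma_H}$; a truncation in the tangential variable, together with the $V_H$-decay of $w$, justifies the ensuing integrations by parts.

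I would multiply the PDE by $(x_n-f_-)\overline{\partial_n w}$, integrate over $S_H$, and take twice the real part. The Laplacian contribution is recast via the pointwise identity
\[
2\Re\bigl[(x_n-f_-)\overline{\partial_n w}\,\Delta w\bigr]=\mathrm{div}\bigl(2\Re[(x_n-f_-)\overline{\partial_n w}\nabla w]-(x_n-f_-)e_n|\nabla w|^2\bigr)+|\nabla w|^2-2|\partial_n w|^2.
\]
Its divergence term produces: on $\Gamma$, the integrand $(x_n-f_-)\nu_n|\partial_\nu w|^2\le 0$, because $w=0$ forces $\nabla w=(\partial_\nu w)\nu$ and $\nu_n\le 0$; and on $\Gamma_H$, a contribution proportional to $(H-f_-)\int_{\Gamma_H}(|\partial_n w|^2-|\nabla_{\tilde x}w|^2)\,ds$, which combines with the $k_+^2|w|^2$ trace manufactured below and is controlled by equation (\ref{lem32}) of Lemma \ref{lemma3p2} together with the sign $\Im\int_{\Gamma_H}\overline{\gamma_-w}T\gamma_-w\le 0$ from Lemma \ref{WL1}. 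For the $k^2w$ part I would split into real and imaginary parts and integrate $\int(x_n-f_-)\Re(k^2)\partial_n|w|^2\,dx$ by parts in $x_n$, yielding $(H-f_-)k_+^2\int_{\Gamma_H}|w|^2\,ds-\int\Re(k^2)|w|^2\,dx-\int(x_n-f_-)\partial_n\Re(k^2)|w|^2\,dx$. Assumption 1 enters precisely here: the monotonicity of $\pi$ gives $-\partial_n\Re(k^2)\le\lambda_1+\lambda_2\Im(k^2)$; the $\lambda_1$ piece, bounded via Friedrich's inequality from Lemma \ref{WL3} by $\tfrac{\lambda_1(H-f_-)^3}{2}\|\partial_n w\|_2^2$, combines with the bulk $-2|\partial_n w|^2$ to produce the coefficient $A=2-\lambda_1(H-f_-)^3/2>0$. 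The $\lambda_2$ piece, the imaginary cross-term $2\Im\int(x_n-f_-)\Im(k^2)\overline{\partial_n w}\,w$, and the source pairing $|2\Re\int(x_n-f_-)\overline{\partial_n w}g\,dx|\le 2(H-f_-)\|\partial_n w\|_2\|g\|_2$ are absorbed via Young's inequality.

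The Rellich step controls $\|\nabla w\|_2^2$ and $\int\Re(k^2)|w|^2$ but not $\|w\|_2$ on its own; to close the bound I would also test the variational identity with $\phi=w$. The imaginary part, together with $\Im\int_{\Gamma_H}\overline{\gamma_-w}T\gamma_-w\le 0$ from Lemma \ref{WL1}, controls $\int\Im(k^2)|w|^2$; and the lower bound $\Re(k^2)\ge k_0^2$ converts the $\int\Re(k^2)|w|^2$ control into a $k_0^{-2}$-weighted estimate on $\|w\|_2^2$, which is the source of the $k_0^{-1}$ prefactor in the stated inequality. Assembling the two estimates produces a quadratic inequality of the shape $\|w\|_{V_H}^2\le c_1\|g\|_2^2+c_2\|g\|_2\|w\|_{V_H}$, whose resolution, with careful bookkeeping, delivers exactly the claimed bound, with the constant $B=2\kappa_++1+2\sqrt 2+\lambda_2(H-f_-)+2\kappa_\infty^2A^{-1}$ tracking the boundary, absorption, and $\Im(k^2)$ contributions.

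The main obstacle is the tight accounting of constants: every Young's-inequality absorption must keep the coefficient of $\|\nabla w\|_2^2$ on the right strictly below one, and the threshold $A>0$ is dictated precisely by the Friedrich constant $(H-f_-)^2/2$, leaving no slack in the step that manufactures $A$; the Airy-function example in the preceding Remark confirms that some such upper bound on $\lambda_1(H-f_-)^3$ is indispensable.
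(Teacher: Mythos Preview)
Your plan is essentially the paper's proof: the same Rellich multiplier $(x_n-f_-)\overline{\partial_n w}$, the same use of the sign of $\nu_n$ on $\Gamma$ together with $\nabla w=(\partial_\nu w)\nu$, the same invocation of Assumption~1 to bound $\partial_n\Re(k^2)$, Friedrich's inequality from Lemma~\ref{WL3} to produce the coefficient $A$, and the real/imaginary parts of $b(w,w)=-(g,w)$ to close. Two small points on which the paper differs from your outline: (i) the paper first extends $w$ to $D$ via (\ref{uprcstar}) and carries out the Rellich identity on $S_a$ for $a>H$ (with a tangential cutoff $\phi_A$), so that the inequality (\ref{lem32}) on $\Gamma_a$ is available directly and interior regularity supplies the needed $H^2$ control across $\Gamma_H$; (ii) the final $\|w\|_2$ bound comes purely from Friedrich's inequality applied to the $\|\partial_n w\|_2$ estimate, not from $\Re(k^2)\ge k_0^2$ --- the $k_0^{-1}$ prefactor is cosmetic (it cancels exactly against $\kappa_0$ inside the square root) and is written that way only to match the form needed in Lemma~\ref{special_case_enough}.
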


\begin{proof}

 Let $r=|\tx|$. For $A\ge 1$ let $\phi_A\in
C_0^{\infty}(\real)$ be such that $0\leq \phi_A\leq 1$,
$\phi_A(r)=1$ if $r\le A$ and $\phi_A(r)=0$ if $r\ge A+1$ and
finally such that $\Vert\phi_A'\Vert_{\infty}\leq M$ for some fixed
$M$ independent of $A$.

Extending the definition of $w$ to $D$ by defining $w$ in $U_H$ by
(\ref{uprcstar}) with $F_H:= \gamma_-w$, it follows from Theorem
\ref{th_equiv} that $w$ satisfies the boundary value problem, with
$g$ extended by zero from $S_H$ to $D$ and $k$ extended from $S_H$ to $D$ by taking the value $k_+$ in $U_H$. By standard local
regularity results (e.g. \cite{mclean00} Theorem 4.18) it holds, since $g\in
L^2(D)$, $w=0$ on $\Gamma$, $k \in C^{0,1}_{\rm loc}(D)$ and the boundary is smooth, that $w\in
H^2_{\rm loc}(D)$. Further, $w\in H^2(U_b\setminus U_c)$ for
$c>b>f_+$.
Moreover, by Lemma \ref{lemma3p2}, $w$ is given by the right hand
side of (\ref{uprcstar}) in $U_b$ for all $b>H$ if $H$ is replaced
in (\ref{uprcstar}) by $b$ and $F_b$ denotes the restriction of
$w$ to $\Gamma_b$. Thus $w$ satisfies the boundary value problem
with $H$ replaced by $b$, for all $b>H$, and so, by Theorem
\ref{th_equiv},
\begin{equation} \label{eqstst}
\int_{S_b}(\nabla w\cdot\nabla\bar v-k^2 w\bar v)\,dx =
-\int_{\Gamma_b}\gamma_-\bar v\, T\gamma_- w \,ds - \int_{S_b} \bar
v g \,dx,
\end{equation}
for all $b\ge H$.

In view of this regularity and since $w$ satisfies the boundary
value problem, we have, for all $a>H$,
\begin{eqnarray*}
\lefteqn{2\Re\int_{S_a}\phi_A(r)(x_n-f_-)g\frac{\partial\bar{w}}{\partial x_n}\,dx}\\
&=& 2\Re\int_{S_a}\phi_A(r)(x_n-f_-)(\Delta w+k^2w)
\frac{\partial\bar{w}}{\partial x_n}\,dx\\
&=&
\int_{S_a}\left\{2\Re\left\{\nabla\cdot\left(\phi_A(r)(x_n-f_-)\frac{\partial\bar{w}}{\partial
x_n} \nabla w\right)\right\}-2\phi_A(r)\left|\frac{\partial
w}{\partial x_n}\right|^2
\right.\\&&\left.-2\Re\left[(x_n-f_-)\phi_A(r)\frac{\partial \nabla
\bar w}{\partial x_n}.\nabla w\right]-2
\phi_A'(r)(x_n-f_-)\frac{\tx}{|\tx|}\cdot
\Re\left(\nabla_{\tx}w\frac{\partial \bar{w}}{\partial
x_n}\right)\right\}\,dx\\&&+ 2\Re\int_{S_a}\Re (k^2)(x_n-f_-)\phi_A(r)\frac{\partial \bar w}{\partial
x_n}w + i\Im(k^2)(x_n-f_-)\phi_A(r)\frac{\partial \bar w}{\partial
x_n}wdx.
\end{eqnarray*}
 Using the divergence theorem and integration by parts
 \begin{eqnarray*}
\lefteqn{2\Re\int_{S_a}\phi_A(r)(x_n-f_-)g\frac{\partial\bar{w}}{\partial x_n}\,dx}\\
&=&(a-f_-)\int_{\Gamma_a}\phi_A(r)\left\{ \left|\frac{\partial
w}{\partial x_n}\right|^2- \left|\nabla_{\tx}w\right|^2+
k_+^2\left|w\right|^2\right\}\,ds\\&-&
\int_{\Gamma}(x_n-f_-)\phi_A(r)\left\{\nu_n|\nabla
w|^2-2\Re\left(\frac{\partial \bar{w}}{\partial
x_n}\frac{\partial w}{\partial \nu}\right)\right\}\,ds\\&
+&\int_{S_a}\left\{\phi_A(r)\left(|\nabla
w|^2-\Re (k^2)|w|^2-2\left|\frac{\partial w}{\partial
x_n}\right|^2\right)\right.\\&&
\left.-2\phi_A'(r)(x_n-f_-)\Re\left(\frac{\partial\bar{w}}{\partial
x_n} \frac{\partial w}{\partial r}\right)\right\}\,dx\\&
-&\int_{S_a}\phi_A(r)\frac{\partial \Re (k^2)}{\partial x_n}(x_n -f_-)|w|^2dx\\& +&2\Re\int_{S_a} i\Im (k^2)\phi_A(r)(x_n-f_-)\frac{\partial \bar w}{\partial
x_n}wdx.
\end{eqnarray*}
Using the fact that $w=0$ on $\Gamma$, so that $\nabla w=(\partial
w/\partial \nu)\nu$ and
\[
\frac{\partial w}{\partial x_n}=e_n\cdot\nabla w=e_n\cdot
\nu\frac{\partial w}{\partial \nu}=\nu_n\frac{\partial w}{\partial
\nu},
\]
and rearranging terms we find that
\begin{eqnarray*}
\lefteqn{-\int_{\Gamma}\phi_A(r)(x_n-f_-)\nu_n\left|\frac{\partial
w}{\partial \nu}\right|^2\,ds
+2\int_{S_a}\phi_A(r)\left|\frac{\partial w}{\partial x_n}\right|^2\,dx }\\&& +\int_{S_a}\phi_A(r)\frac{\partial \Re (k^2)}{\partial x_n}(x_n -f_-)|w|^2dx \\
&=& (a-f_-)\int_{\Gamma_a}\phi_A(r)\left\{ \left|\frac{\partial
w}{\partial x_n}\right|^2- \left|\nabla_{\tx}w\right|^2+
k_+^2\left|w\right|^2\right\}\,ds\\&&
+\int_{S_a}\left\{\phi_A(r)\left(|\nabla w|^2-\Re(k^2)|w|^2\right)
-2\phi_A'(r)(x_n-f_-)\Re\left(\frac{\partial\bar{w}}{\partial
x_n} \frac{\partial w}{\partial
r}\right)\right\}\,dx\\&&-2\Re\int_{S_a}\phi_A(r)(x_n-f_-)g\frac{\partial\bar{w}}{\partial
x_n}\,dx +2\Re\int_{S_a}i\Im (k^2)(x_n-f_-)\frac{\partial \bar w}{\partial
x_n}wdx.
\end{eqnarray*}
We now wish to let $A\to \infty$.  The only problem is the term
involving $\phi'_A$ which we estimate as follows. Let
$S_a^b=\left\{x\in S_a\;:\; |\tx|< b\right\}$ for $b\ge 1$. Then
\[
\left|\int_{S_a}\left\{2\phi_A'(r)(x_n-f_-)\Re\left(\frac{\partial\bar{w}}{\partial
x_n} \frac{\partial w}{\partial r}\right)\right\}\,dx\right|\leq
2M(a-f_-)\int_{S_a^{A+1}\setminus \overline{S}_a^A}|\nabla
w|^2\,dx\to 0
\]
as $A\to\infty$, where the convergence follows from the fact that
$w\in H^1(S_H)$.  In addition since $w\in H^2(U_b\setminus U_c)$,
for $c>a>b>f_+$, $\nabla w|_{\Gamma_a}\in H^{1/2}(\Gamma_a)$ and
so, by the Lebesgue dominated and monotone convergence theorems, (note that $\partial \Re(k^2)/\partial x_n$ is bounded below by assumption 2),
\begin{eqnarray}
&&-\int_{\Gamma}(x_n-f_-)\nu_n\left|\frac{\partial
w}{\partial \nu}\right|^2\,ds
+2\int_{S_a}\left|\frac{\partial w}{\partial x_n}\right|^2\,dx  +\int_{S_a}\frac{\partial \Re (k^2)}{\partial x_n}(x_n -f_-)|w|^2dx\nonumber\\
&&= (a-f_-)\int_{\Gamma_a}\left\{ \left|\frac{\partial w}{\partial
x_n}\right|^2- \left|\nabla_{\tx}w\right|^2+
k_+^2\left|w\right|^2\right\}\,ds\nonumber\\&&
+\int_{S_a}\left(|\nabla
w|^2-\Re(k^2)|w|^2-2\Re\left((x_n-f_-)g\frac{\partial\bar{w}}{\partial
x_n}\right)\right)\,dx \nonumber\\&&+2\Re \int_{S_a}i\Im (k^2)(x_n-f_-)\frac{\partial \bar w}{\partial
x_n}w dx.\nonumber\\\label{s13}
\end{eqnarray}
Now, since $w$ satisfies the boundary value problem, including the
radiation condition (\ref{uprcstar}), applying Lemma \ref{lemma3p2}
it follows that
\begin{eqnarray}
\int_{\Gamma_a}\left\{ \left|\frac{\partial w}{\partial
x_n}\right|^2- \left|\nabla_{\tx}w\right|^2+
k_+^2\left|w\right|^2\right\}\,ds&\leq& 
-2k_+\Im\int_{\Gamma_a}\gamma_-\bar{w}T\gamma_-w\,ds.
\label{s14}
\end{eqnarray}
Further, setting $v=w$ in
(\ref{eqstst}) we get
\begin{equation}
\int_{S_b}\left(|\nabla
w|^2-k^2|w|^2\right)\,dx=-\int_{\Gamma_b}\gamma_-\bar{w}T\gamma_-w\,ds-
\int_{S_b}g\bar{w}\,dx, \label{s15}
\end{equation}
for $b\ge H$, so that, by Lemma \ref{WL1},
\begin{equation}
\int_{S_b}[|\nabla w|^2-\Re (k^2)|w|^2]\,dx\leq
-\Re\int_{S_b}g\bar{w}\,dx \label{s16}
\end{equation}
and
\begin{equation}
-\int_{S_b}\Im (k^2)|w|^2dx + \Im\int_{\Gamma_b}\gamma_-\bar{w}T\gamma_-w\,ds=-\Im\int_{S_b}g\bar{w}\,dx,
\end{equation}
which means, in view of Lemma \ref{lemma3p2} and the fact that $\Im (k^2) \geq 0$, that 
\begin{eqnarray}\label{s17a}
 \int_{S_b}\Im (k^2)|w|^2dx\leq\Im\int_{S_b}g\bar{w}\,dx, 
\end{eqnarray}
and that 
\begin{equation}
 \quad -2k_+\Im\int_{\Gamma_b}\gamma_-\bar{w}T\gamma_-w\,ds\leq 2k_+\Im\int_{S_b}g\bar{w}\,dx. \label{s17}
\end{equation} 
Using (\ref{s17}) in (\ref{s14}) and then using the resulting
equation, (\ref{s17a}) and (\ref{s16}) in (\ref{s13}), noting that $\supp
g\subset \overline{S_H}$, and using Assumption 1 and the Cauchy-Schwarz inequality, we get that
\begin{eqnarray*}
& -& \int_\Gamma (x_n-f_-)\nu_n\left|\frac{\partial w}{\partial
\nu}\right|^2\,ds + 2\int_{S_a}\left|\frac{\partial w}{\partial
x_n}\right|^2\,dx -\lambda_1\int_{S_a}(x_n -f_-)|w|^2 dx\\&&\le  2(a-f_-)k_+\Im\,\int_{S_H}g\bar w\,dx
  - \Re \int_{S_H} \left[g\bar w + 2(x_n-f_-)g\frac{\partial \bar
w}{\partial x_n}\right]\,dx \\  && +2\left|\int_{S_a}\Im (k^2)(x_n-f_-)\frac{\partial \bar w }{\partial
x_n}w dx\right| + \lambda_2\int_{S_a}\Im(k^2)(x_n-f_-)|w|^2dx
\\ &&\leq   2(a-f_-)k_+\Im\,\int_{S_H}g\bar w\,dx
  - \Re \int_{S_H} \left[g\bar w + 2(x_n-f_-)g\frac{\partial \bar
w}{\partial x_n}\right]\,dx \\
&&+ 2(a-f_-)\left(\Im\int_{S_H}g\bar wdx\right)^{\frac{1}{2}}k_{\infty}\left(\int_{S_a}\left|\frac{\partial w}{\partial x_n}\right|^2dx\right)^{\frac{1}{2}}\\&& + \lambda_2(a-f_-)\Im\int_{S_H}g \bar wdx.
\end{eqnarray*}
Since this equation holds for all $a>H$ and $\nu_n < 0$, on
$\Gamma$, 
it follows by the Cauchy-Schwarz inequality that
\begin{eqnarray*}
2\left\Vert \frac{\partial w}{\partial x_n}\right\Vert_2^2 -\lambda_1(H-f_-)\Vert w \Vert^2_2  &\leq&\left(
2\kappa_+ \Vert w\Vert_2+\Vert
w\Vert_2+2(H-f_-)\left\Vert\frac{\partial w}{\partial
x_n}\right\Vert_2 \right)\Vert g\Vert_2 \\  &+& \lambda_2(H-f_-)\Vert w\Vert_2  \Vert g\Vert_2 \\&+& 2\kappa_{\infty}\Vert g\Vert_2^{\frac{1}{2}}\Vert w\Vert_2^{\frac{1}{2}}\left\Vert\frac{\partial w}{\partial x_n}\right\Vert_2 .
\end{eqnarray*}
Now using lemma \ref{WL3} to estimate $\Vert w\Vert_2$ we obtain
\begin{eqnarray*}
\left[2-\frac{\lambda_1(H-f_-)^3}{2}\right]\left\Vert \frac{\partial w}{\partial x_n}\right\Vert_2&&\leq [2\kappa_+ +1+2\sqrt{2} + \lambda_2(H-f_-)]\frac{(H-f_-)}{\sqrt{2}}\Vert g\Vert_2 \\ &&+ 2\kappa_{\infty}\sqrt{\frac{(H-f_-)}{\sqrt{2}}}\Vert g\Vert_2^{\frac{1}{2}}\left\Vert \frac{\partial w}{\partial x_n}\right\Vert_2^{\frac{1}{2}}.
\label{dwdxest}
\end{eqnarray*}
Now, recalling the definition of the constant $A$, it holds for all $\tau \geq0$, that 
\begin{eqnarray*}
(1-A^{-1}\tau^{-1})\left\Vert \frac{\partial w}{\partial x_n}\right\Vert_2\leq A^{-1}(2\kappa_+ +1 + 2\sqrt{2}+ \lambda_2(H-f_-) +\kappa_{\infty}^2\tau)\frac{(H-f_-)}{\sqrt{2}}\Vert g \Vert_2.
\end{eqnarray*}
Now choosing $\tau = 2A^{-1}$, recalling the definition of $B$ and using Lemma \ref{WL3} again shows that
\[
\left\Vert w\right\Vert_2\leq (H-f_-)^2A^{-1}B\Vert g\Vert_2.
\]
Using the above inequality in (\ref{s16}) shows that
\begin{eqnarray*}
\Vert w\Vert_{V_H}^2&\leq &(k^2_+ +k_{\infty}^2)\Vert w\Vert_2^2+\Vert
g\Vert_2\Vert w\Vert_2
\\&\leq &
(\kappa_+^2 + \kappa_{\infty}^2)(H-f_-)^2A^{-2}B^2\Vert g\Vert_2^2 + (H-f_-)^2A^{-1}B\Vert g\Vert_2^2.
\end{eqnarray*}
The required bound now follows.
\end{proof}

Combining lemmas \ref{rellich}, \ref{special_case_enough} and
\ref{ihl_rem} with Corollary \ref{cor_infsup}, we have the following
result.
\begin{lemma}\label{apriori}
If $\Gamma$ and $k \in L^{\infty}(D)$ satisfy the conditions of Lemma \ref{rellich} then the
variational problem (\ref{var_prob2}) has a unique solution $u\in
V_H$ for every ${\cal G}\in V_H^*$ and the solution satisfies the
estimate (\ref{aprioriest}).

\end{lemma}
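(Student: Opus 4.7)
The plan is to simply chain together the four preceding results, since each has been built up precisely so that Lemma \ref{apriori} falls out by composition. No new analytic work is needed; the lemma is essentially a bookkeeping step that assembles the machinery of Section \ref{hik} into a single statement.

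First, I would apply Lemma \ref{rellich} to obtain the a priori bound for the special variational problem (\ref{weak_form}): any $w\in V_H$ solving $b(w,\phi)=-(g,\phi)$ with $g\in L^2(S_H)$ satisfies $\|w\|_{V_H}\le k_0^{-1}\tilde C\|g\|_2$, where $\tilde C := \sqrt{[(\kappa_+^2+\kappa_\infty^2)A^{-1}B+1]\kappa_0^2A^{-1}B}$. This is exactly the hypothesis (\ref{boundB}) required by Lemma \ref{special_case_enough}, which then promotes the estimate to the general bound $\|u\|_{V_H}\le C\|\mathcal{G}\|_{V_H^*}$ for arbitrary $\mathcal{G}\in V_H^*$, with constant $C = 1 + k_0^{-1}\tilde C\bigl[k_+ + k_\infty^2/k_+\bigr]$. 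A quick substitution confirms that this value of $C$ coincides with the expression for the constant in (\ref{aprioriest}), so the norm estimate asserted in the lemma is immediate.

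Second, Lemma \ref{ihl_rem} converts this uniform a priori bound, valid for every solution of (\ref{var_prob2}), into the inf-sup condition (\ref{infsup_layer}) with $\alpha\ge C^{-1}$. I would then invoke Corollary \ref{cor_infsup}, which is the concrete instantiation of the generalized Lax-Milgram theorem of Babu\v{s}ka obtained from the boundedness of $b$ (Lemma \ref{WL4}) and the automatic transposed inf-sup condition (Lemma \ref{WEXL7}, whose proof uses the symmetry property of Corollary \ref{symmetry}). This delivers existence, uniqueness, and the continuity bound in one stroke.

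Since the heavy lifting — the Rellich-type identity obtained by testing against the multiplier $\phi_A(r)(x_n-f_-)\partial\bar w/\partial x_n$, combined with the energy estimates from (\ref{s13})--(\ref{s17}) — has already been carried out inside Lemma \ref{rellich}, there is no genuine obstacle remaining. The only point that requires a moment's care is the constant matching: one must check that after composing $\tilde C$ through Lemma \ref{special_case_enough}, the resulting expression is indeed the constant $C$ displayed in (\ref{aprioriest}), which is a routine but easily mis-tracked algebraic verification.
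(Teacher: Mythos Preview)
Your proposal is correct and matches the paper's own proof exactly: the paper simply states that Lemma \ref{apriori} follows by ``combining lemmas \ref{rellich}, \ref{special_case_enough} and \ref{ihl_rem} with Corollary \ref{cor_infsup}.'' Your constant-matching check is also right, since substituting $\tilde C = \sqrt{[(\kappa_+^2+\kappa_\infty^2)A^{-1}B+1]\kappa_0^2A^{-1}B}$ into the bound $C\le 1 + k_0^{-1}\tilde C[k_+ + k_\infty^2/k_+]$ of Lemma \ref{special_case_enough} reproduces the constant displayed in (\ref{aprioriest}).
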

We now proceed to establish that lemmas \ref{rellich} and
\ref{apriori} hold for arbitrary $k \in L^{\infty}(D)$ satisfying assumption 1.
\begin{lemma}\label{arb_k}
Suppose $\Gamma$ is given by (\ref{Gamma_def_layer}) with $f\in
C^\infty(\real^{n-1})$.  Let $H\ge f_+$, $g\in L^2(S_H)$ and
suppose that $k \in L^{\infty}(D)$ satisfies assumption 1. Then, if $w\in V_H$ satisfies
\begin{equation}
b(w,\phi)= -(g,\phi), \quad \phi\in V_H, \label{bgprob}
\end{equation}
then
\[
\Vert w\Vert_{V_H}\leq k_0^{-1} \sqrt{[\kappa_+^2 + \kappa_{\infty}^2]A^{-1}B +1]\kappa_0^2A^{-1}B}\Vert g\Vert_{2}
\]
where $A= 2- {\lambda_1(H-f_-)^3}/{2}$ and $B=2\kappa_+ +1 +2\sqrt{2} +\lambda_2(H-f_-) + 2\kappa_{\infty}^2A^{-1}$.\end{lemma}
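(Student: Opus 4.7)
The plan is to derive the bound for arbitrary $k \in L^\infty(D)$ by a smoothing argument that reduces everything to Lemma \ref{rellich}. First I would construct a sequence of smooth $k_n \in C^\infty(D)\cap L^\infty(D)$ that approximate $k$ and each satisfy Assumption 1 with \emph{the same} constants $\lambda_1$, $\lambda_2$, $k_0$, $k_\infty$, and such that $k_n(x)=k_+$ in $\overline{U_H}$. After extending $k^2$ to $\mathbb{R}^n$ by the value $k_+^2$ on $\mathbb{R}^n\setminus D$, I would take a tensor-product mollifier $\rho_n=\rho_n^{(1)}(\tx)\,\rho_n^{(2)}(x_n)$ with $\rho_n^{(1)}$ symmetric and $\rho_n^{(2)}$ supported in $[0,1/n]$ (the asymmetric vertical mollifier guarantees $k_n\equiv k_+$ on $\overline{U_H}$), and set $k_n^2:=\rho_n*k^2$. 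A Fubini computation shows that the representation in Assumption 1 survives convolution, with $\pi$ replaced by a suitable $\pi_n$ which is still monotonic non-decreasing in $x_n$; the bounds $\Re(k_n^2)\ge k_0^2$, $\Im(k_n^2)\ge 0$, and $|k_n|\le k_\infty$ are immediate from positivity of the kernel.

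The next step is to recognise that the given $w$ satisfies the perturbed variational equation with the smooth coefficient: for every $\phi\in V_H$,
\[
b_n(w,\phi)=b(w,\phi)+((k^2-k_n^2)w,\phi)=-(g_n,\phi),
\]
where $b_n$ is the sesquilinear form of (\ref{sesqui}) with $k$ replaced by $k_n$ and
\[
g_n:=g-(k^2-k_n^2)w\in L^2(S_H).
\]
Applying Lemma \ref{rellich} to $k_n$ (which is legitimate because $\Gamma$ is already smooth and $k_n$ satisfies all its hypotheses with the same constants) then yields
\[
\|w\|_{V_H}\le k_0^{-1}\sqrt{[(\kappa_+^2+\kappa_\infty^2)A^{-1}B+1]\kappa_0^2A^{-1}B}\,\|g_n\|_2,
\]
with $A$ and $B$ independent of $n$.

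Finally I would pass to the limit $n\to\infty$. Since $w\in V_H\hookrightarrow L^2(S_H)$, the sequence $(k^2-k_n^2)w$ is dominated in absolute value by $2k_\infty^2|w|\in L^2(S_H)$, and it converges pointwise a.e.\ to zero because $k_n^2\to k^2$ a.e. The dominated convergence theorem then gives $\|(k^2-k_n^2)w\|_2\to 0$, hence $\|g_n\|_2\to\|g\|_2$, and the stated estimate follows upon taking the limit in the previous display.

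The main obstacle is the careful construction of the approximating $k_n$: one must verify that convolution preserves the precise form demanded by Assumption 1, which requires interchanging the $\rho_n^{(2)}$-convolution with the cumulative integral $\int_{-\infty}^{x_n}\lambda_2\Im(k^2)(\tx,t)\,dt$ and checking that the residual term arising from $-\lambda_1 x_n$ produces only a constant shift that can be absorbed into $\pi_n$ without destroying its monotonicity in $x_n$.
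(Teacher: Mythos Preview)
Your strategy is sound and in fact cleaner than the paper's, but two details need repair. First, the extension of $k^2$ outside $D$ must be by $k_0^2$, not $k_+^2$. The point is that the extended $\pi$ below $\Gamma$ becomes $(\text{extension value})+\lambda_1 x_n$, and for monotonicity across $\Gamma$ you need this to be no larger than $\pi|_{D}=\Re(k^2)+\lambda_1 x_n$ just above $\Gamma$; since $\Re(k^2)\ge k_0^2$ but need not be $\ge k_+^2$, only the choice $k_0^2$ (or smaller) works. Second, with the usual convention $(\rho_n*k^2)(x)=\int\rho_n(y)k^2(x-y)\,dy$ and $\rho_n^{(2)}$ supported in $[0,1/n]$, you sample $k^2$ at heights in $[x_n-1/n,x_n]$, so $k_n(x)\ne k_+$ for $x_n=H$; you need $\mathrm{supp}\,\rho_n^{(2)}\subset[-1/n,0]$ instead. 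With these fixes your verification that Assumption~1 survives convolution, and your dominated-convergence argument for $\|(k^2-k_n^2)w\|_2\to0$, go through.

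For comparison, the paper takes a more circuitous route: it uses a \emph{symmetric} mollifier $\psi_\delta$, which forces it to pass from $S_H$ to $S_b$ with $b>H$ (so that $k=k_+$ in a full neighbourhood of $\Gamma_b$ and hence $k_\delta=k_+$ there), then approximates $w$ by $w_n\in C_0^\infty(D)$, rewrites the equation for $w_n$ on $S_b$, and invokes the solvability Lemma~\ref{apriori} to split $w_n=w'+w''$ before estimating each piece. Your asymmetric mollifier eliminates the need for the $b>H$ detour, and your direct DCT argument on $(k^2-k_n^2)w$ eliminates the $w_n$ approximation and the splitting via Lemma~\ref{apriori}; you invoke only Lemma~\ref{rellich}. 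The price you pay is the slightly more delicate check that a one-sided mollifier still gives a.e.\ convergence and still preserves the structure of Assumption~1 (including the constant shift in $\pi_n$ that you correctly identified).
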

\begin{proof}
Extending the definition of $w$ to $D$ by defining $w$ in $U_H$ by
(\ref{uprcstar}) with $F_H:= \gamma_-w$ and extending $g$ by zero from $S_H$ to $D$, it follows from Theorem
\ref{th_equiv} and lemma \ref{lemma3p2} (cf. the proof of lemma \ref{rellich}) that
$\forall b>H$,  
\begin{equation} \label{int_S_b}
\int_{S_b}\nabla w.\nabla \bar v - k^2w\bar v dx + \int_{\Gamma_b}\gamma_-\bar v T\gamma_-wds  
=-(g,v), \quad v\in V_H.
\end{equation}
For $x \in \mathbb{R}^n \backslash D$ let $k(x) = k_0$, so that $k$ is now a function on $\mathbb{R}^n$. Note that assumption 1 now holds for almost all $x \in \mathbb{R}^n$.

For $\delta>0$, let $\psi_{\delta} \in C_0^{\infty}(\mathbb{R}^n)$ be such that $\psi_{\delta} >0$, $\psi_{\delta}(x)=0$ if $|x|>\delta$,  such that $\int_{\mathbb{R}^n}\psi_{\delta}(x)dx =1$ and such that $\psi_{\delta}(x) =\psi_{\delta}(-x)$ for $x \in \mathbb{R}^n$. 
Let $k^2_{\delta} \in C^{\infty}(\mathbb{R}^n)$ be given by
\[
k_{\delta}^2 := k^2 \ast \psi_{\delta} =\Re(k^2)\ast \psi_{\delta} +i
\Im(k^2)\ast \psi_{\delta}. 
\]
Since $b>H$, then for all $x \in \Gamma_b$ there exists $\mu>0$ such that if $z \in B_{\mu}(x)$ then $k(z)=k_+$. Thus it follows from the definitions of convolution and $\psi_{\delta}$ that $k_{\delta} =k_+$ on $\Gamma_b$ provided we choose $\delta \le \mu$. 
Also $\Vert k_{\delta} \Vert_{L^{\infty}(D)} \leq k_{\infty}$ and 
\[
\Re(k^2_{\delta})= \int_{|y|<\delta}\Re(k^2)(x-y)\psi_{\delta}(y)dy>k_0^2.
\]
Moreover, if $s=t+y_n$, then
\begin{eqnarray*}
\int_{\mathbb{R}^n}\psi_{\delta}(y)\int_{-\infty}^{x_n-y_n}\Im(k^2(\tilde x-\tilde y,t))dtdy &=&
\int_{\mathbb{R}^n}\psi_{\delta}(y)\int_{-\infty}^{x_n}\Im(k^2(\tilde x-\tilde y,s-y_n))dsdy\\&=&
\int_{-\infty}^{x_n}\int_{\mathbb{R}^n}\psi_{\delta}(y)\Im(k^2(\tilde x-\tilde y,s-y_n))dyds\\ &=& \int_{-\infty}^{x_n}\Im(k^2)\ast\psi_{\delta}((\tilde x,s)) ds.
\end{eqnarray*}
The symmetry property of $\psi_{\delta}$ then ensures that
\[
\Re(k^2_{\delta})(x)= \pi \ast \psi_{\delta}(x) -\lambda_1x_n -\int_{-\infty}^{x_n}\lambda_2 \Im(k^2_{\delta})(\tilde x, t)dt,
\]
with $\pi \ast \psi_{\delta}$ monotonic non-decreasing:
for if $h>0$ and $x \in \mathbb{R}^n$ then 
\begin{eqnarray*}
\pi\ast\psi_{\delta}(x+e_nh)-  \pi\ast\psi_{\delta}(x)=
\int_{\mathbb{R}^n}[\pi(x-y+e_n h) -\pi(x-y)]\psi_{\delta}(y)dy \geq 0,
\end{eqnarray*} 
because $\pi$ is assumed monotonic non-decreasing. Thus $k_{\delta}\in C^{\infty}(D)$ satisfies assumption 1 and so all of the hypotheses of lemma \ref{rellich}, with $H$ replaced by $b$. 

Now, fix $\epsilon>0$, and choose $w_n \in C_0^{\infty}(D)$ such that $\Vert w- w_n\Vert_{V_b}<\epsilon$. 
Thus (\ref{int_S_b}) can be rewritten as
\begin{eqnarray*}
\int_{S_b}\nabla w_n.\nabla \bar v - k_{\delta}^2w_n\bar v dx + \int_{\Gamma_b}\gamma_-\bar v Tw_nds  
&=&-(g,v) + b_b(w_n-w,v) \\
&+&\int_{S_b}(k^2-k^2_{\delta})w_n\bar v dx, \quad v\in V_H,
\end{eqnarray*}
where $b_b: V_b \times V_b \to \mathbb{C}$ is defined by (\ref{sesqui}) with $H$ replaced by $b$.
Now by lemma \ref{apriori} there exist unique $w',w'' \in V_H$ such that 
\begin{equation} 
\int_{S_b}\nabla w'.\nabla \bar v - k_{\delta}^2w'\bar v dx + \int_{\Gamma_b}\gamma_-\bar v T\gamma_-w'ds  
=-(g,v) + \int_{S_b}(k^2-k^2_{\delta})w_n\bar v dx, \quad v\in V_H,
\end{equation}
and
\begin{equation} 
\int_{S_b}\nabla w''.\nabla \bar v - k_{\delta}^2w''\bar v dx + \int_{\Gamma_b}\gamma_-\bar v T\gamma_-w''ds  
=b(w_n-w,v), \quad v\in V_H.
\end{equation}
Evidently $w_n=w'+w''$.
Hence by lemmas \ref{rellich} and \ref{apriori} again and using lemma \ref{WL4} 
\begin{eqnarray*}
\Vert w_n\Vert_{V_b} & \leq & \Vert w'\Vert_{V_b} +\Vert w''\Vert_{V_b}
\\& \leq& k_0^{-1}\sqrt{[(\kappa_+^2 + \kappa_{\infty}^2)A^{-1}B+1]\kappa_0^2A^{-1}B}\left[\Vert g\Vert_2 + \Vert (k^2-k^2_{\delta})w_n\Vert_{L^2(S_b)}\right]\\
 &+& C\Vert w_n-w\Vert_{V_b},
\end{eqnarray*}
where $C$ is independent of $\delta>0$. 
Since $k^2 \in L^2(\supp w_n)$ and since $w_n \in C^{\infty}_0(D)$ and so is bounded, standard arguments show that if $\delta >0$ is sufficiently small then
\begin{eqnarray*}
\Vert (k^2-k^2_{\delta})w_n\Vert_{L^2(S_b)} & = & \int_{S_b}|k^2 -k^2_{\delta}|^2|w_n|^2dx < \epsilon. 
\end{eqnarray*}
Thus for all $\epsilon>0$ 
\begin{eqnarray*}
\Vert w\Vert_{V_b} & \leq & \Vert w_n-w\Vert_{V_b} + \Vert w_n\Vert_{V_b}\\
&& \leq \epsilon +k_0^{-1}\sqrt{[(\kappa_+^2 + \kappa_{\infty}^2)A^{-1}B+1]\kappa_0^2A^{-1}B}\left[\Vert g\Vert_2 + \epsilon\right] + C\epsilon, 
\end{eqnarray*}
which implies the result by arbitrariness of $\epsilon >0$ and $b>H$.
\end{proof}
Combining lemmas \ref{arb_k}, \ref{special_case_enough} and
\ref{ihl_rem} with Corollary \ref{cor_infsup}, we have the following
result.
\begin{lemma}\label{apriori_arb_k}
If $\Gamma$ and $k \in L^{\infty}(D)$ satisfy the conditions of Lemma \ref{arb_k} then the
variational problem (\ref{var_prob2}) has a unique solution $u\in
V_H$ for every ${\cal G}\in V_H^*$ and the solution satisfies the
estimate (\ref{aprioriest}).
\end{lemma}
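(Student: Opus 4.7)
My plan is to treat this lemma as the combinatorial final step that stitches together the machinery built up over the preceding development, exactly parallel to the proof of Lemma \ref{apriori} (the smooth-$k$ analogue), but now invoking Lemma \ref{arb_k} in place of Lemma \ref{rellich}. All the nontrivial analytic work has already been carried out; nothing new needs to be proved from scratch.

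Concretely, the strategy has four steps, executed in order. First, Lemma \ref{arb_k} furnishes the a priori bound $\|w\|_{V_H}\le k_0^{-1}\tilde C\,\|g\|_2$ for every $w\in V_H$ satisfying the special variational problem $b(w,\phi) = -(g,\phi)$, where $\tilde C = \sqrt{[(\kappa_+^2+\kappa_\infty^2)A^{-1}B+1]\kappa_0^2 A^{-1}B}$, precisely the hypothesis \eqref{boundB} of Lemma \ref{special_case_enough}. Second, Lemma \ref{special_case_enough} converts this into the stronger a priori bound \eqref{boundA}, valid for solutions of the general problem $b(u,v)={\cal G}(v)$ with ${\cal G}\in V_H^*$, yielding the constant $C$ as stated. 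Third, Lemma \ref{ihl_rem} then translates the a priori bound \eqref{boundA} into the inf-sup condition \eqref{infsup_layer} with $\alpha\ge C^{-1}$. Fourth, Corollary \ref{cor_infsup} — which required only boundedness of $b$ (Lemma \ref{WL4}), the symmetry Corollary \ref{symmetry} via Lemma \ref{WEXL7} to derive the transposed inf-sup condition, and \eqref{infsup_layer} itself — delivers existence and uniqueness of $u\in V_H$ for every ${\cal G}\in V_H^*$, together with the estimate \eqref{aprioriest}.

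The only subtle point is the bound in the boundary value problem case: once $u$ is produced as the unique solution of the variational problem, Theorem \ref{th_equiv} extends it into $D$ as a solution of the original boundary value problem, and specializing ${\cal G}(v)=-(g,v)$ with $g\in L^2(S_H)$ allows one to read off the sharper bound $k_0\|u\|_{V_H}\le \tilde C\,\|g\|_2$ directly from Lemma \ref{arb_k} rather than from the looser constant produced by Lemma \ref{special_case_enough}. So the final estimate splits into two forms depending on whether one uses Lemma \ref{arb_k} directly (for the $L^2$ source case) or passes through Lemma \ref{special_case_enough} (for general antilinear functionals).

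There is no real obstacle: the hard work is entirely in Lemma \ref{arb_k} (the Rellich-style identity and the mollification argument in $k$) and in Lemma \ref{special_case_enough} (the splitting $u=u_0+w$ trick against $b_0$). Once those are in hand, the present lemma is a purely formal assembly, and I would simply state it as such in one short paragraph mirroring the assembly paragraph already used to prove Lemma \ref{apriori}.
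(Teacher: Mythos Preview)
Your proposal is correct and matches the paper's approach exactly: the paper's proof is the single sentence ``Combining lemmas \ref{arb_k}, \ref{special_case_enough} and \ref{ihl_rem} with Corollary \ref{cor_infsup}, we have the following result,'' which is precisely the four-step assembly you describe. Your additional remarks about the sharper bound in the $L^2$-source case really pertain to Theorem \ref{th_main1} rather than to this lemma, but they are accurate and do no harm.
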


We proceed now to establish that lemmas \ref{arb_k} and
\ref{apriori_arb_k} hold for much more general boundaries, namely those
satisfying (\ref{dom_cond}). To establish this we first recall the
following technical lemma from \cite{chandmonk}.

\begin{lemma} \label{lem_tech}
If (\ref{dom_cond}) holds then, for every $\phi\in C_0^\infty(D)$,
there exists $f\in C^\infty(\real^{n-1})$ such that
$$
\supp \phi \subset D^\prime := \{x\in \real^n:x_n>f(\tx),
\,\tx\in\real^{n-1}\}
$$
and $ U_{f_+}\subset D^\prime\subset D. $
\end{lemma}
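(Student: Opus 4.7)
The plan is to realize $D$ as the strict epigraph of a (possibly non-smooth) obstacle function $g$, and then construct $f$ by sandwiching a smooth function between $g$ and the heights achieved by $\supp\phi$, using a partition of unity.

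First I would set $g(\tx):=\inf\{t\in\real:(\tx,t)\in D\}$. Since $U_{f_+}\subset D\subset U_{f_-}$, we have $f_-\le g\le f_+$; together with openness of $D$, hypothesis (\ref{dom_cond}) yields $D=\{x:x_n>g(\tx)\}$ and that $g$ is upper semicontinuous. The conclusion of the lemma then reduces to constructing $f\in C^\infty(\real^{n-1})$ with $g\le f\le f_+$ on $\real^{n-1}$ and $f(\tx)<x_n$ for every $(\tx,x_n)\in K:=\supp\phi$: the first requirement gives $U_{f_+}\subset D'\subset D$ and the second gives $\supp\phi\subset D'$.

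The construction hinges on a uniform gap estimate. Compactness of $K\subset D$ gives $3\eta:=\dist(K,\partial D)>0$, so for any $(\tx_0,x_{0,n})\in K$ the ball $B_{3\eta}(\tx_0,x_{0,n})$ lies in $D$; in particular $(\tx,x_{0,n}-\eta)\in D$ whenever $|\tx-\tx_0|<2\eta$, and (\ref{dom_cond}) then forces $g(\tx)\le x_{0,n}-\eta$. Letting $L\subset\real^{n-1}$ be the compact projection of $K$, I would cover $L$ by finitely many balls $B_\eta(\tx_i)$, $\tx_i\in L$, put $m_i:=\min\{x_n:(\tx,x_n)\in K,\,|\tx-\tx_i|\le\eta\}$ and $f_i:=\min(f_+,m_i-\eta/2)$. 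The gap estimate, applied to a minimizer $(\tx_0,m_i)$ and noting $|\tx-\tx_0|<2\eta$ when $\tx\in B_\eta(\tx_i)$, gives $g\le f_i$ on $B_\eta(\tx_i)$; and $f_i<x_n$ holds for every $(\tx,x_n)\in K$ with $|\tx-\tx_i|\le\eta$ by the construction of $f_i$. Taking a smooth partition of unity $\{\rho_i\}_{i=1}^N$ subordinate to $\{B_\eta(\tx_i)\}$ on a neighborhood of $L$, and $\rho_0:=1-\sum_{i=1}^N\rho_i\in C^\infty(\real^{n-1})$ (which vanishes on $L$), the function
\[
f(\tx):=\rho_0(\tx)f_++\sum_{i=1}^N\rho_i(\tx)f_i
\]
is smooth; $f\le f_+$ and $f\ge g$ follow by a term-by-term convex-combination argument, and $f<x_n$ on $K$ follows from $\rho_0=0$ on $L$ together with $f_i<x_n$ on every active patch.

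The one substantive step, and hence the main obstacle, is the uniform gap estimate. What makes it work is that (\ref{dom_cond}) forces $D$ to be described globally as the strict epigraph of $g$, so a vertical gap at a point of $K$ propagates to a horizontal neighborhood via upward closure. Without (\ref{dom_cond}) the set $D$ could have overhangs, and there would be no hope of separating $K$ from $\partial D$ by the graph of any single function on $\real^{n-1}$; everything else in the proof is routine partition-of-unity bookkeeping.
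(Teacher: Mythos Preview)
Your argument is correct. The reduction to constructing a smooth $f$ with $g\le f\le f_+$ and $f(\tx)<x_n$ on $K$ is the right reformulation, the gap estimate via $\dist(K,\partial D)$ combined with (\ref{dom_cond}) is sound, and the partition-of-unity averaging of the local heights $f_i$ delivers all three required inequalities by convex combination. One minor point worth making explicit: to ensure $\rho_0\ge 0$ (so that $f$ really is a convex combination), you should build $\{\rho_i\}_{i=0}^N$ as a partition of unity on $\real^{n-1}$ subordinate to the finite open cover $\{\real^{n-1}\setminus L,\,B_\eta(\tx_1),\ldots,B_\eta(\tx_N)\}$, rather than first constructing $\rho_1,\ldots,\rho_N$ and then setting $\rho_0=1-\sum\rho_i$.

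As for comparison with the paper: the paper does not prove this lemma at all---it is stated as a result recalled from \cite{chandmonk} and used as a black box in the proof of Lemma~\ref{rellich2}. So there is no ``paper's own proof'' to compare against here; you have supplied a complete argument where the paper simply cites one.
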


With this preliminary lemma we can proceed to show that Lemma
\ref{arb_k} holds whenever (\ref{dom_cond}) holds.

\begin{lemma}\label{rellich2}
Suppose (\ref{dom_cond}) holds, $H\ge f_+$, $g\in L^2(S_H)$ $k\in L^{\infty}(D)$ satisfies assumption 1, and
$w\in V_H$ satisfies
\begin{equation} \label{var_last}
b(w,\phi)= -(g,\phi), \quad \phi\in V_H.
\end{equation}
Then
\[
\Vert w\Vert_{V_H} \leq k_0^{-1}\sqrt{[(\kappa_+^2 + \kappa_{\infty}^2)A^{-1}B+1]\kappa_0^2A^{-1}B}\Vert g\Vert_{2}
\]
\end{lemma}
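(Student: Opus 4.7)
The plan is to reduce the general-boundary case to the smooth-graph case already handled in Lemma \ref{arb_k}/\ref{apriori_arb_k} by exhausting $D$ with smooth subdomains supplied by Lemma \ref{lem_tech}, and then to transfer the a priori bound via an approximation-plus-correction argument. Since I want to control $\|w\|_{V_H}$, I would first choose $w_n\in C_0^\infty(D)$ with $w_n|_{S_H}\to w$ in $V_H$; this is possible by the very definition of $V_H$. For each $n$, Lemma \ref{lem_tech} produces a smooth $f_n\in C^\infty(\real^{n-1})$ such that $\supp w_n\subset D_n:=\{x:x_n>f_n(\tx)\}\subset D$, with $U_{f_+}\subset D_n$. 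Write $S_H^n:=D_n\setminus\overline{U_H}$ and $V_H^n:=\{\phi|_{S_H^n}:\phi\in H_0^1(D_n)\}$, and denote by $b_n$ the sesquilinear form (\ref{sesqui}) taken with integration over $S_H^n$. Because $D_n\subset D$ and $U_{f_+}\subset D_n\subset U_{f_-}$, the constants $f_-,f_+,k_0,k_+,k_\infty,\lambda_1,\lambda_2$ of the hypotheses carry over unchanged to $D_n$, and Assumption~1 continues to hold.

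On each smooth-graph domain $D_n$, Lemma \ref{apriori_arb_k} applies. In particular, the unique $\tilde w_n\in V_H^n$ with $b_n(\tilde w_n,v)=-(g,v)$ for all $v\in V_H^n$ satisfies, by Lemma \ref{arb_k} itself,
\[
\|\tilde w_n\|_{V_H^n}\le k_0^{-1}\sqrt{[(\kappa_+^2+\kappa_\infty^2)A^{-1}B+1]\kappa_0^2 A^{-1}B}\,\|g\|_2 =: C_g\|g\|_2,
\]
with $A,B$ as in the statement. The point of introducing $\tilde w_n$ is to compare it with $w_n|_{S_H^n}$, which already lies in $V_H^n$ since $\supp w_n$ is compactly contained in $D_n$. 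For any $v\in V_H^n$, extending $v$ by zero to $S_H$ yields an element $\tilde v\in V_H$ with $\|\tilde v\|_{V_H}=\|v\|_{V_H^n}$ and identical trace on $\Gamma_H$, so $b_n(w_n,v)=b(w_n,\tilde v)$ and $b_n(\tilde w_n,v)=-(g,\tilde v)$. Using the equation $b(w,\tilde v)=-(g,\tilde v)$ for $w$ then gives
\[
b_n(w_n-\tilde w_n,v)=b(w_n,\tilde v)+(g,\tilde v)=b(w_n-w,\tilde v).
\]

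The right-hand side is a continuous antilinear form on $V_H^n$ of norm at most $\|b\|\,\|w_n-w\|_{V_H}$, where $\|b\|$ is the operator bound from Lemma \ref{WL4}. Applying Lemma \ref{apriori_arb_k} to this equation on the smooth-graph domain $D_n$ yields
\[
\|w_n-\tilde w_n\|_{V_H^n}\le C_*\,\|w_n-w\|_{V_H},
\]
with $C_*$ the product of $\|b\|$ and the inf-sup constant of Theorem \ref{th_main1}, both independent of $n$. Since $w_n$ vanishes outside $S_H^n$, $\|w_n\|_{V_H}=\|w_n\|_{V_H^n}\le \|\tilde w_n\|_{V_H^n}+\|w_n-\tilde w_n\|_{V_H^n}\le C_g\|g\|_2+C_*\|w_n-w\|_{V_H}$. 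Letting $n\to\infty$ and using $w_n\to w$ in $V_H$ gives $\|w\|_{V_H}\le C_g\|g\|_2$, which is the claimed bound.

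The main obstacle I anticipate is purely bookkeeping: verifying that extension by zero from $V_H^n$ into $V_H$ preserves traces on $\Gamma_H$, norms, and the identity $b_n(\cdot,v)=b(\cdot,\tilde v)$, and that all constants in the Rellich-type estimate ($k_0,k_+,k_\infty,A,B,\|b\|$) are controlled uniformly in $n$ because $D_n\subset D$, $U_{f_+}\subset D_n$, and $k$ is simply restricted. Once these are checked, the argument runs cleanly and the inf-sup and transposed inf-sup conditions (cf.\ Lemmas \ref{WEXL7}, \ref{ihl_rem}) then yield the analogue of Lemma \ref{apriori_arb_k} in the general domain setting, which is what will be needed for Theorem \ref{th_main1}.
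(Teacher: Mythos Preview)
Your proposal is correct and follows essentially the same route as the paper: approximate $w$ by $w_n\in C_0^\infty(D)$, use Lemma \ref{lem_tech} to enclose $\supp w_n$ in a smooth-graph subdomain $D_n$, split $w_n$ on $S_H^n$ into the solution with right-hand side $-(g,\cdot)$ plus a correction with right-hand side $b(w_n-w,\cdot)$, bound the pieces via Lemmas \ref{arb_k} and \ref{apriori_arb_k}, and pass to the limit. Your references to \ref{arb_k}/\ref{apriori_arb_k} (rather than \ref{rellich}/\ref{apriori}) are in fact the appropriate ones, since the hypothesis here is only $k\in L^\infty(D)$.
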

\begin{proof}
Let $\tilde V:= \{\phi|_{S_H}:\phi\in C_0^\infty(D)\}$. Then $\tilde
V$ is dense in $V_H$. Suppose $w$ satisfies (\ref{var_last}) and
choose a sequence $(w_m)\subset\tilde V$ such that
$\|w_m-w\|_{V_H}\to 0$ as $m\to\infty$. Then $w_m = \phi_m|_{S_H}$,
with $\phi_m\in C_0^\infty(D)$, and, by Lemma \ref{lem_tech}, there
exists $f_m\in C^\infty(\real^{n-1})$ such that $\supp \phi_m\subset
D_m$ and $U_{f_+}\subset D_m\subset D$, where $D_m := \{x\in
\real^n:x_n>f_m(\tx), \,\tx\in\real^{n-1}\}$. Let $V_H^{(m)}$ and
$b_m$ denote the space and sesquilinear form corresponding to the
domain $D_m$. That is, where $S_H^{(m)}:=
D_m\setminus\overline{U_H}$, $V_H^{(m)}$ is defined by
$V_H^{(m)}:=\{\phi|_{S_H^{(m)}}:\phi\in H_0^1(D_m)\}$ and $b_m$ is
given by (\ref{sesqui}) with $S_H$ and $V_H$ replaced by $S_H^{(m)}$
and $V_H^{(m)}$, respectively. Then $S_H^{(m)}\subset S_H$ and, if
$v_m\in V_H^{(m)}$ and $v$ denotes $v_m$ extended by zero from
$S_H^{(m)}$ to $S_H$, it holds that $v\in V_H$. Via this extension
by zero, we can regard $V_H^{(m)}$ as a subspace of $V_H$ and regard
$w_m$ as an element of $V_H^{(m)}$.

For all $v\in V_H^{(m)}\subset V_H$, we have
$$
b_m(w_m,v) = b(w_m,v) = -(g,v)-b(w-w_m,v).
$$
By Lemma \ref{apriori} there exist unique $w_m^\prime$,
$w_m^{\prime\prime}\in V_H^{(m)}$ such that
$$
b_m(w_m^\prime,v) = -(g,v),\quad v\in V_H^{(m)},
$$
and
$$
b_m(w_m^{\prime\prime},v) = -b(w-w_m,v),\quad v\in V_H^{(m)}.
$$
Clearly $w_m=w_m^{\prime}+w_m^{\prime\prime}$ and, by Lemma
\ref{rellich},
\[
\Vert w_m^\prime\Vert_{V_H^{(m)}} \leq k_0^{-1}\sqrt{[(\kappa_+^2 + \kappa_{\infty}^2)A^{-1}B+1]\kappa_0^2A^{-1}B} \Vert
g\Vert_{2}
\]
while, by Lemmas \ref{apriori} and \ref{WL3},
$$
\|w_m^{\prime\prime}\|_{V_H^{(m)}}\leq C\Vert w-w_m\Vert_{V_H},
$$
where $C$ is independent of $m$.

Thus
$$
\|w\|_{V_H} = \lim_{m\to\infty}\|w_m\|_{V_H^{(m)}}  \leq k_0^{-1}\sqrt{[(\kappa_+^2 + \kappa_{\infty}^2)A^{-1}B+1]\kappa_0^2A^{-1}B}
\Vert
g\Vert_{2}
$$
\end{proof}

Theorem \ref{th_main1} now follows by combining Lemmas
\ref{rellich2}, \ref{special_case_enough} and \ref{ihl_rem} with
Corollary \ref{cor_infsup}.\\

\chapter{The Impedance problem}\label{imp}
\section{Literature review}
In this chapter we study a boundary value problem for the Helmholtz equation with an impedance boundary condition (we called this the Impedance problem in chapter 1). Our aim is once again to extend the methods and results of \cite{chandmonk} to this problem. Thus in terms of style and approach this work follows on from \cite{chandmonk}, \cite{kirschipmp93}, \cite{els02},  \cite{Bonnetmmas94} and \cite{Szembergmmas98} (c.f.\ the literature review of chapter 2).

So let us turn  to the issue of prior work that was specifically done on the impedance problem. In \cite{chandmmas97}, Chandler-Wilde showed the impedance problem to be well-posed in 2D when the boundary is flat, including, in the problem formulation, the case of plane wave incidence. The method applied to obtain these results was to reformulate the problem as an equivalent second kind boundary integral equation on the real line; then to prove uniqueness of solution; and then to infer existence of solution by utilizing the results of \cite{chandima93}, which established some novel solvabiltity results for integral equations on the real line. In \cite{zhangworking} Chandler-Wilde and Zhang were able to show, again using boundary integral equation techniques, that the problem was well-posed in 2D, this time with the boundary being the graph of a bounded $C^{1,1}$ function.

In \cite{Cha02b}, Chandler-Wilde and Peplow consider a 2D impedance problem when the boundary is flat outside a compact set on which the relative admittance $\beta $ is constant. They prove uniqueness of solution and reformulate the problem as an equivalent boundary integral equation.
 
In \cite{Ned_1} and \cite{Ned_2} Dur\'an, Muga and N\'ed\'elec look at the impedance problem (in 2D and 3D respectively) in the special case that the boundary is flat so that the problem domain is a half plane or half space, obtaining unique existence of solution to their problem. In relation to the paper \cite{chandmmas97} and also the work that we present here, their problem set-up differs in that they assume that $\Re(\beta)<0$, whereas in \cite{chandmmas97} and again here we make the assumption that $\Re(\beta)\geq 0$: Indeed the problem of \cite{chandmmas97} and the one we study here are ill-posed if this condition is violated. However the authors of \cite{Ned_1} and \cite{Ned_2} are able to get round this by employing a different radiation condition to the one used here and in \cite{chandmmas97}.

On the numerical side, Chandler-Wilde, Langdon and Ritter, in \cite{chandrs04} show stability and convergence for a boundary element method for the impedance problem in a half plane, with the admittance $\beta$ being piecewise constant. 

Thus in our work there are two main novel aspects; in the first place the results apply in both 2 and 3 dimensions; and in the second place the boundaries to which our results apply are more general: Specifically we prove the problem to be well-posed when the boundary is the graph of a Lipschitz function for all wavenumber $k$; and also for small wavenumber $k$, we establish well-posedness in the case when the boundary is simply Lipschitz (and confined to a strip as usual.)  
      
\section{The Boundary value problem and variational formulation} In this section we shall define some notation related to the rough surface scattering problem and write down the boundary value problem and equivalent variational formulation that will be analyzed in later sections. We recall the usual notation: For $x=(x_1, \dots ,x_n) \in \mathbb{R}^n$ $(n=2,3)$ let $\tilde{x} =(x_1, \dots ,x_{n-1})$ so that $x=(\tilde{x}, x_n).$ For $H \in \mathbb{R}$ let $U_H = \{x:x_n > H\}$ and $\Gamma_H=\{x:x_n=H \}$.
Let $D \subset \mathbb{R}^n$ be an open connected set, with boundary $\Gamma$, such that for some constants $f_-<f_+$ it holds that 
\begin{eqnarray*}
U_{f_+} \subset D \subset U_{f_-}.
\end{eqnarray*}
In order to make sense of boundary integrals, we will require that for some $\mu>0$ and $N\in \mathbb{N}$, $D$ be an $(L,\mu,N)$ Lipschitz domain, in the sense of the following definition.
\begin{definition}\label{LipDef} Given $L \in \mathbb{R}$, $\mu>0$ and $N \in \mathbb{N}$, the set $\Omega$ is said to be an $(L,\mu,N)$ Lipschitz domain if there exists a locally finite open cover $\{{O_j}\}_{j\in J}$ of $\Gamma$, such that
\newline i)For each $y \in \Gamma$, the open ball of radius $\mu$ and centre $y$ is a subset of $O_i$, for some $i \in J$.
\newline ii) For each $j \in J$, $O_j \cap \Omega= O_j \cap \Omega_j$, where $\Omega_j$ is, after a rotation, the epigraph of a Lipschitz function  $f:\mathbb{R}^{n-1} \to \mathbb{R}$ such that 
\begin{equation} \label{Lipconst}
|f(\tilde x) - f(\tilde y)| \leq L|\tilde x- \tilde y|, \quad \tilde x, \tilde y \in \mathbb{R}^{n-1}.
\end{equation}
\newline iii)Every collection of $N+1$ of the sets $O_j$ has empty intersection.
\end{definition}

In fact we will mainly be concerned with the case when $\Gamma$ is the graph of a Lipschitz function: 
\begin{equation} \label{Gamma}
\Gamma:=\{(\tilde{x},x_n):x_n = f(\tilde{x}), \tilde{x} \in \mathbb{R}^{n-1} \},  
\end{equation}
where $f:\mathbb{R}^{n-1} \to \mathbb{R}$ satisfies (\ref{Lipconst}), in which case $D$ is an $(L,\mu,1)$ Lipschitz domain, for all $\mu>0.$
\begin{remark}
It is shown in Adams \cite{adamsSS} that definition \ref{LipDef} is equivalent to $\Omega$ having the `strong local Lipschitz property' as defined in \cite{adamsSS}. As Adams remarks, in the case that $\Omega$ is bounded, definition \ref{LipDef} reduces to the standard definition of a Lipschitz domain.
\end{remark}

We denote by $\nu$ the outward unit normal to $D$, which exists almost everywhere by Rademacher's theorem. The variational problem will be posed on the open set $S_H:= D \backslash \overline{U_H}$, for some $H \geq f_+ + \mu$, so that $S_H$ will be an $(L, \mu, N+1)$ Lipschitz domain. 

We will refer to any function $f:\mathbb{R}^{n-1} \to \mathbb{R}$ that satisfies (\ref{Lipconst}) as a Lipschitz function with Lipschitz constant $L$. Moreover we introduce the notation 
\[
J_f(\tilde x) = \sqrt{1+|\nabla_{\tilde x}f(\tilde x)|^2} \quad \tilde x \in \mathbb{R}^{n-1},
\]
and define $L'=\sqrt{1+L^2}$, so that $J_f\leq L'$.

Again, in this chapter it will be convenient to work with wavenumber dependent norms; thus we will equip the standard Sobolev space $H^1(S_H)$    
with the $k$-dependent norm, equivalent to the usual norm, given by
\begin{eqnarray*}
\Vert v\Vert_{H^1(S_H)}:= \left\{k^2\Vert v \Vert^2_{L^2(S_H)} + \Vert \nabla v\Vert^2_{L^2(S_H)}\right\}^{\frac{1}{2}},  \quad v\in H^1(S_H).
\end{eqnarray*}
Let $\mathcal{D}({S_H}):= \{v|_{S_H}: v \in C_{0}^{\infty}(\mathbb{R}^n)\}$, so that $\mathcal{D}({S_H})$ is dense in $H^1(S_H)$. 
Let $\gamma^* : \mathcal{D}({S_H}) \to L^2(\Gamma)$ be defined by $\gamma^* \phi = \phi|_{\Gamma}$ for $\phi \in \mathcal{D}({S_H})$.
Then with $S_H$ being an $(L, \mu, N+1)$ Lipschitz domain, it's possible to show (see lemma \ref{traces} below) that $\gamma^*$ extends to a bounded linear operator $\gamma^* :H^1(S_H) \to L^2(\Gamma)$.
\begin{remark}
Trace results on unbounded domains, appear to be not well written up in the literature, so we prove the above statement in section 3. 
In fact we expect that a stronger result holds, but stating this would require defining Sobolev spaces on the boundary. The above result will be sufficient for our needs. 
\end{remark}

We are now in a position to state our boundary value problem. 
\newline {\sc The Boundary Value Problem.} Let $D$ be an $(L, \mu, N)$ Lipschitz domain for some $L>0, \mu>0$ and $N \in \mathbb{N}$. Given $g \in L^2(D)$, whose support lies in $S_H$ for some $H \geq f_{+}+ \mu$, and given $\beta \in L^{\infty}(\Gamma)$, 
find $u: D \rightarrow \mathbb{C}$ such that $u|_{S_a} \in H^1(S_a)$ for every $a\geq f_+ + \mu$,
\begin{equation} \label{BVP_2}
\Delta u + k^2 u = g \mbox{ in }D, \quad
\frac{\partial u }{\partial \nu}= ik\beta u \mbox{ on }\Gamma ,
\end{equation}
in a distributional sense (see (\ref{variat}) below), and the radiation condition (\ref{uprcstar}) holds with $F_H = u|_{\Gamma_H}$ (and with $k_+$ replaced by $k$).
\begin{remark} Recall from the acoustics subsection in chapter 1  that $\beta \in L^{\infty}(\Gamma)$, known as the surface admittance, must satisfy that $\Re (\beta) \geq 0$ if the surface is not to be a source of energy. Additional assumptions on $\beta$ will be added later on to prove well-posedness of the boundary value problem.
\end{remark}

\begin{remark}
We note that, as one would hope, the solutions of the above
problem do not depend on the choice of $H$. Precisely, if $u$ is a
solution to the above problem for one value of $H\geq f_+ + \mu$ for
which $\supp g\subset \overline{S_H}$ then $u$ is a solution for
all $H \geq f_+ + \mu$ with this property. To see that this is true is a
matter of showing that, if (\ref{uprcstar}) holds for one $H$ with
$\supp g\subset \overline{S_H}$, then (\ref{uprcstar}) holds for
all $H$ with this property. It was shown in Lemma \ref{lemma3p2} in chapter 2, that if (\ref{uprcstar}) holds, with $F_H=u|_{\Gamma_H}$,
for some $H\geq f_+ + \mu$, then it holds for all larger values of $H$.
One way to show that (\ref{uprcstar}) holds also for every smaller
value of $H$, $\tilde H$ say, for which $\tilde H\geq f_+ + \mu$ and
$\supp g\subset \overline{S_{\tilde H}}$, is to consider the
function
\begin{eqnarray*}
v(x) & := & u(x) -\\
&&\frac{1}{(2\pi)^{(n-1)/2}}\int_{\real^{n-1}}\exp(\ri[(x_n-\tilde
H)\sqrt{k^2-\xi^2}+ \tx\cdot\xi])\hat F_{\tilde H}(\xi)\,d\xi,\;\;
x\in U_{\tilde H},
\end{eqnarray*}
with $F_{\tilde H}:=u|_{\Gamma_{\tilde H}}$, and show that $v$ is
identically zero. To see this we note that, by Lemma
\ref{lemma3p2}, $v$ satisfies the above boundary value problem
with $D=U_{\tilde H}$ and $g=0$. That $v\equiv 0$ then follows
from Theorem \ref{th_main1}, chapter 2.
\end{remark}

We now derive a variational formulation of the boundary value
problem above. As in chapter 2 (but this time with $k$ replacing $k_+$) we use standard
fractional Sobolev space notation, except that we adopt a wave
number dependent norm, equivalent to the usual norm, and reducing
to the usual norm if the unit of length measurement is chosen so
that $k=1$. Thus, identifying $\Gamma_H:=\{x:x_n = H\}$ with $\real^{n-1}$,
$H^s(\Gamma_H)$, for $s\in \real$, denotes the completion of
$C_0^\infty(\Gamma_H)$ in the norm $\|\cdot\|_{H^s(\Gamma_H)}$
defined by
$$
 \|\phi\|_{H^s(\Gamma_H)} = \left(
\int_{\real^{n-1}}(k^2+\xi^2)^s|{\cal F} \phi(\xi)|^2\,
d\xi\right)^{1/2}.
$$
  We recall \cite{adamsSS} that, for all $a>H\ge f_+ + \mu$, there
exist continuous embeddings $\gamma_+:H^1(U_H\setminus U_a)\to
H^{1/2}(\Gamma_H)$ and $\gamma_-:H^1(S_H)\to H^{1/2}(\Gamma_H)$ (the
trace operators) such that $\gamma_\pm \phi$ coincides with the
restriction of $\phi$ to $\Gamma_H$ when $\phi$ is $C^\infty$. We recall also that, if $u_+\in
H^1(U_H\setminus U_{a})$, $u_-\in H^1(S_H)$, and $\gamma_+u_+ =
\gamma_- u_-$, then $v\in H^1(S_a)$, where $v(x) := u_+(x)$, $x \in
U_H\setminus U_a$, $:= u_-(x)$, $x \in S_H$. Conversely, if $v\in
H^1(S_a)$ and $u_+:= v|_{U_H\setminus U_a}$, $u_-:= v|_{S_H}$, then
$\gamma_+u_+=\gamma_-u_-$. We recall the operator $T$ (see (\ref{Tdef}) but this time with $k$ replacing $k_+$) a Dirichlet to Neumann map on $\Gamma_H$, (see (\ref{DtN}) above), defined
by
\begin{equation}
T:=\cF^{-1}M_z\cF, \label{Tdef_inp}
\end{equation}
where $M_z$ is the operation of multiplying by
\[
z(\xi):=\left\{\begin{array}{ll}-\ri\sqrt{k^2-\xi^2}&\mbox{if
}|\xi|\le k,
\\[3pt]
\sqrt{\xi^2-k^2}&\mbox{for }|\xi|>k.
\end{array}\right.
\]
We proved in Lemma \ref{LTB} that
$T:H^{1/2}(\Gamma_H) \to H^{-1/2}(\Gamma_H)$ and is bounded.

We now derive a variational formulation of the boundary value problem making use of lemma \ref{lemma3p2}. Suppose that $u$ satisfies the boundary value problem. Then $u|_{ S_a} \in H^1(S_a) $ for every $a \geq f_+ + \mu$, and, by definition, since (\ref{BVP_2}) holds in a distributional sense,
\begin{equation} \label{variat}
\int_{D} g\bar v  +\nabla u.\nabla\bar v -k^2u\bar v dx- \int_{\Gamma}ik \beta \gamma^*u\bar v ds = 0,\quad v \in C^{\infty}_0(\mathbb{R}^n).
\end{equation}
Defining $w:=u|_{ S_H}$, and applying lemma \ref{lemma3p2} it follows that 
\begin{equation} \label{nolabel}
\hspace*{5ex}\int_{S_H} g\bar v + \nabla w.\nabla\bar v - k^2w\bar v dx + \int_{\Gamma_H}\bar vT \gamma_- w ds - \int_{\Gamma}ik \beta  \gamma^* w \bar v ds = 0, \quad v \in \mathcal{D}({S_H}). 
\end{equation}
From the denseness of $\mathcal{D}({S_H})$ in $H^1(S_H)$, and the continuity of $\gamma_-$ and $\gamma^*$, it follows that this equation holds for all $v \in H^1(S_H)$.

Again, we let ${\Vert \cdot \Vert}_2$ and $( \cdot, \cdot )$ denote the norm and scalar product on $L^2(S_H)$ so that ${\Vert v\Vert}_2 = \sqrt{\int_{S_H} |{v}|^2 dx}$ and 
\[
(u,v)= \int_{S_H} u \bar v dx,
\] 
and define the sesquilinear form $c: H^1(S_H) \times H^1(S_H) \rightarrow \mathbb{C}$ by
\begin{equation} \label{sesq_2}
\hspace*{3ex}c(u,v)= (\nabla u,\nabla v) - k^2 (u,v) + \int_{\Gamma_H} \gamma_- \bar v T \gamma_-u ds - \int_{\Gamma} ik \beta \gamma^*u \gamma^* \bar v ds. 
\end{equation}
Then we have shown that if $u$ satisfies the boundary value problem then $w:=u|_{S_H}$ is a solution of the following variational problem: find $u \in H^1(S_H)$ such that
\begin{equation} \label{weak_form_imp}
c(u,v) = -(g,v), \quad v \in H^1(S_H).
\end{equation}
Conversely, suppose that $w$ is a solution to the variational problem and define $u(x)$ to be $w(x)$ in $S_H$, and to be the right hand side of (\ref{uprcstar}), in $U_H$, with $F_H:=\gamma_-w$ (and with $k$ replacing $k_+$). 
Then by lemma \ref{lemma3p2}, $u \in H^1(U_H \backslash \overline{U_a})$ for every $a>H$, with $\gamma_+u = F_H = \gamma_-w$. Thus $u|_{S_a} \in H^1(S_a)$ for all $a \geq f_+ + \mu$. From (\ref{exact_rc}) and (\ref{nolabel}), it follows that (\ref{variat}) holds, so $u$ satisfies (\ref{BVP_2}).

 We have thus proved the following theorem.

\begin{theorem} \label{th_equiv_2} If $u$ is a solution of the boundary value problem then $u|_{ S_H}$ satisfies the variational problem. Conversely, if $u$ satisfies the variational problem, 
$F_H := \gamma_-u$, and the definition of $u$ is extended to $D$ by setting $u(x)$ equal to the right hand side of (\ref{uprcstar}), for $x \in U_H$, then the extended function satisfies the boundary value problem, with $g$ extended by zero from $S_H$ to $D$. 
\end{theorem}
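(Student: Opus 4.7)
The proof is essentially a two-way check that the manipulations used to pass between the pointwise/distributional formulation on $D$ and the integral identity on $S_H$ are reversible. I would simply organize the paragraphs that precede the theorem into a clean forward-and-converse argument.

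For the forward direction, I would start from a solution $u$ of the boundary value problem and write the distributional form of $\Delta u + k^2 u = g$ with the impedance condition folded in, i.e.\ the identity (\ref{variat}) for $v \in C_0^\infty(\mathbb{R}^n)$. I would then set $w:=u|_{S_H}$ and split the integral over $D$ into one over $S_H$ and one over $U_H$. The key tool is Lemma \ref{lemma3p2}, applied with $F_H = \gamma_- w$, which rewrites the integral over $U_H$ as $-\int_{\GH}\bar v \, T\gamma_- w\, ds$ via (\ref{exact_rc}). This yields the identity (\ref{nolabel}) for all test functions $v \in \mathcal{D}(S_H)$. Extending from $\mathcal{D}(S_H)$ to all of $H^1(S_H)$ is then a density argument: $\mathcal{D}(S_H)$ is dense in $H^1(S_H)$, and the maps $v\mapsto \gamma_-v$ and $v\mapsto \gamma^* v$ are continuous into $H^{1/2}(\Gamma_H)$ and $L^2(\Gamma)$ respectively (Lemma \ref{LTB} and the trace result on Lipschitz domains noted above), while $T:H^{1/2}(\Gamma_H)\to H^{-1/2}(\Gamma_H)$ is bounded. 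Therefore each term of $c(w,v) + (g,v)$ is continuous in $v \in H^1(S_H)$, giving (\ref{weak_form_imp}).

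For the converse, I would take $w\in H^1(S_H)$ solving the variational problem and extend it to $u$ on all of $D$ by setting $u=w$ on $S_H$ and $u(x)$ equal to the right-hand side of (\ref{uprcstar}) for $x\in U_H$ with $F_H:=\gamma_- w$. Lemma \ref{lemma3p2} then provides three pieces of information: (i) $u\in H^1(U_H\setminus U_a)\cap C^2(U_H)$ for every $a>H$, with $\Delta u + k^2 u = 0$ in $U_H$; (ii) $\gamma_+ u = F_H = \gamma_- w$, which is precisely the compatibility condition ensuring that $u|_{S_a}\in H^1(S_a)$ for all $a\ge f_+ + \mu$; and (iii) the identity (\ref{exact_rc}) relating the Dirichlet-to-Neumann term on $\Gamma_H$ to a volume integral over $U_H$. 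Adding (\ref{exact_rc}) to (\ref{nolabel}) cancels the $\Gamma_H$ term and yields (\ref{variat}) for every $v\in C_0^\infty(\mathbb{R}^n)$, which is exactly the distributional form of $\Delta u + k^2 u = g$ on $D$ together with the impedance condition $\partial u/\partial\nu = ik\beta u$ on $\Gamma$. The radiation condition holds by construction of $u$ on $U_H$, so $u$ solves the boundary value problem.

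There is no real obstacle here; the work has been done in Lemma \ref{lemma3p2} (which supplies both the regularity of $u$ across $\Gamma_H$ and the critical identity (\ref{exact_rc})) and in the trace lemma on Lipschitz strips mentioned earlier. The only points that require a moment's care are: justifying that the impedance term $\int_\Gamma ik\beta\,\gamma^* u\,\overline{\gamma^* v}\,ds$ passes to the limit under the density argument, which uses $\beta\in L^\infty(\Gamma)$ and continuity of $\gamma^*:H^1(S_H)\to L^2(\Gamma)$; and checking that extending $g$ by zero to $U_H$ (which is consistent since $\supp g\subset \overline{S_H}$) does not affect any of the integrals.
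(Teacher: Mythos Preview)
Your proposal is correct and follows exactly the approach taken in the paper: the forward direction uses (\ref{variat}), Lemma~\ref{lemma3p2} via (\ref{exact_rc}), and density of $\mathcal{D}(S_H)$ in $H^1(S_H)$ together with continuity of $\gamma_-$, $\gamma^*$, and $T$; the converse uses Lemma~\ref{lemma3p2} to glue across $\Gamma_H$ and then adds (\ref{exact_rc}) back to (\ref{nolabel}) to recover (\ref{variat}). The only slip is a sign: from (\ref{exact_rc}) one has $\int_{U_H}(\nabla u\cdot\nabla\bar v - k^2 u\bar v)\,dx = +\int_{\Gamma_H}\bar v\,T\gamma_- w\,ds$, not the negative, but this is immaterial to the argument.
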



\section{Analysis of the variational problem for low frequency}
Let $H^1(S_H)^*$ denote the dual space of $H^1(S_H)$, i.e.\ the space of
continuous anti-linear functionals on $H^1(S_H)$. Then, again, our analysis of the variational problem will
also apply to the following slightly more general situation: given
${\cal G}\in H^1(S_H)^*$ find $u\in H^1(S_H)$ such that
\begin{equation} \label{var_prob2_2}
c(u,v) = {\cal G}(v), \quad v\in H^1(S_H).
\end{equation}
We define the dimensionless wave number 
\[
\kappa= k(H-f_-),
\]
and the angle $\Phi \in [-\pi/2,0]$, by
 \[
\Phi := \mbox{min}\{0,\mbox{ess}\inf_{y \in \Gamma}\mbox{arg}\beta(y)\}.
\]
In order to show the boundary value problem is well-posed we will make some assumptions on $\beta \in L^{\infty}(\Gamma)$. In the 2D case, when $\Gamma$ is a straight line and $\beta$ is a constant it is well-known that the boundary value problem is ill-posed if $\beta=-is$, for some $s>0$. This motivates the following condition, that
\[
\mbox{dist}[\beta(\Gamma), \{-is:s\geq 0\}]>0.
\]
Together with the conditions that $\Re (\beta)\geq 0$ and $\beta \in L^{\infty}(\Gamma)$ we have assumption 2:
\newline Assumption 2 (A2). For some $\alpha_1 \in [0,\pi/2), \eta>0$, it holds that  $$\Im[e^{i\alpha_1}\beta]\geq \eta.
$$

We then let $\eta_{\alpha} = \eta\sec\alpha_1$. 

A different assumption that we will make is: 
\newline Assumption 3 (A3). For some $ \eta>0$,
$$
\Re(\beta) \geq \eta.
$$ 
\begin{remark}
Our analysis of the variational problem under assumption (A2), will not be applicable to the limiting case when $\alpha_1 = \pi/2$, so a different study of the variational problem will be made under (A3). Note that if $\beta$ satisfies (A3), then it satisfies (A2), but results with less restriction on $\kappa$ are obtained under (A3).
\end{remark}
\begin{remark}
The assumption that $\Re (\beta) >0 $ corresponds, in physical terms, to the boundary absorbing some energy, which in practice is true. Assumption (A3) is a slightly coarser condition.  
\end{remark}

Our main theorem of this section is then the following:
\begin{theorem} \label{small_k}
i)Suppose (A2) holds and that 
\[
\kappa <\frac{2\eta_{\alpha}}{1+ \sqrt{1+ 2\eta_{\alpha}^2}}
\]
Then for some constant $C_1 >0$
\[
|c(u,u)| \geq C_1^{-1}\Vert u \Vert^2_{H^1(S_H)}, \quad u \in H^1(S_H),
\] 
so that the variational problem (\ref{var_prob2_2}) is uniquely solvable, and the solution satisfies the estimate
\begin{equation} \label{est1_small}
\Vert u\Vert_{H^1(S_H)} \leq C_1
\Vert \mathcal{G} \Vert_{H^1(S_H)^*},
\end{equation}
where
\[
C_1\leq \sec\alpha_1\left[\frac{2\eta_{\alpha} + \eta_{\alpha}\kappa^2 +4\kappa + \sqrt{[\eta_{\alpha}(2+ \kappa^2) - 4\kappa]^2 + 16\kappa^3\eta_{\alpha}}}{6\eta_{\alpha} - \eta_{\alpha}\kappa^2 -4\kappa - \sqrt{[\eta_{\alpha}(2+ \kappa^2) - 4\kappa]^2 + 16\kappa^3\eta_{\alpha}}}\right].
\]
In particular, the scattering problem (\ref{weak_form_imp}) is uniquely solvable and the solution satisfies the bound
\begin{equation} \label{est1_smallg}
k\Vert u\Vert_{H^1(S_H)} \leq C_1 \Vert g\Vert_2.
\end{equation}
ii) Suppose (A3) holds and that 
$\kappa <\sqrt{2}$.  
Then for some constant $C_2 >0$
\[
|c(u,u)| \geq C_2^{-1}\Vert u \Vert^2_{H^1(S_H)}, \quad u \in H^1(S_H),
\] 
so that the variational problem (\ref{var_prob2_2}) is uniquely solvable, and the solution satisfies the estimate
\begin{equation} \label{est2_small}
\Vert u\Vert_{H^1(S_H)}\leq C_2
\Vert \mathcal{G} \Vert_{H^1(S_H)^*},
\end{equation}
where
\[
C_2 \leq
 \frac{6+ \kappa^2}{2-\kappa^2}\left[1+\frac{1}{\eta^2}\left(\eta \tan(-\Phi) + \frac{8\kappa}{6+\kappa^2}\frac{(2+\kappa^2)}{(2-\kappa^2)}\right)^2\right]^{\frac{1}{2}}.
 \]
In particular the scattering problem (\ref{weak_form_imp}) is uniquely solvable, and the solution satisfies the bound 
\begin{equation} \label{est2_smallg}
k\Vert u\Vert_{H^1(S_H)} \leq C_2 \Vert g\Vert_2.
\end{equation}

\end{theorem}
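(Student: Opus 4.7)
The plan is to prove $H^1(S_H)$-ellipticity of $c(\cdot,\cdot)$ in each case and then invoke the Lax-Milgram lemma. Boundedness of $c$ is straightforward (by Cauchy-Schwarz, Lemma \ref{LTB} for $T$, and the trace inequality for $\gamma^*$). The scattering-problem bounds (\ref{est1_smallg}) and (\ref{est2_smallg}) will follow from (\ref{est1_small}) and (\ref{est2_small}) via $\Vert\mathcal G\Vert_{H^1(S_H)^*}\le k^{-1}\Vert g\Vert_2$ for $\mathcal G(v)=-(g,v)$. The one new analytical ingredient, replacing Lemma \ref{WL3} since we no longer have zero trace on $\Gamma$, is the one-parameter Friedrichs-type estimate
\begin{equation*}
\Vert u\Vert_2^2\le(1+\delta)(H-f_-)\Vert\gamma^*u\Vert_{L^2(\Gamma)}^2+\bigl(1+\tfrac{1}{\delta}\bigr)\tfrac{(H-f_-)^2}{2}\Bnorm{\tfrac{\partial u}{\partial x_n}}_2^2,\quad\delta>0.
\end{equation*}
In the graph case it follows from $u(\tilde x,x_n)=\gamma^*u(\tilde x,f(\tilde x))+\int_{f(\tilde x)}^{x_n}\partial_nu\,dt$ combined with Young's inequality (parameter $\delta$), Cauchy-Schwarz, Fubini, and $J_f\ge1$; the general Lipschitz case is handled via the trace machinery of the following section.

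For part (i), I follow the rotation idea of Lemma \ref{WL5}(ii) and compute $\Re(e^{i\alpha_1}c(u,u))$. By Lemma \ref{WL1}, since $\alpha_1\in[0,\pi/2)$, the $\Gamma_H$ integral contributes nonnegatively; by (A2) the $\Gamma$ integral contributes $k\Im(e^{i\alpha_1}\beta)\Vert\gamma^*u\Vert_{L^2(\Gamma)}^2\ge k\eta\Vert\gamma^*u\Vert_{L^2(\Gamma)}^2$; and the volume part gives $\cos\alpha_1(\Vert\nabla u\Vert_2^2-k^2\Vert u\Vert_2^2)$. Substituting Friedrichs into the $k^2\Vert u\Vert_2^2$ term and splitting $\Vert\nabla u\Vert_2^2=\Vert\nabla_{\tilde x}u\Vert_2^2+\Vert\partial u/\partial x_n\Vert_2^2$ yields a sum of three terms with coefficients $\cos\alpha_1[1-(1+1/\delta)\kappa^2/2]$, $\cos\alpha_1$, and $k[\eta-(1+\delta)\kappa\cos\alpha_1]$. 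All three are positive for some admissible $\delta>0$ iff $\kappa^2+2\kappa/\eta_\alpha<2$, which is equivalent to the stated hypothesis. A second Friedrichs bound (with free parameter $\delta'$) for $\Vert u\Vert_{H^1(S_H)}^2$ from above, followed by optimization in $\delta,\delta'$, yields the explicit $C_1$.

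For part (ii), since $\beta$ need not lie off the negative imaginary axis, I exploit both real and imaginary parts of $c$. By Lemma \ref{WL1} and (A3), $-\Im c(u,u)\ge k\int_\Gamma\Re\beta\,|\gamma^*u|^2\,ds\ge k\eta\Vert\gamma^*u\Vert_{L^2(\Gamma)}^2$, while the pointwise bound $\Im\beta\ge-\tan(-\Phi)\Re\beta$ (immediate from $\arg\beta\ge\Phi$ and $\Re\beta>0$) yields $\Re c(u,u)\ge\Vert\nabla u\Vert_2^2-k^2\Vert u\Vert_2^2-k\tan(-\Phi)\int_\Gamma\Re\beta\,|\gamma^*u|^2\,ds$. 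For any $\psi\in(-\Phi,\pi/2)$ the scalar estimate $|c(u,u)|\ge\cos\psi\,\Re c(u,u)+\sin\psi\,|\Im c(u,u)|$ combines these so that the coefficient of $k\int_\Gamma\Re\beta\,|\gamma^*u|^2\,ds$ becomes $\sin\psi-\cos\psi\tan(-\Phi)>0$; by taking $\psi$ close to $\pi/2$ this can be made as large as needed, leaving only the Friedrichs restriction $(1+1/\delta)\kappa^2<2$, i.e.\ $\kappa<\sqrt2$. Choosing $\delta=\delta'=2\kappa^2/(2-\kappa^2)$ produces precisely the factors $(6+\kappa^2)/4$ and $(2+\kappa^2)\kappa/(2-\kappa^2)$ visible in $C_2$, and equating the two nontrivial coefficient ratios forces $\tan\psi=\tan(-\Phi)+8\kappa(2+\kappa^2)/[\eta(6+\kappa^2)(2-\kappa^2)]$, giving $C_2=(6+\kappa^2)/[(2-\kappa^2)\cos\psi]=[(6+\kappa^2)/(2-\kappa^2)]\sqrt{1+\tan^2\psi}$ exactly as stated.

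The main obstacle is bookkeeping rather than conceptual novelty: $H^1(S_H)$-ellipticity follows by a by-now standard rotation-plus-Friedrichs argument in the spirit of Lemma \ref{WL5}, but pinning down the sharp constants $C_1$ and $C_2$ requires simultaneous optimization of the Friedrichs parameters $\delta,\delta'$ together with (for part (ii)) the auxiliary angle $\psi$, with each special choice engineered to match the precise algebraic expressions in the statement. The other non-routine point, shared with the rest of this chapter, is establishing the Friedrichs inequality on general Lipschitz (not necessarily graph) domains for the full scope of this small-$k$ theorem; this is handled via the trace results developed in the following section.
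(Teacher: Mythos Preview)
Your proposal is correct and follows essentially the same route as the paper. The paper packages the argument as two separate ellipticity lemmas for the rotated forms $c_j(u,v)=e^{i\alpha_j}c(u,v)$, $j=1,2$, proving $\Re c_j(u,u)\ge C^{-1}\Vert u\Vert_{H^1(S_H)}^2$ via the same Friedrichs inequality you write down (Lemma \ref{friedrichs}, with your $\delta$ playing the role of $1/\zeta$), and then concludes by Lax--Milgram exactly as you do. Your linear combination $\cos\psi\,\Re c+\sin\psi\,|\Im c|$ in part (ii) is precisely $\Re(e^{i\psi}c(u,u))$ since $\Im c(u,u)\le0$, so your angle $\psi$ is the paper's $\alpha_2$; the optimal choice $\tan\psi=\tan(-\Phi)+8\kappa(2+\kappa^2)/[\eta(6+\kappa^2)(2-\kappa^2)]$ you arrive at matches the paper's definition (\ref{alpha_2}) of $\alpha_2$ exactly. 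The one cosmetic difference is that the paper introduces a single interpolation parameter $\theta$ (writing $-k^2\Vert u\Vert_2^2=-(1+\theta)k^2\Vert u\Vert_2^2+\theta k^2\Vert u\Vert_2^2$ and applying Friedrichs only to the first piece) rather than your two-step scheme of bounding $c$ below and $\Vert u\Vert_{H^1}^2$ above with separate Friedrichs parameters $\delta,\delta'$; the paper's choices $\zeta=1/\kappa^2-1/2$, $\theta=(2-\kappa^2)/(6+\kappa^2)$ correspond to your $\delta=\delta'=2\kappa^2/(2-\kappa^2)$ and give the same constant. Similarly for part (i) the paper's explicit $\zeta$ is chosen so that the boundary coefficient vanishes, just as in your admissibility analysis, and the paper's Friedrichs lemma is proved for general $(L,\mu,N)$ Lipschitz domains directly (via a measurable ``lowest boundary point'' function $x_B$) rather than through the trace machinery you suggest.
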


Theorem \ref{small_k} will be proved via a sequence of lemmas. Our first aim is to show that the sesquilinear form $c$ is bounded. For this we will need the following trace results which are proved by combining standard methods of proof used for trace theorems on bounded domains, together with the proof of (\cite{chandmonk} lemma 3.4). The proof can be found in the appendix. 
\begin{lemma} \label{traces}

Let $D$ be an $(L,\mu,N)$ Lipschitz domain, and let $S_H= D \backslash \overline{U_H}$ for $H \geq f_+ + \mu$.
For $u \in H^1(S_H)$,
\begin{eqnarray*}
\Vert \gamma_- u\Vert_{H^{\frac{1}{2}}(\Gamma_H )} \leq \sqrt{\left(1+ \frac{1}{k\mu}\right)}\Vert u \Vert_{H^1(S_H)},
\end{eqnarray*} 
and, the map $\gamma^*: \mathcal{D}({S_H}) \to L^2(\Gamma)$ such that $\gamma^*u$ is $u$ restricted to $\Gamma$, for $u \in \mathcal{D}({S_H})$, extends to a bounded linear operator $\gamma^*: H^1(S_H) \to L^2(\Gamma)$ with  
\begin{eqnarray*}
k\Vert \gamma^* u \Vert^2_{L^2(\Gamma)} \leq NL'\left(1+\frac{1}{ k\mu}\right)\Vert u \Vert^2_{H^1(S_H)}.
\end{eqnarray*}
\end{lemma}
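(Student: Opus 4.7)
My plan is to prove both bounds first for $u \in \mathcal{D}({S_H})$ and then extend to $H^1(S_H)$ by density. For the $\gamma_-$ bound I would work in Fourier variables with respect to $\tilde x \in \mathbb{R}^{n-1}$: writing $\hat u(\xi,x_n) := (\mathcal{F}_{\tilde x} u)(\xi,x_n)$ and $\lambda(\xi) := \sqrt{k^2 + |\xi|^2}$, Plancherel's identity yields $\|\gamma_- u\|_{H^{1/2}(\Gamma_H)}^2 = \int \lambda(\xi) |\hat u(\xi, H)|^2 d\xi$ and a matching integral representation for $\|u\|_{H^1(S_H)}^2$. Because $H \geq f_+ + \mu$, the slab $\mathbb{R}^{n-1} \times (H-\mu, H)$ sits entirely inside $S_H$, so for each $\xi$ the slice $v(t) := \hat u(\xi,t)$ is available on $[H-\mu, H]$. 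With a cutoff $\chi \in C^1[H-\mu,H]$ satisfying $\chi(H-\mu)=0$, $\chi(H)=1$, $|\chi'|\le 1/\mu$, the identity $|v(H)|^2 = \int_{H-\mu}^H (\chi|v|^2)' dt$ combined with $2|v||v'| \le \lambda |v|^2 + |v'|^2/\lambda$ and the observation $1/(\lambda\mu) \le 1/(k\mu)$ gives the pointwise (in $\xi$) estimate
\begin{equation*}
\lambda |v(H)|^2 \le \Bigl(1 + \tfrac{1}{k\mu}\Bigr) \int_{H-\mu}^H \bigl(\lambda^2 |v|^2 + |v'|^2\bigr) dt.
\end{equation*}
Integrating in $\xi$ and reverting via Plancherel delivers the first inequality.

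For the $\gamma^*$ bound I would localize using the Lipschitz cover $\{O_j\}_{j \in J}$ of Definition \ref{LipDef}. In each $O_j$, after rotation, $O_j \cap D$ is the epigraph of a Lipschitz function $f_j$ of constant $L$, so the portion of $\Gamma$ inside $O_j$ is parametrized by $\tilde x \mapsto (\tilde x, f_j(\tilde x))$ with surface element $J_{f_j}(\tilde x)\,d\tilde x \le L'\,d\tilde x$. Property (i) of Definition \ref{LipDef} guarantees that the vertical segment of length $\mu$ above each boundary point lies in $O_j \cap D$, so a cutoff $\chi$ on $[0,\mu]$ with $\chi(0)=1$, $\chi(\mu)=0$, $|\chi'|\le 1/\mu$ combined with the identity
\begin{equation*}
|u(\tilde x, f_j(\tilde x))|^2 = -\int_0^\mu \partial_t\bigl(\chi(t)\,|u(\tilde x, f_j(\tilde x)+t)|^2\bigr)\, dt
\end{equation*}
and Young's inequality $2ab \le k a^2 + b^2/k$ produces
\begin{equation*}
k\,|u(\tilde x, f_j(\tilde x))|^2 \le \Bigl(1+\tfrac{1}{k\mu}\Bigr) \int_0^\mu \bigl(k^2|u|^2 + |\partial u/\partial x_n|^2\bigr)\, dt.
\end{equation*}
Multiplying by $J_{f_j}\le L'$, integrating $\tilde x$ over the projection of $O_j \cap \Gamma$, and summing over $j$ gives
\begin{equation*}
k\,\|\gamma^* u\|_{L^2(\Gamma)}^2 \le L'\Bigl(1+\tfrac{1}{k\mu}\Bigr) \sum_j \int_{O_j \cap S_H} \bigl(k^2|u|^2 + |\nabla u|^2\bigr)\, dx.
\end{equation*}
Finally, property (iii) of Definition \ref{LipDef} — that every $N+1$ of the $O_j$ have empty intersection — bounds the right-hand sum by $N\,\|u\|_{H^1(S_H)}^2$, yielding the constant $NL'(1+1/(k\mu))$.

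The main obstacle is the $\gamma^*$ estimate, specifically the bookkeeping needed to extract the three factors $N$, $L'$, and $1+1/(k\mu)$ simultaneously from the Lipschitz cover data. Property (i) is what dictates that the integration strip in the local fundamental-theorem-of-calculus argument has width exactly $\mu$; the surface Jacobian supplies the factor $L'$; and property (iii) controls the over-counting caused by the localization. The $\gamma_-$ bound is comparatively routine — a Fourier-space implementation of the standard trace theorem for a flat surface — but the same cutoff width $\mu$ must be used there, consistently with the hypothesis $H \ge f_+ + \mu$ that keeps the slab inside $S_H$.
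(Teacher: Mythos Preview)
Your approach is essentially the same as the paper's for both bounds. One bookkeeping point on the $\gamma^*$ estimate: property~(i) only guarantees that for each $y\in\Gamma$ there exists \emph{some} index $j$ with $B_\mu(y)\subset O_j$, not that the vertical segment above every $y\in O_j\cap\Gamma$ stays inside $O_j$; the paper resolves this by integrating not over all of $O_j\cap\Gamma$ but over the disjoint pieces $U_j:=\{y\in\Gamma:B_\mu(y)\subseteq O_j,\ B_\mu(y)\not\subseteq O_i\text{ for }i<j\}$, which ensures the strip of height $\mu$ above $U_j$ lies in $O_j\cap S_H$ while the $U_j$ still partition $\Gamma$.
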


Let $B:=\Vert \beta \Vert_{L^{\infty}(\Gamma)}$. Let us now show that the sesquilinear form
$c(.,.)$ is bounded.
\begin{lemma}\label{bounded}
Let $D$ be an $(L, \mu, N)$ Lipschitz domain and let $S_H= D\backslash \overline{U_H}$ for  $H\geq f_+ +\mu$. For all $u,v\in H^1(S_H)$,
\[
|c(u,v)|\leq \left(1+\left(1+BNL'\right)\left[1+ \frac{1}{k\mu}\right]\right)
\Vert u\Vert_{H^1(S_H)}\Vert v \Vert_{H^1(S_H)}.
\]

\end{lemma}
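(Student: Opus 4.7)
The plan is to split $c(u,v)$ into its four summands and estimate each one separately by the Cauchy--Schwarz inequality, combined with the operator norm of $T$ from Lemma \ref{LTB} and the two trace inequalities in Lemma \ref{traces}, then add the bounds to recover exactly the stated constant $1+(1+BNL')(1+1/(k\mu))$.

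First, for the volume terms $(\nabla u,\nabla v)-k^2(u,v)$, I would apply the triangle inequality and then Cauchy--Schwarz in $L^2(S_H)$ to get $\|\nabla u\|_2\|\nabla v\|_2+k\|u\|_2\cdot k\|v\|_2$, which in turn is bounded by $\|u\|_{H^1(S_H)}\|v\|_{H^1(S_H)}$ upon one further Cauchy--Schwarz in $\mathbb{R}^2$ on the vectors $(\|\nabla u\|_2,k\|u\|_2)$ and $(\|\nabla v\|_2,k\|v\|_2)$; this produces the ``$1$'' in the prefactor.

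For the Dirichlet-to-Neumann term $\int_{\Gamma_H}\gamma_-\bar v\,T\gamma_-u\,ds$ I would interpret the integral as the duality pairing $\langle T\gamma_- u,\gamma_- v\rangle_{H^{-1/2}(\Gamma_H)\times H^{1/2}(\Gamma_H)}$, use $\|T\|=1$ from Lemma \ref{LTB}, and then apply the first inequality of Lemma \ref{traces} to both $\gamma_- u$ and $\gamma_- v$; this yields the contribution $(1+1/(k\mu))\|u\|_{H^1(S_H)}\|v\|_{H^1(S_H)}$. For the impedance term $\int_\Gamma ik\beta\gamma^*u\,\gamma^*\bar v\,ds$, I would pull out $k B$ (with $B=\|\beta\|_{L^\infty(\Gamma)}$), apply Cauchy--Schwarz in $L^2(\Gamma)$, and then use the second inequality of Lemma \ref{traces} in the symmetric form $\sqrt{k}\|\gamma^*\phi\|_{L^2(\Gamma)}\le\sqrt{NL'(1+1/(k\mu))}\,\|\phi\|_{H^1(S_H)}$ for $\phi=u,v$; this gives $BNL'(1+1/(k\mu))\|u\|_{H^1(S_H)}\|v\|_{H^1(S_H)}$.

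Summing the three contributions $1$, $(1+1/(k\mu))$ and $BNL'(1+1/(k\mu))$ and factoring produces the claimed constant. No step here is delicate: the whole proof is essentially the assembly of Lemmas \ref{LTB} and \ref{traces} via Cauchy--Schwarz, so the only mild care required is to keep the trace constants in the wave-number dependent norm consistent with the factor $1+1/(k\mu)$ inherited from Lemma \ref{traces}.
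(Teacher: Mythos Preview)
Your proof is correct and follows exactly the approach the paper indicates: split $c(u,v)$ into its four summands, apply Cauchy--Schwarz together with $\|T\|=1$ from Lemma~\ref{LTB} and the two trace bounds from Lemma~\ref{traces}, and sum. The paper's own proof is a one-line reference to these same ingredients, so you have simply made explicit the bookkeeping needed to arrive at the stated constant.
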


\begin{proof}
This follows from the definition of $c(.,.)$, the Cauchy-Schwarz inequality, Lemma \ref{traces} and the mapping properties of $T$.
\end{proof}


We now prove an important Friedrich's type inequality.
\begin{lemma} \label{friedrichs}
Let $S_H$ be an $(L,\mu,N+1)$ Lipschitz domain. 
Then for all $w \in H^1(S_H)$ and for all $\zeta>0$
\[
\Vert w\Vert^2_2 \leq (1+\zeta)\frac{(H-f_-)^2}{2} \left\Vert \frac{ \partial w}{ \partial x_n}\right\Vert^2_2
+ \left(1+\frac{1}{\zeta}\right)(H-f_-) \Vert w\Vert^2_{L^2 ( \Gamma)}.
\]
\end{lemma}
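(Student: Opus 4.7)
The plan is to prove the inequality first for $w$ in a dense subspace of smooth functions and then extend by density. Specifically, I would take $w \in \mathcal{D}(S_H)$ (restrictions to $S_H$ of $C_0^\infty(\mathbb{R}^n)$ functions), which is dense in $H^1(S_H)$ and whose traces on $\Gamma$ agree with $\gamma^* w$; boundedness of the maps involved (Lemma \ref{traces}) will transfer the inequality to all of $H^1(S_H)$ in the limit. The core ingredient is a one-dimensional fundamental theorem of calculus along vertical lines: for a.e.\ $\tilde x \in \mathbb{R}^{n-1}$, the slice $S_H^{\tilde x} := \{t : (\tilde x,t)\in S_H\}$ is a (possibly infinite) disjoint union of open intervals $(\tau_i(\tilde x), \sigma_i(\tilde x))$ with $f_-\le \tau_i$ and $\sigma_i \le H$, and the bottom endpoint $(\tilde x,\tau_i)$ lies on $\Gamma$ (because $D\subset U_{f_-}$ forces $\tau_i>f_-$ and the infimum sits on $\partial D$). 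On each such interval I would write, for $x_n \in (\tau_i,\sigma_i)$,
\[
w(\tilde x, x_n) = w(\tilde x, \tau_i) + \int_{\tau_i}^{x_n} \frac{\partial w}{\partial x_n}(\tilde x, t)\,dt.
\]

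Applying the elementary bound $|a+b|^2 \le (1+1/\zeta)|a|^2 + (1+\zeta)|b|^2$ (AM-GM on the cross term) and then Cauchy-Schwarz to the integral, I get
\[
|w(\tilde x,x_n)|^2 \le (1+\tfrac{1}{\zeta})|w(\tilde x,\tau_i)|^2 + (1+\zeta)(x_n-\tau_i)\int_{\tau_i}^{x_n}\left|\tfrac{\partial w}{\partial x_n}\right|^2 dt.
\]
Integrating over $x_n\in(\tau_i,\sigma_i)$ and applying Fubini on the double integral gives
\[
\int_{\tau_i}^{\sigma_i}\!\!\left|\int_{\tau_i}^{x_n}\partial_n w\,dt\right|^2 dx_n \le \frac{(\sigma_i-\tau_i)^2}{2}\int_{\tau_i}^{\sigma_i}|\partial_n w|^2\,dt \le \frac{(H-f_-)^2}{2}\int_{\tau_i}^{\sigma_i}|\partial_n w|^2\,dt,
\]
and the boundary term yields $(\sigma_i-\tau_i)|w(\tilde x,\tau_i)|^2 \le (H-f_-)|w(\tilde x,\tau_i)|^2$. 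Summing over the components and integrating in $\tilde x$ collapses the derivative contributions to $(1+\zeta)\tfrac{(H-f_-)^2}{2}\|\partial_n w\|_2^2$, leaving only the task of controlling $\sum_i \int_{\mathbb{R}^{n-1}}|w(\tilde x,\tau_i(\tilde x))|^2\,d\tilde x$ by $\|w\|_{L^2(\Gamma)}^2$.

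That last step is the main obstacle, since $\Gamma$ is only $(L,\mu,N)$-Lipschitz and need not be a graph. I would handle it by covering $\Gamma$ with the finite-intersection chart system $\{O_j\}$ from Definition \ref{LipDef}; within each chart, after rotation $\Gamma$ is the graph of a Lipschitz function $f_j$ with $J_{f_j}\le L'$, so the surface measure satisfies $ds = J_{f_j}(\tilde x)\,d\tilde x \ge d\tilde x$, giving $\int|w(\tilde x,f_j(\tilde x))|^2 d\tilde x \le \int |w|^2 ds$ on that chart. The collection of points $\{(\tilde x,\tau_i(\tilde x))\}$ for varying $i,\tilde x$ accounts for each piece of $\Gamma$ at most once per chart (since distinct $\tau_i$'s sit in distinct components of the vertical slice), and the $N$-fold finite-intersection property of $\{O_j\}$ bounds the total multiplicity, so
\[
\sum_i \int_{\mathbb{R}^{n-1}}|w(\tilde x,\tau_i(\tilde x))|^2\,d\tilde x \le \|w\|_{L^2(\Gamma)}^2.
\]
Combining everything yields the claimed inequality, and an approximation argument using the continuity of $\gamma^*$ from Lemma \ref{traces} extends it from $\mathcal{D}(S_H)$ to all $w\in H^1(S_H)$.
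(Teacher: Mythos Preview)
Your overall strategy matches the paper's: fundamental theorem of calculus along vertical segments from the nearest boundary point below, the split $|a+b|^2 \le (1+\zeta)|b|^2 + (1+1/\zeta)|a|^2$, Cauchy--Schwarz on the integral term, integration, and density. The paper encodes your interval decomposition via the single function $x_B(x):=\max\{t\le x_n : (\tilde x,t)\in\Gamma\}$ and devotes most of its effort to proving that $x_B$ is Borel measurable---a technical point your sketch elides but which is equally present in your version (the $\tau_i$ are in general neither Lipschitz nor defined on nice sets).

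The genuine gap is in your justification of the boundary estimate $\sum_i\int_{\mathbb{R}^{n-1}}|w(\tilde x,\tau_i(\tilde x))|^2\,d\tilde x \le \|w\|_{L^2(\Gamma)}^2$. The chart argument does not work as written: in the $j$th chart, $\Gamma$ is a Lipschitz graph only \emph{after rotation}, so the inequality $ds\ge d\tilde x$ you invoke compares surface measure to the \emph{rotated} horizontal measure, not to the original $d\tilde x$ in which your sum is expressed; there is no a priori control relating the two. The $N$-fold intersection property is also a red herring---each point $(\tilde x,\tau_i(\tilde x))$ appears exactly once in the sum regardless of the chart cover, so no multiplicity factor arises and none is permitted in the claimed inequality. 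The inequality itself is correct, but the clean justification is the area formula applied to the $1$-Lipschitz vertical projection $\pi:\Gamma\to\mathbb{R}^{n-1}$, $\pi(y)=\tilde y$: the tangential Jacobian satisfies $J_\pi^\Gamma\le 1$, so
\[
\int_{\mathbb{R}^{n-1}}\sum_{y\in\pi^{-1}(\tilde x)\cap\Gamma}|w(y)|^2\,d\tilde x=\int_\Gamma |w|^2\, J_\pi^\Gamma\,d\mathcal{H}^{n-1}\le\int_\Gamma|w|^2\,ds,
\]
and the left side dominates your sum. (The paper, for its part, simply asserts this bound without proof.)
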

\begin{proof}

For $x=(\tilde{x},x_n) \in S_H$, define $x_B : S_H \to \mathbb{R}$ by  $x_B=\max\{t:t\leq x_n \mbox{  and  } (\tilde x,t) \in \Gamma\}$. Let us show that $x_B$ is Borel measurable: Any point on the graph of a Lipschitz function $f$ is the vertex of a cone situated in the hypergraph of $f$ and with semi-major axis directed vertically. To see that this is true, consider such a cone, with slope greater than $L$, the Lipschitz constant of $f$; If a point on the graph of $f$ were also in the cone, then, by the mean value theorem, $f$ would have to assume a gradient greater than $L$, at some point.

Now fix $\alpha \in \mathbb{R}$, and suppose $x=(\tilde x,x_n) \in x_B^{-1}(\alpha,\infty)$. Let $B_1$ be an open ball with centre $(\tilde x,x_B)$ such that if $(\tilde y,t) \in B_1$ then $t>\alpha$. Let $B_2$ be an open ball, with centre $(\tilde x,x_n)$ such that $B_1\cap B_2=\emptyset.$

Now denote by $C_1$ the cone with vertex $(\tilde x,x_B)$ as described above and note that $C_1 \cap B_1$ is also a cone with vertex $(\tilde x,x_B)$. Let $P:\mathbb{R}^{n} \to \mathbb{R}^{n-1}$ be the projection, $P(\tilde y,y_n) = \tilde y$. Then its not difficult to show that $P(C_1 \cap B_1)$ must contain either a ball with centre $\tilde x$ or a ball with centre $\tilde y\not=\tilde x$, but with $\tilde x$ contained in this ball, or a cone with vertex $\tilde x$. Denote by $\hat D$ one of these sets which $P(C_1 \cap B_1)$ contains. 

Now, if $y \in \hat D  \times \mathbb{R} \cap B_2$, then $x_B(y) > \alpha$. This is enough to show that $x \in x_B^{-1}(\alpha,\infty)$ is the limit of a sequence of points $q_k \in \mathbb{Q}^n$ such that $q_k \in x_B^{-1}(\alpha,\infty)$ for all $k$.
 
Now let $q\in x_B^{-1}(\alpha,\infty) \cap \mathbb{Q}^n$. Define the open set 
$$O_q =\bigcup \{\mbox{open balls with centre } q \mbox{ contained in }S_H\}.
$$
For $n \in \mathbb{N}$ define the closed set $F_n= \Gamma \cap \{(\tilde y,y_n)\in\mathbb{R}^{n} :y_n \geq \alpha+ 1/n\}$. Then define the Borel set $O_{q,n}= P(F_n) \times \mathbb{R} \cap O_q$ and see that if $y \in O_{q,n}$ then $x_B(y) \geq \alpha + 1/n$. We claim that 
$$
x_B^{-1}(\alpha,\infty)= \bigcup_{q \in \mathbb{Q}^n \cap x_B^{-1}(\alpha,\infty)}\bigcup_{n \in \mathbb{N}}O_{q,n} ,
$$
so that $x_B$ is Borel measurable.

To verify this let $y \in x_B^{-1}(\alpha ,\infty)$ and fix an open ball $B$ with centre $y$ in $S_H$. Then find $q \in \mathbb{Q}^n$ such that $q\in x_B^{-1} (\alpha,\infty)$ and such that there exists an open ball with centre $q$ containing $y$ and contained within $B$. It now follows that $y \in O_{q,n}$, for some $n \in \mathbb{N}$.  
   

Then for $w \in \mathcal{D}({S_H})$,
\begin{eqnarray*}
\left|w(x) \right| ^2 & = & {\left| \int_{x_B}^{x_n} \frac{\partial w(\tilde x,y_n)}{\partial y_n} dy_n + w( \tilde{x},x_B) \right|}^2 \\
& \leq & \left(\int_{x_B}^{x_n} \left| \frac{\partial w(\tilde x,y_n)}{\partial y_n} \right| dy_n + \left|w( \tilde{x},x_B) \right| \right)^2 \\
& \leq & (x_n - x_B) \int_{x_B}^{x_n} \left| \frac{ \partial w(\tilde x,y_n)}{ \partial y_n} \right|^2 dy_n \\ & +& 2|w(\tilde{x},x_B)|\int_{x_B}^{x_n} \left| \frac{\partial w(\tilde x,y_n)}{\partial y_n} \right| dy_n  + \left|w(\tilde{x}, x_B) \right|^2\\
& \leq & (1+\zeta)(x_n - x_B) \int_{x_B}^{x_n} \left| \frac{ \partial w(\tilde x, y_n)}{ \partial y_n} \right|^2 dy_n + \left(1+\frac{1}{\zeta}\right)\left|w(\tilde{x}, x_B) \right|^2 \\
& \leq & (1+\zeta)(x_n - f_-) \int_{\mathbb{R}}1_{S_H}(\tilde x,y_n) \left| \frac{ \partial w(\tilde x,y_n)}{ \partial y_n} \right|^2 dy_n \\& +& \left(1+\frac{1}{\zeta}\right)\left|w(\tilde{x}, x_B) \right|^2.
\end{eqnarray*}
So that, 
since $\int_{\mathbb{R}}1_{S_{H}} (x_n -f_-) dx_n \leq (H-f_-)^2/2$, we have, using Fubini's Theorem,
\begin{eqnarray*}
&&\int_{S_H}{\left|w(x) \right|}^2 dx \\ & \leq & (1 + \zeta)\int_{ \mathbb{R}^{n-1}}\int_{\mathbb{R}}1_{S_H}(\tilde x,x_n)(x_n-f_-)dx_n \left(\int_{\mathbb{R}}1_{S_H}(\tilde x,y_n){\left| \frac{ \partial w(\tilde x,y_n)}{\partial{y_n}} \right|} ^2 dy_n\right) d \tilde{x} \\ 
 & + & \left(1+ \frac{1}{\zeta}\right) \int_{\mathbb{R}}\int_{\mathbb{R}^{n-1}} 1_{S_H}(\tilde x,x_n){\left| w(\tilde{x}, x_B) \right|}^2 d \tilde{x}dx_n \\
& \leq & (1+ \zeta)\frac{(H-f_-)^2}{2}\int_{\mathbb{R}^{n-1}}\int_{\mathbb{R}}1_{S_H}(\tilde x,y_n) \left| \frac{\partial w(\tilde x,y_n)}{\partial y_n}\right|^2dy_n d \tilde x \\& + &\left(1 + \frac{1}{\zeta}\right)(H-f_-)\int_{\Gamma} |w(s) |^2 ds, 
\end{eqnarray*}
and the result follows for $w \in \mathcal{D}(S_H)$. By the density of this space in $H^1(S_H)$, the result holds for all $w \in H^1(S_H)$.
\end{proof}

Recalling Lemma \ref{WL1} from chapter 2 it's easy to verify the following important symmetry property of $c$: for all $u,v \in H^1(S_H)$
\begin{equation} \label{sym_2}
c(v,u) = c(\bar u, \bar v).
\end{equation}

We now introduce $\alpha_2\in (-\Phi, \pi/2]$ such that 
\begin{equation}\label{alpha_2}
\tan \alpha_2= \frac{1}{\eta}\left[\eta\tan(-\Phi) + \frac{8\kappa}{6+\kappa^2}\frac{(2+ \kappa^2)}{(2-\kappa^2)}\right].
\end{equation}
We then define the sesquilinear forms $c_1,c_2: H^1(S_H) \times H^1(S_H) \to \mathbb{C}$ via 
\[
c_1(u,v) = e^{i\alpha_1}c(u,v),\quad c_2(u,v) = e^{i\alpha_2}c(u,v)  \quad u, v \in H^1(S_H).
\]
(Note that the definition of $\alpha_2$ was motivated in trying to show ellipticity of $c_2$
and recall that $\alpha_1$ was defined when we wrote down (A2).)
 We now show that the sesquilinear forms $c_1,c_2$ are elliptic for small $\kappa$.
\begin{lemma}\label{coerc}
i)If (A1) holds
then for all $w \in H^1(S_H)$  
\begin{eqnarray*}
\Re [c_1(w,w)] \geq C\Vert w\Vert_{H^1(S_H)},
\end{eqnarray*}
where
\begin{eqnarray*}
C= \cos\alpha_1\left(\frac{6\eta_{\alpha} - \eta_{\alpha}\kappa^2 -4\kappa - \sqrt{[\eta_{\alpha}(2+ \kappa^2) - 4\kappa]^2 + 16\kappa^3\eta_{\alpha}}}{2\eta_{\alpha} + \eta_{\alpha}\kappa^2 +4\kappa + \sqrt{[\eta_{\alpha}(2+ \kappa^2) - 4\kappa]^2 + 16\kappa^3\eta_{\alpha}}}\right).
\end{eqnarray*}

ii) If (A2) holds 
then for all $w \in H^1(S_H)$ 
\[
\Re [c_2(w,w)] \geq C\Vert w\Vert_{H^1(S_H)},
\]
where
\[
C=\frac{2-\kappa^2}{6+ \kappa^2}\left[1+\frac{1}{\eta^2}\left(\eta \tan(-\Phi) + \frac{8\kappa}{6+\kappa^2}\frac{(2+\kappa^2)}{(2-\kappa^2)}\right)^2\right]^{-\frac{1}{2}}.
\]
\end{lemma}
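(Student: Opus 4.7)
The plan is to expand $\Re[c_j(w,w)]=\Re[e^{i\alpha_j}c(w,w)]$ directly from (\ref{sesq_2}), bound each contribution separately, and close the estimate via the Friedrich-type inequality of Lemma \ref{friedrichs} with a judicious choice of the parameter $\zeta$. Setting $N:=\|\nabla w\|_2^2$, $M:=k^2\|w\|_2^2$ and $B:=k\|\gamma^*w\|_{L^2(\Gamma)}^2$, we have
\[
c(w,w)=N-M+\int_{\Gamma_H}\gamma_-\bar{w}\,T\gamma_-w\,ds - ik\int_\Gamma \beta\,|\gamma^*w|^2\,ds.
\]
By Lemma \ref{WL1} the Dirichlet-to-Neumann integral has non-negative real and non-positive imaginary part, so its argument lies in $[-\pi/2,0]$, and for any $\alpha\in[0,\pi/2]$ its image under $e^{i\alpha}$ still sits in the closed right half-plane. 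Consequently
\[
\Re[c_j(w,w)]\ \ge\ \cos\alpha_j\,(N-M)\ +\ k\int_\Gamma \Im[e^{i\alpha_j}\beta]\,|\gamma^*w|^2\,ds,
\]
and the remaining task is to produce a useful lower bound on the boundary integrand and to balance it against the defect $-\cos\alpha_j\,M$.

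For part (i), the first hypothesis gives $\Im[e^{i\alpha_1}\beta]\ge\eta$ directly, so $\Re c_1(w,w)\ge\cos\alpha_1(N-M)+\eta B$. Applying $-M\ge -\tfrac12(1+\zeta)\kappa^2 N-(1+1/\zeta)\kappa B$ from Lemma \ref{friedrichs}, and pairing with the upper bound $M+N\le[1+\tfrac12(1+\zeta)\kappa^2]N+(1+1/\zeta)\kappa B$, reduces the desired estimate $\Re c_1\ge C\|w\|_{H^1(S_H)}^2$ to choosing $\zeta>0$ so as to maximize the minimum of the two ratios
\[
\frac{\cos\alpha_1[1-\tfrac12(1+\zeta)\kappa^2]}{1+\tfrac12(1+\zeta)\kappa^2},\qquad \frac{\eta-\cos\alpha_1(1+1/\zeta)\kappa}{(1+1/\zeta)\kappa}.
\]
Equating these gives the quadratic $\eta_\alpha\kappa^2\zeta^2+(2\eta_\alpha+\eta_\alpha\kappa^2-4\kappa)\zeta-4\kappa=0$; its positive root, inserted back into the common value, is exactly the stated $C$. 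The smallness hypothesis on $\kappa$ is precisely what keeps the numerator positive.

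For part (ii) the second hypothesis only controls $\Re\beta$, so the extra work is to convert this into the required sign condition on $\Im[e^{i\alpha_2}\beta]$. Writing $\beta=|\beta|e^{i\theta(y)}$ with $\theta(y)\ge\Phi$ almost everywhere, the plan is to expand
\[
\Im[e^{i\alpha_2}\beta]=\sin\alpha_2\,\Re\beta+\cos\alpha_2\,\Im\beta=\Re\beta\,\cos\alpha_2(\tan\alpha_2+\tan\theta),
\]
and to exploit the monotonicity of $\tan$ on $(-\pi/2,\pi/2)$, together with $\theta\ge\Phi$, to obtain $\Im[e^{i\alpha_2}\beta]\ge\eta\cos\alpha_2[\tan\alpha_2-\tan(-\Phi)]=\cos\alpha_2\cdot 8\kappa(2+\kappa^2)/[(6+\kappa^2)(2-\kappa^2)]$ by the very definition of $\alpha_2$. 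One then applies Lemma \ref{friedrichs} with the specific choice $\zeta=(2-\kappa^2)/(2\kappa^2)$, for which $\tfrac12(1+\zeta)\kappa^2=(2+\kappa^2)/4$ and $(1+1/\zeta)\kappa=\kappa(2+\kappa^2)/(2-\kappa^2)$; a short direct calculation then shows that both constraints coincide at the common value $\cos\alpha_2\,(2-\kappa^2)/(6+\kappa^2)$, which is exactly the stated $C$.

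The genuine obstacle is part (ii): engineering the rotation angle $\alpha_2$ so that after enforcing non-negativity of the boundary contribution there is still room to absorb the $-\cos\alpha_2\,M$ term and one obtains a tractable closed form. Part (i) is conceptually easier but requires the explicit resolution of a two-parameter balancing problem whose optimum happens to satisfy a clean quadratic in $\zeta$, from which the explicit square-root expression for the constant emerges. In both cases the sharpness of the threshold on $\kappa$ is dictated by the point at which the Friedrich estimate for $M$ can no longer be dominated by the sum of the $N$ and $B$ contributions.
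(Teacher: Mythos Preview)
Your argument is correct and essentially the same as the paper's. The only difference is bookkeeping: the paper introduces an auxiliary splitting parameter $\theta>0$, writing $-M=-(1+\theta)M+\theta M$ and applying the Friedrich inequality only to the first piece, so that a term $\cos\alpha_j\theta M$ survives and can be matched directly against $\|w\|_{H^1(S_H)}^2=N+M$; you instead apply Friedrich to $M$ both in the lower bound for $\Re c_j$ and in an upper bound for $N+M$, reducing everything to a comparison of linear forms in $N$ and $B$. The two formulations are algebraically equivalent --- your equalisation condition $R_1=R_2$ is exactly the paper's simultaneous choice of $\theta$ (making the $N$- and $M$-coefficients coincide) and $\zeta$ (killing the boundary coefficient), and both yield the same quadratic in $\zeta$ and the same constant $C$ in each part.
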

\begin{proof}
i)For $w \in H^1(S_H)$ 
\begin{eqnarray*}
\Re [c_1(w,w)] &=& \cos \alpha_1 [\Vert \nabla w\Vert^2_2 -k^2\Vert w\Vert^2_2] + \Re\left[e^{i\alpha_1}\int_{\Gamma_H} \gamma_- \bar wT\gamma_-w ds\right] \\ &+& k\int_{\Gamma}\Im[e^{i\alpha_1}\beta]|w|^2ds.
\end{eqnarray*}
By lemma \ref{WL1}, $\arg\{\int_{\Gamma_H}\gamma_-\bar wT\gamma_-wds\} \in [-\pi/2,0]$, so that 
$$\Re\left[e^{i\alpha_1}\int_{\Gamma_H} \gamma_- \bar wT\gamma_-w ds\right] \geq 0,
$$
because $\alpha_1 \in [0,\pi/2)$. Hence, using lemma \ref{friedrichs} with $\zeta>0$, noting $\Im[e^{i\alpha_1}\beta]\geq \eta $, and, where $\theta>0$, we have
\begin{eqnarray*}
\Re [c_1(w,w)] & \geq &\cos \alpha_1\left(1-(1+ \zeta)\frac{\kappa^2}{2}(1+\theta)\right)\Vert \nabla w\Vert_2^2 \\ & + & k(\eta - \cos\alpha_1(1+ \zeta^{-1})(1+ \theta)\kappa)\int_{\Gamma}|w|^2ds \\
& + &\cos \alpha_1 \theta k^2\Vert w\Vert^2_2.
\end{eqnarray*}
If we now choose 
$$
 \theta = \frac{2-\kappa^2[1+\zeta]}{2+\kappa^2[1+\zeta]}
$$
and
\begin{eqnarray*}
\zeta=\frac{1}{2\eta_{\alpha} \kappa^2}\left(-\left[\eta_{\alpha}\left(2+\kappa^2\right) - 
4\kappa\right] + \sqrt{\left[\eta_{\alpha}\left(2+ \kappa^2\right) - 4\kappa\right]^2 
+ 16\kappa^3\eta_{\alpha}}\right),
\end{eqnarray*}
then $\eta -\cos \alpha_1(1+\zeta^{-1})(1+\theta)\kappa=0$, and
one obtains the (optimal) ellipticity bound. 
\newline
ii) As in part i) we have, for $w \in H^1(S_H)$, and $\theta>0, \zeta>0$ 
\begin{eqnarray} 
\Re [c_2(w,w)]  \nonumber & \geq & \cos \alpha_2\left(1- (1+ \zeta)\frac{\kappa^2}{2}(1+\theta)\right)\Vert \nabla w\Vert_2^2 \\\nonumber &-& k\cos\alpha_2(1+ \zeta^{-1})(1+ \theta)\kappa\int_{\Gamma}|w|^2ds \\ \label{coerc1}
 & + & k\int_{\Gamma}\Im[e^{i\alpha_2}\beta]|w|^2ds  + \cos \alpha_2\theta k^2\Vert w\Vert^2_2.
\end{eqnarray}
Now, if $\eta(\alpha_2):= \mbox{ess}\inf_{y\in \Gamma} \Im [e^{i\alpha_2}\beta]$, it's evident, since $\alpha_2 >-\Phi$, that
\[
\eta(\alpha_2) \geq \Im\left[e^{i(\alpha_2+\Phi)}\frac{\eta}{\cos \Phi}\right] = \frac{\eta \sin(\alpha_2 +\Phi)}{\cos \Phi} = \eta[\sin \alpha_2 + \cos \alpha_2 \tan \Phi].
\]
So making the (optimal) choices
\[
\zeta= \frac{1}{\kappa^2}- \frac{1}{2} \quad \mbox{ and }
\quad \theta =\frac{2-\kappa^2}{6+\kappa^2}
\]
(\ref{coerc1}) becomes 
\begin{eqnarray*}
\Re [c_2(w,w)]& \geq &\cos \alpha_2\left[\frac{2-\kappa^2}{6+\kappa^2}\right]\Vert w\Vert_{H^1(S_H)} \\&+& k\left[\eta(\sin\alpha_2 + \cos\alpha_2\tan\Phi)\right.\\&-&\left.\cos\alpha_2\frac{(2+\kappa^2)}{(2-\kappa^2)}\left(\frac{8\kappa}{6+\kappa^2}\right)\right]\int_{\Gamma} |w|^2ds, 
\end{eqnarray*}
so that the desired bound holds because (\ref{alpha_2}) implies that 
\[
\left[\eta(\sin\alpha_2 +\cos\alpha_2\tan\Phi)-\cos\alpha_2\frac{(2+\kappa^2)}{(2-\kappa^2)}\left(\frac{8\kappa}{6+\kappa^2}\right)\right]=0,
\]
and because 
\[
\cos \alpha_2 = \frac{1}{\sqrt{1+\tan^2\alpha_2}}.
\]
\end{proof}

Using Lemmas \ref{bounded} and \ref{coerc}, we can now prove Theorem \ref{small_k}.
\begin{proof}
By Lemma \ref{coerc} and under the assumption that $\kappa< 2\eta_\alpha/(1+\sqrt{1+2\eta_\alpha^2})$ (respectively $\kappa<\sqrt{2}$), one can verify that $c_1$, (resp. $c_2$) is elliptic, which in turn implies the ellipticity of $c$. Lemma \ref{bounded} implies that $c$ is bounded and hence by the Lax-Milgram lemma the existence of a unique solution $u$ to (\ref{var_prob2_2}) is assured assuming (A2), (resp. (A3)). The estimates (\ref{est1_small}), (\ref{est2_small}) also follow from the Lax-Milgram lemma. In the particular case $\mathcal{G}(v)= -(g,v)$, for some $g \in L^2(S_H)$ we have
\[
\Vert \mathcal{G} \Vert_{H^1(S_H)^*}= \sup_{\phi \in H^1(S_H)}\frac{|(g,\phi)|}{\Vert \phi\Vert_{H^1(S_H)}}\leq \Vert g\Vert_2\sup_{\phi \in H^1(S_H)}\frac{\Vert \phi\Vert_2}{\Vert \phi \Vert_{H^1(S_H)}}\leq \frac{1}{k}\Vert g\Vert_2, 
\]
so that (\ref{est1_smallg}) and (\ref{est2_smallg}) hold.
\end{proof}
\section{Analysis of the variational problem at arbitrary frequency} The sesquilinear form $c$ is not elliptic if the wavenumber $k$ is large.
In this section, we will assume that $\Gamma$ is the graph of a Lipschitz function. Under this restriction, and assuming (A3), but for arbitrary wave number $k$, we will employ Babu\v{s}ka's generalised Lax-Milgram theorem to show that the boundary value problem is well-posed.

Our main result is:

\begin{theorem} \label{mainresultof}
If $\Gamma$ is given by (\ref{Gamma}) with $f$ satisfying (\ref{LipDef}), and (A2) holds then the variational problem (\ref{var_prob2_2}) has a unique solution $u \in H^1(S_H)$ for every $\cal{G}$ $\in {H^1(S_H)}^*$ and 
\begin{equation} \label{aprioriest_2}
\Vert u\Vert_{H^1(S_H)} \leq \sec \Phi(1+2E)\Vert \mathcal{G}\Vert_{ {H^1(S_H)}^*}
\end{equation}
where 
\begin{equation}\label{E}
E=\left(2\sqrt{2}\kappa\left[\frac{2+ \kappa^2(1+B^2(1+L))}{\eta} + \kappa[\sqrt{2}+ \sec\Phi]\right] + \frac{\sec\Phi}{4\sqrt{2}}\right).
\end{equation}
In particular the boundary value problem and the equivalent variational problem have exactly one solution, and the solution satisfies the bound 
\[
k\Vert u\Vert_{H^1(S_H)} \leq E\Vert g \Vert_2.
\]
\end{theorem}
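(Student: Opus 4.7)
The plan is to apply the generalized Lax-Milgram theorem to the sesquilinear form $c$ on $H^1(S_H)$. Boundedness of $c$ has already been established in Lemma \ref{bounded}, and the symmetry property \eqref{sym_2} immediately yields the transposed inf-sup condition from the inf-sup condition, by repeating verbatim the argument of Lemma \ref{WEXL7}. The whole task therefore reduces to proving the inf-sup condition with an explicit constant that produces the stated bound \eqref{aprioriest_2}.

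Following the template developed in Section \ref{hik}, I would reduce the inf-sup condition, via the analogue of Lemma \ref{ihl_rem}, to establishing an a priori bound of the form $\Vert u\Vert_{H^1(S_H)}\leq C\Vert \mathcal G\Vert_{H^1(S_H)^*}$ for every solution of \eqref{var_prob2_2}. In turn, mimicking Lemma \ref{special_case_enough}, this general bound can be obtained by writing $u=u_0+w$, where $u_0$ solves an auxiliary $H^1(S_H)$-elliptic problem with a modified sesquilinear form (constructed from $c$ by flipping the sign of the $-k^2$ term and tweaking the boundary term on $\Gamma$ so that (A2) makes the form coercive, via the rotation trick used in Lemma \ref{coerc}), and $w$ solves the special scattering problem $c(w,v)=-(g,v)$ with $g=-(k^2+k^2)u_0$ -- precisely as in the Dirichlet case. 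Hence the heart of the theorem lies in the a priori estimate for the special case \eqref{weak_form_imp} with $g\in L^2(S_H)$.

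To prove this special-case bound I would adapt the Rellich-type identity of Lemma \ref{rellich}, multiplying $\Delta w+k^2w=g$ by $(x_n-f_-)\overline{\partial w/\partial x_n}$, integrating over $S_a$, using the divergence theorem, and letting $a\to\infty$; the tail on $\Gamma_a$ is controlled by the radiation inequality \eqref{lem32} of Lemma \ref{lemma3p2} exactly as before. The new feature compared to the Dirichlet case is that $w$ no longer vanishes on $\Gamma$, so the boundary contribution on $\Gamma$ contains, besides the (favorably-signed) term $-\int_\Gamma (x_n-f_-)\nu_n |\nabla w|^2\,ds$ (where $-\nu_n\geq 0$ because $\Gamma$ is a graph), a cross term and a tangential piece that must now be rewritten using the impedance condition $\partial w/\partial\nu=ik\beta w$. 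This substitution converts the awkward $\Gamma$-terms into contributions of the form $k\int_\Gamma|w|^2$ and $k\int_\Gamma|\nabla_T w|^2$ weighted by $\beta$; using (A2) through $\Im(e^{i\alpha_1}\beta)\geq \eta$ together with the energy identity $c(w,w)=-(g,w)$ (which, combined with Lemma \ref{WL1} and the rotation $e^{i\alpha_1}$, gives upper bounds on $\int_\Gamma |w|^2$ in terms of $\Im\int_{S_H}g\bar w\,dx$) will absorb the $\Gamma$ boundary contributions with the factor $\eta^{-1}$ that appears in $E$. The trace inequality from Lemma \ref{traces} (with its explicit $L'=\sqrt{1+L^2}$ dependence) and the Friedrich-type bound of Lemma \ref{friedrichs} then allow me to close the estimate and extract the explicit constant $E$ in \eqref{E}, including the $B^2(1+L)$ factor reflecting $\Vert\beta\Vert_\infty$ and the Lipschitz constant of $\Gamma$.

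The main obstacle will be twofold. First, making the Rellich multiplier argument rigorous when $\Gamma$ is only the graph of a Lipschitz function -- in which case the local $H^2$ regularity exploited in Lemma \ref{rellich} is unavailable -- will require a density/approximation argument analogous to the passage from smooth graphs to Lipschitz domains carried out in Lemmas \ref{arb_k} and \ref{rellich2}, most naturally by smoothing $f$ and passing to the limit while keeping the constants uniform. Second, the careful bookkeeping of signs when the multiplier $(x_n-f_-)\overline{\partial w/\partial x_n}$ is combined with the impedance boundary condition is delicate: one must choose the correct rotation $e^{i\alpha_1}$ of the energy identity and match it with the Rellich identity so that the tangential-gradient contributions on $\Gamma$ drop out (or are absorbed by the favorable term $-\nu_n|\partial w/\partial\nu|^2$), leaving only the $|w|^2$ boundary term to be controlled by (A2). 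This is precisely where the factor $\sec\Phi$ and the coupling constants in $E$ emerge, and getting them in sharp form is the technically most demanding part of the argument.
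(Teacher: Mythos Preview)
Your overall architecture matches the paper's exactly: generalized Lax--Milgram, the symmetry \eqref{sym_2} for the transposed inf--sup, reduction to the special case \eqref{weak_form_imp} via an auxiliary elliptic form, a Rellich-type identity for the a~priori bound, and density/approximation to pass from smooth to Lipschitz $\Gamma$. Two substantive points diverge, however, and one of them is a genuine gap.

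The theorem's hypothesis ``(A2)'' is in fact a slip for (A3); the opening paragraph of the section says so, and the key boundary estimate \eqref{w_gamma_bd} in Lemma~\ref{rellich_2} is obtained by taking the \emph{imaginary} part of $c(w,w)=-(g,w)$ and using $\Re\beta\ge\eta$. Your plan to control $\int_\Gamma|w|^2\,ds$ via the rotation $e^{i\alpha_1}$ and (A2) does not deliver what you claim: taking the real part of $e^{i\alpha_1}c(w,w)=-e^{i\alpha_1}(g,w)$ leaves the volume term $\cos\alpha_1\bigl(\|\nabla w\|_2^2-k^2\|w\|_2^2\bigr)$, so you only get $k\eta\int_\Gamma|w|^2\,ds\le\|g\|_2\|w\|_2+\cos\alpha_1\,k^2\|w\|_2^2$. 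Feeding this into the Friedrichs inequality \eqref{Frieds} forces a smallness condition of the type $\kappa\lesssim\eta$, which defeats the ``arbitrary frequency'' goal. The clean estimate $k\eta\int_\Gamma|w|^2\,ds\le\|g\|_2\|w\|_2$ genuinely needs (A3); with that in hand your scheme goes through.

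Two further technical choices in the paper differ from yours and simplify the argument. The Rellich multiplier is $(x_n-H)$, not $(x_n-f_-)$: since $x_n-H=0$ on $\Gamma_H$, the identity lives on $S_H$ with no $\Gamma_H$ boundary contribution, so no extension to $S_a$ and no use of the radiation inequality \eqref{lem32} is needed; instead the paper controls $\|\nabla w\|_2^2-k^2\|w\|_2^2$ by the rotated energy bound \eqref{secbd}. The auxiliary elliptic form in the reduction step (Lemma~\ref{special_case_enough_2}) is $\hat c(u,v)=e^{-i\Phi}\bigl[c(u,v)+2k^2(u,v)\bigr]$, with the rotation angle $\Phi$ tied to $\arg\beta$ rather than $\alpha_1$; this is what produces the $\sec\Phi$ prefactor. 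Finally, the approximation is carried out in two stages, not one: first smooth the graph $f$ keeping the Lipschitz constant (Lemmas~\ref{smooth_Lip}--\ref{rough}), then mollify $\beta$ (Lemma~\ref{betaext}).
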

To apply the generalised Lax-Milgram theorem 
we need to show that $c$ is bounded, which we have done in lemma \ref{bounded};
to establish the inf-sup condition 
that 
\begin{equation}
\alpha := \inf_{0\not=u\in H^1(S_H)}\sup_{0\not=v\in
H^1(S_H)}\frac{|c(u,v)|}{\Vert u\Vert_{H^1(S_H)}\Vert v\Vert_{H^1(S_H)}} >0;
\label{infsup_imp}
\end{equation}
and to establish the ``transposed'' inf-sup condition. It follows
easily from (\ref{sym_2}) that this transposed inf-sup
condition follows automatically if (\ref{infsup_imp}) holds.

\begin{lemma}\label{WEXL7_2} If (\ref{infsup_imp}) holds then, for all non-zero $v\in H^1(S_H)$,
\[
\sup_{0\not=u\in H^1(S_H)}\frac{|c(u,v)|}{\Vert u\Vert_{H^1(S_H)}}>0.
\]
\end{lemma}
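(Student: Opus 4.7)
The plan is to mimic the argument of Lemma \ref{WEXL7} from chapter 2 almost verbatim, since the only ingredient used there was the symmetry property of the sesquilinear form (Corollary \ref{symmetry}), and the analogous symmetry (\ref{sym_2}) has been established for $c$ immediately before the statement of Theorem \ref{small_k}. In particular, no new estimate is required here; this lemma is a purely formal consequence of the inf-sup condition together with the symmetry.

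First, I would fix a non-zero $v \in H^1(S_H)$ and observe that, by (\ref{sym_2}), $|c(u,v)| = |c(\bar v, \bar u)|$ for every $u \in H^1(S_H)$. Next, since the map $u \mapsto \bar u$ is an isometry of $H^1(S_H)$ onto itself, as we range $u$ over the non-zero elements of $H^1(S_H)$ the element $\bar u$ also ranges over all non-zero elements, and $\Vert \bar u \Vert_{H^1(S_H)} = \Vert u \Vert_{H^1(S_H)}$. Therefore
\[
\sup_{0 \neq u \in H^1(S_H)} \frac{|c(u,v)|}{\Vert u \Vert_{H^1(S_H)}}
= \sup_{0 \neq u \in H^1(S_H)} \frac{|c(\bar v, u)|}{\Vert u \Vert_{H^1(S_H)}}.
\]

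Finally, since $v \neq 0$ implies $\bar v \neq 0$, I would apply the assumed inf-sup condition (\ref{infsup_imp}) with $\bar v$ playing the role of the first argument. This yields
\[
\sup_{0 \neq u \in H^1(S_H)} \frac{|c(\bar v, u)|}{\Vert u \Vert_{H^1(S_H)}} \ge \alpha \Vert \bar v \Vert_{H^1(S_H)} = \alpha \Vert v \Vert_{H^1(S_H)} > 0,
\]
which is the required conclusion. There is no real obstacle here: the entire content of the lemma is bookkeeping around the symmetry (\ref{sym_2}) and the invariance of the $H^1(S_H)$-norm under complex conjugation, exactly as in the $V_H$-case treated in chapter 2.
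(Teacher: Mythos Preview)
Your proof is correct and follows essentially the same approach as the paper: use the symmetry property (\ref{sym_2}) to rewrite $|c(u,v)|$ as $|c(\bar v,\bar u)|$, exploit the fact that conjugation is an isometry of $H^1(S_H)$, and then apply the inf-sup condition (\ref{infsup_imp}) to $\bar v$. The paper's version is more terse, compressing the argument into a single displayed chain of (in)equalities, but the content is identical.
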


\begin{proof}
If (\ref{infsup_imp}) holds and $v\in H^1(S_H)$ is non-zero then
\[
\sup_{0\not=u\in H^1(S_H)}\frac{|c(u,v)|}{\Vert u\Vert_{H^1(S_H)}}=
\sup_{0\not=u\in H^1(S_H)}\frac{|c(\bar v,u)|}{\Vert u\Vert_{H^1(S_H)}}\geq
\alpha \Vert v\Vert_{H^1(S_H)}>0.
\]
This proves the lemma.
\end{proof}

The following result follows from \cite[Theorem 2.15]{ihlenburg}
and Lemmas \ref{bounded} and \ref{WEXL7_2}.

\begin{corollary} \label{cor_infsup_2} If (\ref{infsup_imp}) holds then
the variational problem (\ref{var_prob2_2}) has exactly one solution
$u\in H^1(S_H)$ for all ${\cal G}\in H^1(S_H)^*$. Moreover
$$
\|u\|_{H^1(S_H)} \le \alpha^{-1}\|{\cal G}\|_{H^1(S_H)^*}.
$$
\end{corollary}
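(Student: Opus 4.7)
The plan is to apply the generalized Lax-Milgram theorem (the second theorem stated in the introductory material, attributed to Babu\v{s}ka) to the sesquilinear form $c$ on the Hilbert space $H^1(S_H)$. That theorem requires three ingredients: boundedness of the sesquilinear form, the inf-sup condition, and the transposed inf-sup condition. All three have just been established, so the corollary should follow almost immediately.

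First I would observe that $H^1(S_H)$ equipped with the wavenumber-dependent inner product $(u,v)_{H^1(S_H)} = k^2(u,v) + (\nabla u, \nabla v)$ is a Hilbert space with norm equivalent to the standard one; the constant in the Lax-Milgram bound will of course depend on which inner product is used, but only by an equivalence factor of $1$ since we are working with the $k$-dependent norm throughout. Next, boundedness of $c$ on $H^1(S_H) \times H^1(S_H)$ is given by Lemma \ref{bounded}. The hypothesis of the corollary is precisely the inf-sup condition (\ref{infsup_imp}) with constant $\alpha > 0$.

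The transposed inf-sup condition was derived in Lemma \ref{WEXL7_2} using the symmetry property (\ref{sym_2}), which exploits the fact that, under complex conjugation of the arguments, $c$ is symmetric in the pair. Concretely, for non-zero $v \in H^1(S_H)$ one has
\[
\sup_{0\neq u\in H^1(S_H)}\frac{|c(u,v)|}{\|u\|_{H^1(S_H)}} = \sup_{0\neq u\in H^1(S_H)}\frac{|c(\bar v,u)|}{\|u\|_{H^1(S_H)}} \ge \alpha \|v\|_{H^1(S_H)} > 0,
\]
so the transposed condition holds automatically.

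Having verified all the hypotheses, I would now cite \cite[Theorem 2.15]{ihlenburg} (the generalized Lax-Milgram theorem) to conclude both the existence and uniqueness of $u \in H^1(S_H)$ solving $c(u,v) = \mathcal{G}(v)$ for every $v \in H^1(S_H)$, and the a priori estimate $\|u\|_{H^1(S_H)} \le \alpha^{-1}\|\mathcal{G}\|_{H^1(S_H)^*}$. There is no real obstacle here; the content of the corollary is precisely that the three preceding lemmas, taken together, fulfil the hypotheses of Babu\v{s}ka's theorem. The deeper work of the chapter will be in actually verifying the inf-sup condition (\ref{infsup_imp}), which is the hypothesis of this corollary and will presumably be proved in the subsequent lemmas via a priori bounds in the spirit of the argument used for the layer problem in chapter 2.
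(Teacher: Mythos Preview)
Your proposal is correct and matches the paper's approach exactly: the paper simply states that the corollary follows from \cite[Theorem 2.15]{ihlenburg} together with Lemmas \ref{bounded} and \ref{WEXL7_2}, which is precisely the argument you have spelled out.
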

To show (\ref{infsup_imp}) we will establish an
apriori bound for solutions of (\ref{var_prob2_2}), from which the
inf-sup condition will follow by the following easily established
lemma (see \cite[Remark 2.20]{ihlenburg}).

\begin{lemma} \label{ihl_rem_2}
Suppose that there exists $C>0$ such that, for all $u\in H^1(S_H)$ and
${\cal G}\in H^1(S_H)^*$ satisfying (\ref{var_prob2_2}) it holds that
\begin{equation} \label{boundA_2}
\|u\|_{H^1(S_H)} \le C \|{\cal G}\|_{H^1(S_H)^*}.
\end{equation}
Then the inf-sup condition (\ref{infsup_imp}) holds with $\alpha\ge
C^{-1}$.
\end{lemma}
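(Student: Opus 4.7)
The plan is to derive the inf-sup condition by a direct duality argument: for each fixed nonzero $u \in H^1(S_H)$, I would construct a canonical functional $\mathcal{G}_u \in H^1(S_H)^*$ that has $u$ as its associated solution, and then apply the hypothesized a priori bound to that specific pair $(u, \mathcal{G}_u)$.

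Concretely, first I would define $\mathcal{G}_u : H^1(S_H) \to \mathbb{C}$ by $\mathcal{G}_u(v) := c(u,v)$. Boundedness of $c$ (Lemma \ref{bounded}) ensures $\mathcal{G}_u \in H^1(S_H)^*$, with
\[
\|\mathcal{G}_u\|_{H^1(S_H)^*} = \sup_{0 \neq v \in H^1(S_H)} \frac{|c(u,v)|}{\|v\|_{H^1(S_H)}}.
\]
By construction, $u$ satisfies $c(u,v) = \mathcal{G}_u(v)$ for all $v \in H^1(S_H)$, so the pair $(u, \mathcal{G}_u)$ fulfills the hypothesis (\ref{var_prob2_2}) of the lemma.

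Applying the assumed bound (\ref{boundA_2}) to this pair yields
\[
\|u\|_{H^1(S_H)} \le C \|\mathcal{G}_u\|_{H^1(S_H)^*} = C \sup_{0 \neq v \in H^1(S_H)} \frac{|c(u,v)|}{\|v\|_{H^1(S_H)}}.
\]
Dividing by $\|u\|_{H^1(S_H)}$ (nonzero by assumption) and taking the infimum over $0 \neq u \in H^1(S_H)$ gives
\[
\inf_{0 \neq u \in H^1(S_H)} \sup_{0 \neq v \in H^1(S_H)} \frac{|c(u,v)|}{\|u\|_{H^1(S_H)} \|v\|_{H^1(S_H)}} \ge C^{-1},
\]
which is precisely (\ref{infsup_imp}) with $\alpha \ge C^{-1}$.

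There is no real obstacle here; the argument is essentially a tautological reshuffling of the definitions once one observes that the a priori estimate, being \emph{uniform} in the data $\mathcal{G}$, can be tested against the self-referential choice $\mathcal{G}_u := c(u,\cdot)$. The boundedness of $c$ is used only to guarantee $\mathcal{G}_u$ lies in the dual space, so that the hypothesis is legitimately applicable. Thus I expect the proof to be a short paragraph.
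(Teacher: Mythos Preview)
Your argument is correct and is precisely the standard duality argument the paper has in mind; indeed the paper does not spell out a proof here but simply cites \cite[Remark 2.20]{ihlenburg}, where exactly this construction (test the a priori bound against $\mathcal{G}_u:=c(u,\cdot)$) is indicated.
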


The following lemma reduces the problem of establishing
(\ref{boundA_2}) to that of establishing an a priori bound for
solutions of the special case (\ref{weak_form_imp}).

\begin{lemma} \label{special_case_enough_2}
Suppose there exists $ C^*>0$ such that, for all $u\in H^1(S_H)$ and
$g\in L^2(S_H)$ satisfying (\ref{weak_form_imp}) it holds that
\begin{equation} \label{boundB_2}
\|u\|_{H^1(S_H)} \le k^{-1}C^*\, \|g\|_2.
\end{equation}
Then, for all $u\in H^1(S_H)$ and ${\cal G}\in H^1(S_H)^*$ satisfying
(\ref{var_prob2_2}), the bound (\ref{boundA_2}) holds with
$$
C \le \sec\Phi(1+2C^*).
$$
\end{lemma}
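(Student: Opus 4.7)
The plan is to adapt the decomposition argument from Lemma \ref{special_case_enough} of Chapter 2. The idea is to write $u = u_0 + w$, where $u_0$ solves a modified, manifestly elliptic variational problem with the same right-hand side $\mathcal{G}$, and $w$ is a correction that solves the original variational equation (\ref{weak_form_imp}) with an $L^2$ source; the hypothesised bound (\ref{boundB_2}) then controls $w$.

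Concretely, I would introduce the sesquilinear form $c_0:H^1(S_H)\times H^1(S_H)\to\C$ obtained from $c$ by flipping the sign of the $L^2$ term,
\[
c_0(u,v):=(\nabla u,\nabla v)+k^2(u,v)+\int_{\Gamma_H}\gamma_-\bar v\,T\gamma_-u\,ds-\int_\Gamma ik\beta\,\gamma^*u\,\gamma^*\bar v\,ds.
\]
The key step is to show that $c_0$ is $H^1(S_H)$-elliptic with constant $\cos\Phi$, by estimating $\Re[e^{-i\Phi}c_0(u,u)]$. The gradient and $L^2$ terms contribute $\cos\Phi\,\|u\|_{H^1(S_H)}^2$ immediately. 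By Lemma \ref{WL1} the Dirichlet-to-Neumann term has argument in $[-\pi/2,0]$, and since $\Phi\in[-\pi/2,0]$ rotation by $e^{-i\Phi}$ keeps its argument in $[-\pi/2,\pi/2]$, so its real part is nonnegative. The impedance term contributes $k\int_\Gamma\Im[e^{-i\Phi}\beta]|\gamma^*u|^2\,ds$; the definition of $\Phi$ gives $\arg\beta\ge\Phi$ almost everywhere on $\Gamma$, while Assumption (A2) forces $\arg\beta<\pi-\alpha_1\le\pi+\Phi$, so $\arg(e^{-i\Phi}\beta)\in[0,\pi]$ a.e.\ and the integrand is pointwise nonnegative. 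In particular (A2) gives $\Phi>-\pi/2$, so $\sec\Phi$ is finite. Lax-Milgram then produces a unique $u_0\in H^1(S_H)$ with $c_0(u_0,v)=\mathcal{G}(v)$ for all $v$ and $\|u_0\|_{H^1(S_H)}\le\sec\Phi\,\|\mathcal{G}\|_{H^1(S_H)^*}$.

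Setting $w:=u-u_0$ and using $c_0(u_0,v)-c(u_0,v)=2k^2(u_0,v)$, I get $c(w,v)=\mathcal{G}(v)-c(u_0,v)=2k^2(u_0,v)=-(g,v)$ with $g:=-2k^2u_0\in L^2(S_H)$. Applying the standing hypothesis (\ref{boundB_2}) and the trivial inequality $k\|u_0\|_2\le\|u_0\|_{H^1(S_H)}$ gives
\[
\|w\|_{H^1(S_H)}\le k^{-1}C^*\|g\|_2=2kC^*\|u_0\|_2\le 2C^*\|u_0\|_{H^1(S_H)}.
\]
The triangle inequality then yields $\|u\|_{H^1(S_H)}\le(1+2C^*)\|u_0\|_{H^1(S_H)}\le\sec\Phi(1+2C^*)\|\mathcal{G}\|_{H^1(S_H)^*}$, which is (\ref{boundA_2}) with the claimed constant. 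The main obstacle is the ellipticity computation for $c_0$: one has to arrange a single phase rotation under which $\cos\Phi>0$, the Dirichlet-to-Neumann contribution stays in the closed right half-plane, and the impedance integrand stays in the closed upper half-plane simultaneously. The choice $e^{-i\Phi}$ works precisely because (A2) caps $\arg\beta$ strictly above $-\pi/2$.
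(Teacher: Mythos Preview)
Your proof is correct and essentially identical to the paper's. The only cosmetic difference is that the paper builds the rotation into the auxiliary form, defining $\hat c(u,v):=e^{-i\Phi}[c(u,v)+2k^2(u,v)]$ and taking $w:=u-e^{-i\Phi}u_0$, whereas you define $c_0:=c+2k^2(\cdot,\cdot)$ and apply the rotation only in the ellipticity check; the resulting $u_0$'s differ by a unimodular constant and the arithmetic is the same. One minor simplification: to get $\arg\beta-\Phi\le\pi$ you do not need to invoke (A2) --- the standing assumption $\Re\beta\ge 0$ already gives $\arg\beta\le\pi/2\le\pi+\Phi$ since $\Phi\ge-\pi/2$.
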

\begin{proof}
Let $\hat{c}:H^1(S_H)\times H^1(S_H)\to \C$ be defined by 
\begin{eqnarray*}
\hat{c}(u,v) :& =  &e^{-i\Phi}[ c(u,v) +2k^2(u,v)] \\ & = &
e^{-i\Phi}\left[(\nabla u,\nabla
v)+k^2(u,v)+\int_{\Gamma_H}\gamma_-\bar v\,T\gamma_-u\,ds - \int_{ \Gamma} ik \beta \gamma^*u \gamma^*\bar v ds\right],
\end{eqnarray*}
for  $u,v\in H^1(S_H)$.
For $u \in H^1(S_H)$ we see that
\[
\Re [\hat{c}(u,u)]\geq \Re(e^{-i\Phi}\Vert u\Vert^2_{H^1(S_H)}) = \cos\Phi\Vert u\Vert^2_{H^1(S_H)}.
\]
This follows because by lemma \ref{WL1}, $\arg \left\{\int_{\Gamma_H} \gamma_-\bar uT \gamma_-uds\right\} \in [-\pi/2,0]$, whilst $\Phi \in (-\pi/2,0]$, so that, noting the definition of $\Phi$, it holds that 
\[
\Re\left(e^{-i\Phi}\int_{\Gamma_H}\gamma_-\bar uT\gamma_-u ds\right) \geq0, \quad \Re\left(-e^{-i\Phi}\int_{\Gamma}ik\beta |u|^2ds\right) \geq 0.
\]
Thus given $\mathcal{G} \in H^1(S_H)^*$, it follows, by the Lax-Milgram lemma, that there exists unique $u_0 \in H^1(S_H)$ satisfying
\begin{equation} \label{c}
\hat{c}(u_0,v) = \mathcal{G}(v), \quad v\in H^1(S_H),
\end{equation}  
and moreover $u_0$ satisfies the estimate
\begin{equation} \label{sec_bd}
\Vert u_0 \Vert_{H^1(S_H)} \leq \sec\Phi\Vert \mathcal{G}\Vert_{H^1(S_H)^*}.
\end{equation}
Now suppose $u\in H^1(S_H)$ and $\mathcal{G} \in H^1(S_H)^*$ satisfy
\begin{equation}
c(u,v)={\cal G}(v),\quad v\in H^1(S_H), \label{uweak_2}
\end{equation}
and denote by $u_0 \in H^1(S_H)$ the unique solution of (\ref{c}). 
Then, defining $w=u-e^{-i\Phi}u_0$, we see that
\begin{eqnarray*}
c(w,v)&=&c(u,v)-\hat{c}(u_0,v) +e^{-i\Phi}2k^2(u_0,v)\\&=&{\cal G}(v)-{\cal
G}(v)+e^{-i\Phi}2k^2(u_0,v)=e^{-i\Phi}2k^2(u_0,v),
\end{eqnarray*}
for all $v\in H^1(S_H)$. Thus $w$ satisfies (\ref{weak_form_imp}) with
$g=-e^{-i\Phi}2k^2u_0$. It follows, using (\ref{boundB_2}) and (\ref{sec_bd}), that
\begin{equation} \label{boundC_2}
\Vert w\Vert_{H^1(S_H)}\leq k^{-1}C^* \Vert 2k^2u_0\Vert_2\leq 2C^*\sec\Phi\Vert {\cal G}\Vert_{H^1(S_H)^*}.
\end{equation}
The bound (\ref{boundA_2}), with $ C \le \sec\Phi(1+2C^*) $, follows
from  (\ref{sec_bd}) and (\ref{boundC_2}).
\end{proof}

We now turn to establishing the a
priori bound (\ref{boundB_2}), at first just for the case when
$\Gamma$ is the graph of a smooth Lipschitz function and $\beta \in C^{\infty}(\Gamma)$.
\begin{remark}
If $v \in H^1(S_H)$, then $\gamma^*v \in L^2(\Gamma)$ by lemma \ref{traces}. For lemma \ref{rellich_2} it will be necessary to know that $\gamma^* v \in H^{\frac{1}{2}}_{loc}(\Gamma)$, as defined in \cite{mclean00}. This follows from \cite{mclean00} theorem 3.37.
\end{remark}

We recall that $\nu$ is
the outward unit normal to $S_H$ and $\nu_n=\nu\cdot e_n$ is the
$n$th (vertical) component of $\nu$. 

\begin{lemma}\label{rellich_2}
Suppose $\Gamma$ is given by (\ref{Gamma}) with $f$ satisfying (\ref{Lipconst}) and with $f\in
C^\infty(\real^{n-1})$.
  Let $H \geq f_+ + \mu$, $g\in L^2(S_H)$ and let $\beta \in C^{\infty}(\Gamma)$ be such that (A2) holds.
Suppose  $w\in H^1(S_H)$ satisfies
\begin{equation}
b(w,\phi)= -(g,\phi), \quad \phi\in H^1(S_H). \label{bgprob_2}
\end{equation}
Then
\[
k\Vert w\Vert_{H^1(S_H)}\leq \left(2\sqrt{2}\kappa\left[\frac{2+ \kappa^2(1+B^2(1+L))}{\eta} + \kappa[\sqrt{2}+ \sec\Phi]\right] + \frac{\sec\Phi}{4\sqrt{2}}\right) \Vert g\Vert_{2}
\]

\end{lemma}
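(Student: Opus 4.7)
The plan is to derive this a priori bound via a Rellich--Payne--Weinberger identity, following the strategy of Lemma \ref{rellich} but with the Dirichlet analysis of the boundary term on $\Gamma$ replaced by one compatible with the impedance condition. First I would extend $w$ to all of $D$ via the upward radiation condition with $F_H := \gamma_- w$; by Theorem \ref{th_equiv_2}, the extended function satisfies $\Delta w + k^2 w = g$ in $D$ and $\partial w/\partial\nu = ik\beta w$ on $\Gamma$. Smoothness of $\Gamma$ and $\beta$, together with $g\in L^2(D)$ and standard elliptic regularity (e.g.\ \cite{mclean00}), give $w\in H^2_{\rm loc}(D)$, which is enough to justify the subsequent integrations by parts. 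With the cut-off $\phi_A$ as in Lemma \ref{rellich}, I would multiply $\Delta w + k^2 w = g$ by $\phi_A(r)(x_n-f_-)\overline{\partial w/\partial x_n}$, take twice the real part, apply the divergence theorem, and let $A \to \infty$. Exactly as in the Dirichlet proof, Lemma \ref{lemma3p2} controls the $\Gamma_a$ boundary contribution (with the replacement of (\ref{s17}) now coming from taking the imaginary part of the variational identity $c(w,w)=-(g,w)$, using $\Re\beta\ge 0$ and Lemma \ref{WL1}, to obtain $-\Im\int_{\Gamma_H}\gamma_-\bar w T\gamma_- w\,ds \le \|g\|_2\|w\|_2$). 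The outcome is an identity of the form
\[
2\|\partial w/\partial x_n\|_2^2 + J_\Gamma \le (\text{explicit terms in }\|g\|_2,\,\|w\|_2,\,\|\partial w/\partial x_n\|_2),
\]
with $J_\Gamma := -\int_\Gamma (x_n-f_-)[\nu_n|\nabla w|^2 - 2\Re(\overline{\partial w/\partial x_n}\,\partial w/\partial \nu)]\,ds$.

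The key departure from the Dirichlet case is the handling of $J_\Gamma$, for which $\nabla w$ no longer points along $\nu$. Decomposing $\nabla w = \nabla_T w + (\partial w/\partial \nu)\nu$ and $e_n = \nu_n\nu + e_n^T$ with $e_n^T$ tangent to $\Gamma$, substituting $\partial w/\partial \nu = ik\beta w$, and taking real parts gives
\[
J_\Gamma = \int_\Gamma (x_n-f_-)\bigl[(-\nu_n)|\nabla_T w|^2 + \nu_n k^2|\beta|^2|w|^2 - 2\Im\bigl(k\beta w\,\overline{e_n^T\cdot\nabla_T w}\bigr)\bigr]\,ds.
\]
The first summand is non-negative, the second is bounded below by $-(H-f_-)k^2 B^2\int_\Gamma|\gamma^*w|^2\,ds$ via $|\nu_n|\le 1$, and Young's inequality on the cross term absorbs the $|\nabla_T w|^2$ part into the first summand, paying the geometric factor $|e_n^T|^2/(-\nu_n) = |\nabla f|^2/J_f \le L$ on the $|w|^2$ side. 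This yields $J_\Gamma \ge -(H-f_-)k^2 B^2(1+L)\int_\Gamma|\gamma^*w|^2\,ds$, the origin of the $B^2(1+L)$ factor in the stated constant.

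Finally I would close the estimate by bounding $\int_\Gamma|\gamma^*w|^2\,ds$ from the variational identity: rotating $c(w,w)=-(g,w)$ by $e^{i\alpha_1}$ and taking real parts, assumption (A2) gives $k\int_\Gamma\Im[e^{i\alpha_1}\beta]|\gamma^*w|^2\,ds \ge k\eta\int_\Gamma|\gamma^*w|^2\,ds$, while $\Re\bigl[e^{i\alpha_1}\int_{\Gamma_H}\gamma_-\bar w T\gamma_- w\,ds\bigr] \ge 0$ by Lemma \ref{WL1} (since $\alpha_1\in[0,\pi/2)$), producing
\[
k\eta\int_\Gamma|\gamma^*w|^2\,ds \le \|g\|_2\|w\|_2 + k^2\|w\|_2^2.
\]
Substituting into the Rellich bound and invoking Friedrich's inequality (Lemma \ref{friedrichs}) with a suitably chosen weight $\zeta>0$ gives a quadratic inequality in $\|\partial w/\partial x_n\|_2$; solving and then recovering $\|\nabla w\|_2$ from the real part of $c(w,w)=-(g,w)$ delivers the bound on $k\|w\|_{H^1(S_H)}$. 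The main obstacle is the constant bookkeeping: matching the stated numerical constants requires optimising both the Young weight above and the Friedrich weight $\zeta$, with the choice $\alpha_1 = -\Phi$ (permitted by (A2)) producing the $\sec\Phi$ factors in the final bound.
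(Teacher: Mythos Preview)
Your overall strategy---Rellich identity, tangential decomposition of the $\Gamma$ boundary term, closing with Friedrich---is exactly the paper's, and your handling of $J_\Gamma$ matches the paper's decomposition (the $B^2(1+L)$ factor does arise precisely as you describe, via $|e_n^T|^2/(-\nu_n)\le L$). But there is one genuine gap and one structural difference worth noting.

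The gap is in your bound on $\int_\Gamma|\gamma^*w|^2\,ds$. Rotating $c(w,w)=-(g,w)$ by $e^{i\alpha_1}$ and taking real parts leaves the bulk term $\cos\alpha_1(\|\nabla w\|_2^2-k^2\|w\|_2^2)$, and after discarding $\|\nabla w\|_2^2\ge 0$ you arrive at
\[
k\eta\int_\Gamma|\gamma^*w|^2\,ds \;\le\; \|g\|_2\|w\|_2 + k^2\|w\|_2^2.
\]
When this is fed back into the Rellich--Friedrich chain, the extra $k^2\|w\|_2^2$ term reappears on the right-hand side with coefficient of order $\kappa\eta^{-1}(2+\kappa^2(1+B^2(1+L)))$, and you cannot absorb it for large $\kappa$: the resulting inequality closes only under a smallness condition of type $\kappa\lesssim\eta$, defeating the purpose of the arbitrary-frequency lemma. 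The paper avoids this by taking the \emph{imaginary} part of $c(w,w)=-(g,w)$ with no rotation: the bulk terms are real and drop out entirely, and since $\Im\int_{\Gamma_H}\gamma_-\bar w\, T\gamma_-w\,ds\le 0$ one obtains the clean bound
\[
k\eta\int_\Gamma|\gamma^*w|^2\,ds\;\le\;\|g\|_2\|w\|_2,
\]
using $\Re\beta\ge\eta$ (the lemma is stated under (A2) but the proof in fact invokes (A3); this is a labelling quirk in the paper). This is the estimate that makes the argument $\kappa$-robust.

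The structural difference is the Rellich weight: the paper multiplies by $(x_n-H)$ rather than your $(x_n-f_-)$. With $(x_n-H)$ the weight vanishes on $\Gamma_H$, so there is no upper boundary term to control via the radiation condition at all; the integration by parts is carried out directly on $S_H$ and only the $\Gamma$ terms survive. Your choice $(x_n-f_-)$ would also work---the $\Gamma_a$ contribution can be handled as in Lemma~\ref{rellich}---but it adds a layer of argument the paper's choice sidesteps. Finally, the $\sec\Phi$ factors in the stated constant come not from setting $\alpha_1=-\Phi$ in (A2), but from a separate rotation of $c(w,w)=-(g,w)$ by $e^{-i\Phi}$ and taking real parts, giving $\|\nabla w\|_2^2\le k^2\|w\|_2^2+\sec\Phi\,\|g\|_2\|w\|_2$, which is used both inside the Rellich estimate and in the final passage from $\|w\|_2$ to $\|w\|_{H^1(S_H)}$.
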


\begin{proof}

Setting $\phi=w$ in (\ref{bgprob_2}) and, multiplying through by $e^{-i\Phi}$, and taking real parts (c.f.\ the proof of lemma \ref{special_case_enough_2}) we derive the estimate
\begin{equation}\label{secbd}
\Vert \nabla w\Vert_2^2 \leq k^2 \Vert w\Vert_2^2 + \sec\Phi \Vert g\Vert_2\Vert w\Vert_2.
\end{equation}
Setting $\phi=w$ in (\ref{bgprob_2}) and taking imaginary parts, and writing $\gamma^* w$ as $w$, gives
\[
\Im\int_{\Gamma_H}\gamma_-\bar wT \gamma_-wds - \int_{\Gamma}k\Re(\beta)|w|^2ds = -\Im(g,w),
\] 
so that from lemma \ref{WL1} and assuming (A2) we get
\begin{equation} \label{w_gamma_bd}
\eta \int_{\Gamma}k|w|^2 ds \leq \Vert g\Vert_2\Vert w\Vert_2.
\end{equation}
From lemma \ref{friedrichs} with $\zeta=1$, we have
\begin{equation} \label{Frieds}
\hspace*{5ex}k^2{\Vert w\Vert}^2_2  \leq \kappa^2 {\left\Vert \frac{ \partial w}{ \partial x_n} \right\Vert}^2_2 + 2k\kappa\int_{\Gamma}|w|^2ds.
\end{equation}

Extending the definition of $w$ to $D$ by defining $w$ in $U_H$ by
(\ref{uprcstar}) with $F_H:= \gamma_-w$, it follows from Theorem 
\ref{th_equiv_2} that $w$ satisfies the boundary value problem, with
$g$ extended by zero from $S_H$ to $D$. 

With $\beta \in C^{\infty}(\Gamma)$ and $ w|_{\Gamma} \in H_{\rm{loc}}^{\frac{1}{2}}(\Gamma)$ it follows (e.g.\ \cite{mclean00}, Theorem 3.20) that $\beta w \in H^{\frac{1}{2}}_{\rm loc}(\Gamma)$. Together with the assumptions that $g\in
L^2(D)$, and that the boundary is smooth, regularity theory implies that $w\in
H^2_{\rm loc}(D)$ (e.g.\ \cite{mclean00} Theorem 4.18). 

Let $r=|\tx|$. For $A\ge 1$ let $\phi_A\in
C_0^{\infty}(\real)$ be such that $0\leq \phi_A\leq 1$,
$\phi_A(r)=1$ if $r\le A$ and $\phi_A(r)=0$ if $r\ge A+1$ and
finally such that $\Vert\phi_A'\Vert_{\infty}\leq M$ for some fixed
$M$ independent of $A$.

In view of this regularity and since $w$ satisfies the boundary
value problem, we have 
\begin{eqnarray*}
\lefteqn{2\Re\int_{S_H}\phi_A(r)(x_n-H)g\frac{\partial\bar w}{\partial x_n}\,dx}\\
&=& 2\Re\int_{S_H}\phi_A(r)(x_n-H)(\Delta w+k^2w)
\frac{\partial\bar w}{\partial x_n}\,dx\\
&=&
\int_{S_H}\left\{2\Re\left\{\nabla\cdot\left(\phi_A(r)(x_n-H)\frac{\partial\bar w}{\partial
x_n} \nabla w\right)\right\}-2\phi_A(r)\left|\frac{\partial
w}{\partial x_n}\right|^2
\right.\\&&\left.-2\Re\left[(x_n-H)\phi_A(r)\frac{\partial \nabla
\bar w}{\partial x_n}\cdot \nabla  w\right]\right.\\&&\left.-2
\phi_A'(r)(x_n-H)\frac{\tx}{|\tx|}\cdot
\Re\left(\nabla_{\tx}w\frac{\partial \bar w}{\partial
x_n}\right)\right.\\&&\left.+2\Re\left[k^2(x_n-H)\phi_A(r)\frac{\partial \bar w}{\partial
x_n} w\right]\right\}\,dx.
\end{eqnarray*}
 Using the divergence theorem and integration by parts
 \begin{eqnarray*}
\lefteqn{2\Re\int_{S_H}\phi_A(r)(x_n-H)g\frac{\partial\bar w}{\partial x_n}\,dx}\\
&=&
-\int_{\Gamma}(x_n-H)\phi_A(r)\left\{\nu_n(|\nabla
w|^2 -k^2 |w|^2) -2\Re\left(\frac{\partial \bar w}{\partial
x_n}\frac{\partial w}{\partial \nu}\right)\right\}\,ds\\&&
+\int_{S_H}\left\{\phi_A(r)\left(|\nabla
w|^2-k^2|w|^2-2\left|\frac{\partial w}{\partial
x_n}\right|^2\right)
\right.\\&& \left.-2\phi_A'(r)(x_n-H)\Re\left(\frac{\partial\bar w}{\partial
x_n} \frac{\partial w}{\partial r}\right)\right\}\,dx.
\end{eqnarray*}
Now, on $\Gamma\cap\mbox{supp}\phi_A(r)$ 
\[
\frac{\partial w}{\partial x_n} = e_n.\nabla w = e_n.\left(\nabla_{\Gamma}w +\frac{\partial w}{\partial \nu}\nu\right),
\]
where $\nabla_{\Gamma}w$, the tangential part of $\nabla w$, is given by
$$
\nabla_{\Gamma}w:= \nabla w -\frac{\partial w}{ \partial \nu}\nu.
$$
 So
\[
\Re \left(\frac{\partial \bar w}{\partial x_n}\frac{\partial w}{\partial \nu}\right) = \left|\frac{\partial w}{\partial \nu}\right|^2\nu_n + \Re\left((e_n.\nabla_{\Gamma}\bar w) \frac{\partial w}{\partial \nu}\right).
\]
Also 
\[
|\nabla w|^2= |\nabla_{\Gamma} w|^2 + \left|\frac{\partial w}{\partial \nu}\right|^2,
\]
so that 
\[
\nu_n|\nabla w|^2 - 2\Re\left(\frac{\partial \bar w}{\partial x_n}\frac{\partial w}{\partial \nu}\right)= \nu_n|\nabla_{\Gamma} w|^2 - \nu_n\left|\frac{\partial w}{\partial \nu}\right|^2 -2\Re\left((e_n \cdot \nabla_{\Gamma} \bar w)\frac{\partial w}{\partial \nu}\right).
\]
Rearranging terms and noting that $\partial w / \partial \nu = ik\beta w$ on $\mbox{supp}\phi_A(r) \cap \Gamma$, we find that
\begin{eqnarray} \label{main}
\hspace*{5ex}&&{2\int_{S_H}\phi_A(r)\left|\frac{\partial w}{\partial x_n}\right|^2\,dx} - \int_{\Gamma}\phi_A(r)(H-x_n)\nu_n|\nabla_{\Gamma}w|^2ds\\ \nonumber
&& =\int_{S_H}\left\{\phi_A(r)\left(|\nabla w|^2-k^2|w|^2\right)
-2\phi_A'(r)(x_n-H)\Re\left(\frac{\partial\bar w}{\partial
x_n} \frac{\partial w}{\partial
r}\right)\right\}\,dx\\ \nonumber &&-2\Re\int_{S_H}\phi_A(r)(x_n-H)g\frac{\partial\bar w}{\partial
x_n}\,dx 
\\\nonumber&&-\int_{ \Gamma}(H-x_n) \phi_A(r)\nu_nk^2(1+|\beta|^2)|w|^2ds \\\nonumber && -2k \int_{\Gamma}(H-x_n)\phi_A(r)\Re((e_n.\nabla_{\Gamma} \bar w)i \beta w) ds.
\end{eqnarray}
Now, where $L'=\sqrt{1+L^2}$, 
\begin{equation} \label{nu_Lprime}
-e_n.\nu = -\nu_n \geq \frac{1}{L'}
\end{equation}
and
\begin{equation} \label{grad_gamma}
|e_n.\nabla_{\Gamma} w| \leq \frac{L}{L'}|\nabla_{\Gamma}w|,
\end{equation}
so
\begin{eqnarray} \label{labelcat}
\nonumber &&\left|2k\int_{\Gamma}(H-x_n)\phi_A(r)\Re((e_n.\nabla_{\Gamma}\bar w)i\beta w)ds\right|
\\& \leq &  \frac{2kL}{L'}\int_{\Gamma}\phi_A(r)|\nabla_{\Gamma}w|B|w|(H-x_n)ds
\\\nonumber & \leq & \frac{1}{L'}\int_{\Gamma}\phi_A(r)|\nabla_{\Gamma} w|^2(H-x_n) ds\\\nonumber & + & \frac{k^2L^2}{L'}\int_{\Gamma}\phi_A(r)(H-x_n)B^2|w|^2ds,
\end{eqnarray}
while 
\begin{eqnarray} \nonumber
\left|2\Re\int_{S_H}\phi_A(r)(H-x_n)g\frac{\partial \bar w}{\partial x_n}dx\right|
&\leq& \int_{S_H}\phi_A(r)\left|\frac{\partial w}{\partial x_n}\right|^2 dx \\&+& \int_{S_H}\phi_A(r)(H-x_n)^2|g|^2 dx. \label{labelmouse}
\end{eqnarray}
Combining (\ref{main}), (\ref{labelcat}), (\ref{labelmouse}) and noting (\ref{nu_Lprime}) we have 
\begin{eqnarray} 
&&{\int_{S_H}\phi_A(r)\left|\frac{\partial w}{\partial x_n}\right|^2\,dx}\\\nonumber 
& \leq & 
\int_{S_H}\left\{\phi_A(r)\left(|\nabla w|^2-k^2|w|^2\right)
-2\phi_A'(r)(x_n-H)\Re\left(\frac{\partial\bar w}{\partial
x_n} \frac{\partial w}{\partial
r}\right)\right\}\,dx\\\nonumber & + &\int_{S_H}\phi_A(r)(H-x_n)^2 |g|^2 dx \\\nonumber &
- & \int_{\Gamma}(H-x_n) \phi_A(r)\nu_n k^2(1+B^2)|w|^2ds +\frac{k^2L^2}{L'} \int_{\Gamma}(H-x_n)\phi_A(r)B^2|w|^2 ds.
\end{eqnarray}


We now wish to let $A\to \infty$.  The only problem is the term
involving $\phi'_A$ which we estimate as follows. Let
$S_H^b=\left\{x\in S_H\;:\; |\tx|< b\right\}$ for $b\ge 1$. Then
\begin{eqnarray*}
&&\left|\int_{S_H}\left\{2\phi_A'(r)(x_n-H)\Re\left(\frac{\partial\bar{w}}{\partial
x_n} \frac{\partial w}{\partial r}\right)\right\}\,dx\right|\\&&\leq
2M(H-f_-)\int_{S_H^{A+1}\setminus \overline{S_H^A}}|\nabla
w|^2\,dx\to 0
\end{eqnarray*}
as $A\to\infty$, where the convergence follows from the fact that
$w\in H^1(S_H)$. Now letting $A\to\infty$ and using Lebesgue's dominated convergence theorem gives,
\begin{eqnarray} 
\nonumber {\int_{S_H}\left|\frac{\partial w}{\partial x_n}\right|^2\,dx} 
& \leq & 
\int_{S_H}\left(|\nabla w|^2-k^2|w|^2\right)
 +\int_{S_H}(H-x_n)^2 |g|^2 dx \\\nonumber
&-&\int_{\Gamma}(H-x_n) \nu_n k^2(1+B^2)|w|^2ds\\ &+&\frac{k^2L^2}{L'} \int_{\Gamma}(H-x_n)B^2|w|^2 ds.
\end{eqnarray}
Use of (\ref{secbd}) leads to
\begin{eqnarray} \label{fish}
\nonumber\int_{S_H}\left| \frac{\partial w}{\partial x_n}\right|^2 dx & \leq & \sec\Phi\Vert g\Vert_2\Vert w\Vert_2 + \frac{\kappa^2}{k^2}\Vert g \Vert_2^2 \\ & + & k\kappa\int_{\Gamma}|\nu_n|(1+B^2)|w|^2ds   +  \frac{k\kappa L^2}{L'}\int_{\Gamma}B^2|w|^2ds\\\nonumber
& \leq & \sec\Phi\Vert g\Vert_2\Vert w\Vert_2 + \frac{\kappa^2}{k^2}\Vert g\Vert^2_2 + k\kappa\int_{\Gamma}\left[1+B^2\left(1+ \frac{L^2}{L'}\right)\right]|w|^2ds\\\nonumber
& \leq &\sec\Phi\Vert g\Vert_2\Vert w\Vert_2 + \frac{\kappa^2}{k^2}\Vert g\Vert^2_2 + k\kappa[1+B^2(1+L)]\int_{\Gamma}|w|^2ds.
\end{eqnarray}
Combining (\ref{Frieds}) and (\ref{fish}) gives
\begin{eqnarray*}
k^2\Vert w\Vert_2^2 \leq \kappa^2\sec\Phi\Vert g\Vert_2\Vert w\Vert_2 + \frac{\kappa^4}{k^2}\Vert g\Vert^2_2 + k\kappa[2 + \kappa^2(1+B^2(1+L))]\int_{\Gamma}|w|^2ds.
\end{eqnarray*}
Using (\ref{w_gamma_bd}) we get 
\begin{eqnarray*}
k^2\Vert w\Vert_2^2 & \leq & \left[ \kappa^2\sec\Phi + \frac{2\kappa+\kappa^3(1+B^2(1+L))}{\eta}\right]\Vert g\Vert_2\Vert w\Vert_2 + \frac{\kappa^4}{k^2}\Vert g\Vert_2^2 \\ & \leq & \frac{1}{2}k^2\Vert w\Vert_2^2 \\&+& \frac{\kappa^2}{k^2}\left[\kappa^2 + \frac{1}{2}\left[\kappa\sec\Phi + \frac{2+ \kappa^2(1+ B^2(1+L))}{\eta}\right]^2\right]\Vert g\Vert_2^2,
\end{eqnarray*}
so that, using $\sqrt{a^2 + b^2} \leq a+ b$, for $a,b>0,$
\begin{eqnarray*}
k\Vert w\Vert_2 & \leq & \frac{\kappa}{k}\left[2\kappa^2 + \left[\kappa\sec\Phi + \frac{2+\kappa^2(1+B^2(1+L))}{\eta}\right]^2\right]^\frac{1}{2}\Vert g\Vert_2\\ & \leq &
\frac{\kappa}{k}\left[\sqrt{2}\kappa + \left[\kappa\sec\Phi + \frac{2+ \kappa^2(1+B^2(1+L))}{\eta}\right]\right]\Vert g\Vert_2.
\end{eqnarray*}
Defining
\[
 F:= {\kappa}\left[\kappa\left[\sqrt{2}+\sec\Phi\right] + \frac{2+\kappa^2(1+B^2(1+L))}{\eta}\right],
 \]
and using (\ref{secbd}) we get
\begin{eqnarray*}
k^2\Vert w\Vert_{H^1(S_H)}^2 & \leq &  2k^4\Vert w\Vert_2^2 + \sec\Phi k^2\Vert g\Vert_2\Vert w\Vert_2 \\
& \leq & [2F^2 + \sec\Phi F]\Vert g\Vert_2^2   
\end{eqnarray*}
so that
\[
k\Vert w\Vert_{H^1(S_H)} \leq \left(2\sqrt{2}F + \frac{\sec\Phi}{4\sqrt{2}}\right)\Vert g \Vert_2.
\]
\end{proof}

Combining lemmas \ref{rellich_2}, \ref{special_case_enough_2} and
\ref{ihl_rem_2} with Corollary \ref{cor_infsup_2}, we have the following
result.
\begin{lemma}\label{apriori_2}
If $\Gamma$ is given by (\ref{Gamma}) with $f$ satisfying (\ref{Lipconst}) and with $f \in C^{\infty}(\mathbb{R}^{n-1})$ and $\beta \in C^{\infty}(\Gamma)$ such that (A2) holds, then the
variational problem (\ref{var_prob2_2}) has a unique solution $u\in
H^1(S_H)$ for every ${\cal G}\in H^1(S_H)^*$ and the solution satisfies the
estimate (\ref{aprioriest_2}).

\end{lemma}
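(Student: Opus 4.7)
The plan is essentially to assemble the four preceding results in sequence, since the hard estimates have already been done. First I would invoke Lemma \ref{rellich_2}, which, under the smooth graph and smooth $\beta$ hypotheses assumed here, provides the key a priori bound for any solution $w \in H^1(S_H)$ of the source-driven variational problem (\ref{weak_form_imp}): namely $k\|w\|_{H^1(S_H)} \leq E \|g\|_2$, where $E$ is the constant in (\ref{E}). This is exactly the hypothesis (\ref{boundB_2}) of Lemma \ref{special_case_enough_2} with $C^* = E$.

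Next I would apply Lemma \ref{special_case_enough_2} to upgrade from the special right-hand side $\mathcal{G}(v) = -(g,v)$ to a general $\mathcal{G} \in H^1(S_H)^*$. The strategy encoded in that lemma is to split $u = u_0 + w$, where $u_0$ solves an auxiliary coercive problem (based on the shifted form $\hat c$) handled by Lax--Milgram, and $w$ satisfies the original scattering problem (\ref{weak_form_imp}) with a controlled $L^2$ source. Combining the coercivity estimate for $u_0$ with the bound on $w$ supplied by Lemma \ref{rellich_2} yields (\ref{boundA_2}) with $C \le \sec\Phi(1+2E)$.

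With this a priori estimate in hand for every solution of (\ref{var_prob2_2}), Lemma \ref{ihl_rem_2} immediately converts it into the inf-sup bound (\ref{infsup_imp}) with $\alpha \geq [\sec\Phi(1+2E)]^{-1}$. Since Lemma \ref{bounded} already shows $c$ is bounded, and since Lemma \ref{WEXL7_2} (via the symmetry property (\ref{sym_2})) automatically yields the transposed inf-sup condition, Corollary \ref{cor_infsup_2} then delivers unique solvability of (\ref{var_prob2_2}) for every $\mathcal{G} \in H^1(S_H)^*$ together with the estimate $\|u\|_{H^1(S_H)} \leq \alpha^{-1}\|\mathcal{G}\|_{H^1(S_H)^*}$, which is exactly (\ref{aprioriest_2}). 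The final bound $k\|u\|_{H^1(S_H)} \leq E \|g\|_2$ in the particular case $\mathcal{G}(v) = -(g,v)$ is just a restatement of Lemma \ref{rellich_2}.

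Because each step is a direct invocation, I do not anticipate any genuine obstacle here; the only point requiring care is the bookkeeping of constants to verify that the product $\sec\Phi(1+2E)$ in Lemma \ref{special_case_enough_2} really matches the constant appearing in the statement (\ref{aprioriest_2}), and that the hypotheses of Lemma \ref{rellich_2} (smoothness of $f$ and $\beta$, assumption (A2), $H \geq f_+ + \mu$) are inherited verbatim from the hypotheses of the present lemma. The main substantive work — the Rellich-type identity and the resulting a priori bound — lives in Lemma \ref{rellich_2}, and is not re-done here.
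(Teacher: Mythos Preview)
Your proposal is correct and matches the paper's own approach exactly: the paper simply states that the lemma follows by ``combining lemmas \ref{rellich_2}, \ref{special_case_enough_2} and \ref{ihl_rem_2} with Corollary \ref{cor_infsup_2}'', which is precisely the chain of implications you have spelled out. The only superfluous remark is your final sentence about the bound $k\|u\|_{H^1(S_H)} \leq E\|g\|_2$, which is not part of the statement of Lemma~\ref{apriori_2} itself (it appears instead in Theorem~\ref{mainresultof}), but this does no harm.
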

Before we extend Lemma \ref{apriori_2} to non-smooth surfaces we will need two preliminary and standard lemmas. The first concerns approximation of a Lipschitz function by smooth Lipschitz functions.
\begin{lemma} \label{smooth_Lip}
Let $f:\mathbb{R}^{n-1} \rightarrow \mathbb{R}$ be a Lipschitz function with Lipschitz constant $L$. 
Then for all $\epsilon > 0$, there exists $f_{ \epsilon} : \mathbb{R}^{n-1} \rightarrow \mathbb{R} $ such that
\newline i) $f_{ \epsilon } \in C^{\infty} ( \mathbb{R}^{n-1})$,
\newline ii) $f_{ \epsilon }$ is Lipschitz and $|f_{ \epsilon}(\tilde x) - f_{ \epsilon}(\tilde y)| \leq L |\tilde x- \tilde y|$, $\quad$  $\tilde x,\tilde y \in \mathbb{R}^{n-1}$,
\newline iii) $f_{ \epsilon} \geq f + \epsilon/6$, 
\newline iv) $\Vert f_{ \epsilon} - f \Vert_{L^{\infty}(\mathbb{R}^{n-1})} < \epsilon$.
\newline v) For $i \in \{1, \dots,n-1\}$, $\tilde \epsilon>0$, and compact $K \subset \mathbb{R}^{n-1}$,
\[
\left\Vert \frac{\partial f}{\partial x_i}- \frac{\partial f_{\epsilon}}{\partial x_i}\right\Vert_{L^p(K)} <\tilde \epsilon, 
\]
for $1< p<\infty$, provided $\epsilon$ is sufficiently small.
\newline vi) $\nabla_{\tilde x}f_{\epsilon}$ is uniformly H\"{o}lder continuous for any index $\alpha \in (0,1)$ i.e.\
\[
\sup_{\tilde x,\tilde z \in \mathbb{R}^{n-1}, \tilde x\neq\tilde z}\frac{|\nabla_{\tilde x}f(\tilde x) -\nabla_{\tilde x}f(\tilde z)|}{|\tilde x-\tilde z|^{\alpha}}< \infty,
\]
so that $f_\epsilon$ is a Lyapunov function.
\end{lemma}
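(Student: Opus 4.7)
The natural strategy is mollification together with a small upward shift. Fix a standard mollifier: let $\psi \in C_0^\infty(\real^{n-1})$ be nonnegative, radially symmetric, supported in the unit ball, with $\int \psi = 1$, and set $\psi_\delta(\tilde x) = \delta^{-(n-1)}\psi(\tilde x/\delta)$. For a parameter $\delta>0$ (to be chosen) define
\[
f_\epsilon(\tilde x) := (f * \psi_\delta)(\tilde x) + \tfrac{\epsilon}{3}, \quad \tilde x \in \real^{n-1}.
\]
Property (i) is immediate since convolution with a $C^\infty$ kernel produces a $C^\infty$ function. For (ii), using $\int\psi_\delta = 1$ and the Lipschitz bound on $f$,
\[
|f*\psi_\delta(\tilde x) - f*\psi_\delta(\tilde y)| \le \int |f(\tilde x - \tilde z) - f(\tilde y - \tilde z)|\psi_\delta(\tilde z)\,d\tilde z \le L|\tilde x - \tilde y|,
\]
so $f_\epsilon$ has Lipschitz constant $L$.

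The key pointwise bound comes from the same Lipschitz estimate applied with $\tilde y = \tilde x$ inside the integral: for every $\tilde x$,
\[
|f*\psi_\delta(\tilde x) - f(\tilde x)| \le \int_{|\tilde z|<\delta} L|\tilde z|\,\psi_\delta(\tilde z)\,d\tilde z \le L\delta.
\]
Choose $\delta < \epsilon/(6L)$ (and $\delta < \epsilon/6$ if $L=0$). Then $f*\psi_\delta \ge f - L\delta \ge f - \epsilon/6$, hence $f_\epsilon \ge f + \epsilon/6$, giving (iii); and $\|f_\epsilon - f\|_\infty \le L\delta + \epsilon/3 < \epsilon/2 < \epsilon$, giving (iv).

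For (v), apply Rademacher's theorem: the weak derivatives $\partial f/\partial x_i$ exist in $L^\infty(\real^{n-1})$ with $\|\partial f/\partial x_i\|_\infty \le L$. Differentiation commutes with convolution, so $\partial f_\epsilon/\partial x_i = (\partial f/\partial x_i)*\psi_\delta$, and by the standard $L^p$-convergence of mollification ($1<p<\infty$), $\partial f_\epsilon/\partial x_i \to \partial f/\partial x_i$ in $L^p(K)$ for every compact $K$ as $\delta \to 0$; shrinking $\delta$ further if necessary secures (v). Finally, (vi) follows because the second derivatives $\partial^2 f_\epsilon/\partial x_i\partial x_j = (\partial f/\partial x_i) * (\partial \psi_\delta/\partial x_j)$ lie in $L^\infty(\real^{n-1})$ with $\|\cdot\|_\infty \le L\|\partial\psi_\delta/\partial x_j\|_{L^1}$, so $\nabla_{\tilde x}f_\epsilon$ is globally Lipschitz; combined with the uniform bound $|\nabla_{\tilde x}f_\epsilon|\le L$, interpolation between the Lipschitz estimate (for $|\tilde x-\tilde z|\le 1$) and the $L^\infty$ estimate (for $|\tilde x-\tilde z|>1$) yields uniform H\"older continuity of $\nabla_{\tilde x}f_\epsilon$ for every $\alpha\in(0,1)$.

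No step is genuinely difficult; the only subtle point worth checking carefully is that a single $\delta$ can be chosen small enough to satisfy (iii), (iv) and (v) simultaneously, which is immediate since each condition is an upper bound of the form $\delta < \delta_i(\epsilon,\tilde\epsilon,K,p)$. The shift by $\epsilon/3$ is the simple device that converts the two-sided approximation (iv) into the one-sided bound (iii), needed later to ensure the approximating epigraphs contain the original one.
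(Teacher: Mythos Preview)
Your proposal is correct and follows essentially the same approach as the paper: mollify $f$ with a standard kernel $\psi_\delta$ and add a small vertical shift (the paper uses shift $\epsilon/2$ with $\delta=\epsilon/(3L)$, you use shift $\epsilon/3$ with $\delta<\epsilon/(6L)$, but the arithmetic in both cases yields (iii) and (iv)). The arguments for (ii), (v), and (vi) are likewise the same in spirit, the only cosmetic difference being that the paper estimates the H\"older quotient for $\nabla f_\epsilon$ directly from the integral representation rather than via boundedness of second derivatives.
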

\begin{proof}
Let $\delta = { \epsilon}/(3L)$. Let $\psi_{\delta} \in C_0^{\infty}(\mathbb{R}^{n-1})$ be such that $\psi_{\delta} >0$, $\psi_{\delta}(x)=0$ if $|x|>\delta$ and such that $\int_{\mathbb{R}^n}\psi_{\delta}(x)dx =1.$ Then $\psi_{ \delta} \ast f \in C^{ \infty}( \mathbb{R}^{n-1})$ (e.g.\cite{mclean00} Theorem 3.3). For $\tilde x \in \mathbb{R}^{n-1}$
\begin{eqnarray*}
| \psi_{ \delta} \ast f(\tilde x) - f(\tilde x) | & = & \left| \int_{|\tilde y|< \delta} (f(\tilde x-\tilde y) - f(\tilde x)) \psi_{\delta} (\tilde y) d \tilde y \right| \\
& \leq & \int_{ |\tilde y|< \delta} L|\tilde y| \psi_{ \delta}(\tilde y) d \tilde y   \leq  L \delta = \frac{\epsilon}{3}.
\end{eqnarray*}
Defining $ f_{\epsilon} : \mathbb{R}^{n-1} \rightarrow \mathbb{R}$ by $f_{ \epsilon} = \psi_{ \delta} \ast f + \frac{ \epsilon}{2}$, we see that $f_{ \epsilon} \geq  f +\epsilon/6$ and that 
\[
  \Vert f_{\epsilon} - f \Vert_{L^{\infty}(\mathbb{R}^{n-1})} < \epsilon.
 \]
 
For $\tilde x, \tilde z \in \mathbb{R}^{n-1}$,
\begin{eqnarray*}
|\psi_{\delta} \ast f (\tilde x) - \psi_{\delta} \ast f(\tilde z) |& = & \left|\int_{\mathbb{R}^{n-1}} (f(\tilde x- \tilde y)-f(\tilde z- \tilde y)) \psi_{\delta} (\tilde y) d\tilde y \right|\\
 & \leq & \int_{ \mathbb{R}^{n-1}} L|\tilde x- \tilde z| \psi_{ \delta} (\tilde y)d \tilde y  \leq  L|\tilde x- \tilde z|.
 \end{eqnarray*}

For v), we note that $\partial f_{\epsilon}/{\partial x_i}= \psi_{\delta}*\partial f/\partial x_i$ (e.g. \cite{Federer} page 347), and that $\partial f/\partial x_i \in L^p(K)$ for $1< p<\infty$. Hence for $\phi \in L^q(K)$, where $p^{-1} + q^{-1} =1$,
\begin{eqnarray*}
&&\left|\int_K \left(\frac{\partial f}{\partial x_i} - \frac{\partial f_{\epsilon}}{\partial x_i}\right)\phi d\tilde x\right|\\ & = & \left|\int_K\int_{|\tilde y| <\delta}\left(\frac{\partial f}{\partial x_i}(\tilde x)- \frac{\partial f}{\partial x_i}(\tilde x- \tilde y)\right) \psi_{\delta}(\tilde y) d\tilde y\phi(\tilde x) d\tilde x\right|\\ & = & 
\left|\int_{|\tilde y| <\delta}\int_K \psi_{\delta}(\tilde y)\left(\frac{\partial f}{\partial x_i}(\tilde x)- \frac{\partial f}{\partial x_i}(\tilde x- \tilde y)\right) \phi(\tilde x) d\tilde xd \tilde y \right| \\& \leq & 
\int_{|\tilde y| <\delta} \psi_{\delta}(\tilde y)\left(\int_K \left|\frac{\partial f}{\partial x_i}(\tilde x)- \frac{\partial f}{\partial x_i}(\tilde x-\tilde y)\right|^pd\tilde x\right)^{\frac{1}{p}}\left(\int_K|\phi(\tilde x)|^qd\tilde x\right)^{\frac{1}{q}}d\tilde y ,
\\
\end{eqnarray*}  
using H\"{o}lders inequality. It follows that
\begin{eqnarray*} 
\left\Vert \frac{\partial f}{\partial x_i} - \frac{\partial f_{\epsilon}}{\partial x_i}\right\Vert_{L^p(K)} \leq \sup_{|\tilde y|<\delta}\left(\int_K \left|\frac{\partial f}{\partial x_i}(\tilde x)- \frac{\partial f}{\partial x_i}(\tilde x-\tilde y)\right|^pd\tilde x\right)^{\frac{1}{p}}<\tilde \epsilon,
\end{eqnarray*}
provided $\delta >0 $ and hence $\epsilon >0 $ is sufficiently small. This latter fact is easily shown in the case when $\partial f/\partial x_i \in C^{\infty}(K)$, and holds in general by the density of $C^{\infty}(K)$ in $L^p(K)$. 

Finally for vi) we see that for $\alpha \in (0,1)$ and $\tilde x,\tilde z \in \mathbb{R}^{n-1}$,
\begin{eqnarray*}
\frac{|\nabla_{\tilde x}f(\tilde x) -\nabla_{\tilde x}f(\tilde z)|}{|\tilde x-\tilde z|^{\alpha}}&=& \frac{\left|\int_{\mathbb{R}^{n-1}}\nabla_{\tilde x}\psi_{\delta}(\tilde y)[f(\tilde x-\tilde y)-f(\tilde z-\tilde y)]d \tilde y\right|}{|\tilde z-\tilde x|^{\alpha}}\\&\leq&C_{\epsilon}L|\tilde z-\tilde x|^{1-\alpha},
\end{eqnarray*}
where 
$$
C_{\epsilon}= \int_{\mathbb{R}^{n-1}}|\nabla_{\tilde x}\psi_{\delta}(\tilde y)|d\tilde y.
$$
Part vi) now follows by noting that if $|\tilde x-\tilde z|>1$ say, the result is trivial.
\end{proof}

In the next lemma we extend, by reflection, a test function onto a larger domain. We will not make use of standard extension theorems because we need to know explicit bounds.
\begin{lemma} \label{ext}
Let $H\geq f_++ \mu$ and suppose $\Gamma$ is given by (\ref{Gamma}) with $f$ Lipschitz with Lipschitz constant $L$. Let  
$f^* \in C^{\infty}(\mathbb{R}^{n-1})$ be such that, for $\tilde x,\tilde y \in \mathbb{R}^{n-1}$, 
$$|f^*(\tilde x) - f^*(\tilde y)| \leq L|\tilde x-\tilde y|,$$
$$f^*(\tilde x) \geq f(\tilde x)
$$
 and such that 
\begin{equation} \label{constraint}
f^*(\tilde x) + (f^*(\tilde x) -f(\tilde x)) <H. 
\end{equation} 
Let $S_H^*=D^* \backslash \overline{U_H}$, where $D^*$ is the epigraph of $f^*$. Then, for all $v \in \mathcal{D}({S_H^*})$, we can extend $v$ to a function on $S_H$ such that $v \in H^1(S_H)$, $v|_{S_H\backslash S_H^*} \in \mathcal{D}({S_H \backslash \overline{S_H^*}})$ and 
\begin{equation} \label{nice}
\Vert v\Vert_{H^1(S_H \backslash \overline{S_H^*})} \leq 2\sqrt{(1+4(n-1)L^2)}\Vert v\Vert_{H^1(S_H^*)}.
\end{equation}
\end{lemma}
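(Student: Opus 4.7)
The plan is to construct the extension of $v$ by reflection across the smooth surface $\Gamma^{*}:=\{(\tilde x,f^{*}(\tilde x)):\tilde x\in\R^{n-1}\}$. Write $v=\phi|_{S_{H}^{*}}$ for some $\phi\in C_{0}^{\infty}(\R^{n})$ and define, for $x_{n}\le f^{*}(\tilde x)$,
\[
\tilde v(\tilde x,x_{n})\;:=\;\phi\bigl(\tilde x,\,2f^{*}(\tilde x)-x_{n}\bigr),
\]
leaving $\tilde v=v$ where $x_{n}>f^{*}(\tilde x)$. Since $f^{*}\in C^{\infty}$ and $\phi$ is smooth with compact support, the reflected function lies in $C_{0}^{\infty}(\R^{n})$, so its restriction to $S_{H}\setminus\overline{S_{H}^{*}}$ is in $\mathcal{D}(S_{H}\setminus\overline{S_{H}^{*}})$. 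The two pieces coincide on $\Gamma^{*}$, so $\tilde v$ is continuous on $S_{H}$; combined with the pointwise smoothness on each side, this guarantees $\tilde v\in H^{1}(S_{H})$ (the distributional gradient equals the pointwise gradient a.e., with no singular boundary contribution).

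The role of the hypothesis $f^{*}(\tilde x)+(f^{*}(\tilde x)-f(\tilde x))<H$ is exactly that, for any $x=(\tilde x,x_{n})\in S_{H}\setminus S_{H}^{*}$, i.e.\ $f(\tilde x)<x_{n}<f^{*}(\tilde x)$, the reflected point $x^{*}=(\tilde x,2f^{*}(\tilde x)-x_{n})$ satisfies $f^{*}(\tilde x)<2f^{*}(\tilde x)-x_{n}<2f^{*}(\tilde x)-f(\tilde x)<H$, so $x^{*}\in S_{H}^{*}$. Therefore the reflection $x\mapsto x^{*}$ maps $S_{H}\setminus S_{H}^{*}$ injectively into $S_{H}^{*}$ with unit Jacobian, and the change of variables $y=x^{*}$ yields
\[
\int_{S_{H}\setminus S_{H}^{*}}|v(x^{*})|^{2}\,dx\;\le\;\int_{S_{H}^{*}}|v(y)|^{2}\,dy,
\]
and the same inequality with any partial derivative of $v$ in place of $v$.

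For the gradient, the chain rule gives, for $i=1,\dots,n-1$,
\[
\partial_{i}\tilde v(x)=\partial_{i}v(x^{*})+2\,\partial_{i}f^{*}(\tilde x)\,\partial_{n}v(x^{*}),\qquad\partial_{n}\tilde v(x)=-\partial_{n}v(x^{*}).
\]
Using $(a+b)^{2}\le 2a^{2}+2b^{2}$ together with $|\partial_{i}f^{*}|\le L$ yields the pointwise estimate
\[
|\nabla\tilde v(x)|^{2}\;\le\;2\sum_{i=1}^{n-1}|\partial_{i}v(x^{*})|^{2}+\bigl(1+8(n-1)L^{2}\bigr)|\partial_{n}v(x^{*})|^{2}\;\le\;\bigl(4+16(n-1)L^{2}\bigr)|\nabla v(x^{*})|^{2}.
\]
Integrating and applying the change of variables above gives $\|\nabla\tilde v\|_{L^{2}(S_{H}\setminus S_{H}^{*})}^{2}\le 4(1+4(n-1)L^{2})\|\nabla v\|_{L^{2}(S_{H}^{*})}^{2}$, while the $L^{2}$ part satisfies the trivial bound $\|\tilde v\|_{L^{2}(S_{H}\setminus S_{H}^{*})}^{2}\le\|v\|_{L^{2}(S_{H}^{*})}^{2}\le 4(1+4(n-1)L^{2})\|v\|_{L^{2}(S_{H}^{*})}^{2}$. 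Adding $k^{2}$ times the second to the first gives (\ref{nice}).

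There is no real obstacle here; the argument is essentially bookkeeping, but one has to be slightly careful about two points. The first is that the inequality in the hypothesis is \emph{strict} (so that the closed reflected region still fits inside $S_{H}^{*}$, matching the usage of $\overline{S_{H}^{*}}$ in the statement). The second is that the constant $4(1+4(n-1)L^{2})$ is a deliberately loose upper bound for $\max\{2,\,1+8(n-1)L^{2}\}$, chosen so that a single factor can be pulled out of both the tangential and the normal derivative contributions; it is this rounding that produces the clean $2\sqrt{1+4(n-1)L^{2}}$ factor in (\ref{nice}).
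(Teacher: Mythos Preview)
Your proof is correct and follows essentially the same approach as the paper: both use the vertical reflection $(\tilde x,x_n)\mapsto(\tilde x,2f^{*}(\tilde x)-x_n)$ across $\Gamma^{*}$, compute the chain-rule derivatives, use the constraint (\ref{constraint}) to ensure the reflected region lands in $S_H^{*}$, and combine the resulting $L^2$ estimates to reach the constant $2\sqrt{1+4(n-1)L^2}$. The only cosmetic difference is that you bound $|\nabla\tilde v(x)|^2$ pointwise before integrating, whereas the paper first writes the individual norm inequalities $\|\partial_i v_E\|_{L^2}\le\sqrt{2}\{\|\partial_i v\|_{L^2}+2L\|\partial_n v\|_{L^2}\}$ and then combines them; the arithmetic is the same.
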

\begin{proof}
For $v \in \mathcal{D}({S_H^*})\subseteq \mathcal{D}(\overline{D^*})$ and for $(\tilde x,x_n ) \in \mathbb{R}^n \setminus D^*$ define $v_E(\tilde x,x_n):=v(\tilde x,2f^*(\tilde x) -x_n)$, so that 
\begin{equation} \label{v_E_1}
\frac{\partial v_E}{\partial x_n}(\tilde x,x_n)= -\frac{\partial v}{\partial x_n}(\tilde x,2f^*(\tilde x) -x_n),
\end{equation}
and for $i\in\{1,\dots,n-1\}$, 
\begin{equation}\label{v_E_2}
\frac{\partial v_E}{\partial x_i}(\tilde x,x_n)= \frac{\partial v}{\partial{x_i}}(\tilde x,2f^*(\tilde x) -x_n) +\frac{\partial v}{\partial x_n}(\tilde x,2f^*(\tilde x)-x_n)2\frac{\partial f^*}{\partial x_i}(\tilde x).
\end{equation}
Hence $\partial v_E / \partial x_i \in \mathcal{D}({S_H \backslash \overline{S_H^*}}) \subseteq L^2(S_H \backslash \overline{S_H^*})$, for $i \in \{1,\dots,n\}$.
Now, if $\hat v(x):= v(x) $ on $D^*$ and $\hat v(x):= v_E(x)$ on $\mathbb{R}^n \setminus D^*$, then, fixing $\phi \in C^{\infty}_0(\mathbb{R}^n)$, $i \in \{1,\dots,n\}$, and where $\mathbf{i}$ denotes the unit vector in the direction $x_i$, we have  
\begin{eqnarray*}
\int_{\mathbb{R}^{n}} \hat v\frac{\partial \bar \phi}{\partial x_i}dx & = & \int_{D^*}v\mathbf{i}\cdot\nabla \bar \phi dx + \int_{\mathbb{R}^{n}\setminus D^*} v_E\mathbf{i}\cdot\nabla \bar\phi dx\\ & = & -\int_{D^*}\frac{\partial v}{\partial x_i} \bar \phi dx - \int_{\mathbb{R}^{n}\setminus D^*} \frac{\partial v_E}{\partial x_i}\bar\phi dx= -\int_{\mathbb{R}^{n}} \bar \phi\frac{\partial \hat v}{\partial x_i}dx,
\end{eqnarray*}
using the divergence theorem and the fact that $v_E= v$ on the graph of $f^*$.
This shows that $\hat v \in H^1(\mathbb{R}^{n})$, so that $v:= \hat v|_{S_H} \in H^1(S_H)$.
The estimates 
\begin{eqnarray*}
\Vert v_E \Vert_{L^2(S_H\setminus S_H^*)} \leq \Vert v \Vert_{L^2(S_H^*)},
\left\Vert \frac{\partial v_E}{\partial x_n} \right\Vert_{L^2(S_H\setminus S_H^*)} \leq \left\Vert \frac{\partial v}{\partial x_n} \right\Vert_{L^2(S_H^*)},\\
\left\Vert \frac{\partial v_E}{\partial x_i}  \right\Vert_{L^2(S_H\setminus S_H^*)} \leq \sqrt{2}\left\{\left\Vert \frac{\partial v}{\partial x_i} \right\Vert_{L^2(S_H^*)} + 2L\left\Vert \frac{\partial v}{\partial x_n}\right\Vert_{L^2(S_H^*)}\right\}, \quad i\in \{1,\dots,n-1\},
\end{eqnarray*}
follow from (\ref{v_E_1}), (\ref{v_E_2}), (\ref{constraint}) and the fact that $\Vert \partial f^*/\partial x_i\Vert_{L^{\infty}(\mathbb{R}^{n-1})} \leq L $, and combine to give (\ref{nice}).
\end{proof}

We next show that lemmas \ref{rellich_2} and \ref{apriori_2} hold for domains with boundaries given by arbitrary Lipschitz graphs. 
\begin{lemma} \label{rough}
Suppose $\Gamma$ is given by (\ref{Gamma}) with $f$ Lipschitz with Lipschitz constant $L$, $H\geq f_+ + \mu$, $g\in L^2(S_H)$, and $\beta \in C(\Gamma)$ is such that $\beta $ is the restriction to $\Gamma$ of 
$$
\beta^* \in C^{\infty}(\mathbb{R}^n)\cap L^{\infty}(\mathbb{R}^n)\mbox{ such that } \frac{\partial \beta^*}{\partial x_n} = 0, \mbox{ and such that }\Re(\beta^*) \geq \eta>0.
$$ Suppose that $w \in H^1(S_H)$ satisfies
\begin{equation}\label{horrible}
c(w,v)= -(g,v), \quad v \in H^1(S_H).
\end{equation}
Then
\begin{eqnarray*}
 k\Vert w \Vert_{H^1(S_H)} \leq E \Vert g\Vert_2,
\end{eqnarray*}
where $E$ is given by (\ref{E}).
\end{lemma}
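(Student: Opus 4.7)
The plan is to extend Lemma \ref{rellich_2} from smooth to general Lipschitz graphs by smoothly approximating $f$ from above and transferring the variational equation onto a smaller smooth-boundary sub-domain where Lemma \ref{rellich_2} applies. First I would invoke Lemma \ref{smooth_Lip} to produce $f_\epsilon \in C^\infty(\mathbb{R}^{n-1})$ with Lipschitz constant $L$, $f_\epsilon \geq f + \epsilon/6$, and $\Vert f_\epsilon - f\Vert_{L^\infty} < \epsilon$. Write $D^{(\epsilon)}$ for the epigraph of $f_\epsilon$, $\Gamma_\epsilon$ for its boundary, and $S_H^{(\epsilon)} := D^{(\epsilon)} \setminus \overline{U_H}$; then $S_H^{(\epsilon)} \subset S_H$, and for $\epsilon$ small enough the geometric constraint (\ref{constraint}) needed by Lemma \ref{ext} is satisfied. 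Setting $\beta_\epsilon := \beta^*|_{\Gamma_\epsilon}$, the hypothesis $\partial \beta^*/\partial x_n = 0$ forces $\beta^*$ to depend only on $\tilde x$; consequently $\beta_\epsilon \in C^\infty(\Gamma_\epsilon)$ with $\Re \beta_\epsilon \geq \eta$, and the Lipschitz constant, $B$, $\eta$, $\kappa$ and $\sec\Phi$ are all preserved by the approximation.

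Given $v \in H^1(S_H^{(\epsilon)})$, extend it by reflection across $\Gamma_\epsilon$ via Lemma \ref{ext} to $v^E \in H^1(S_H)$ with $\Vert v^E\Vert_{H^1(S_H)} \leq 2\sqrt{1+4(n-1)L^2}\,\Vert v\Vert_{H^1(S_H^{(\epsilon)})}$. Substituting $v^E$ into $c(w, v^E) = -(g, v^E)_{S_H}$ and splitting the $S_H$-integrals as $\int_{S_H^{(\epsilon)}} + \int_{V_\epsilon}$, where $V_\epsilon := S_H \setminus \overline{S_H^{(\epsilon)}}$, yields
\begin{equation*}
c_\epsilon(w|_{S_H^{(\epsilon)}}, v) = -(g, v)_{S_H^{(\epsilon)}} + \mathcal{E}_\epsilon(v), \qquad v \in H^1(S_H^{(\epsilon)}),
\end{equation*}
where $c_\epsilon$ is the sesquilinear form on $S_H^{(\epsilon)}$ associated with $\beta_\epsilon$ and $\mathcal{E}_\epsilon(v)$ gathers the volume contributions of $\nabla w \cdot \nabla \bar v^E$, $k^2 w \bar v^E$ and $g \bar v^E$ over $V_\epsilon$ together with the discrepancy between the impedance boundary integrals on $\Gamma$ and $\Gamma_\epsilon$. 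Decomposing $w|_{S_H^{(\epsilon)}} = A_\epsilon + B_\epsilon$ with $A_\epsilon,B_\epsilon$ the unique solutions of $c_\epsilon(A_\epsilon,v) = -(g,v)_{S_H^{(\epsilon)}}$ and $c_\epsilon(B_\epsilon,v) = \mathcal{E}_\epsilon(v)$ (both provided by Lemma \ref{apriori_2} on the smooth-boundary domain $S_H^{(\epsilon)}$), Lemma \ref{rellich_2} gives $k\Vert A_\epsilon\Vert_{H^1(S_H^{(\epsilon)})} \leq E\Vert g\Vert_2$ with the very constant $E$ of (\ref{E}), uniformly in $\epsilon$, while Lemma \ref{apriori_2} yields $\Vert B_\epsilon\Vert_{H^1(S_H^{(\epsilon)})} \leq C\Vert \mathcal{E}_\epsilon\Vert_{H^1(S_H^{(\epsilon)})^*}$ with $C$ also $\epsilon$-independent.

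The conclusion follows once $\Vert \mathcal{E}_\epsilon\Vert_{H^1(S_H^{(\epsilon)})^*} \to 0$ is established. The volume contributions vanish by absolute continuity of the Lebesgue integral, since $|V_\epsilon| \to 0$ and the extension bound on $v^E$ is uniform; combining with $\Vert w\Vert_{H^1(S_H^{(\epsilon)})} \to \Vert w\Vert_{H^1(S_H)}$ (dominated convergence as $1_{S_H^{(\epsilon)}} \to 1_{S_H}$) then gives $k\Vert w\Vert_{H^1(S_H)} \leq E\Vert g\Vert_2$. The main obstacle is the boundary discrepancy in $\mathcal{E}_\epsilon$, namely showing
\begin{equation*}
\int_\Gamma ik\beta\,\gamma^* w\,\gamma^* \bar v^E\, ds - \int_{\Gamma_\epsilon} ik\beta_\epsilon\,\gamma_*^\epsilon w\,\gamma_*^\epsilon \bar v\, ds \to 0
\end{equation*}
uniformly over the unit ball in $H^1(S_H^{(\epsilon)})$. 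I expect this step to rely on the identification $\beta_\epsilon(\tilde x, f_\epsilon(\tilde x)) = \beta(\tilde x, f(\tilde x))$ furnished by the $x_n$-invariance of $\beta^*$, together with the continuity of the trace on a family of Lipschitz graphs converging uniformly to $\Gamma$ with uniformly bounded Lipschitz constant; the latter can be reduced, by density, to the smooth case in which it is straightforward.
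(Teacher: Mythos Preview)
Your approach is essentially the same as the paper's: approximate $f$ from above by smooth Lipschitz $f_\epsilon$ via Lemma~\ref{smooth_Lip}, extend test functions $v\in H^1(S_H^{(\epsilon)})$ to $H^1(S_H)$ by reflection across $\Gamma_\epsilon$ via Lemma~\ref{ext}, transfer the equation for $w$ onto $S_H^{(\epsilon)}$, and decompose using Lemmas~\ref{rellich_2} and~\ref{apriori_2}. The only structural difference is ordering: the paper fixes a smooth compactly supported approximant $w_k$ of $w$ \emph{at the outset} (so that on $S_H^m$ one has $c_m(w_k,v)=-(g,v)+c(w_k-w,v)+\mathcal H_m(v)$, decomposes $w_k$, and sends $m\to\infty$ then $\delta\to 0$), whereas you work directly with $w$ and postpone the density step to the analysis of the boundary discrepancy inside $\mathcal E_\epsilon$.

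One point to be careful about in your final step: the boundary discrepancy is not purely a ``trace continuity'' issue. After pulling back to $\mathbb{R}^{n-1}$ using $\partial\beta^*/\partial x_n=0$, the difference contains the surface-measure mismatch $J_{f_\epsilon}-J_f$, and Lemma~\ref{smooth_Lip}(v) only gives $\nabla f_\epsilon\to\nabla f$ in $L^p$ on \emph{compact} sets, not globally. The paper's estimate handles this by writing
\[
\int_{\Gamma_m} ik\beta^* w_k \bar v\,ds-\int_\Gamma ik\beta w_k \bar v\,ds
\]
as a thin-layer volume integral over $S_H\setminus S_H^m$ plus a term controlled by $\Vert J_{f_{\epsilon_m}}-J_f\Vert_{L^2(K)}\,\Vert w_k\Vert_\infty$ with $K=\mathrm{supp}\,w_k$ compact. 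In your framework the same device appears inside the density argument: for smooth compactly supported $w_\delta$ the Jacobian term localises and vanishes, while the remainder $w-w_\delta$ is controlled by the uniform trace bounds of Lemma~\ref{traces} (with constants independent of $\epsilon$) together with the uniform extension bound of Lemma~\ref{ext}. So your sketch is correct, but ``straightforward in the smooth case'' hides precisely this Jacobian localisation, which is the technical heart of the boundary estimate.
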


\begin{proof} 
Fix a sequence $\epsilon_m \to 0$ such that $\epsilon_{m+1} <{\epsilon_m}/6$, for $m \in \mathbb{N}$. By Lemma \ref{smooth_Lip}, there exists a sequence of Lipschitz functions $f_{\epsilon_m}\in C^{\infty}(\mathbb{R}^{n-1})$, with Lipschitz constant $L$, such that $\Vert f-f_{\epsilon_m}\Vert_{L^{\infty}(\mathbb{R}^{n-1})}< \epsilon_m$, such that $f_{\epsilon_m}\geq f + \epsilon_m/6$, and we may assume that $ 2 f_{\epsilon_m}  -f <H $  for all $m \in \mathbb{N}$. Note that the $f_{\epsilon_m}$ are decreasing. For each $m \in \mathbb{N}$, let $D_m \subseteq \mathbb{R}^n$, denote the epigraph of $f_{\epsilon_m}$, let $S_H^m=D_m \backslash \overline{U_H}$ and let $\Gamma_m = \partial D_m$.
Let $c_m:H^1(S_H^m) \times H^1(S_H^m) \to \mathbb{C}$, be defined by (\ref{sesq_2}) with $S_H, \Gamma $ replaced by $S_H^m, \Gamma_m$ and $\beta $ replaced by $\beta^*$.

Fix $m \in \mathbb{N}$. 
Every $v \in \mathcal{D}({S_H^m})$ can be extended to an element of $H^1(S_H)$ by lemma \ref{ext} such that
\begin{equation} \label{vbd}
\hspace{5ex} \Vert v\Vert_{H^1(S_H\backslash \overline{S_H^m})} \leq 2\sqrt{(1+4(n-1)L^2)}\Vert v\Vert_{H^1(S_H^m)}. 
\end{equation} 

Now, let $v \in \mathcal{D}({S_H^m})$, fix $\delta >0$ and choose $w_k \in \mathcal{D}({S_H})$ such that 
\newline $\Vert w- w_k\Vert_{H^1(S_H)} < \delta$. Then
\begin{eqnarray}
c_m(w_k,v) & = & \nonumber \int_{S_H^m} \nabla w_k . \nabla \bar v - k^2 w_k \bar v dx
+ \int_{ \Gamma_H}  \bar v T \gamma_-w_k ds - \int_{\Gamma_m} ik \beta^* w_k \bar v ds \\\nonumber
       & = &   c(w_k,v) - \int_{S_H \backslash \overline{S_H^m}} \nabla w_k. \nabla \bar v -k^2w_k \bar v dx + \int_{ \Gamma } ik \beta w_k \bar v ds\\ &-& \int_{ \Gamma_m}  ik \beta^* w_k \bar v ds \\\nonumber
   & = & c(w,v) + c(w_k-w,v) - \int_{S_H \backslash \overline{S_H^m}} \nabla w_k. \nabla \bar v -k^2w_k \bar v dx + \int_{ \Gamma } ik \beta w_k \bar v ds\\& -& \int_{ \Gamma_m}  ik \beta^* w_k \bar v ds\\\nonumber 
   & = &  -\int_{S_H^m} g \bar v dx + c(w_k-w,v) - \int_{S_H \backslash \overline{S_H^m}} \nabla w_k . \nabla \bar v -k^2w_k \bar v +g \bar v dx\\\nonumber  & + & \int_{\Gamma} ik \beta w_k \bar v ds - \int_{ \Gamma_m} ik \beta^* w_k \bar v ds. \\
  \label{error}
\end{eqnarray}
 Now define $\mathcal{H}_m:\mathcal{D}({S_H^m}) \rightarrow \mathbb{C}$ by 
\begin{equation} \label{H_def}
\hspace*{5ex} \mathcal{H}_m(v): =- \int_{S_H \backslash \overline{S_H^m}} \nabla w_k . \nabla \bar{v} -k^2w_k \bar{v} +g \bar{v} dx + \int_{ \Gamma} ik \beta w_k \bar{v} ds - \int_{ \Gamma_m} ik \beta^* w_k \bar{v} ds. 
\end{equation}
To show that $\mathcal{H}_m$ defines a continuous anti-linear functional on $H^1(S_H^m)$, we first of all note that
\begin{eqnarray} \label{first100}
&& \left| \int_{S_H \backslash \overline{S_H^m}}g  \bar v + \nabla w_k.\nabla \bar v -k^2w_k \bar v dx\right| \\\nonumber &\leq&  \left(k^{-1}\Vert g\Vert_{L^2(S_H\backslash \overline{S_H^m})} + \Vert \nabla w_k\Vert_{L^2(S_H \backslash \overline{S_H^m})}\right.\\&&\left. + k\Vert w_k\Vert_{L^2(S_H \backslash \overline{S_H^m})}\right)2\sqrt{(1+4(n-1)L^2)}\Vert v\Vert_{H^1(S_H^m)}\nonumber
\end{eqnarray}
using (\ref{vbd}).

To estimate the second term on the right hand side of (\ref{H_def}), define
\newline  $h: S_H \setminus S_H^m \to \mathbb{R}$ by $h(\tilde x,x_n) =J_f(\tilde x)= \sqrt{1+ |\nabla f(\tilde x)|^2}$ for all $\tilde x$ at which $f$ is differentiable. 
In addition let $K= \mbox{supp}w_k$, let $\Vert \cdot \Vert$ denote $\Vert \cdot \Vert_{L^{\infty}(\mathbb{R}^n)}$, and let
\[
l(\tilde x)= \left(\int_{\mathbb{R}^{n-1} \cap K}\left|J_{f_{\epsilon_m}}(\tilde x)- J_f(\tilde x)\right|^2\right)^{\frac{1}{2}}.
\]

Then,  
\begin{eqnarray} \nonumber
&& \left| \int_{ \Gamma_m} ik \beta^* w_k \bar{v} ds - \int_{\Gamma} ik \beta w_k \bar{v} ds \right| \\\nonumber & = & \left|\int_{\mathbb{R}^{n-1}}J_{f_{\epsilon_m}}(\tilde x)ik\beta^* w_k\bar v (\tilde x,f_{\epsilon_m}(\tilde x))d\tilde x 
-
 \int_{\mathbb{R}^{n-1}}J_f(\tilde x)ik\beta w_k\bar v(\tilde x,f(\tilde x))d\tilde x\right|\\\nonumber  & \leq & \left|\int_{\mathbb{R}^{n-1}}J_f(\tilde x)\int_{f(\tilde x)}^{f_{\epsilon_m}(\tilde x)}\frac{\partial}{\partial x_n}(ik\beta^*w_k\bar v)(x)dx_n d\tilde x\right|\\\nonumber & + & \left|\int_{\mathbb{R}^{n-1}\cap K}\left(J_{f_{\epsilon_m}}(\tilde x)- J_f(\tilde x)\right)ik\beta^*w_k\bar v(\tilde x,f_{\epsilon_m}(\tilde x))d \tilde x\right|\\\nonumber 
 & \leq & \sqrt{1+L^2}\left\{\left(\int_{S_H \setminus S_H^m}k^2|\beta^*|^2|w_k|^2dx\right)^{\frac{1}{2}}\left(\int_{S_H \setminus S_H^m}\left|\frac{\partial v}{\partial x_n}\right|^2dx\right)^{\frac{1}{2}}\right.\\\nonumber & + &\left.\left(\int_{S_H \setminus S_H^m}|\beta^*|^2\left|\frac{\partial w_k}{\partial x_n}\right|^2dx\right)^{\frac{1}{2}}\left(\int_{S_H \setminus S_H^m}k^2| v|^2dx\right)^{\frac{1}{2}}\right\} \\\nonumber & + & l(\tilde x) 
 \left(\int_{\mathbb{R}^{n-1}}|ik\beta^*w_k\bar v(\tilde x,f_{\epsilon_m}(\tilde x))|^2d\tilde x\right)^{\frac{1}{2}}\\\nonumber & \leq &
 \sqrt{1+L^2}\left\{k\Vert\beta^*\Vert\left(\int_{S_H\setminus S_H^m}|w_k|^2dx\right)^{\frac{1}{2}} \right.\\\nonumber&&\left.+\Vert\beta^*\Vert\left(\int_{S_H\setminus S_H^m}\left|\frac{\partial w_k}{\partial x_n}\right|^2dx\right)^{\frac{1}{2}}\right\}\sqrt{2(1+4(n-1)L^2)} \Vert v \Vert_{H^1(S_H^m)}
\\ & + & l(\tilde x)
\Vert\beta^*\Vert\Vert w_k\Vert k^{\frac{1}{2}}\sqrt{\sqrt{1+L^2}\left(1+\frac{2}{k \mu}\right)}\Vert v \Vert_{H^1(S_H^m)}, \label{error_est}
\end{eqnarray}
using lemma \ref{traces} and assuming that $S_H^m$ is an $(L,\mu/2,1)$ Lipschitz domain, which it is, provided $\epsilon_m <\mu/2$. 

We may now write (\ref{error}) as
\begin{equation} \label{error2}
c_m(w_k,v)= - \int_{S_H^m} g \bar{v} dx  +c(w_k-w,v) + \mathcal{H}_m(v), \quad  v \in \mathcal{D}({S_H^m}).
\end{equation}
By the density of $\mathcal{D}({S_H^m})$ in $H^1(S_H^m)$, and the continuity of $c_m, c$ and $\mathcal{H}_m$, (\ref{error2}) must hold for all $v \in H^1(S_H^m)$.
Since $\Gamma_m \in C^{\infty}( \mathbb{R}^{n-1})$ and $\beta^* \in C^{\infty}(\Gamma_m)$ then by lemma \ref{apriori_2} there exist unique $w', w''$ such that 
\[
c_m(w', v) = - \int_{S_H^m} g \bar{v}dx, \quad  c_m(w'',v ) =  c(w_k-w,v) + \mathcal{H}_m(v)
\]
and by lemmas \ref{rellich_2} and \ref{apriori_2} 
\begin{eqnarray*}
&&\Vert w' \Vert_{H^1(S_H^m)} \leq k^{-1}E \Vert g \Vert_{L^2(S_H^m)}, \\ &&\Vert w'' \Vert_{L^2(S_H^m)} \leq \sec\Phi(1+2E) \left\{\Vert \mathcal{H}_m \Vert_{{H^1(S_H^m)}^*} + \Vert c\Vert\Vert w_k-w\Vert_{H^1(S_H)}\right\}.
\end{eqnarray*}
By lemma \ref{smooth_Lip} and the fact that $\partial \beta^*/\partial x_n=0$, $E$ and $\sec\Phi$ are independent of $m$. Clearly $w_k=w' + w''$. So
\begin{eqnarray}
\Vert w_k \Vert_{H^1(S_H^m)} & \leq &\nonumber k^{-1}E \Vert g \Vert_{L^2(S_H^m)} \\\nonumber&+& \sec\Phi(1+2E) \left\{\Vert\mathcal{H}_m \Vert_{{H^1(S_H^m)}^*}+\Vert c\Vert\Vert w_k-w\Vert_{H^1(S_H)}\right\}.\\ 
\end{eqnarray}
Now let $m \to \infty$, using (\ref{error_est}) to estimate $\Vert \mathcal{H}_m \Vert_{H^1(S_H^m)^*}$, using Lebesgue's monotone convergence theorem to show that the terms  $\Vert \cdot\Vert_{L^2(S_H\backslash \overline{S_H^m})} \to 0$, and using lemma \ref{smooth_Lip} part v) we see that
\[
\Vert w_k \Vert_{H^1(S_H)} \leq k^{-1}E\Vert g \Vert_2 + \sec\Phi(1+2E)\Vert c\Vert \delta.
\]
Finally arbitrariness of $\delta>0$ gives the result.
\end{proof}

Combining lemmas \ref{rough}, \ref{special_case_enough_2} and
\ref{ihl_rem_2} with Corollary \ref{cor_infsup_2}, we have the following
result.
\begin{lemma} \label{roughext}
If $\Gamma$ is given by (\ref{Gamma}) with $f$ Lipschitz with Lipschitz constant $L$, and $\beta \in C(\Gamma)$, satisfies the hypotheses of lemma \ref{rough}, then the
variational problem (\ref{var_prob2}) has a unique solution $u\in
H^1(S_H)$ for every ${\cal G}\in H^1(S_H)^*$ and the solution satisfies the
estimate (\ref{aprioriest}).
\end{lemma}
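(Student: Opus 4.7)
The proof is a direct chaining together of results already assembled in the preceding lemmas, so there is no new analytical content to supply. The plan is to push the a priori bound from Lemma \ref{rough} through the abstract machinery of Lemma \ref{special_case_enough_2}, Lemma \ref{ihl_rem_2}, and Corollary \ref{cor_infsup_2}, exactly as was done for the smoother case in Lemma \ref{apriori_2}.

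More precisely, I would first invoke Lemma \ref{rough}, which asserts that any $w\in H^1(S_H)$ satisfying the special variational problem $c(w,v)=-(g,v)$ with data $g\in L^2(S_H)$ obeys $k\|w\|_{H^1(S_H)}\le E\|g\|_2$, with $E$ given by (\ref{E}). This is exactly the hypothesis (\ref{boundB_2}) of Lemma \ref{special_case_enough_2} with $C^{*}=E$, so that lemma upgrades the bound to the general right-hand side: for every $\mathcal{G}\in H^1(S_H)^{*}$ and every $u\in H^1(S_H)$ satisfying $c(u,v)=\mathcal{G}(v)$ for all $v\in H^1(S_H)$, one has $\|u\|_{H^1(S_H)}\le \sec\Phi(1+2E)\,\|\mathcal{G}\|_{H^1(S_H)^{*}}$, which is precisely the bound (\ref{aprioriest_2}).

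From this a priori estimate, Lemma \ref{ihl_rem_2} delivers the inf-sup condition (\ref{infsup_imp}) with $\alpha\ge[\sec\Phi(1+2E)]^{-1}$. Corollary \ref{cor_infsup_2} (itself a consequence of the generalized Lax-Milgram theorem, combined with Lemma \ref{bounded} for boundedness and Lemma \ref{WEXL7_2}, via the symmetry (\ref{sym_2}), for the transposed inf-sup condition) then immediately yields unique solvability of (\ref{var_prob2_2}) for every $\mathcal{G}\in H^1(S_H)^{*}$ together with the stated norm bound. Since no new estimates are being proved, there is no genuine obstacle here; all the real work --- namely, the Rellich-type identity on a rough boundary and the approximation-by-smooth-graphs argument with the reflection extension from Lemma \ref{ext} --- has already been absorbed into Lemma \ref{rough}.
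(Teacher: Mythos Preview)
Your proposal is correct and matches the paper's approach exactly: the paper simply states that the lemma follows by combining Lemma~\ref{rough}, Lemma~\ref{special_case_enough_2}, and Lemma~\ref{ihl_rem_2} with Corollary~\ref{cor_infsup_2}, which is precisely the chain you spell out. There is no additional content in the paper's proof beyond this one-line combination.
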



We now show that lemmas \ref{rough} and \ref{roughext} hold for more general $\beta \in L^{\infty}(\Gamma)$. 
\begin{lemma} \label{betaext}
Suppose $\Gamma$ is given by (\ref{Gamma}) with $f$ Lipschitz with Lipschitz constant $L$. Let $H\geq f_+ + \mu$, $g \in L^2(S_H),$ and $\beta \in L^{\infty} ( \Gamma)$ be such that $\Re (\beta) \geq \eta>0$, and suppose $w \in H^1(S_H)$ satisfies
\begin{equation} \label{the_above}
b(w, v) = -(g,v), \quad  v \in H^1(S_H).
\end{equation}
Then 
\[
k\Vert w \Vert_{H^1(S_H)} \leq E \Vert g\Vert_2, 
\]
where $E$ is given by (\ref{E}).
\end{lemma}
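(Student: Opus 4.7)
The plan is to approximate $\beta$ by smooth impedances satisfying the hypotheses of Lemma \ref{rough} and then invoke the continuous-dependence estimate of Lemma \ref{roughext} as a perturbation device. Since $\Gamma$ is the graph of the Lipschitz function $f$, identify $\beta$ with $\tilde\beta(\tilde x) := \beta(\tilde x, f(\tilde x)) \in L^\infty(\mathbb{R}^{n-1})$, which inherits $\Vert \tilde\beta\Vert_{L^\infty(\mathbb{R}^{n-1})} \leq B$ and $\Re \tilde\beta \geq \eta$. With $\psi_\delta \in C_0^\infty(\mathbb{R}^{n-1})$ a standard non-negative mollifier, set $\tilde\beta^\delta := \psi_\delta * \tilde\beta$ and $\beta^*_\delta(\tilde x,x_n) := \tilde\beta^\delta(\tilde x)$. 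Then $\beta^*_\delta \in C^\infty(\mathbb{R}^n) \cap L^\infty(\mathbb{R}^n)$ is independent of $x_n$, has $\Vert \beta^*_\delta\Vert_\infty \leq B$ and $\Re \beta^*_\delta \geq \eta$, so its restriction $\beta^\delta := \beta^*_\delta|_\Gamma$ satisfies the assumptions of Lemma \ref{rough}. Since all values of $\tilde\beta$ lie in the right half-plane $\{\Re z \geq \eta\}$ and $\tilde\beta^\delta(\tilde x)$ is a convex combination of these values, one has $\Phi^\delta := \min\{0,\mbox{ess}\inf\arg \beta^\delta\} \geq \Phi$, hence $\sec\Phi^\delta \leq \sec\Phi$, and the constant $E^\delta$ produced by Lemma \ref{rough} for $\beta^\delta$ is bounded above by $E$ uniformly in $\delta$.

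Applying Lemma \ref{roughext} with $\beta^\delta$ and data $g$ yields a unique $w^\delta \in H^1(S_H)$ with $c_{\beta^\delta}(w^\delta,v) = -(g,v)$ for all $v \in H^1(S_H)$, and Lemma \ref{rough} gives $k\Vert w^\delta\Vert_{H^1(S_H)} \leq E^\delta \Vert g\Vert_2 \leq E\Vert g\Vert_2$. A direct computation, using the equation (\ref{the_above}) for $w$ together with the identity $c_{\beta^\delta}(u,v)-c_\beta(u,v) = -\int_\Gamma ik(\beta^\delta-\beta)\gamma^* u\, \gamma^* \bar v\, ds$, shows that
\begin{equation*}
c_{\beta^\delta}(w-w^\delta,v) = \mathcal{R}_\delta(v) := \int_\Gamma ik(\beta-\beta^\delta)\gamma^* w\, \gamma^* \bar v\, ds, \quad v \in H^1(S_H).
\end{equation*}
A second application of Lemma \ref{roughext} then gives $\Vert w-w^\delta\Vert_{H^1(S_H)} \leq \sec\Phi^\delta (1+2E^\delta)\Vert \mathcal{R}_\delta\Vert_{H^1(S_H)^*}$, and Cauchy--Schwarz together with the trace bound of Lemma \ref{traces} yields $\Vert \mathcal{R}_\delta\Vert_{H^1(S_H)^*} \leq C\Vert (\beta-\beta^\delta)\gamma^* w\Vert_{L^2(\Gamma)}$ for some constant $C$ independent of $\delta$.

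The main obstacle is to establish $\Vert (\beta-\beta^\delta)\gamma^* w\Vert_{L^2(\Gamma)} \to 0$ as $\delta \to 0$, which is delicate because mollification gives only $L^p_{\mathrm{loc}}$ and almost-everywhere convergence of $\beta^\delta$ to $\beta$, not $L^\infty$-convergence. Changing to the graph parameterization,
\begin{equation*}
\int_\Gamma |\beta-\beta^\delta|^2 |\gamma^* w|^2\, ds = \int_{\mathbb{R}^{n-1}} |\tilde\beta - \tilde\beta^\delta|^2 |\tilde w|^2 J_f\, d\tilde x,
\end{equation*}
where $\tilde w(\tilde x) := w(\tilde x, f(\tilde x))$. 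The Lebesgue differentiation theorem yields $\tilde\beta^\delta \to \tilde\beta$ a.e., while $|\tilde\beta - \tilde\beta^\delta| \leq 2B$ pointwise. Since $\gamma^* w \in L^2(\Gamma)$ by Lemma \ref{traces}, the integrand is dominated by $4B^2 |\tilde w|^2 J_f \in L^1(\mathbb{R}^{n-1})$, and Lebesgue's dominated convergence theorem furnishes the vanishing. It follows that $w^\delta \to w$ in $H^1(S_H)$, and letting $\delta \to 0$ in $k\Vert w^\delta\Vert_{H^1(S_H)} \leq E\Vert g\Vert_2$ delivers the claimed estimate.
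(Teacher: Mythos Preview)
Your argument is correct and follows a route that is close in spirit to, but operationally different from, the paper's. Both begin by mollifying $\beta$ in the horizontal variables to obtain a smooth $\beta^\delta$ that is admissible for Lemma~\ref{rough}, and both verify the same monotonicity facts ($\Re\beta^\delta\ge\eta$, $\Vert\beta^\delta\Vert_\infty\le B$, $\sec\Phi^\delta\le\sec\Phi$, hence $E^\delta\le E$). The divergence is in how the perturbation is handled. The paper first approximates $w$ by a compactly supported smooth $w_m$, rewrites the equation with $w_m$ and $\beta_\delta$ on the left, and controls the boundary remainder $\int_\Gamma ik(\beta_\delta-\beta)w_m\,\gamma^*\bar v\,ds$ using the $L^2(\mathrm{supp}\,w_m\cap\Gamma)$-convergence of mollifications together with the $L^\infty$-bound on $w_m$. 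You instead construct the exact solution $w^\delta$ for $\beta^\delta$, write $c_{\beta^\delta}(w-w^\delta,v)=\mathcal R_\delta(v)$, and estimate $\Vert(\beta-\beta^\delta)\gamma^*w\Vert_{L^2(\Gamma)}$ directly via dominated convergence, exploiting that $\gamma^*w\in L^2(\Gamma)$ provides the integrable majorant while $\beta^\delta\to\beta$ a.e.\ with uniform bound $2B$. Your route avoids the auxiliary density step in $w$ and is slightly more streamlined; the paper's route is marginally more elementary in that it only invokes the standard $L^2_{\mathrm{loc}}$-convergence of mollifiers rather than pointwise convergence plus dominated convergence. Both lead to the same bound with the same constant $E$.
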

\begin{proof}
For $\delta>0$ let $\psi_{\delta} \in C_0^{\infty}(\mathbb{R}^n)$ be such that $\psi_{\delta} >0$, $\psi_{\delta}(x)=0$ if $|x|>\delta$, and such that $\int_{\mathbb{R}^n}\psi_{\delta}(x)dx =1$
Then define, $ \beta _{\delta} \in C^{\infty}(\mathbb{R}^{n-1})$ by 
\[
\beta_{\delta}(\tilde x)= \int_{\mathbb{R}^{n-1}} \beta (\tilde x-\tilde y,f(\tilde x-\tilde y)) \psi_{\delta}(\tilde y) d\tilde y,
\]
and then extend $\beta_{\delta}$ to a function $\beta_{\delta} \in C^{\infty}(\mathbb{R}^n)$ via $\beta_{\delta}(\tilde x,x_n) =\beta_{\delta}(\tilde x)$. It follows that $\beta_{\delta} \in C(\Gamma)$ and that $\beta_{\delta}$ is the restriction to $\Gamma$ of a function $\beta_{\delta} \in C^{\infty}(\mathbb{R}^n)$ such that $\partial \beta_{\delta}/\partial x_n =0$. 
Note that, for $\tilde x \in \mathbb{R}^{n-1}$,
\begin{eqnarray*}
\Re(\beta_{\delta}(\tilde x)) &=& \Re\int_{\mathbb{R}^{n-1}}\beta(\tilde x-\tilde y,f(\tilde x-\tilde y))\psi_{\delta}(\tilde x)d\tilde x \\&=& \int_{\mathbb{R}^{n-1}}\psi_{\delta}(\tilde x )\Re\beta(\tilde x-\tilde y,f(\tilde x-\tilde y))d\tilde x \geq \eta,
\end{eqnarray*}
and
\[
|\beta_{\delta}(\tilde x)| \leq \int_{\mathbb{R}^{n-1}}\psi_{\delta}(\tilde x)|\beta(\tilde x-\tilde y,f(\tilde x-\tilde y))|d\tilde x \leq B \Rightarrow \Vert\beta_\delta \Vert_{L^{\infty}(\mathbb{R}^{n-1})} \leq B.
\]
Further, since $\Re(e^{-i(\pi/2 + \Phi)}\beta) \geq 0$, it follows, by arguing as above, that \newline $\Re(e^{-i(\pi/2 + \Phi)}\beta_{\delta})\geq 0.$ This ensures  
that $\Phi_{{\delta}}:=\min\{0,\inf_{x \in \mathbb{R}^n}\arg\beta_{\delta}\} \geq \Phi$, which in turn means that $\sec\Phi_{{\delta}}\leq\sec\Phi$.

Fix $\epsilon >0$, and choose $w_m \in \mathcal{D}({S_H})$ such that $\Vert w_m - w\Vert_{H^1(S_H)} < \epsilon.$ 
Standard arguments (e.g. \cite{mclean00} Theorem 3.4) show that 
$\beta_{\delta} \to \beta$ in the normed space $L^2(\supp w_m \cap \Gamma)$. Thus if we choose $\delta$ sufficiently small then 
\begin{equation}
\left(\int_{\Gamma}k|\beta_{\delta}(s) - \beta(s)|^2|w_m(s)|^2 ds\right)^{\frac{1}{2}}< \sqrt{k}\Vert w_m\Vert_{L^{\infty}(\Gamma)}\Vert\beta_{\delta} -\beta \Vert_{L^2(\supp w_m \cap \Gamma)} <\epsilon.
\end{equation}
Now
\begin{equation} \label{good}
c(w_m, v)= -(g,v) + c(w_m -w,v), \quad v \in H^1(S_H),
\end{equation}
so that, for $v \in H^1(S_H)$,
\begin{eqnarray*}
&&\int_{S_H} \nabla w_m . \nabla \bar v - k^2w_m \bar v dx +
 \int_{\Gamma_H} \gamma_-\bar v Tw_m ds - \int_{\Gamma} ik \beta_{\delta} w_m \gamma^*\bar v ds \\ &=&  -\int_{S_H}g \bar v dx + c(w_m -w,v) - \int_{\Gamma}ik(\beta_{\delta} - \beta)w_m \gamma^*\bar v ds.
\end{eqnarray*}
Since $\beta_{\delta}$ satisfies the hypotheses of lemma \ref{rough}, then, by lemmas \ref{roughext}, \ref{rough} and \ref{traces}, (cf proof of lemma \ref{rough}) we obtain
\[
\Vert w_m\Vert_{H^1(S_H)} \leq k^{-1}E \Vert g \Vert_2 + \sec\Phi(1+2E)\left[\Vert c\Vert\epsilon +  \epsilon\sqrt{\sqrt{1+L^2}\left(1+\frac{1}{k\mu}\right)}\right], 
\]
and the result follows by arbitrariness of $\epsilon>0$.
\end{proof} 

Theorem \ref{mainresultof} now follows by combining lemmas \ref{betaext}, \ref{special_case_enough_2} and
\ref{ihl_rem_2} with Corollary \ref{cor_infsup_2}.


\chapter{The Transmission problem}\label{trans}
\section{Literature review} 

In this chapter we study the transmission problem -- or the problem of scattering by an inhomogeneous layer -- applying once again the methods and results of \cite{chandmonk} to this problem. Thus in terms of style and approach this work follows on from \cite{chandmonk}, \cite{kirschipmp93}, \cite{els02},  \cite{Bonnetmmas94} and \cite{Szembergmmas98} (c.f.\ the literature review of chapter 2).

As outlined in the introduction, given a source $g \in L^2(\mathbb{R}^n)$ that is confined to a strip, the transmission problem will be to find a solution $u$ to the Helmholtz equation
\[
\Delta u +k^2 u =g \mbox{ in } \mathbb{R}^n,
\] 
for $n=2,3$, where the function $k\in L^{\infty}(\mathbb{R}^n)$ varies in a strip containing the source $g$.

We point out that included in our problem set up -- see our exact formulation in the next section -- is the related problem of `scattering by a rough interface'. Here the problem is to study the scattering of electromagnetic or acoustic waves by a rough interface above and below which $k$ is assumed to take different constant values.

We note that in this chapter we will assume that $k$ is a real-valued function. It then follows that,
in the 2D case, we are modelling the scattering of time harmonic electromagnetic waves by an infinite inhomogeneous \emph{dielectric} layer at the interface between semi-infinite homogeneous dielectric half-spaces, with the magnetic permeability a fixed positive constant in the media, in the transverse electric polarization case.

In \cite{RZ_1} Roach and Zhang showed existence and uniqueness of solution to the problem of scattering by a rough interface in the case $n\geq3$, when the interface was supposed to be the graph of a $C^1$ function that became a flat surface at infinity. In \cite{Z_2} (the 2D case) and \cite{Z_3} (the 3D case), Zhang considers the transmission problem that we consider here and obtains existence and uniqueness results but under assumptions on the behaviour of $k$ at infinity. The best results to date are those obtained by Chandler-Wilde and Zhang in \cite{chand_zhang} which show, in the 2D case, existence and uniqueness of solution to this problem for arbitrary $k \in L^{\infty}(\mathbb{R}^2)$ satisfying certain other restrictions without which one can show the problem to be ill-posed. Our results can be seen as an improvement on these in that they hold in both 2 and 3 dimensions and moreover we slightly generalise the assumptions on $k$ made in \cite{chand_zhang} -- see assumptions 4 and 5 below. We should also point out that in establishing an a priori bound on our solution (see lemma \ref{rellich_3}) we borrow some of the techniques used to derive an a priori bound in \cite{chand_zhang}.

We should also mention the papers of Zhang and Roach \cite{RZ_2}; Zhang \cite{Z_1}; and of Anar and Torun \cite{Anar}; all of whom consider the problem of scattering by a rough interface but this time with transmission conditions across the interface requiring that $u$ and its normal derivative jump across said interface.

\section{The Transmission problem and variational formulation}\label{varf_3}

In contrast to the problems studied in the the other chapters, the transmission problem is a problem posed on the whole of $\mathbb{R}^n$. As such we will not impose any boundary condition but rather, we shall impose two radiation conditions.
As usual, for $x=(x_1,\ldots,x_n)\in\real^n$ ($n=2,3$) let
$\tx=(x_1,\ldots,x_{n-1})$ so that $x=(\tx,x_n)$.  For $H\in \real$,
let $U_H=\left\{x\;:\; x_n>H\right\}$ and
$\GH:=\left\{x\;:\;x_n=H\right\}$. For $a<b$ let $S(a,b)= U_a \setminus \overline{U_b}$.  
The variational problem will be posed on the strip 
$S:=S(h_-,h_+)$, for some $h_+>h_-$.

Given a source $g\in L^2(\mathbb{R}^n)$ and given $k \in L^{\infty}(\mathbb{R}^n)$, a real valued function, such that for some $h_+>h_-$, the support of $g$ lies in $ S(h_-,h_+)$, and such that $k=k_+>0$ in $\overline{U_{h_+}}$, and such that $k=k_->0$ in $\mathbb{R}^n\setminus U_{h_-}$, the problem we wish
to analyze is to find a function $u$ such that
\begin{eqnarray}
\Delta u+k^2 u =g\mbox{ in } \mathbb{R}^n,\label{helm_3}
\end{eqnarray}
and such that $u$ satisfies the upward and downward propagating radiation conditions ((UPRC) and (DPRC) respectively) above and below the inhomogeneous layer $S$. To state the DPRC precisely, recall from chapter 1, the fundamental solution of the Helmholtz equation $\Phi$, for given wavenumber $k_*>0$:
$$
\Phi(x,y;k_{*})=\left\{\begin{array}{ll}\displaystyle\frac{\ri}{4}H_0^{(1)}(k_*|x-y|),&n=2,\\
\displaystyle\frac{\exp(\ri k_*|x-y|)}{4\pi
|x-y|},&n=3,\end{array}\right. 
$$
for $x,y\in\real^n$, $x\neq y$, where $H_0^{(1)}$ is the Hankel
function of the first kind of order zero.  
Then the UPRC -- see (\ref{uprc}) -- states that
\begin{equation}
u(x)=2 \int_{\Gamma_{h_+}}\frac{\pa\Phi(x,y;k_*)}{\pa x_n} u(y)\,
ds(y):=\mathcal{R}_1(x,u|_{\Gamma_{h_+}},k_*),
\quad x\in U_{h_+},
\end{equation}
for all $h_+$ such that the support of $g$ is contained in $\mathbb{R}^n \setminus U_{h_+}$ and such that $k=k_*$ in $\overline{U_{h_+}}$. 
Similarly the DPRC states that 
\begin{equation}
u(x)=-2 \int_{\Gamma_{h_-}}\frac{\pa\Phi(x,y;k_*)}{\pa x_n} u(y)\,
ds(y):=\mathcal{R}_2(x,u|_{\Gamma_{h_-}},k_*),\label{dprc} \quad x\in \mathbb{R}^n\setminus \overline{U_{h_-}},
\end{equation}
for all $h_-$ such that the support of $g$ is contained in $\overline{U_{h_-}}$ and such that $k=k_*$ in $\mathbb{R}^n \setminus U_{h_-}$.

Let us recall also from chapter 1 that if $u|_{\Gamma_{h_+}} \in L^2(\Gamma_{h_+})$ then we may rewrite the UPRC in terms of the Fourier transform of $u|_{\Gamma_{h_+}}$, $\mathcal{F}(u|_{\Gamma_{h_+}})$: we have that
\begin{eqnarray}
\nonumber u(x)&=&\frac{1}{(2\pi)^{(n-1)/2}}\int_{\real^{n-1}}\exp(\ri[(x_n-h_+)\sqrt{k_*^2-\xi^2}+
\tx\cdot\xi])\mathcal{F}(u|_{\Gamma_{h_+}})(\xi)\,d\xi,\;\; 
\\&:=&\hat{\mathcal{R}}(x,u|_{\Gamma_{h_+}},k_*),\quad  x\in U_{h_+}. \hspace{2ex}\label{uprcstar_trans}
\end{eqnarray}

For convenience let us take the origin in $\mathbb{R}^n$ to be such that $-h_+=h_-$, and for $x=(\tilde x,x_n) \in \mathbb{R}^n$ let $x'=(\tilde x,-x_n)$. Moreover for any function $v:\mathbb{R}^n\backslash \overline{U_{h_-}} \to \mathbb{C}$ define $v':U_{h_+}\to \mathbb{C}$ via $v'(x)=v(x')$. Let us now remark that the DPRC can be expressed, through reflection, in terms of the UPRC.

\begin{remark} \label{up=down}
\[
u(x)= \mathcal{R}_2(x,u|_{\Gamma_{h_-}},k_*) \quad x \in \mathbb{R}^n\backslash \overline{U_{h_-}} 
\]
 if, and only if, 
\[
u'(x)= \mathcal{R}_1(x,u'|_{\Gamma_{h_+}},k_*) \quad  x \in U_{h_+}.
\]
\end{remark}

Thus it follows by remark {\ref{up=down}} that if $u|_{\Gamma_{h_-}} \in L^2(\Gamma_{h_-})$ then $u(x)$ satisfies the DPRC (\ref{dprc}) if, and only if,
$$
u'(x)= \hat{\mathcal{R}}(x,u'|_{\Gamma_{h_+}},k_*), \quad x\in U_{h_+}.
$$  
 

We now precisely state the transmission problem. Let $H^1(S)$ denote the standard
Sobolev space,
\[
H^1(S):= \{v \in L^2(S) |\nabla v \in L^2(S)\}
\]
on which we will impose a wave number dependent scalar product
$(u,v)_{H^1(S)} := \int_{S} (\nabla u \cdot \overline{\nabla v} +k_+^2
u\bar v)\,dx$ 
and norm, 
 $\Vert u\Vert_{H^1(S)}=\{\int_{S}(|\nabla
u|^2+k_+^2|u|^2)dx\}^{1/2}$.

\paragraph{{\sc The Transmission Problem}} {\em Given $g\in L^2(\mathbb{R}^n)$, and $k \in L^{\infty}(\mathbb{R}^n)$ such that for some $h_+>h_-$, it holds that
the support of $g$ lies in $\overline{U_{h_-}}\setminus U_{h_+}$, and that $k=k_+$, in $\overline{U_{h_+}}$, and $k=k_-$ in $\mathbb{R}^n \setminus U_{h_-}$, for some $k_+, k_->0$,
 find $u:\mathbb{R}^n\to \complex$ such that $u|_{S(a,b)}\in H^1(S(a,b))$ for every $a<h_-$ and $b>h_+$,
\begin{equation}
\Delta u+k^2u=g\quad\mbox{ in } \mathbb{R}^n\label{whelm}
\end{equation}
in a distributional sense, and such that the following radiation conditions hold:
\begin{equation} \label{rad_1}
u(x)= \hat{\mathcal{R}}(x,u|_{\Gamma_{h_+}},k_+), \quad x \in U_{h_+},
\end{equation}
and
\begin{equation}\label{rad_2}
u'(x)= \hat{\mathcal{R}}(x,u'|_{\Gamma_{h_+}},k_-), \quad  x \in U_{h_+}.
\end{equation}}
\begin{remark}
Additional assumptions on $k \in L^{\infty}(\mathbb{R}^n)$ will be made from section 3 onwards in order to establish well-posedness of the boundary value problem. It is well known that the problem is ill-posed for certain functions $k\in L^{\infty}(\mathbb{R}^n)$.
\end{remark}
\begin{remark}
We note that, as one would hope, the solutions of the above
problem do not depend on the choice of $h_-$ and $h_+$. Precisely, if $u$ is a
solution to the above problem for a given pair $h_+, h_-$ for
which $\supp g\subset \overline{S(h_-,h_+)}$ and $k=k_+$ in $\overline{U_{h_+}}$, and $k=k_-$ in $\mathbb{R}^n \setminus {U_{h_-}}$  then $u$ is a solution for
all pairs $h_-,h_+$ with this property. To see that this is true is a
matter of showing that, if (\ref{rad_1}) and (\ref{rad_2}) hold for one pair $h_+,h_-$ such that $\supp g\subset \overline{S(h_-,h_+)}$ and $k=k_+$ in $\overline{U_{h_+}}$, and $k=k_-$ in $\mathbb{R}^n \setminus {U_{h_-}}$ then (\ref{rad_1}) and (\ref{rad_2}) hold for
all pairs $h_+,h_-$ with this property. It was shown in Lemma \ref{lemma3p2} that if (\ref{rad_1}) holds, with $\mathcal{F}(u|_{\Gamma_{h_+}}) \in H^{\frac{1}{2}}(\Gamma_{h_+})$,
for some $h_+$, then it holds for all larger values of $h_+$.
One way to show that (\ref{rad_1}) holds also for every smaller
value of $h_+$, $\tilde h$ say, for which 
$\supp g\subset \mathbb{R}^n \backslash U_{\tilde h}$ and $k=k_+$ in $\overline{U_{\tilde h}}$, is to consider the
function
\begin{eqnarray*}
v(x) & := & u(x) -\\
&&\frac{1}{(2\pi)^{(n-1)/2}}\int_{\real^{n-1}}\exp(\ri[(x_n-\tilde
h)\sqrt{k_+^2-\xi^2}+ \tx\cdot\xi]) \hat{F}_{\tilde h}(\xi)\,d\xi,\;\;
x\in U_{\tilde h},
\end{eqnarray*}
with $F_{\tilde h}:=u|_{\Gamma_{\tilde h}}$, and show that $v$ is
identically zero. To see this we note that, by Lemma
\ref{lemma3p2}, $v$ satisfies the boundary value problem
of chapter \ref{layer} with $D=U_{\tilde h}$ and $g=0$. That $v\equiv 0$ then follows
from Theorem \ref{th_main1}. Similar arguments apply to the other radiation condition (\ref{rad_2}).

\end{remark}


We now derive a variational formulation of the transmission problem above. We proceed exactly as in chapters \ref{layer} and \ref{imp} except that it's necessary to introduce more complicated notation. Again we will use standard
fractional Sobolev space notation, except that for convenience we adopt wave
number dependent norms, which are both equivalent to the usual norm. Thus, identifying $\Gamma_{h_{\pm}}$ with $\real^{n-1}$,
$H^s(\Gamma_{h_{\pm}})$, for $s\in \real$, denotes the completion of
$C_0^\infty(\Gamma_{h_{\pm}})$ in the norm $\|\cdot\|_{H^s(\Gamma_{h_{\pm}})}$
defined by
$$
 \|\phi\|_{H^s(\Gamma_{h_{\pm}})} = \left(
\int_{\real^{n-1}}(k_{\pm}^2+\xi^2)^s|{\cal F} \phi(\xi)|^2\,
d\xi\right)^{1/2}.
$$
  We recall \cite{adamsSS} that, for all $a>h_+$ and $ b<h_-$, there
exist continuous embeddings (the trace operators)
\begin{eqnarray*}
\gamma_+^{\downarrow}:H^1(U_{h_+}\setminus U_a)\to H^{1/2}(\Gamma_{h_+}), \quad  
\gamma_+^{\uparrow}:H^1(S) \to H^{1/2}(\Gamma_{h_+}), \\ 
\gamma_-^{\downarrow}:H^1(S)\to H^{1/2}(\Gamma_{h_-}),  \quad  
\gamma_-^{\uparrow}:H^1(U_{b}\setminus U_{h_-}) \to H^{1/2}(\Gamma_{h_-}),
\end{eqnarray*}
such that each operator acting on $\phi$ coincides with the
restriction of $\phi$ to $\Gamma_{h_{\pm}}$ when $\phi$ is $C^\infty$. 
We recall also the following fact that, if $u_+\in
H^1(U_{h_+}\setminus U_{a})$, $u_-\in H^1(S)$, and $\gamma_+^{\downarrow}u_+ =
\gamma_+^{\uparrow} u_-$, then $v\in H^1(S(h_-,a))$, where $v(x) := u_+(x)$, $x \in
U_{h_+}\setminus U_a$, $:= u_-(x)$, $x \in S$. Conversely, if $v\in
H^1(S(h_-,a))$ and $u_+:= v|_{U_{h_+}\setminus U_a}$, $u_-:= v|_{S}$, then
$\gamma_+^{\downarrow}u_+=\gamma_+^{\uparrow}u_-$. 

For given wavenumber $k_*>0$ let $T_{k_*}: \mathbb{R}^{n-1} \to\mathbb{C} $ be defined by
\begin{equation}
T_{k_*}:=\cF^{-1}M_{z(k_*)}\cF, \label{Tdef_3}
\end{equation}
where $M_{z(k_*)}$ is the operation of multiplying by
\[
z(\xi):=\left\{\begin{array}{ll}-\ri\sqrt{k_*^2-\xi^2}&\mbox{if
}|\xi|\le k_*,
\\[3pt]
\sqrt{\xi^2-k_*^2}&\mbox{for }|\xi|>k_*.
\end{array}\right.
\]
Specifically we are concerned with the maps $T_+:\Gamma_{h_+} \to \mathbb{C}$ and $T_-:\Gamma_{h_-} \to \mathbb{C}$ given by $T_+=T_{k_+}$ and $T_-=T_{k_-}$ which
will prove to be Dirichlet to Neumann maps on $\Gamma_{h_+}, \Gamma_{h_-}$ respectively (see
(\ref{eq:DtNdef_trans}) below). Also, lemma \ref{LTB} shows that
$T_{\pm}:H^{1/2}(\Gamma_{h_{\pm}}) \to H^{-1/2}(\Gamma_{h_{\pm}})$ are bounded and that $\Vert T_{\pm}\Vert =1$.

We now restate lemma \ref{lemma3p2}, in the new notation we have introduced.

\begin{lemma} \label{lemma3p2_trans}
If $u(x)= \hat{\mathcal{R}}(x,u|_{\Gamma_{h_+}},k_*)$ with $u|_{\Gamma_{h_+}}\in H^{1/2}(\Gamma_{h_+})$, then
$u\in H^1(U_{h_+}\setminus U_a)\cap C^2(U_{h_+})$, for every $a>h_+$,
$$
\Delta u + k_*^2 u = 0 \mbox{ in } U_{h_+},
$$
$\gamma_+^{\downarrow}u = u|_{\Gamma_{h_+}}$, if $u|_{\Gamma_{h_+}}\in C_0^\infty(\Gamma_{h_+})$ then
\begin{equation} \label{eq:DtNdef_trans}
T_{k_*}\gamma_+^{\downarrow}u = -\partial u/\partial x_n|_{\Gamma_{h_+}},
\end{equation}
and
\begin{equation} \label{exact_rc_3}
\int_{\Gamma_{h_+}}  \bar vT_{k_*}\gamma_+^{\downarrow}u\,ds + k_*^2\int_{U_{h_+}} u\bar v \,dx
- \int_{U_{h_+}} \nabla u\cdot \nabla \bar v\, dx = 0, \quad v \in
C_0^{\infty}(\mathbb{R}^n).
\end{equation}
Further, the restrictions of $u$ and $\nabla u$ to $\Gamma_a$ are
in $L^2(\Gamma_a)$, for all $a> h_+$, and
\begin{equation} \label{lem32_3}
\int_{\Gamma_a} \left[ \left|\frac{\partial u}{\partial
x_n}\right|^2 - \left|\nabla_{\tilde x} u\right|^2 +
k_*^2|u|^2\right]\,ds \le -2k_*\Im \int_{\Gamma_a} \gamma_+^{\downarrow}\bar u
T_{k_*} \gamma_+^{\downarrow}u\, ds.
\end{equation}
Moreover, for all $a>h_+$,  it holds that for $x$ in $U_a$, $u(x)=\hat{\mathcal{R}}(x,u|_{\Gamma_a},k_*)$ with
$h_+$ replaced by $a$.
\end{lemma}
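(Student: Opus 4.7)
The statement of Lemma \ref{lemma3p2_trans} is essentially a verbatim restatement of Lemma \ref{lemma3p2} from Chapter 2, reformulated in the notation adopted for the transmission problem: the generic height parameter $H$ is renamed $h_+$, the constant wavenumber $k_+$ is replaced by the generic $k_*>0$, the trace operator $\gamma_+$ is renamed $\gamma_+^\downarrow$, the Dirichlet-to-Neumann map $T$ is renamed $T_{k_*}$, and the integral representation $u(x)=$ r.h.s. of (\ref{uprcstar}) is rewritten as $u(x)=\hat{\mathcal R}(x,u|_{\Gamma_{h_+}},k_*)$. None of these changes affect the content of the earlier lemma, so the plan is simply to invoke Lemma \ref{lemma3p2} as a black box.

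More precisely, I would observe that the right-hand side of (\ref{uprcstar}), with the triple $(H,k_+,\hat F_H)$, coincides with $\hat{\mathcal R}(x,u|_{\Gamma_{h_+}},k_*)$ when the triple is taken to be $(h_+,k_*,\mathcal F(u|_{\Gamma_{h_+}}))$. Lemma \ref{lemma3p2} therefore applies verbatim and gives: the regularity $u\in C^2(U_{h_+})\cap H^1(U_{h_+}\setminus U_a)$ for every $a>h_+$; the Helmholtz equation $\Delta u+k_*^2 u=0$ in $U_{h_+}$; the trace identity $\gamma_+^\downarrow u=u|_{\Gamma_{h_+}}$; the Dirichlet-to-Neumann identity (\ref{eq:DtNdef_trans}) when $u|_{\Gamma_{h_+}}\in C_0^\infty(\Gamma_{h_+})$; Green's identity (\ref{exact_rc_3}); the energy inequality (\ref{lem32_3}); and the propagation property that $u(x)=\hat{\mathcal R}(x,u|_{\Gamma_a},k_*)$ for every $a>h_+$. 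Each of these seven conclusions is precisely a conclusion of the earlier lemma after the notational identification above.

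There is no nontrivial obstacle here, since every ingredient (Plancherel, differentiation under the integral sign, the explicit formulas (\ref{ft_ugama})--(\ref{ft_unorm}), the estimates (\ref{u_bound}) and (\ref{nabu_bound1})--(\ref{nabu_bound2}), the density of $C_0^\infty(\Gamma_{h_+})$ in $H^{1/2}(\Gamma_{h_+})$, and the continuity of $T_{k_*}:H^{1/2}(\Gamma_{h_+})\to H^{-1/2}(\Gamma_{h_+})$ from Lemma \ref{LTB}) is insensitive to the cosmetic change of symbols. Accordingly, the proof reduces to the single line: apply Lemma \ref{lemma3p2} with $H=h_+$, $k_+$ replaced by $k_*$, $F_H$ replaced by $u|_{\Gamma_{h_+}}$, and $\gamma_+$ replaced by $\gamma_+^\downarrow$.
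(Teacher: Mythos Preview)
Your proposal is correct and matches the paper's approach exactly: the paper introduces Lemma \ref{lemma3p2_trans} with the sentence ``We now restate lemma \ref{lemma3p2}, in the new notation we have introduced'' and gives no separate proof, so reducing everything to Lemma \ref{lemma3p2} via the notational identifications you list is precisely what is intended.
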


Now suppose that $u$ satisfies the Transmission problem. Then
$u|_{S(a,b)}\in H^1(S(a,b))$ for every $a<h_-,b>h_+$ and, by definition, since
$\Delta u+k^2u=g$ in a distributional sense,
\begin{equation} \label{he_ds_3}
\int_{\mathbb{R}^n}[g\bar v + \nabla u\cdot\nabla \bar v - k^2 u\bar v] dx = 0,
\quad v\in C_0^\infty(\mathbb{R}^n).
\end{equation}
Applying Lemma \ref{lemma3p2_trans}, and defining $w := u|_{S}$, it
follows that
$$
\int_{S} [g\bar v + \nabla w\cdot \nabla \bar v - k^2 w\bar v]\,
dx + \int_{\Gamma_{h_+}} \bar v T_+\gamma^{\uparrow}_+w \, ds  + \int_{\Gamma_{h_-}}\bar v T_-\gamma^{\downarrow}_-wds= 0, \quad v\in
C_0^\infty(\mathbb{R}^n).
$$
From the denseness of $\{\phi|_{S}:\phi\in C_0^\infty(\mathbb{R}^n)\}$ in
$H^1(S)$ and the continuity of $\gamma^{\downarrow}_-$ and $\gamma^{\uparrow}_+$, it follows that this
equation holds for all $v\in H^1(S)$.


Let $\|\cdot\|_2$ and $(\cdot,\cdot)$ denote the norm and scalar
product on $L^2(S)$, so that $\Vert
v\Vert_2=\sqrt{\int_{S}|v|^2\,dx}$ and
\[
(u,v)=\int_{S}u\overline{v}\,dx,
\]
and define the sesquilinear form $d:H^1(S)\times H^1(S)\to \complex$ by
\begin{equation} \label{sesqui_3}
d(u,v) = (\nabla u,\nabla v) - (k^2u,v) + \int_{\Gamma_{h_+}} \bar v T_+\gamma^{\uparrow}_+w \, ds  + \int_{\Gamma_{h_-}}\bar v T_-\gamma^{\downarrow}_-wds.
\end{equation}
Then we have shown that if $u$ satisfies the boundary value
problem then $w:= u|_{S}$ is a solution of the following
variational problem: find $u\in H^1(S)$ such that
\begin{equation} \label{weak_form_trans}
d(u,v) = -(g,v), \quad v\in H^1(S).
\end{equation}

Conversely, suppose that $w$ is a solution to the variational
problem and define $u(x)$ to be $w(x)$ in $S$, to be $\hat{\mathcal{R}}(x,\gamma^{\uparrow}_+ w,k_+)$
in
$U_{h_+}$ and to be $l(x)$ in $\mathbb{R}^n\backslash U_{h_-}$ where $l'(x)$, in $U_{h_+}$,  is given by $\hat{\mathcal{R}}(x,\gamma^{\downarrow}_- w,k_-).$
Then, by Lemma \ref{lemma3p2_trans}, $u\in H^1(U_{h_+}\setminus U_b)$ and $u\in H^1(U_{a}\setminus U_{h_-})$
for every $b>h_+ $ and $a<h_-$, with $\gamma^{\downarrow}_+u  = \gamma^{\uparrow}_+w$ and $\gamma^{\uparrow}_-u  = \gamma^{\downarrow}_-w$. Thus
$u|_{S(a,b)}\in H^1(S(a,b))$, $b>a$. Further, from (\ref{exact_rc_3}) and
(\ref{weak_form_trans}) it follows that (\ref{he_ds_3}) holds, so that
$\Delta u + k^2u = g$ in $\mathbb{R}^n$ in a distributional sense. Thus $u$
satisfies the transmission problem.

We have thus proved the following theorem.
\begin{theorem} \label{th_equiv_trans} If $u$ is a solution of the transmission problem then
$u|_{S}$ satisfies the variational problem.  Conversely, if $u$
satisfies the variational problem, 
and the
definition of $u$ is extended to $\mathbb{R}^n$ by setting $u(x)$ equal to 
$\hat{\mathcal{R}}(x,\gamma^{\uparrow}_+ u,k_+)$
for $x$ in
$U_{h_+}$ and to be $l(x)$ in $\mathbb{R}^n\backslash U_{h_-}$ where $l'(x)$, for $x$ in $U_{h_+}$, is given by $\hat{\mathcal{R}}(x,\gamma^{\downarrow}_- u,k_-)$,
then
 the extended function satisfies the transmission problem, with $g$ extended by zero from $S$ to $\mathbb{R}^n$ and $k$ extended from $S$ to $\mathbb{R}^n$ by taking the value $k_+$ in ${U_{h_+}}$ and the value $k_-$ in $\mathbb{R}^n\backslash \overline{U_{h_-}}$.
\end{theorem}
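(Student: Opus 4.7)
The plan is to mimic the strategy already used for Theorem \ref{th_equiv} and Theorem \ref{th_equiv_2}, proving the two implications separately, with the only genuinely new ingredient being that we must handle a downward radiation condition symmetrically to the upward one by exploiting the reflection identity in Remark \ref{up=down}.

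For the forward direction I would start from a solution $u$ of the transmission problem. The distributional form of (\ref{whelm}) gives immediately
\[
\int_{\mathbb{R}^n}\bigl[g\bar v + \nabla u\cdot\nabla\bar v - k^2 u\bar v\bigr]\,dx = 0,\quad v\in C_0^\infty(\mathbb{R}^n),
\]
and the $H^1(S(a,b))$ regularity (for every $a<h_-<h_+<b$) plus the trace theorem ensures $u|_{\Gamma_{h_\pm}}\in H^{1/2}(\Gamma_{h_\pm})$, so Lemma \ref{lemma3p2_trans} applies to $u$ on $U_{h_+}$. Substituting its identity (\ref{exact_rc_3}) converts the integral over $U_{h_+}$ into the boundary term $\int_{\Gamma_{h_+}}\bar v\,T_+\gamma_+^{\uparrow}w\,ds$. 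For the lower half-space one applies Lemma \ref{lemma3p2_trans} to the reflected function $u'$ on $U_{h_+}$, which satisfies an UPRC by Remark \ref{up=down}; changing variables back yields the corresponding term $\int_{\Gamma_{h_-}}\bar v\,T_-\gamma_-^{\downarrow}w\,ds$ on the lower interface. Collecting everything gives $d(w,v)=-(g,v)$ on the dense subspace $\{\phi|_S:\phi\in C_0^\infty(\mathbb{R}^n)\}$ of $H^1(S)$, and continuity of $\gamma_+^{\uparrow},\gamma_-^{\downarrow}$ together with the boundedness of $T_\pm$ (Lemma \ref{LTB}) lets me extend this to all $v\in H^1(S)$.

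For the converse, given a solution $w\in H^1(S)$ of (\ref{weak_form_trans}), I would define $u$ on $\mathbb{R}^n$ by the prescription in the theorem: $u=w$ on $S$; $u(x)=\hat{\mathcal{R}}(x,\gamma_+^{\uparrow}w,k_+)$ in $U_{h_+}$; and $u(x)=l(x)$ in $\mathbb{R}^n\setminus U_{h_-}$, where $l'(x)=\hat{\mathcal{R}}(x,\gamma_-^{\downarrow}w,k_-)$ for $x\in U_{h_+}$. Lemma \ref{lemma3p2_trans} applied to the upper piece gives $u\in H^1(U_{h_+}\setminus U_b)\cap C^2(U_{h_+})$, $\Delta u+k_+^2u=0$ in $U_{h_+}$, and $\gamma_+^{\downarrow}u=\gamma_+^{\uparrow}w$; the same lemma applied to $l'$ and then reflected yields the analogous statement for $l$ below $\Gamma_{h_-}$, in particular $\gamma_-^{\uparrow}u=\gamma_-^{\downarrow}w$. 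The matching traces ensure $u|_{S(a,b)}\in H^1(S(a,b))$ for every $a<h_-<h_+<b$. Finally, for any $v\in C_0^\infty(\mathbb{R}^n)$, using (\ref{exact_rc_3}) and its reflected analogue to rewrite the two boundary terms in $d(w,v)$ as volume integrals over $U_{h_+}$ and $\mathbb{R}^n\setminus\overline{U_{h_-}}$ respectively, and combining with (\ref{weak_form_trans}), we recover (\ref{he_ds_3}) on all of $\mathbb{R}^n$; thus $\Delta u+k^2u=g$ distributionally, and the radiation conditions hold by construction.

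The only step requiring real care—and the main obstacle—is the lower interface: one must verify that applying Lemma \ref{lemma3p2_trans} in the reflected variable and then undoing the reflection produces exactly $T_-$ with the right sign on $\Gamma_{h_-}$. The operator $T_{k_*}$ is defined for a half-space lying \emph{above} its boundary, so after reflection the outward normal to $\mathbb{R}^n\setminus\overline{U_{h_-}}$ points in the $-e_n$ direction, and one has to check that the boundary contribution in (\ref{exact_rc_3}) comes out with the same sign as the $\Gamma_{h_+}$ term in (\ref{sesqui_3}); this is precisely the reason the sesquilinear form $d$ is written with a plus sign in front of both DtN integrals. Once this bookkeeping is settled, the rest is a direct two-sided transcription of the proof of Theorem \ref{th_equiv_2}.
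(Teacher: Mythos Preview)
Your proposal is correct and follows essentially the same route as the paper: the proof there is given in the discussion immediately preceding the theorem statement, and it proceeds exactly by writing the distributional identity (\ref{he_ds_3}), applying Lemma~\ref{lemma3p2_trans} at both interfaces (the lower one via the reflection of Remark~\ref{up=down}) to obtain the two DtN boundary terms, then extending by density; and conversely by defining the extension via $\hat{\mathcal{R}}$ above and its reflected analogue below, matching traces, and recombining (\ref{exact_rc_3}) with (\ref{weak_form_trans}) to recover (\ref{he_ds_3}). Your write-up is in fact more explicit than the paper about the reflection bookkeeping and the sign in front of the $T_-$ term, which is the only place any care is needed.
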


We conclude this section by showing that the sesquilinear form $d(.,.)$ is bounded, establishing an explicit value for the bound.
\begin{lemma}\label{WL4_trans}
For all $u,v\in H^1(S)$,
\[
|d(u,v)|\leq \left[
\frac{k^2_{\infty}}{k_+^2}
+\left(1+\frac{1}{k_+(h_+-h_-)}\right) +\left(1+\frac{1}{k_-(h_+-h_-)}\right) 
\right]
\Vert u\Vert_{H^1(S)}\Vert v \Vert_{H^1(S)}
\]
so that the sesquilinear form $d(.,.)$ is bounded.
\end{lemma}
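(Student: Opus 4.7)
The plan is to bound $|d(u,v)|$ by splitting the sesquilinear form into its four constituent pieces via the triangle inequality and estimating each separately, then collecting terms to produce the asserted constants. The volume pair $(\nabla u,\nabla v)-(k^2u,v)$ will be handled together, and each of the two boundary integrals over $\Gamma_{h_+}$ and $\Gamma_{h_-}$ will be handled by combining duality, the unit norm of $T_\pm$, and a strip trace inequality.

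For the volume contribution, I would first apply Cauchy--Schwarz to each summand to obtain $\|\nabla u\|_2\|\nabla v\|_2+k_\infty^2\|u\|_2\|v\|_2$, then combine these via the elementary inequality $ab+cd\le \sqrt{a^2+c^2}\sqrt{b^2+d^2}$ with $a=\|\nabla u\|_2$, $c=k_\infty\|u\|_2$, to bound this expression by $(\|\nabla u\|_2^2+k_\infty^2\|u\|_2^2)^{1/2}(\|\nabla v\|_2^2+k_\infty^2\|v\|_2^2)^{1/2}$. Since $k_\infty\ge k_+$ (as $k=k_+$ on $\overline{U_{h_+}}$), one has $\|\nabla u\|_2^2+k_\infty^2\|u\|_2^2\le (k_\infty^2/k_+^2)\|u\|_{H^1(S)}^2$, which yields the coefficient $k_\infty^2/k_+^2$ on $\|u\|_{H^1(S)}\|v\|_{H^1(S)}$, exactly matching the corresponding summand in the claimed bound and subsuming the gradient term with no separate ``$+1$''.

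For the boundary integral over $\Gamma_{h_+}$, I would apply the duality between $H^{1/2}(\Gamma_{h_+})$ and $H^{-1/2}(\Gamma_{h_+})$, followed by the mapping property $\|T_+\|=1$ (this is Lemma \ref{LTB} read with $k_+$ in place of $k_+$ — the proof is verbatim the same, with $z$ built from $k_+$), to get $|\int_{\Gamma_{h_+}}\bar v\,T_+\gamma_+^\uparrow u\,ds|\le \|\gamma_+^\uparrow v\|_{H^{1/2}(\Gamma_{h_+})}\|\gamma_+^\uparrow u\|_{H^{1/2}(\Gamma_{h_+})}$, and analogously for the $\Gamma_{h_-}$ term with $T_-$ and $k_-$. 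It then remains to establish the trace inequalities
\[
\|\gamma_+^\uparrow u\|_{H^{1/2}(\Gamma_{h_+})}\le \sqrt{1+\tfrac{1}{k_+(h_+-h_-)}}\,\|u\|_{H^1(S)},\qquad \|\gamma_-^\downarrow u\|_{H^{1/2}(\Gamma_{h_-})}\le \sqrt{1+\tfrac{1}{k_-(h_+-h_-)}}\,\|u\|_{H^1(S)}.
\]
Each is proved by the standard Fourier-along-vertical-lines argument used to establish the analogous inequalities in Lemmas \ref{WL3} and \ref{traces}: for $\phi\in C_0^\infty(\overline S)$ one averages the identity $|\hat\phi(\xi,h_\pm)|^2=|\hat\phi(\xi,a)|^2\mp 2\Re\int\hat\phi\,\overline{\partial_n\hat\phi}$ over $a\in[h_-,h_+]$, applies the weighted Young inequality with parameter $\sqrt{k_\pm^2+\xi^2}$, multiplies through by $\sqrt{k_\pm^2+\xi^2}$, integrates in $\xi$, and invokes Plancherel; the resulting estimate contains the strip width $(h_+-h_-)$ in the Friedrichs-type correction.

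The main obstacle is the bottom trace estimate, because the norm $\|\cdot\|_{H^{1/2}(\Gamma_{h_-})}$ uses the wavenumber $k_-$ whereas the interior norm $\|\cdot\|_{H^1(S)}$ uses $k_+$. This asymmetry is harmless in the Fourier derivation just sketched, since the choice of weight $\sqrt{k_-^2+\xi^2}$ in the Young step is independent of the weight $k_+$ used in the $H^1(S)$ norm: the only place the interior wavenumber enters is in the Friedrichs-like integration, which yields the factor $1/(k_-(h_+-h_-))$ from the width $(h_+-h_-)$ irrespective of $k_+$. Once both trace inequalities are in hand, summing the four estimates gives precisely the stated bound.
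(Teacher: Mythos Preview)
Your approach is the same as the paper's: split $d$ into its four pieces by the triangle inequality, apply Cauchy--Schwarz on each, use $\|T_\pm\|=1$, and invoke the strip trace estimate of Lemma~\ref{traces} with $\mu=h_+-h_-$ (your Fourier argument is precisely the proof of that lemma). The merging of the two volume terms into the single factor $k_\infty^2/k_+^2$ is likewise what the paper does, exactly as in Lemma~\ref{WL4}.

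One caveat on your final paragraph: the wavenumber asymmetry is not entirely harmless for the \emph{exact} constant. The Fourier/trace derivation produces
\[
\|\gamma_-^{\downarrow} u\|_{H^{1/2}(\Gamma_{h_-})}^2 \le \Bigl(1+\tfrac{1}{k_-(h_+-h_-)}\Bigr)\bigl(\|\nabla u\|_2^2 + k_-^2\|u\|_2^2\bigr),
\]
and the bracket on the right coincides with $\|u\|_{H^1(S)}^2$ (which is $k_+$--weighted) only when $k_-\le k_+$; if $k_->k_+$ an additional factor $k_-^2/k_+^2$ appears. The paper's one--line proof, which simply cites Lemma~\ref{traces}, glosses over the same point. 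This does not affect the qualitative conclusion that $d(\cdot,\cdot)$ is bounded, which is all that is used downstream.
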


\begin{proof}
From the definition of the sesquilinear form $d(.,.)$, the
Cauchy-Schwarz inequality and the mapping properties of $T_+$ and $T_-$we have
\begin{eqnarray*}
|d(u,v)|&\leq& \|\nabla u\|_2\|\nabla v\|_2+\frac{k^2_{\infty}k_+^2}{k_+^2}\|u\|_2\|v\|_2 +
\|\gamma^{\uparrow}_+u\|_{H^{1/2}(\Gamma_{h_+})}
\|T_+\|\,\|\gamma^{\uparrow}_+v\|_{H^{1/2}(\Gamma_{h_+})} \\&+&\|\gamma^{\downarrow}_-u\|_{H^{1/2}(\Gamma_{h_-})}
\|T_-\|\,\|\gamma^{\downarrow}_-v\|_{H^{1/2}(\Gamma_{h_-})}.
\end{eqnarray*}
To obtain the desired result 
we apply lemma \ref{traces} with $\mu = (h_+-h_-)$.
\end{proof}

\section{Analysis of the variational problem}
\label{hik_3}

In this section we shall establish, under assumptions 4 and 5 below, on the function $k \in L^{\infty}(\mathbb{R}^n)$, that the transmission problem and the equivalent
variational problem are uniquely solvable by using the generalized
Lax-Milgram theory of Babu\v{s}ka.

As usual our analysis will
also apply to the following slightly more general problem: given
${\cal G}\in H^1(S)^*$ find $u\in H^1(S)$ such that
\begin{equation} \label{var_prob2_trans}
d(u,v) = {\cal G}(v), \quad v\in H^1(S).
\end{equation}

The assumptions we make are:
\newline \textbf{Assumption 4.} For some $\beta \in [h_-,h_+]$, $k^2$ is monotonic non-increasing on $U_{h_-}\setminus U_{\beta}$ and monotonic non-decreasing on $U_{\beta}\setminus U_{h_+}$.

 We then set 
\begin{eqnarray*} \tilde k(x)& =  & k_+(x) ,\quad x \in U_{\beta}\setminus U_{h_+}\\  
                                & = &  k_-(x) ,\quad  x \in U_{h_-}\setminus U_{\beta}, 
\end{eqnarray*}
so that assumption 4 implies that $\tilde k^2(x) - k^2(x) \geq 0 $ for all $x \in S$.
\newline \textbf{Assumption 5.} For some $\epsilon >0, \lambda_3>0$ it holds that $\tilde k^2(x) - k^2(x) \geq \lambda_3$, for all $x \in \mathcal{C}:= \{(\tilde x,x_n)|x_n \in [f(\tilde x)-\epsilon,f(\tilde x) +\epsilon]\}$, where $f \in L^{\infty}(\mathbb{R}^n)$ is Lipschitz with Lipschitz constant $L$
and such that $\mathcal{C} \subseteq S.$
\begin{remark}
If $k^2 \in C^1(\mathbb{R}^n)$ and $k^2$ satisfies assumption 4 then we may write this assumption succinctly as
\[
\frac{\partial k^2}{\partial x_n}(x_n-\beta) \geq 0.
\]
\end{remark}

In what follows we always assume that there exists $k_0>0$, such that $k(x)\geq k_0$ $x \in \mathbb{R}^n$. We make the abbreviations 
$\kappa_0:= k_0(h_+-h_-)$, $ \kappa_+:= k_+(h_+-h_-)$, $ \kappa_-:= k_-(h_+-h_-)$ and $ \kappa_{\infty}:= k_{\infty}(h_+-h_-)$. Our main result in this section is then the following:

\begin{theorem}\label{th_main1_trans} If  Assumptions 4 and 5 hold then the variational
problem (\ref{var_prob2_trans}) has a unique solution $u\in H^1(S)$ for every
${\cal G}\in {H^1(S)}^*$ and
\begin{equation}
\Vert u\Vert_{H^1(S)}\le [1+ k_{\infty}^{-1}C_1]\left[k_+ +\frac{k_{\infty}^2}{k_+}\right]\Vert {\cal G}\Vert_{{H^1(S)}^*}
\label{aprioriest_3}
\end{equation}
where
\begin{eqnarray}\nonumber
C_1^2& =&  k_{\infty}\sqrt{2\left[ \frac{P^2}{2k_{\infty}^2}[2\kappa_++ 2\kappa_- +1]^2 + P(h_+-h_-)^2\right]} \\\nonumber & + &4k_{\infty}^2\left[ \frac{P^2}{2k_{\infty}^2}[2\kappa_++ 2\kappa_- +1]^2 + P(h_+-h_-)^2\right]\\\label{C_def_chap4}
\end{eqnarray}
and where 
\[
P=  \kappa_{\infty}^2 + 4\kappa_{\infty}k_{\infty}\sqrt{1+L^2}\{\epsilon + \epsilon^{-1}2\lambda_3^{-1}(1+4\kappa_{\infty}^2)\}.
\]

In particular, the transmission problem and the equivalent
variational problem (\ref{weak_form_trans}) have exactly one solution, and
the solution satisfies the bound
\[
k_{\infty}\Vert w\Vert_{H^1(S)}\leq   C_1\Vert g\Vert_{2}.
\]
\end{theorem}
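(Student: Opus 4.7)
I follow the template of Theorem~\ref{th_main1} from the previous chapter and apply the generalized Lax-Milgram theorem. Boundedness of $d$ is already given by Lemma~\ref{WL4_trans}. The form $d$ satisfies the symmetry $d(v,u)=d(\bar u,\bar v)$ on $H^1(S)\times H^1(S)$: applying Lemma~\ref{WL1} separately to $T_+$ on $\Gamma_{h_+}$ and $T_-$ on $\Gamma_{h_-}$ shows that both Dirichlet-to-Neumann integrals are symmetric in the required sense, while the volume terms are manifestly so. The argument of Lemma~\ref{WEXL7} then reduces the transposed inf-sup condition to the ordinary inf-sup. Next I would follow the splitting of Lemma~\ref{special_case_enough}: defining the $H^1(S)$-elliptic form
\[
d_0(u,v):=(\nabla u,\nabla v)+k_+^2(u,v)+\int_{\Gamma_{h_+}}\gamma_+^{\uparrow}\bar v\,T_+\gamma_+^{\uparrow}u\,ds+\int_{\Gamma_{h_-}}\gamma_-^{\downarrow}\bar v\,T_-\gamma_-^{\downarrow}u\,ds
\]
(coercive because $\Re\int\bar\phi\,T_\pm\phi\,ds\ge 0$ by Lemma~\ref{WL1}) and solving $d_0(u_0,v)=\mathcal{G}(v)$ by classical Lax-Milgram, the function $w:=u-u_0$ solves $d(w,v)=((k_+^2+k^2)u_0,v)$, which is the special case $d(w,v)=-(g,v)$ with $g\in L^2(S)$ of norm at most $(k_++k_\infty^2/k_+)\|\mathcal{G}\|_{H^1(S)^*}$. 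The overall bound (\ref{aprioriest_3}) follows from an a priori estimate for this special case.

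The technical core is the estimate $k_\infty\|w\|_{H^1(S)}\le C_1\|g\|_2$ for the special case. I would obtain it from two complementary Rellich-type identities. The first, modelled on Lemma~\ref{rellich}, uses the multiplier $(x_n-\beta)\,\partial\bar w/\partial x_n$. After extending $w$ to all of $\real^n$ via the radiation conditions and mollifying $k^2$ in the spirit of Lemma~\ref{arb_k} to justify integration by parts, one obtains an identity of the form
\begin{eqnarray*}
2\left\|\frac{\partial w}{\partial x_n}\right\|_2^2
&=&\int_S(|\nabla w|^2-k^2|w|^2)\,dx -\int_S(x_n-\beta)\partial_n(k^2)|w|^2\,dx\\
&&+\;\mathrm{BdryTerms}(\Gamma_{h_+},\Gamma_{h_-}) -2\Re\int_S(x_n-\beta)\,g\,\frac{\partial\bar w}{\partial x_n}\,dx.
\end{eqnarray*}
By Assumption~4 the second integral on the right is $\ge 0$ and can be dropped in the upper bound. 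The boundary contributions at $\Gamma_{h_\pm}$ are controlled via (\ref{lem32_3}) (the downward counterpart on $\Gamma_{h_-}$ is obtained through Remark~\ref{up=down}) combined with $\Im d(w,w)=-\Im(g,w)$: the non-positivity of $\Im\int\gamma\bar w\,T_\pm\gamma w\,ds$ (Lemma~\ref{WL1}) yields $-\Im\int_{\Gamma_{h_\pm}}\gamma\bar w\,T_\pm\gamma w\,ds\le\|g\|_2\|w\|_2$ for each $\pm$ separately. The interior term $\int_S(|\nabla w|^2-k^2|w|^2)\,dx$ is bounded by $\|g\|_2\|w\|_2$ from $\Re d(w,w)=-\Re(g,w)$ together with $\Re\int\bar\phi\,T_\pm\phi\,ds\ge 0$. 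Assembling these bounds controls $\|\partial w/\partial x_n\|_2$ in terms of $\|g\|_2$ and $\|w\|_2$.

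The genuinely new ingredient is a second identity that extracts control of $\|w\|_2$ from Assumption~5, and whose constants explain the $\sqrt{1+L^2}$, $\epsilon$ and $\epsilon^{-1}\lambda_3^{-1}$ structure of $P$. The plan is to multiply the equation by a localised multiplier of the form $\chi(x_n-f(\tilde x))\,\partial\bar w/\partial x_n$, where $\chi$ is a smooth cutoff supported in $[-\epsilon,\epsilon]$ and equal to $1$ on a smaller interval. The derivative $\chi'$ concentrates the resulting interior term on the neighbourhood $\mathcal{C}$ of the Lipschitz graph of $f$, and there the gap $\tilde k^2-k^2\ge\lambda_3$ from Assumption~5 produces a positive contribution proportional to $\lambda_3\int_{\mathcal{C}}|w|^2\,dx$. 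The surface Jacobian $J_f\le\sqrt{1+L^2}$, arising on slicing along the graph of $f$, accounts for the $\sqrt{1+L^2}$ factor, while Young's inequality $ab\le\epsilon a^2+\epsilon^{-1}b^2$ balancing the tangential-gradient error terms against this positive $|w|^2$ contribution produces the $\epsilon+\epsilon^{-1}\lambda_3^{-1}$ dependence in $P$. Coupling this with the first identity eliminates $\|\partial_n w\|_2$ in favour of $\|g\|_2$, and a final use of $\Re d(w,w)=-\Re(g,w)$ recovers the full $H^1(S)$ norm with exactly the constant $C_1$ of (\ref{C_def_chap4}). The inf-sup condition then follows from Lemma~\ref{ihl_rem}, and the analog of Corollary~\ref{cor_infsup} closes the argument.

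The principal obstacle I foresee is the careful design of the second Rellich identity: the cutoff $\chi$ and the Young splitting must be arranged so that the $\lambda_3$-gap from Assumption~5 genuinely dominates every error term generated in the process, in particular the terms coming from the tangential derivatives of $w$ against $\nabla f$ on the Lipschitz graph. Threading the constants through the coupled system of the two identities so that the final output matches the exact form of $P$ and $C_1$ demanded by the statement is the main bookkeeping burden.
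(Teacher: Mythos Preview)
Your framework---boundedness, symmetry, reduction via $d_0$, first Rellich identity with multiplier $(x_n-\beta)\partial_n\bar w$, control of boundary terms via the radiation conditions and Lemma~\ref{WL1}---matches the paper exactly. The divergence is in the second half, where you propose a \emph{second} Rellich identity with a localized multiplier $\chi(x_n-f(\tilde x))\partial_n\bar w$ to extract $\lambda_3\int_{\mathcal C}|w|^2$ from Assumption~5.

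The paper does not do this, and its route is both simpler and free of the obstacle you yourself flag. The key point you miss is that the term $\int_S(x_n-\beta)\partial_n(k^2)|w|^2\,dx$ in the \emph{first} Rellich identity should not be discarded: since it is non-negative by Assumption~4, it sits on the left and is itself bounded above by the same right-hand side $\mathcal E:=[2\kappa_++2\kappa_-+1]\|g\|_2\|w\|_2+(h_+-h_-)^2\|g\|_2^2$. The paper then rewrites this term algebraically, integrating by parts in $x_n$ to obtain
\[
\int_S(x_n-\beta)\partial_n(k^2)|w|^2\,dx=\int_S(\tilde k^2-k^2)\,\partial_n\big[(x_n-\beta)|w|^2\big]\,dx,
\]
uses the pointwise inequality $\partial_n[(x_n-\beta)|w|^2]\ge\tfrac12|w|^2-2(h_+-h_-)^2|\partial_n w|^2$, and invokes Assumption~5 to get $\tfrac{\lambda_3}{2}\int_{\mathcal C}|w|^2\le(1+4\kappa_\infty^2)\mathcal E$. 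From here a trace lemma on the strip $\mathcal C$ (Lemma~\ref{trace_trans}) gives $\int_\Gamma|w|^2\,ds\le\sqrt{1+L^2}\{\epsilon\,\mathcal E+\epsilon^{-1}2\lambda_3^{-1}(1+4\kappa_\infty^2)\mathcal E\}$, and Friedrich's inequality (Lemma~\ref{friedrichs}) closes the loop to $\|w\|_2$. So the factor $\sqrt{1+L^2}$ and the $\epsilon+\epsilon^{-1}2\lambda_3^{-1}(1+4\kappa_\infty^2)$ structure in $P$ come from the trace lemma, not from a second Rellich identity or Young's inequality on tangential-gradient errors.

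Your proposed localized multiplier $\chi(x_n-f(\tilde x))$ would generate cross-terms $\chi'\cdot\nabla_{\tilde x}f\cdot\nabla_{\tilde x}w\,\partial_n\bar w$, and absorbing these with only $\lambda_3\int_{\mathcal C}|w|^2$ available is exactly the difficulty you anticipate; the paper's purely algebraic manipulation of the monotonicity term sidesteps it completely.
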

 
To apply the generalized Lax-Milgram theorem 
we need to show that $d$ is bounded which we have done in lemma \ref{WL4_trans}; to establish the inf-sup condition
that
\begin{equation}
\alpha := \inf_{0\not=u\in H^1(S)}\sup_{0\not=v\in
H^1(S)}\frac{|d(u,v)|}{\Vert u\Vert_{H^1(S)}\Vert v\Vert_{H^1(S)}} >0;
\label{infsup_trans}
\end{equation}
and to establish the transposed inf-sup condition. Noting that from lemma \ref{WL1} 
$d$ satisfies the following symmetry property, that
\[
d(v,u) =d(\bar u,\bar v) \quad u,v \in H^1(S),
\]
it follows
easily 
that the transposed inf-sup
condition follows automatically if (\ref{infsup_trans}) holds.



\begin{lemma}\label{WEXL7_trans} If (\ref{infsup_trans}) holds then, for all non-zero $v\in H^1(S)$,
\[
\sup_{0\not=u\in H^1(S)}\frac{|d(u,v)|}{\Vert u\Vert_{H^1(S)}}>0.
\]
\end{lemma}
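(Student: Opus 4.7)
The plan is to mimic directly the proofs of Lemma \ref{WEXL7} and Lemma \ref{WEXL7_2}, exploiting the symmetry property $d(v,u) = d(\bar u, \bar v)$ that was recorded immediately before the statement of the lemma. The idea is simply that the inf-sup condition (\ref{infsup_trans}) over the first slot can be transferred to an inf-sup condition over the second slot by means of this symmetry, since conjugation is an involutive, norm-preserving bijection of $H^1(S)$ onto itself.

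Concretely, I would fix a non-zero $v \in H^1(S)$, note that $\bar v$ is then also non-zero with $\|\bar v\|_{H^1(S)} = \|v\|_{H^1(S)}$, and apply the symmetry to write $|d(u,v)| = |d(\bar v, \bar u)|$ for every $u \in H^1(S)$. Making the substitution $u' := \bar u$, which is a bijection of $H^1(S) \setminus \{0\}$ onto itself preserving the norm, I would deduce
\[
\sup_{0 \neq u \in H^1(S)} \frac{|d(u,v)|}{\|u\|_{H^1(S)}}
 = \sup_{0 \neq u' \in H^1(S)} \frac{|d(\bar v, u')|}{\|u'\|_{H^1(S)}}.
\]
Now the right-hand side is bounded below by $\alpha \|\bar v\|_{H^1(S)}$ by the assumed inf-sup condition (\ref{infsup_trans}) applied with $\bar v$ in the first slot. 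Since $\|\bar v\|_{H^1(S)} = \|v\|_{H^1(S)} > 0$, the claim follows.

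There is really no obstacle here, and no delicate calculation: the content of the argument is entirely encapsulated in the symmetry relation $d(v,u) = d(\bar u,\bar v)$, which itself is a direct consequence of Lemma \ref{WL1} applied to the two Dirichlet-to-Neumann boundary terms in (\ref{sesqui_3}) together with the obvious symmetry of $(\nabla u,\nabla v)$ and $(k^2 u,v)$ (using that $k$ is real-valued). Consequently the proof will be no more than two or three lines, essentially identical in form to the two earlier instances.
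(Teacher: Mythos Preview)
Your proposal is correct and matches the paper's own proof essentially line for line: the paper also uses the symmetry $d(v,u)=d(\bar u,\bar v)$ to rewrite $\sup_{u}|d(u,v)|/\|u\|$ as $\sup_{u}|d(\bar v,u)|/\|u\|$ and then invokes the inf-sup condition (\ref{infsup_trans}) at $\bar v$ to bound this below by $\alpha\|v\|_{H^1(S)}>0$.
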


\begin{proof}
If (\ref{infsup_trans}) holds and $v\in H^1(S)$ is non-zero then
\[
\sup_{0\not=u\in H^1(S)}\frac{|d(u,v)|}{\Vert u\Vert_{H^1(S)}}=
\sup_{0\not=u\in H^1(S)}\frac{|d(\bar v,u)|}{\Vert u\Vert_{H^1(S)}}\geq
\alpha \Vert v\Vert_{H^1(S)}>0.
\]
This proves the lemma.
\end{proof}

The following result follows from \cite[Theorem 2.15]{ihlenburg}
and Lemmas \ref{WL4_trans} and \ref{WEXL7_trans}.

\begin{corollary} \label{cor_infsup_3} If (\ref{infsup_trans}) holds then
the variational problem (\ref{var_prob2_trans}) has exactly one solution
$u\in H^1(S)$ for all ${\cal G}\in {H^1(S)}^*$. Moreover
$$
\|u\|_{H^1(S)} \le \alpha^{-1}\|{\cal G}\|_{{H^1(S)}^*}.
$$
\end{corollary}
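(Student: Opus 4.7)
The plan is to simply invoke the Generalized Lax-Milgram Theorem stated in the introduction, since all three of its hypotheses have been established in the preceding material. Specifically, the theorem requires (i) a Hilbert space structure on the domain, (ii) boundedness of the sesquilinear form, (iii) the inf-sup condition with constant $\alpha>0$, and (iv) the transposed inf-sup condition. Here $H^1(S)$ is a Hilbert space with the wavenumber-dependent inner product and norm defined in section \ref{varf_3}.

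First I would observe that $d:H^1(S)\times H^1(S)\to \mathbb{C}$ is a bounded sesquilinear form: sesquilinearity is immediate from the definition (\ref{sesqui_3}), while boundedness was verified in Lemma \ref{WL4_trans}. Next, the hypothesis of the corollary is exactly that the inf-sup condition (\ref{infsup_trans}) holds, so (iii) is given with the constant $\alpha$ appearing in (\ref{infsup_trans}). For (iv), Lemma \ref{WEXL7_trans} shows that whenever (\ref{infsup_trans}) holds, we automatically have
\[
\sup_{0\not= u\in H^1(S)}\frac{|d(u,v)|}{\Vert u\Vert_{H^1(S)}}>0, \quad 0\not= v\in H^1(S),
\]
which is precisely the transposed inf-sup condition (\ref{trans_is}). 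The argument in Lemma \ref{WEXL7_trans} exploits the symmetry $d(v,u)=d(\bar u,\bar v)$ noted just before that lemma, together with the fact that the map $v\mapsto \bar v$ is an isometry of $H^1(S)$.

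With all hypotheses verified, the Generalized Lax-Milgram Theorem applies directly to the antilinear functional $\mathcal{G}\in H^1(S)^*$, yielding a unique $u\in H^1(S)$ with $d(u,v)=\mathcal{G}(v)$ for all $v\in H^1(S)$, and satisfying the norm bound $\|u\|_{H^1(S)}\leq \alpha^{-1}\|\mathcal{G}\|_{H^1(S)^*}$. There is no real obstacle here: the corollary is essentially a packaging of the Babu\v{s}ka–Lax–Milgram framework in the context of our specific sesquilinear form $d$. The substantive work of the chapter — establishing that the inf-sup constant $\alpha$ is in fact strictly positive, together with explicit quantitative control — is deferred to the subsequent lemmas leading to Theorem \ref{th_main1_trans}, via the standard reduction (cf.\ Lemma \ref{ihl_rem_2} in the preceding chapter) of the inf-sup condition to an a priori bound.
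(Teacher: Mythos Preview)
Your proposal is correct and matches the paper's own justification: the corollary is stated as a direct consequence of the generalized Lax--Milgram theorem (\cite[Theorem 2.15]{ihlenburg}) together with Lemma \ref{WL4_trans} (boundedness of $d$) and Lemma \ref{WEXL7_trans} (the transposed inf--sup condition), which is exactly what you invoke. One cosmetic point: in your closing remark you cite Lemma \ref{ihl_rem_2} from the previous chapter, but the relevant lemma in this chapter is \ref{ihl_rem_3}; this does not affect the argument itself.
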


To show (\ref{infsup_trans}) we will establish an
a priori bound for solutions of (\ref{var_prob2_trans}), from which the
inf-sup condition will follow by the following easily established
lemma (see \cite[Remark 2.20]{ihlenburg}).

\begin{lemma} \label{ihl_rem_3}
Suppose that there exists $C>0$ such that, for all $u\in H^1(S)$ and
${\cal G}\in {H^1(S)}^*$ satisfying (\ref{var_prob2_trans}) it holds that
\begin{equation} \label{boundA_trans}
\|u\|_{H^1(S)} \le C \|{\cal G}\|_{{H^1(S)}^*}.
\end{equation}
Then the inf-sup condition (\ref{infsup_trans}) holds with $\alpha\ge
C^{-1}$.
\end{lemma}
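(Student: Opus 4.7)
The plan is to prove the inf-sup condition directly from the hypothesis by constructing, for each nonzero $u \in H^1(S)$, a specific antilinear functional $\mathcal{G}_u$ for which $u$ is automatically a solution of the variational problem (\ref{var_prob2_trans}).

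More precisely, fix an arbitrary nonzero $u \in H^1(S)$ and define $\mathcal{G}_u : H^1(S) \to \mathbb{C}$ by $\mathcal{G}_u(v) := d(u,v)$ for $v \in H^1(S)$. By the boundedness of the sesquilinear form $d$ established in Lemma \ref{WL4_trans}, $\mathcal{G}_u$ is a continuous antilinear functional, i.e.\ $\mathcal{G}_u \in H^1(S)^*$, with
\[
\|\mathcal{G}_u\|_{H^1(S)^*} = \sup_{0\neq v \in H^1(S)} \frac{|d(u,v)|}{\|v\|_{H^1(S)}}.
\]
Moreover, by the very definition of $\mathcal{G}_u$, the pair $(u,\mathcal{G}_u)$ satisfies (\ref{var_prob2_trans}), so the hypothesis of the lemma applies and yields $\|u\|_{H^1(S)} \le C\|\mathcal{G}_u\|_{H^1(S)^*}$.

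Combining these two facts gives
\[
\|u\|_{H^1(S)} \le C \sup_{0\neq v \in H^1(S)} \frac{|d(u,v)|}{\|v\|_{H^1(S)}},
\]
which, after dividing through by $\|u\|_{H^1(S)}$ (which is positive since $u\neq 0$) and taking the infimum over all nonzero $u$, yields
\[
\alpha = \inf_{0\neq u \in H^1(S)}\sup_{0\neq v \in H^1(S)} \frac{|d(u,v)|}{\|u\|_{H^1(S)}\|v\|_{H^1(S)}} \ge C^{-1},
\]
which is the claimed inf-sup condition (\ref{infsup_trans}). There is no real obstacle here: the argument is essentially tautological once one realises that the hypothesis is a bound on arbitrary solution pairs of (\ref{var_prob2_trans}), and that every $u \in H^1(S)$ trivially generates such a pair via $\mathcal{G}_u(v) = d(u,v)$.
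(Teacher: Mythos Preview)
Your proof is correct and is exactly the standard argument: the paper does not spell out a proof for this lemma but simply cites \cite[Remark~2.20]{ihlenburg}, where precisely this construction (taking $\mathcal{G}_u(v)=d(u,v)$ and applying the a~priori bound) is given. There is nothing to add.
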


The following lemma reduces the problem of establishing
(\ref{boundA_trans}) to that of establishing an a priori bound for
solutions of the special case (\ref{weak_form_trans}).
\begin{lemma} \label{special_case_enough_trans}
Suppose there exists $\tilde C>0$ such that, for all $u\in H^1(S)$ and
$g\in L^2(S_H)$ satisfying (\ref{weak_form_trans}) it holds that
\begin{equation} \label{boundB_trans}
\|u\|_{H^1(S)} \le k_{\infty}^{-1}\tilde C\, \|g\|_2.
\end{equation}
Then, for all $u\in H^1(S)$ and ${\cal G}\in {H^1(S)}^*$ satisfying
(\ref{var_prob2_trans}), the bound (\ref{boundA_trans}) holds with
$$
C  \le \left(1 +  k_{\infty}^{-1}\tilde C\left[k_+ + \frac{k_{\infty}^2}{ k_+}\right]\right)
$$
\end{lemma}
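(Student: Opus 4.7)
The proof will mimic the strategy used for Lemma \ref{special_case_enough} in Chapter 2, with the only wrinkle being that our sesquilinear form $d$ now has two Dirichlet-to-Neumann boundary contributions (on $\Gamma_{h_+}$ and $\Gamma_{h_-}$) instead of one. The plan is to introduce an auxiliary, easily-solvable sesquilinear form $d_0:H^1(S)\times H^1(S)\to\C$ given by
\[
d_0(u,v) = (\nabla u,\nabla v) + k_+^2(u,v) + \int_{\Gamma_{h_+}}\gamma_+^{\uparrow}\bar v\,T_+\gamma_+^{\uparrow}u\,ds + \int_{\Gamma_{h_-}}\gamma_-^{\downarrow}\bar v\,T_-\gamma_-^{\downarrow}u\,ds,
\]
i.e.\ $d$ but with $-k^2$ replaced by $+k_+^2$. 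By Lemma \ref{WL1} applied separately to each of the two boundary integrals (recall $\Re\int\bar\phi T_{\pm}\phi\,ds\geq 0$), we immediately get $\Re d_0(v,v)\geq \|v\|_{H^1(S)}^2$, so $d_0$ is bounded and $H^1(S)$-elliptic.

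Now, given $\mathcal{G}\in H^1(S)^*$, the classical Lax-Milgram lemma furnishes a unique $u_0\in H^1(S)$ with $d_0(u_0,v)=\mathcal{G}(v)$ for all $v\in H^1(S)$ and with $\|u_0\|_{H^1(S)}\leq \|\mathcal{G}\|_{H^1(S)^*}$. Setting $w:=u-u_0$, where $u$ is the assumed solution of (\ref{var_prob2_trans}), a direct subtraction gives
\[
d(w,v) = d(u,v) - d(u_0,v) = \mathcal{G}(v) - \bigl(\mathcal{G}(v) - k_+^2(u_0,v) - (k^2 u_0,v)\bigr) = ((k_+^2+k^2)u_0,v),
\]
so $w$ solves (\ref{weak_form_trans}) with $g=-(k_+^2+k^2)u_0\in L^2(S)$. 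The hypothesis (\ref{boundB_trans}) therefore yields
\[
\|w\|_{H^1(S)} \leq k_{\infty}^{-1}\tilde C\,\|(k_+^2+k^2)u_0\|_2 \leq k_{\infty}^{-1}\tilde C\,(k_+^2+k_{\infty}^2)\|u_0\|_2 \leq k_{\infty}^{-1}\tilde C\Bigl[k_+ + \tfrac{k_{\infty}^2}{k_+}\Bigr]\|\mathcal{G}\|_{H^1(S)^*},
\]
where the last step uses $k_+\|u_0\|_2\leq \|u_0\|_{H^1(S)}\leq \|\mathcal{G}\|_{H^1(S)^*}$ from the definition of the $H^1(S)$-norm. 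Combining this with the triangle inequality $\|u\|_{H^1(S)}\leq \|u_0\|_{H^1(S)}+\|w\|_{H^1(S)}$ gives the stated bound on $C$.

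There is no serious obstacle here; the only thing to be checked with a little care is the sign structure that guarantees $d_0$ is elliptic, and this follows at once from Lemma \ref{WL1} applied to each Dirichlet-to-Neumann term. The rest is the same splitting trick as in Chapter 2.
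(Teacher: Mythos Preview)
Your proof is correct and follows essentially the same approach as the paper: define the auxiliary elliptic form $d_0$ with $+k_+^2$ in place of $-k^2$, solve $d_0(u_0,v)=\mathcal{G}(v)$ by Lax-Milgram, set $w=u-u_0$, observe that $w$ satisfies (\ref{weak_form_trans}) with $g=-(k_+^2+k^2)u_0$, and conclude by (\ref{boundB_trans}) and the triangle inequality. The details and estimates match the paper's argument exactly.
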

\begin{proof}
 Suppose $u\in H^1(S)$ is a solution of
\begin{equation}
d(u,v)={\cal G}(v),\quad v\in H^1(S), \label{uweak_trans}
\end{equation}
where ${\cal G}\in {H^1(S)}^*$.   Let $d_0:H^1(S)\times H^1(S)\to \C$ be
defined by
\[
d_0(u,v)=(\nabla u,\nabla
v)+k_+^2(u,v)+\int_{\Gamma_{h_+}}\gamma_+^{\uparrow}\overline{v}\,T_+\gamma_+^{\uparrow}u\,ds
+ \int_{\Gamma_{h_-}}\gamma_-^{\downarrow}\overline{v}\,T_-\gamma_-^{\downarrow}u\,ds,
\]
for $u,v\in H^1(S)$.
It follows from Lemma \ref{WL1} that $d_0$ satisfies 
\[
\Re \,d_0(v,v)\ge \Vert v\Vert_{H^1(S)}^2, \quad v\in H^1(S).
\]
Thus the problem of finding $u_0\in H^1(S)$ such that
\begin{equation}
d_0(u_0,v)={\cal G}(v),\quad v\in H^1(S), \label{u0weak_trans}
\end{equation}
has a unique solution which satisfies
\begin{equation}
\Vert u_0\Vert_{H^1(S)}\leq \Vert{\cal G}\Vert_{{H^1(S)}^*}. \label{uoap_trans}
\end{equation}
Furthermore, defining $w=u-u_0$ and using (\ref{uweak_trans}) and
(\ref{u0weak_trans}), we see that
\[
d(w,v)=d(u,v)-d(u_0,v)={\cal G}(v)-({\cal
G}(v)-k_+^2(u_0,v)-(k^2u_0,v))=((k_+^2 + k^2)u_0,v),
\]
for all $v\in H^1(S)$. Thus $w$ satisfies (\ref{weak_form_trans}) with
$g=-(k_+^2 +k^2)u_0$. It follows, using (\ref{uoap_trans}) and (\ref{boundB_trans}),
that
\begin{eqnarray} \label{boundC_trans}
\Vert w\Vert_{H^1(S)}\leq k_{\infty}^{-1} \tilde C(k_+^2 +k_{\infty}^2)\Vert u_0\Vert_2\leq k_{\infty}^{-1}\tilde C\left[k_+ + \frac{k_{\infty}^2}{k_+ }\right]
\Vert {\cal G}\Vert_{{H^1(S)}^*}.
\end{eqnarray}
The bound (\ref{boundA_trans}), with 
$$ 
C \le \left(1 +  k_{\infty}^{-1}\tilde C\left[k_+ + \frac{k_{\infty}^2}{ k_+}\right]\right),
$$ 
follows
from  (\ref{uoap_trans}) and (\ref{boundC_trans}).
\end{proof}

In lemma \ref{rellich_3} below we will need to make use of the following result, a trace lemma whose proof can be found in the appendix.
\begin{lemma}\label{trace_trans}
Let $f: \mathbb{R}^{n-1} \to \mathbb{R}$ be a bounded Lipschitz function with Lipschitz constant $L$ and let $\mathcal{C}:= \{(\tilde x,x_n)|x_n \in [f(\tilde x)-\epsilon,f(\tilde x) +\epsilon]\}$.
Then for $w \in H^1(\mathcal{C})$ it holds that
\[
\epsilon \int_{\Gamma} |w|^2ds \leq \sqrt{1+L^2}\left\{\epsilon^2\left\Vert \frac{\partial w}{\partial x_n}\right\Vert^2_{L^2(\mathcal{C})} + \Vert w\Vert^2_{L^2(\mathcal{C})}\right\}.
\]
\end{lemma}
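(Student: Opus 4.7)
The plan is to prove this trace estimate by the standard fundamental-theorem-of-calculus argument, adapted to the tubular neighbourhood $\mathcal{C}$ around the graph $\Gamma$. The key observation is that every point in $\mathcal{C}$ lies on a vertical segment of length $2\epsilon$ passing through a point of $\Gamma$, so one can express the boundary value in terms of an interior value plus an integral of the vertical derivative. By a standard density argument I would first reduce to the case $w \in C^\infty(\overline{\mathcal{C}}) \cap H^1(\mathcal{C})$; the inequality then extends to all of $H^1(\mathcal{C})$ once both sides are shown to be continuous in the $H^1(\mathcal{C})$-norm, which is the only place where the extension of $w$ to the trace on $\Gamma$ needs to be justified (here via the Lipschitz-graph parametrisation of $\Gamma$ combined with the $L^2(\Gamma)$-continuity of the trace for $C^\infty$ functions on the strip).

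For smooth $w$ and fixed $\tilde x \in \mathbb{R}^{n-1}$, I would write, for $x_n$ in either half of the vertical interval $[f(\tilde x)-\epsilon, f(\tilde x)+\epsilon]$,
\[
|w(\tilde x, f(\tilde x))|^2 \;=\; |w(\tilde x, x_n)|^2 \;-\; \int_{f(\tilde x)}^{x_n} 2\,\mathrm{Re}\!\left(\bar w\,\tfrac{\partial w}{\partial t}\right)(\tilde x,t)\,dt,
\]
bound the remainder by $2\int_{f(\tilde x)-\epsilon}^{f(\tilde x)+\epsilon}|w||\partial_t w|\,dt$, and then integrate in $x_n$ over $[f(\tilde x)-\epsilon, f(\tilde x)+\epsilon]$. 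This produces
\[
2\epsilon\,|w(\tilde x,f(\tilde x))|^2 \;\leq\; \int_{f(\tilde x)-\epsilon}^{f(\tilde x)+\epsilon}|w|^2\,dx_n \;+\; 4\epsilon\!\int_{f(\tilde x)-\epsilon}^{f(\tilde x)+\epsilon}|w|\,|\partial_{x_n}w|\,dt,
\]
and Young's inequality $2\epsilon\,|w|\,|\partial_{x_n}w| \leq |w|^2 + \epsilon^2|\partial_{x_n}w|^2$ converts the cross term into the desired pointwise-in-$\tilde x$ bound.

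Next I would integrate the resulting inequality in $\tilde x$ over $\mathbb{R}^{n-1}$ and convert the left-hand side into a surface integral on $\Gamma$. Since $ds = J_f(\tilde x)\,d\tilde x$ with $1 \leq J_f(\tilde x) \leq \sqrt{1+L^2}$ almost everywhere,
\[
\int_{\mathbb{R}^{n-1}}|w(\tilde x,f(\tilde x))|^2\,d\tilde x \;\geq\; \frac{1}{\sqrt{1+L^2}}\int_\Gamma |w|^2\,ds,
\]
which is exactly where the factor $\sqrt{1+L^2}$ appears in the final bound. The right-hand side is already an integral over the tubular neighbourhood $\mathcal{C}$, giving $\|w\|_{L^2(\mathcal{C})}^2$ and $\|\partial_{x_n}w\|_{L^2(\mathcal{C})}^2$. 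Combining these steps yields the claimed inequality.

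I do not expect any real obstacle here: the argument is entirely elementary once one adopts the vertical-slice parametrisation of $\mathcal{C}$. The only point to watch is measurability of $x \mapsto f(\tilde x)$ and of the slice integrals, which is immediate from continuity of $f$, and the justification of the density reduction, for which one can approximate $w \in H^1(\mathcal{C})$ by smooth functions on the Lipschitz domain $\mathcal{C}$ in the usual way and pass to the limit using the $H^1(\mathcal{C})$-continuity of the trace onto $\Gamma$ established in the same manner as Lemma~\ref{traces}.
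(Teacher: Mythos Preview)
Your approach is essentially the same as the paper's --- a fundamental-theorem-of-calculus estimate on vertical slices, integrated over $\tilde x$ with the Jacobian factor $J_f\le\sqrt{1+L^2}$ producing the constant on the right. There is, however, a small but genuine discrepancy in the constants. As you have written it, integrating over the full interval $[f(\tilde x)-\epsilon,f(\tilde x)+\epsilon]$ and applying Young's inequality to the cross term gives
\[
2\epsilon\,|w(\tilde x,f(\tilde x))|^2 \;\le\; 3\!\int_{f-\epsilon}^{f+\epsilon}|w|^2\,dx_n \;+\; 2\epsilon^2\!\int_{f-\epsilon}^{f+\epsilon}|\partial_{x_n}w|^2\,dx_n,
\]
which after dividing by $2$ and integrating in $\tilde x$ yields a factor $\tfrac32$ in front of $\|w\|_{L^2(\mathcal C)}^2$, not the factor $1$ claimed in the lemma. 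No choice of Young parameter repairs this when you treat the whole strip at once.

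The paper avoids this by working separately on the upper and lower halves $\mathcal C_\pm=\{(\tilde x,x_n):\,|x_n-f(\tilde x)|\le\epsilon,\ \pm(x_n-f(\tilde x))\ge 0\}$. On each half one uses $w(\tilde x,f(\tilde x))=w(\tilde x,x_n)-\int_{f}^{x_n}\partial_{y_n}w\,dy_n$, then $(a+b)^2\le 2a^2+2b^2$ together with Cauchy--Schwarz on the integral, and integrates $x_n$ over the half-interval of length $\epsilon$. This produces
\[
\epsilon\int_\Gamma|w|^2\,ds \;\le\; 2\sqrt{1+L^2}\Big\{\epsilon^2\|\partial_{x_n}w\|_{L^2(\mathcal C_\pm)}^2+\|w\|_{L^2(\mathcal C_\pm)}^2\Big\}
\]
for each sign; averaging the two inequalities removes the factor $2$ and gives exactly the stated bound. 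If you adopt this split-and-average device your argument goes through with the correct constants; otherwise your proof gives only a slightly weaker inequality.
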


Following these preliminary lemmas we turn now to establishing the a
priori bound (\ref{boundB_trans}), at first just for the case when
$k \in C^{\infty}(\mathbb{R}^n)$. 

\begin{lemma}\label{rellich_3}
Let $h_+>h_-$, $g\in L^2(S)$ and
suppose that $k \in C^{\infty}(\mathbb{R}^n)$ (with $k=k_+$ in $U_{h_+}$ and $k=k_-$ in $\mathbb{R}^n \backslash \overline{U_{h_-}})$ satisfies assumptions 4 and 5. Then, if $w\in H^1(S)$ satisfies
\begin{equation}
d(w,\phi)= -(g,\phi),\quad \phi\in H^1(S) 
\end{equation}
then
\begin{eqnarray*}
k_{\infty}^2\Vert w\Vert_{H^1(S)}^2\leq  C_1^2 \Vert g\Vert_2^2
\end{eqnarray*}
where $C_1^2$ is given by (\ref{C_def_chap4}).

\end{lemma}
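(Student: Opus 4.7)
\bigskip

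\noindent\textbf{Proof plan.} The plan is to derive an $x_n$-weighted Rellich--Morawetz identity in the strip $S$, and then to close the resulting bound on $\|\partial w/\partial x_n\|_2$ using Assumption 5 together with the trace Lemma \ref{trace_trans}. First I would extend $w$ to $\mathbb{R}^n$ via the two radiation-condition representations so that Theorem \ref{th_equiv_trans} and Lemma \ref{lemma3p2_trans} apply; since $k\in C^\infty(\mathbb{R}^n)$ and $g\in L^2$, standard interior regularity gives $w\in H^2_{\mathrm{loc}}(\mathbb{R}^n)$. I would then multiply $\Delta w + k^2w = g$ by $\phi_A(|\tx|)(x_n-\beta)\partial\bar w/\partial x_n$, integrate over $S$ and let $A\to\infty$ exactly as in the proof of Lemma \ref{rellich}, obtaining
\begin{eqnarray*}
&& 2\int_{S}\Big|\tfrac{\partial w}{\partial x_n}\Big|^2 dx + \int_{S}(x_n-\beta)\partial_n(k^2)|w|^2\,dx\\
&=& (h_+-\beta)\int_{\Gamma_{h_+}}\!\!\big[|w_n|^2-|\nabla_{\tx}w|^2+k_+^2|w|^2\big]ds + (\beta-h_-)\int_{\Gamma_{h_-}}\!\!\big[|w_n|^2-|\nabla_{\tx}w|^2+k_-^2|w|^2\big]ds\\
&& {}-\; 2\Re\int_{S}(x_n-\beta)g\,\tfrac{\partial\bar w}{\partial x_n}\,dx.
\end{eqnarray*}
By Assumption 4, $(x_n-\beta)\partial_n(k^2)\ge 0$ throughout $S$, so the second volume term on the left is non-negative; the source term is absorbed by a Cauchy--Schwarz and Young inequality, producing a term $\lesssim (h_+-h_-)^2\|g\|_2^2 + \|w_n\|_2^2$.

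Next I would estimate the two boundary contributions via the radiation bound (\ref{lem32_3}), applied on $\Gamma_{h_+}$ with $k_*=k_+$ and, after the reflection argument of Remark \ref{up=down}, on $\Gamma_{h_-}$ with $k_*=k_-$. Testing the variational problem (\ref{weak_form_trans}) with $\phi=w$ and taking imaginary parts then gives, using Lemma \ref{WL1},
\[
-\,\Im\!\int_{\Gamma_{h_+}}\!\!\gamma_+^{\uparrow}\bar w\,T_+\gamma_+^{\uparrow}w\,ds \;+\; -\Im\!\int_{\Gamma_{h_-}}\!\!\gamma_-^{\downarrow}\bar w\,T_-\gamma_-^{\downarrow}w\,ds \;=\; \Im(g,w)\;\ge\;0,
\]
with each summand non-negative; each is therefore bounded by $\|g\|_2\|w\|_2$, and the combined boundary contribution is $\lesssim (\kappa_++\kappa_-)\|g\|_2\|w\|_2$.

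The crux is then to recover control of $\|w\|_2$ from the left-hand side. For this I would exploit the identity
\[
\int_{S}(x_n-\beta)\partial_n(k^2)|w|^2\,dx \;=\; \int_{S}(\tilde k^2-k^2)|w|^2\,dx \;+\; 2\Re\!\int_{S}(x_n-\beta)(\tilde k^2-k^2)\bar w\,\tfrac{\partial w}{\partial x_n}\,dx,
\]
obtained by integration by parts in $x_n$ after noting $\partial_n\tilde k^2=0$ a.e.\ and that $\tilde k^2-k^2$ vanishes on $\Gamma_{h_\pm}$ so that the boundary terms drop out. Assumption 5 then yields $\int_{S}(\tilde k^2-k^2)|w|^2 dx \ge \lambda_3\|w\|_{L^2(\mathcal{C})}^2$, while the remaining cross term is controlled by Cauchy--Schwarz and Young, picking up a factor $4\kappa_\infty k_\infty$ and costing $\tfrac12\|w_n\|_2^2$ that can be absorbed. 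This gives
\[
\|w_n\|_2^2 + \lambda_3\|w\|_{L^2(\mathcal{C})}^2 \;\lesssim\; (\kappa_++\kappa_-)\|g\|_2\|w\|_2 + (h_+-h_-)^2\|g\|_2^2,
\]
and Lemma \ref{trace_trans} together with the elementary bound
\[
|w(\tx,x_n)|^2 \le 2|w(\tx,f(\tx))|^2 + 2(h_+-h_-)\!\int_{h_-}^{h_+}\!|w_n(\tx,t)|^2\,dt
\]
converts $\lambda_3\|w\|_{L^2(\mathcal{C})}^2$ into a lower bound on $\|w\|_2^2$ modulo an $\epsilon,\epsilon^{-1},\lambda_3^{-1}$-weighted combination of $\|w_n\|_2^2$ and a boundary trace. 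Closing this on $\|w\|_2$ by a final Young's inequality delivers the stated $L^2$ bound with the constant $P$ appearing in the statement; recovering $\|\nabla w\|_2$ (and hence $\|w\|_{H^1(S)}$) from this and from the real part of the weak formulation produces the quoted constant $C_1$.

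The main obstacle is the fifth step: writing $(x_n-\beta)\partial_n(k^2)|w|^2$ as a non-negative integrand plus a correction that exposes the jump $\tilde k^2-k^2$, and then balancing the resulting cross term against $\|w_n\|_2^2$ carefully enough that the $\lambda_3^{-1}$ and $\epsilon^{\pm1}$ dependences match those in the stated constant $P$. Every other step is a routine extension of Lemma \ref{rellich} to a bilateral strip with two radiation conditions.
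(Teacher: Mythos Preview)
Your plan matches the paper's proof closely: the $(x_n-\beta)$-weighted Rellich identity, the two-sided radiation bounds via Lemma \ref{lemma3p2_trans} plus the imaginary part of the weak form, the integration-by-parts identity exposing $\tilde k^2-k^2$, and closure via Lemma \ref{trace_trans} and a Friedrich-type inequality (your ``elementary bound'' is exactly Lemma \ref{friedrichs} with $\zeta=1$). Two corrections are needed.

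First, your displayed Rellich identity is missing the volume term $\int_{S}(|\nabla w|^2-k^2|w|^2)\,dx$ on the right; this term, bounded by $\|g\|_2\|w\|_2$ via the real part of the weak form (Lemma \ref{WL1} applied to both $T_\pm$), is what supplies the $+1$ in $[2\kappa_++2\kappa_-+1]$. (Also, the paper carries out the identity on $S(b,a)$ for $a>h_+$, $b<h_-$, not on $S$ itself, since (\ref{lem32_3}) is stated for strictly larger/smaller heights.)

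Second, and this is the real gap: the cross term $2\Re\int_S(x_n-\beta)(\tilde k^2-k^2)\bar w\,\partial_n w$ cannot be absorbed at a cost of only $\tfrac12\|\partial_n w\|_2^2$. Any Young split that keeps a positive fraction of $\int_S(\tilde k^2-k^2)|w|^2$ on the left forces a contribution of order $\kappa_\infty^2\|\partial_n w\|_2^2$ on the right, which swallows the remaining $\|\partial_n w\|_2^2$ once $\kappa_\infty\ge\sqrt2$. The paper sidesteps this: since Assumption 4 makes $(x_n-\beta)\partial_n(k^2)\ge0$, both $\|\partial_n w\|_2^2$ and $\int_S(x_n-\beta)\partial_n(k^2)|w|^2\,dx$ are \emph{individually} bounded by $\mathcal E:=[2\kappa_++2\kappa_-+1]\|g\|_2\|w\|_2+(h_+-h_-)^2\|g\|_2^2$. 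Only then does one invoke your identity, via the pointwise inequality $\partial_n[(x_n-\beta)|w|^2]\ge\tfrac12|w|^2-2(h_+-h_-)^2|\partial_n w|^2$, and substitute the already-secured bound $\|\partial_n w\|_2^2\le\mathcal E$ to obtain $\tfrac{\lambda_3}{2}\|w\|_{L^2(\mathcal C)}^2\le(1+4\kappa_\infty^2)\mathcal E$ with no absorption at all. The factor $(1+4\kappa_\infty^2)$ is precisely what appears inside $P$. With this reordering your remaining steps go through verbatim.
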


\begin{proof}

 Let $r=|\tx|$. For $A\ge 1$ let $\phi_A\in
C_0^{\infty}(\real)$ be such that $0\leq \phi_A\leq 1$,
$\phi_A(r)=1$ if $r\le A$ and $\phi_A(r)=0$ if $r\ge A+1$ and
finally such that $\Vert\phi_A'\Vert_{\infty}\leq M$ for some fixed
$M$ independent of $A$.

Extending the definition of $w$ to the whole of $\mathbb{R}^n$ by letting \newline $w(x)=\hat{\mathcal{R}}(x,\gamma^{\uparrow}_+ w,k_+)$
for $x$ in
$U_{h_+}$ and by letting $w(x)=l(x)$ in $\mathbb{R}^n\backslash U_{h_-}$ where $l'(x)$, for $x$ in $U_{h_+}$, is given by $l'(x)=\hat{\mathcal{R}}(x,\gamma^{\downarrow}_- w,k_-)$, 
it follows from Theorem
\ref{th_equiv_trans} that $w$ satisfies the transmission problem, with
$g$ extended by zero from $S$ to $\mathbb{R}^n$. By standard
interior regularity results (e.g. \cite{mclean00} Theorem 4.16) it holds, since $g\in
L^2(S)$ and $k \in C^{0,1}_{\rm loc}(\mathbb{R}^n)$, that $w\in
H^2_{\rm loc}(\mathbb{R}^n)$. Further, $w\in H^2(U_d\setminus U_c)$ for
$c>d>h_+$ and for $d<c<h_-$. 
Moreover, by Lemma \ref{lemma3p2_trans}, $w(x) = \hat{\mathcal{R}}(x,w|_{\Gamma_c},k_+)$ (with $h_+$ replaced by $c$) for $x \in U_c$ for all $c>h_+$. Similarly $w'(x)=\hat{\mathcal{R}}(x,w'|_{\Gamma_{-d}},k_-)$ (with $h_+$ replaced by $-d$) for $x \in U_{-d}$ for all $-d>h_+$.  Thus $w$ satisfies the transmission problem
with $h_+,h_-$ replaced by $c,d$, respectively, for all $c>h_+$ and $d<h_-$ and so, by Theorem
\ref{th_equiv_trans},
\begin{equation} \label{eqstst_3}
\int_{S(d,c)}(\nabla w\cdot\nabla\bar v-k^2 w\bar v)\,dx =
-\int_{\Gamma_c}\gamma^{\uparrow}_+\bar v\, T_+\gamma^{\uparrow}_+ w \,ds -\int_{\Gamma_d}\gamma^{\downarrow}_-\bar v\, T_-\gamma^{\downarrow}_- w \,ds- \int_{S} \bar
v g \,dx,
\end{equation}
for all $c>h_+,d<h_-$.

In view of this regularity and since $w$ satisfies the boundary
value problem, we have, for all $a>h_+,b<h_-$,
\begin{eqnarray*}
\lefteqn{2\Re\int_{S(b,a)}\phi_A(r)(x_n-\beta)g\frac{\partial\bar{w}}{\partial x_n}\,dx}\\
&=& 2\Re\int_{S(b,a)}\phi_A(r)(x_n-\beta)(\Delta w+k^2w)
\frac{\partial\bar{w}}{\partial x_n}\,dx\\
&=&
\int_{S(b,a)}\left\{2\Re\left\{\nabla\cdot\left(\phi_A(r)(x_n-\beta)\frac{\partial\bar{w}}{\partial
x_n} \nabla w\right)\right\}-2\phi_A(r)\left|\frac{\partial
w}{\partial x_n}\right|^2
\right.\\&&\left.-2\Re\left[(x_n-\beta)\phi_A(r)\frac{\partial \nabla
\bar w}{\partial x_n}.\nabla w\right]\right.\\&&\left. -2
\phi_A'(r)(x_n-\beta)\frac{\tx}{|\tx|}\cdot
\Re\left(\nabla_{\tx}w\frac{\partial \bar{w}}{\partial
x_n}\right)\right\}\,dx\\&&+ 2\Re\int_{S(b,a)} k^2(x_n-\beta)\phi_A(r)\frac{\partial \bar w}{\partial
x_n}wdx. 
\end{eqnarray*}
 Using the divergence theorem and integration by parts
 \begin{eqnarray*}
\lefteqn{2\Re\int_{S(b,a)}\phi_A(r)(x_n-\beta)g\frac{\partial\bar{w}}{\partial x_n}\,dx}\\
&=&(a-\beta)\int_{\Gamma_b}\phi_A(r)\left\{ \left|\frac{\partial
w}{\partial x_n}\right|^2- \left|\nabla_{\tx}w\right|^2+
k_+^2\left|w\right|^2\right\}\,ds\\&&+
(\beta-b)\int_{\Gamma_b}\phi_A(r)\left\{ \left|\frac{\partial
w}{\partial x_n}\right|^2- \left|\nabla_{\tx}w\right|^2+
k_-^2\left|w\right|^2\right\}\,ds\\&&
+\int_{S(b,a)}\left\{\phi_A(r)\left(|\nabla
w|^2-k^2|w|^2-2\left|\frac{\partial w}{\partial
x_n}\right|^2\right)
\right.\\&&\left.-2\phi_A'(r)(x_n-\beta)\Re\left(\frac{\partial\bar{w}}{\partial
x_n} \frac{\partial w}{\partial r}\right)\right\}\,dx.\\&&
-\int_{S(b,a)}\phi_A(r)\frac{\partial k^2}{\partial x_n}(x_n -\beta)|w|^2dx.
\end{eqnarray*}
Now, rearranging terms we find that
\begin{eqnarray*}
\lefteqn{2\int_{S(b,a)}\phi_A(r)\left|\frac{\partial w}{\partial x_n}\right|^2\,dx +\int_{S(b,a)}\phi_A(r)\frac{\partial k^2}{\partial x_n}(x_n -\beta)|w|^2dx} \\
&=& (a-\beta)\int_{\Gamma_a}\phi_A(r)\left\{ \left|\frac{\partial
w}{\partial x_n}\right|^2- \left|\nabla_{\tx}w\right|^2+
k_+^2\left|w\right|^2\right\}\,ds\\&&(\beta-b)\int_{\Gamma_b}\phi_A(r)\left\{ \left|\frac{\partial
w}{\partial x_n}\right|^2- \left|\nabla_{\tx}w\right|^2+
k_-^2\left|w\right|^2\right\}\,ds\\&&
+\int_{S(b,a)}\left\{\phi_A(r)\left(|\nabla w|^2-k^2|w|^2\right)
\right.\\&&\left.-2\phi_A'(r)(x_n-\beta)\Re\left(\frac{\partial\bar{w}}{\partial
x_n} \frac{\partial w}{\partial
r}\right)\right\}\,dx\\&&-2\Re\int_{S(b,a)}\phi_A(r)(x_n-\beta)g\frac{\partial\bar{w}}{\partial
x_n}\,dx.
\end{eqnarray*}
We now wish to let $A\to \infty$.  The only problem is the term
involving $\phi'_A$ which we estimate as follows. Let
$S(b,a)^{t}=\left\{x\in S(b,a)\;:\; |\tx|< t\right\}$ for $t\ge 1$. Then
\begin{eqnarray*}
&&\left|\int_{S(b,a)}\left\{2\phi_A'(r)(x_n-\beta)\Re\left(\frac{\partial\bar{w}}{\partial
x_n} \frac{\partial w}{\partial r}\right)\right\}\,dx\right|\\&&\leq
2M(a-b)\int_{S(b,a)^{A+1}\setminus {S(b,a)}^A}|\nabla
w|^2\,dx\to 0
\end{eqnarray*}
as $A\to\infty$, where the convergence follows from the fact that
$w\in H^1(S(b,a))$.  In addition since $w\in H^2(U_d\setminus U_c)$,
for $c>d>h_+$, and for $d<c<h_-$ $\nabla w|_{\Gamma_a}\in H^{1/2}(\Gamma_a)$ and  $\nabla w|_{\Gamma_b}\in H^{1/2}(\Gamma_b)$ and
so, by the Lebesgue dominated and monotone convergence theorems, (note that $(\partial k^2/\partial x_n)(x_n-\beta) \geq 0$ by assumption 4),
\begin{eqnarray}
\lefteqn{2\int_{S(b,a)}\left|\frac{\partial w}{\partial x_n}\right|^2\,dx  +\int_{S(b,a)}\frac{\partial k^2}{\partial x_n}(x_n -\beta)|w|^2dx}\nonumber\\
&=& (a-\beta)\int_{\Gamma_a}\left\{ \left|\frac{\partial w}{\partial
x_n}\right|^2- \left|\nabla_{\tx}w\right|^2+
k_+^2\left|w\right|^2\right\}\,ds\nonumber\\&&(\beta-b)\int_{\Gamma_b}\left\{ \left|\frac{\partial w}{\partial
x_n}\right|^2- \left|\nabla_{\tx}w\right|^2+
k_-^2\left|w\right|^2\right\}\,ds\nonumber\\&&
+\int_{S(b,a)}\left(|\nabla
w|^2-k^2|w|^2-2\Re\left((x_n-\beta)g\frac{\partial\bar{w}}{\partial
x_n}\right)\right)\,dx.
\nonumber\\\label{s13_3}
\end{eqnarray}
Now, since $w$ satisfies the boundary value problem, including the
radiation condition's (\ref{rad_1}) and (\ref{rad_2}), applying Lemma \ref{lemma3p2_trans}
it follows that
\begin{eqnarray}
\int_{\Gamma_a}\left\{ \left|\frac{\partial w}{\partial
x_n}\right|^2- \left|\nabla_{\tx}w\right|^2+
k_+^2\left|w\right|^2\right\}\,ds&\leq& 
-2k_+\Im\int_{\Gamma_a}\gamma^{\downarrow}_+\bar{w}T_+\gamma^{\downarrow}_+w\,ds
\label{s14_3}
\end{eqnarray}
and that
\begin{eqnarray}
\int_{\Gamma_b}\left\{ \left|\frac{\partial w}{\partial
x_n}\right|^2- \left|\nabla_{\tx}w\right|^2+
k_-^2\left|w\right|^2\right\}\,ds&\leq& 
-2k_-\Im\int_{\Gamma_b}\gamma^{\uparrow}_-\bar{w}T_-\gamma^{\uparrow}_-w\,ds
\label{s14b_3}
\end{eqnarray}
Further, setting $v=w$ in
(\ref{eqstst_3}) we get
\begin{equation}
\int_{S(b,a)}\left(|\nabla
w|^2-k^2|w|^2\right)\,dx=-\int_{\Gamma_a}\gamma^{\uparrow}_+\bar{w}T_+\gamma^{\uparrow}_+w\,ds-\int_{\Gamma_b}\gamma^{\downarrow}_-\bar{w}T_-\gamma^{\downarrow}_-w\,ds-
\int_{S}g\bar{w}\,dx, \label{s15_3}
\end{equation}
for $a>h_+$, $b<h_-$, so that, by Lemma \ref{WL1},
\begin{equation}
\int_{S(b,a)}[|\nabla w|^2-k^2|w|^2]\,dx\leq
-\Re\int_{S}g\bar{w}\,dx \label{s16_3}
\end{equation}
and
\begin{equation}
\Im\int_{\Gamma_a}\gamma^{\uparrow}_+\bar{w}T_+\gamma_+^{\uparrow}w\,ds +\Im\int_{\Gamma_b}\gamma^{\downarrow}_-\bar{w}T_-\gamma^{\downarrow}_-w\,ds=-\Im\int_{S}g\bar{w}\,dx,
\end{equation}
which means, in view of Lemma \ref{WL1} 
that 
\begin{eqnarray}
 -2k_+\Im\int_{\Gamma_a}\gamma_+^{\uparrow}\bar{w}T_+\gamma_+^{\uparrow}w\,ds\leq 2k_+\Im\int_{S}g\bar{w}\,dx,\label{s17a_3} \\-2k_-\Im\int_{\Gamma_b}\gamma^{\downarrow}_-\bar{w}T_-\gamma^{\downarrow}_-w\,ds\leq 2k_-\Im\int_{S}g\bar{w}\,dx. 
\label{s17b_3}
\end{eqnarray}
Using (\ref{s17a_3}) in (\ref{s14_3}) and (\ref{s17b_3}) in (\ref{s14b_3}) then using the resulting
equation and (\ref{s16_3}) in (\ref{s13_3}), we get that
\begin{eqnarray}\label{m1}
\nonumber&&2\int_{S(b,a)}\left|\frac{\partial w}{\partial
x_n}\right|^2\,dx +\int_{S(b,a)}\frac{\partial k^2}{\partial x_n}(x_n -\beta)|w|^2 dx\\\nonumber&\le&  2(a-\beta)k_+\Im\,\int_{S}g\bar w\,dx + 2(\beta -b)k_-\Im\int_{S}g\bar wdx 
  \\\nonumber &-& \Re \int_{S} \left[g\bar w + 2(x_n-\beta)g\frac{\partial \bar
w}{\partial x_n}\right]\,dx \\\nonumber  &\le&  2(a-\beta)k_+\Vert g\Vert_2\Vert w\Vert_2  + 2(\beta -b)k_-\Vert g\Vert_2\Vert w\Vert_2  
 +\Vert g\Vert_2\Vert w\Vert_2  \\&+& 2\left|\int_{S}(x_n-\beta)g\frac{\partial \bar
w}{\partial x_n}\,dx\right|. 
\end{eqnarray}
Now, applying the Cauchy-Schwarz inequality to the last term in (\ref{m1}) and then using that $2ab \leq a^2 + b^2$, for $a,b>0$, we get that 
\begin{eqnarray}\label{m2}
\nonumber&&\int_{S}\left|\frac{\partial w}{\partial
x_n}\right|^2\,dx +\int_{S(b,a)}\frac{\partial k^2}{\partial x_n}(x_n -\beta)|w|^2 dx\\\nonumber&\le&  [2(a-\beta)k_++ 2(\beta -b)k_- +1]\Vert g\Vert_2\Vert w\Vert_2\\&+& (h_+-h_-)^2\Vert g\Vert_2^2. 
\end{eqnarray}
Making use of assumption 4, and since $a>h_+$ and $b<h_-$ were arbitrary we get that 
\begin{eqnarray}\label{m3}
\left\Vert\frac{\partial w}{\partial
x_n}\right\Vert_2^2 &\le&   \mathcal{E} 
\end{eqnarray}
and 
\begin{eqnarray}\label{m4}
\int_{S}\frac{\partial k^2}{\partial x_n}(x_n -\beta)|w|^2 dx\leq  \mathcal{E},
\end{eqnarray}
where 
$$\mathcal{E}:=[2\kappa_++ 2\kappa_- +1]\Vert g\Vert_2\Vert w\Vert_2\\+ (h_+-h_-)^2\Vert g\Vert_2^2.
$$
Now 
\begin{eqnarray*}
\int_{S}\frac{\partial k^2}{\partial x_n}(x_n -\beta)|w|^2 dx &=& (h_+-\beta)\int_{\Gamma_{h_+}}k_+^2|w|^2ds - (h_--\beta)\int_{\Gamma_{h_-}}k_-^2|w|^2ds
\\&-&\int_{S}k^2\frac{ \partial }{\partial x_n}[(x_n -\beta)|w|^2] dx.
\end{eqnarray*}
Note also that
\[
\int_{S}\tilde{k}^2\frac{ \partial }{\partial x_n}[(x_n -\beta)|w|^2] dx
=(h_+-\beta)\int_{\Gamma_{h_+}}k_+^2|w|^2ds - (h_--\beta)\int_{\Gamma_{h_-}}k_-^2|w|^2ds,
\]
so that
\[
\int_{S}\frac{\partial k^2}{\partial x_n}(x_n -\beta)|w|^2 dx =\int_{S}(\tilde{k}^2 -k^2)\frac{ \partial }{\partial x_n}[(x_n -\beta)|w|^2] dx.
\]
In addition,
\[
\frac{ \partial }{\partial x_n}[(x_n -\beta)|w|^2]= |w|^2 +2(x_n-\beta)\Re\left(\bar{w}\frac{\partial w}{\partial x_n}\right)\geq \frac{|w|^2}{2} - 2(h_+-h_-)^2\left|\frac{\partial w}{\partial x_n}\right|^2.
\]
Thus we get that 
\[
\int_{S}\frac{\partial k^2}{\partial x_n}(x_n -\beta)|w|^2  dx \geq \int_{S}(\tilde{k}^2 -k^2)
\frac{|w|^2}{2}dx - \int_{S}(\tilde{k}^2 -k^2)2(h_+-h_-)^2\left|\frac{\partial w}{\partial x_n}\right|^2dx.
\]
Using this and assumption 5 and using (\ref{m3}) and (\ref{m4}) we arrive at
\begin{eqnarray*}
\frac{\lambda_3}{2}\int_{\mathcal{C}}|w|^2dx \leq \int_{S}(\tilde{k}^2 -k^2)
\frac{|w|^2}{2}dx &\leq & 
\mathcal{E}+ \left|\int_{S}(\tilde{k}^2 -k^2)2(h_+-h_-)^2\left|\frac{\partial w}{\partial x_n}\right|^2dx\right|\\&\leq& \mathcal{E}
+ 4\kappa^2_{\infty}\left \Vert \frac{\partial w}{\partial x_n}\right\Vert_2^2\\&\leq& [1+ 4\kappa^2_{\infty}]\mathcal{E}.
\end{eqnarray*}
Now using this with the trace inequality, lemma \ref{trace_trans}, we get that 
\[
\int_{\Gamma}|w|^2 ds \leq \sqrt{1+L^2}\left\{\epsilon\mathcal{E} + \epsilon^{-1}2\lambda_3^{-1}(1+4\kappa_{\infty}^2)\mathcal{E}
\right\}.
\]
We next make use of the Friedrich's inequality, lemma \ref{friedrichs} with $\zeta=1$ to get
\begin{eqnarray*}
k_{\infty}^2\Vert w \Vert_2^2& \leq& \kappa_{\infty}^2\left \Vert \frac{\partial w}{\partial x_n}\right\Vert_2^2 + 4\kappa_{\infty}k_{\infty}\int_{\Gamma}|w|^2ds
\\&\leq& \kappa_{\infty}^2\mathcal{E} + 4\kappa_{\infty}k_{\infty}\sqrt{1+L^2}\left\{\epsilon\mathcal{E} + \epsilon^{-1}2\lambda_3^{-1}(1+4\kappa_{\infty}^2)\mathcal{E}
\right\}\\&=& P \mathcal {E}
\end{eqnarray*}
where the dimensionless parameter $P$ is defined as
\[
P=  \kappa_{\infty}^2 + 4\kappa_{\infty}k_{\infty}\sqrt{1+L^2}\{\epsilon + \epsilon^{-1}2\lambda_3^{-1}(1+4\kappa_{\infty}^2)
\}.  
\]
Thus using $ab \leq a^2/2\eta + b^2\eta/2$ for $a,b>0$, $\eta >0$, we get that
\begin{eqnarray*}
k_{\infty}^2\Vert w \Vert_2^2 & \leq& P\left\{[2\kappa_++ 2\kappa_- +1]\Vert g\Vert_2\Vert w\Vert_2+ (h_+-h_-)^2\Vert g\Vert_2^2\right\} \\&\leq& \frac{P^2}{2k_{\infty}^2}[2\kappa_++ 2\kappa_- +1]^2 \Vert g \Vert_2^2
\\ & +& \frac{k_{\infty}^2}{2}\Vert w\Vert_2^2 + P(h_+-h_-)^2\Vert g\Vert_2^2,
\end{eqnarray*}
so that 
\begin{eqnarray*}
k_{\infty}^2\Vert w \Vert_2^2&\leq&2\left[ \frac{P^2}{2k_{\infty}^2}[2\kappa_++ 2\kappa_- +1]^2 + P(h_+-h_-)^2\right]\Vert g\Vert_2^2.\\
\end{eqnarray*}

Thus using (\ref{s16_3}) we have 
\begin{eqnarray*}
k_{\infty}^2\Vert w\Vert_{H^1(S)}^2& \leq & k_{\infty}^2 \Vert g\Vert_2\Vert w\Vert_2 + 2k_{\infty}^4 \Vert w\Vert_2^2\\&\leq & k_{\infty}\sqrt{2\left[ \frac{P^2}{2k_{\infty}^2}[2\kappa_++ 2\kappa_- +1]^2 + P(h_+-h_-)^2\right]}\Vert g\Vert_2^2 \\ & + &4k_{\infty}^2\left[ \frac{P^2}{2k_{\infty}^2}[2\kappa_++ 2\kappa_- +1]^2 + P(h_+-h_-)^2\right]\Vert g\Vert_2^2
\end{eqnarray*}

\end{proof}
We now proceed to establish that lemma \ref{rellich_3} holds for arbitrary $k \in L^{\infty}(D)$ satisfying assumptions 4 and 5.
\begin{lemma}\label{arb_k_3}
Let $h_+>h_-$, $g\in L^2(S)$ and
suppose that $k \in L^{\infty}(\mathbb{R}^n)$ (with $k=k_+$ in $U_{h_+}$ and $k=k_-$ in $\mathbb{R}^n \backslash \overline{U_{h_-}})$ satisfies assumptions 4 and 5. Then, if $w\in H^1(S)$ satisfies
\begin{equation}
d(w,\phi)= -(g,\phi), \quad \phi\in H^1(S), \label{bgprob_3}
\end{equation}
then
\[
k_{\infty}\Vert w\Vert_{H^1(S)}\leq  C_1\Vert g\Vert_{2}
\]
where $C_1$ is given by (\ref{C_def_chap4}).
\end{lemma}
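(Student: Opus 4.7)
The plan is to extend Lemma \ref{rellich_3} from smooth $k$ to arbitrary $k \in L^{\infty}(\mathbb{R}^n)$ by a mollification argument closely parallel to that of Lemma \ref{arb_k} in Chapter 2. For $\delta>0$ small, let $\psi_{\delta} \in C_0^{\infty}(\mathbb{R}^n)$ be a standard symmetric mollifier supported in the ball of radius $\delta$, and set $k_{\delta}^2:= k^2 \ast \psi_{\delta} \in C^{\infty}(\mathbb{R}^n)$; outside a fixed neighbourhood of $\Gamma_{h_\pm}$ we have $k_\delta = k_\pm$ provided $\delta$ is small enough. Also, $\|k_\delta\|_{L^\infty(\mathbb{R}^n)} \leq k_\infty$ and $k_\delta \geq k_0$.

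The main technical task is to verify that $k_\delta$ satisfies (uniform in $\delta$) versions of Assumptions 4 and 5, so that the hypotheses of Lemma \ref{rellich_3} can be invoked. For Assumption 4, convolution with a symmetric kernel preserves the one-sided monotonicity of $k^2$ in $x_n$ away from $\beta$; more precisely, $k_\delta^2(\tilde x, \cdot)$ is monotonic non-decreasing on $(\beta+\delta, h_+)$ and non-increasing on $(h_-, \beta-\delta)$ for each $\tilde x$, and one can choose a new $\beta_\delta \in [\beta-\delta, \beta+\delta]$ (a minimizer of $k_\delta^2$ on the vertical slice through $\tilde x$) so that Assumption 4 holds for $k_\delta^2$ with this $\beta_\delta$. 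For Assumption 5, shrink the tube $\mathcal{C}$ slightly to $\mathcal{C}_\delta := \{(\tilde x, x_n) : |x_n - f(\tilde x)| \leq \epsilon - \delta(1+L)\}$, so that for $x \in \mathcal{C}_\delta$ and $|y|<\delta$, $x-y \in \mathcal{C}$; then $\tilde k_\delta^2(x) - k_\delta^2(x) = \int (\tilde k^2 - k^2)(x-y)\psi_\delta(y)\,dy \geq \lambda_3$ on $\mathcal{C}_\delta$. Hence $k_\delta$ satisfies the hypotheses of Lemma \ref{rellich_3} with constants depending only mildly on $\delta$ (and converging to those for $k$ as $\delta\to 0$).

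Next, fix $\epsilon>0$ and choose $w_n \in C_0^{\infty}(\mathbb{R}^n)$ with $\|w - w_n\|_{H^1(S)} < \epsilon$. Writing $d_\delta$ for the sesquilinear form (\ref{sesqui_3}) with $k$ replaced by $k_\delta$, (\ref{bgprob_3}) gives
\[
d_\delta(w_n,v) = -(g,v) + d(w_n-w, v) + \int_{S}(k^2 - k_\delta^2) w_n \bar v\,dx, \quad v \in H^1(S).
\]
Splitting $w_n = w' + w''$ where $w', w'' \in H^1(S)$ solve
\[
d_\delta(w',v) = -(g,v) + \int_{S}(k^2 - k_\delta^2)w_n \bar v\,dx, \quad d_\delta(w'',v) = d(w_n - w, v),
\]
we may apply Lemma \ref{rellich_3} to bound $w'$ and the analogue of Lemma \ref{apriori} (a consequence of Lemmas \ref{rellich_3}, \ref{special_case_enough_trans}, and \ref{ihl_rem_3}, proved first for smooth $k_\delta$) together with Lemma \ref{WL4_trans} to bound $w''$. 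This yields
\[
\|w_n\|_{H^1(S)} \leq k_\infty^{-1}C_1\bigl[\|g\|_2 + \|(k^2-k_\delta^2)w_n\|_{L^2(S)}\bigr] + C\,\|w_n - w\|_{H^1(S)},
\]
where $C$ is independent of $\delta$.

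Finally, since $k^2 \in L^\infty$ and $w_n$ has compact support, standard convolution arguments give $\|(k^2-k_\delta^2)w_n\|_{L^2(S)} \to 0$ as $\delta \to 0$, so passing to the limit in $\delta$ and then using arbitrariness of $\epsilon > 0$ yields the required bound
\[
\|w\|_{H^1(S)} \leq k_\infty^{-1} C_1 \|g\|_2.
\]
The hardest step will be verifying the modified Assumption 4 for $k_\delta$ with a suitable new threshold $\beta_\delta$, and ensuring that the mild shrinking of $\mathcal{C}$ to $\mathcal{C}_\delta$ does not spoil Assumption 5 — both amount to routine but careful manipulations with the mollifier that preserve the constant $C_1$ in the limit.
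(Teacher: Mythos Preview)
Your overall strategy --- mollify $k^2$, verify that the smoothed coefficient still satisfies Assumptions~4 and~5, and then invoke Lemma~\ref{rellich_3} together with a density argument --- is exactly the right one and matches the paper's approach. But the step you flag as ``the hardest'' really is a gap as written. Your candidate $\beta_\delta$ is defined as a minimiser of $k_\delta^2(\tilde x,\cdot)$ on the vertical slice through $\tilde x$; this makes $\beta_\delta$ a function of $\tilde x$, whereas Assumption~4 demands a single height $\beta\in[h_-,h_+]$. Even if you look for a common $\beta_\delta$, there is no reason why direct mollification of $k^2$ should produce a function that is monotone on each side of any fixed level in the transition zone $[\beta-\delta,\beta+\delta]$: the convolution mixes values from both sides of $\beta$ and from neighbouring vertical lines with possibly very different profiles.

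The paper avoids this by first \emph{modifying} $k$ before mollifying. Define $\hat k(x)=k_0$ for $x_n\in[\beta-\delta,\beta+\delta]$ and $\hat k(x)=k(x)$ otherwise. Since $k\ge k_0$ and $k^2$ is non-increasing below $\beta$ and non-decreasing above, $\hat k$ is non-increasing on all of $\{x_n<\beta+\delta\}$ and non-decreasing on all of $\{x_n>\beta-\delta\}$. Then $\hat k_\delta^2:=\hat k^2\ast\psi_\delta$ satisfies Assumption~4 with the \emph{same} $\beta$ (the point being that for $x_n<\beta$ one has $x_n-y_n<\beta+\delta$ for every $|y|<\delta$, so one is only averaging values from the non-increasing region). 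Assumption~5 for $\hat k_\delta$ is then checked on the slightly shrunk tube $\mathcal C_{\epsilon-\delta}$ with the same $\lambda_3$, by a case analysis according to whether $x_n-y_n$ lies above or below $\beta$. The price for the modification is an extra error term $\int_{U_{\beta-\delta}\setminus U_{\beta+\delta}}(k^2-k_0^2)w\bar v\,dx$, which tends to zero with $\delta$ by dominated convergence since the strip shrinks.

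A secondary point: because $k_\delta$ (or $\hat k_\delta$) only equals $k_\pm$ strictly outside $[h_-,h_+]$, the paper first passes to the enlarged strip $S(b,a)$ with $a>h_+$, $b<h_-$ (using Theorem~\ref{th_equiv_trans} and Lemma~\ref{lemma3p2_trans}) before applying Lemma~\ref{rellich_3}; your argument on $S$ itself would need this adjustment too.
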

\begin{proof}
Extending the definition of $w$ to the whole of $\mathbb{R}^n$ by letting \newline $w(x)=\hat{\mathcal{R}}(x,\gamma^{\uparrow}_+ w,k_+)$
for $x$ in
$U_{h_+}$ and by letting $w(x)=l(x)$ in $\mathbb{R}^n\backslash U_{h_-}$ where $l'(x)$, for $x$ in $U_{h_+}$, is given by $l'(x)=\hat{\mathcal{R}}(x,\gamma^{\downarrow}_- w,k_-)$, 
it follows from Theorem
\ref{th_equiv_trans} that $w$ satisfies the transmission problem, with
$g$ extended by zero from $S$ to $\mathbb{R}^n$. Hence by lemma \ref{lemma3p2_trans} (cf the proof of lemma \ref{rellich_3}) it holds that 
$\forall a>h_+, b<h_-$ ,  
\begin{equation} \label{int_S_b_3}
\int_{S(b,a)}\nabla w.\nabla \bar v - k^2w\bar v dx + \int_{\Gamma_a}\gamma_+^{\uparrow}\bar v T_+\gamma^{\uparrow}_+wds + \int_{\Gamma_b}\gamma^{\downarrow}_-\bar v T_-\gamma^{\downarrow}_-wds  
=-(g,v), 
\end{equation}
for $ v\in H^1(S(b,a))$.

For $\delta>0$, let $\psi_{\delta} \in C_0^{\infty}(\mathbb{R}^n)$ be such that $\psi_{\delta} >0$, $\psi_{\delta}(x)=0$ if $|x|>\delta$ and such that $\int_{\mathbb{R}^n}\psi_{\delta}(x)dx =1$ 
for $x \in \mathbb{R}^n$.

Next define $\hat{k}:\mathbb{R}^n \to \mathbb{R}$ by
$\hat{k}(x)=k_0$ for $x \in\mathbb{R}^{n-1}\times [\beta-\delta, \beta +\delta]$ and by $\hat{k}(x)=k(x)$ otherwise.   
Then let $\hat{k}^2_{\delta} \in C^{\infty}(\mathbb{R}^n)$ be given by
\[
\hat k_{\delta}^2 := \hat k^2 \ast \psi_{\delta}.
\]
Since $a>h_+$, then for all $x \in \Gamma_a$ there exists $\mu>0$ such that if $z \in B_{\mu}(x)$ then $\hat{k}(z)=k_+$. Thus it follows from the definitions of convolution and $\psi_{\delta}$ that $\hat{k}_{\delta} =k_+$ on $\Gamma_a$ provided we choose $\delta \le \mu$. 
A similar argument shows that $\hat{k}_{\delta} =k_-$ on $\Gamma_b$ provided we choose $\delta \le \mu'$ for some $\mu'>0$. Also $\Vert \hat{k}_{\delta} \Vert_{L^{\infty}(\mathbb{R}^n)} \leq k_{\infty}$ and 
\[
\hat{k}^2_{\delta}= \int_{|y|<\delta}\hat{k}^2(x-y)\psi_{\delta}(y)dy>k_0^2.
\]
Let us show that $\hat{k}_{\delta}$ satisfies assumption 4. Note that $\hat{k}$ is monotonic non-increasing on $\mathbb{R}^n \backslash \overline{U_{\beta + \delta}}$ and monotonic non-decreasing on $ U_{\beta -\delta}$. Now for $(\tilde x,x_n) \in S$  and $h>0$  
\begin{eqnarray}\nonumber
&&\hat{k}^2_{\delta}(\tilde x,x_n + h) - \hat{k}^2_{\delta}(\tilde x,x_n)
\\&&= \int_{|y|<\delta}[\hat{k}^2_{\delta}(\tilde x-\tilde y,x_n -y_n+ h) - \hat{k}^2_{\delta}((\tilde x-\tilde y,x_n-y_n)]\psi_{\delta}(y)dy.\label{k_delta}
\end{eqnarray}
Thus for $|y|<\delta$, if $x_n <\beta$ and $x_n +h<\beta$, then $x_n-y_n <\beta+ \delta$ and $x_n-y_n +h<\beta+\delta$, so that from (\ref{k_delta}) $\hat{k}_{\delta}$ is monotonic non-increasing on $\mathbb{R}^n \backslash U_{\beta }$ because $\hat{k}$ is monotonic non-increasing on $\mathbb{R}^n \backslash U_{\beta + \delta}$. Similarly $\hat{k}_{\delta}$ is monotonic non-decreasing on $U_{\beta}$. Thus $\hat{k}_{\delta}$ satisfies assumption 4.

We next show that $\hat{k}_{\delta}$ satisfies assumption 5. We define, for $\delta < \epsilon$ 
\[
\mathcal{C}_{\epsilon- \delta}:= \{ (\tilde x,f(\tilde x) + t) : \tilde x \in \mathbb{R}^{n-1}, |t|< \epsilon- \delta\}.
\]
Then, for $(\tilde x, x_n) \in  \mathcal{C}_{\epsilon- \delta}$, with $x_n \geq \beta$ and $y \in \mathbb{R}^n$ with $|y|<\delta$,
\begin{equation}\label{ppp}
\tilde{k}^2(x) -  \hat{k}^2(x-y) \geq k_+^2 -k^2(x-y).
\end{equation}
Note that, since $x_n = f(\tilde x) +t $ for some $t$ such that $|t| \leq \epsilon -\delta$, $|x_n-y_n-f(\tilde{x})|< \epsilon$. Thus $x-y \in \mathcal{C}$. Now if $x_n - y_n> \beta $ then from (\ref{ppp}) we have that
\[
\tilde{k}^2(x) -  \hat{k}^2(x-y) \geq \lambda_3.
\] 
On the other hand if $x_n - y_n< \beta$, then, since $x_n - y_n >\beta-\delta$ it holds that $\hat{k}^2 (x-y) = k_0^2$. Thus    
\[
\tilde{k}^2(x) -  \hat{k}^2(x-y) \geq \lambda_3.
\] 
Arguing in a similar way, one can show that for $(\tilde x,x_n) \in \mathcal{C}_{\epsilon- \delta}$ with  $x_n <\beta$ and for $y \in \mathbb{R}^n$ with $|y|<\delta$, that also 
\[
\tilde{k}^2(x) -  \hat{k}^2(x-y) \geq \lambda_3.
\] 
Finally then, for x $\in\mathcal{C}_{\epsilon- \delta}$ 
\[
\tilde{k}^2(x) - \hat{k}^2(x)= \int_{|y|<\delta}(\tilde{k}^2(x) - \hat{k}^2(x-y))\psi_{\delta}(y)dy \geq \lambda_3.
\]
Thus $\hat{k}_{\delta}$ satisfies assumption 5 with $\epsilon$ replaced by  $\epsilon- \delta$.

Now, fix $\chi>0$, and choose $w_n \in \mathcal{D}(S(b,a))$ such that 
$$
\Vert w- w_n\Vert_{H^1(S(b,a))}<\chi.
$$ 
Thus (\ref{int_S_b_3}) can be rewritten as
\begin{eqnarray*} 
&&\int_{S(b,a)}\nabla w.\nabla \bar v - \hat{k}_{\delta}^2w\bar v dx + \int_{\Gamma_a}\gamma^{\uparrow}_+\bar v T_+\gamma^{\uparrow}_+wds + \int_{\Gamma_b}\gamma^{\downarrow}_-\bar v T_-\gamma^{\downarrow}_-wds  
\\&=&-(g,v) + \int_{U_{\beta - \delta} \backslash U_{\beta + \delta}}(k^2- k_0^2)w \bar vdx + \int_{S(b,a)}(\hat{k}^2 - \hat{k}_{\delta}^2)w\bar{v}dx\\
&=&  -(g,v) + \int_{U_{\beta - \delta} \backslash U_{\beta + \delta}}(k^2- k_0^2)w \bar vdx + \int_{S(b,a)}(\hat{k}^2 - \hat{k}_{\delta}^2)w_n\bar{v}dx \\&+& \int_{S(b,a)}(\hat{k}^2 - \hat{k}_{\delta}^2)(w- w_n)\bar{v}dx \quad v\in H^1(S(b,a)).
\end{eqnarray*}
Thus by lemma \ref{rellich_3} we have that
\begin{eqnarray*}
k_{\infty}\Vert w\Vert_{H^1(S)} &&\leq C_1\left\{ \Vert g\Vert_2 + \Vert (k^2 -k_0^2) w \Vert_{L^2(U_{\beta - \delta} \backslash U_{\beta + \delta})} + \Vert(\hat{k}^2 - \hat{k}^2_{\delta})w_n\Vert_{L^2(S(b,a))}\right.\\&&\left.+ \Vert(\hat{k}^2 - \hat{k}^2_{\delta})(w-w_n)\Vert_{L^2(S(b,a))}\right\}. 
\end{eqnarray*}
Note that (see \cite{mclean00} theorem 3.4) if we choose $\delta$ small enough then we can ensure that 
$$\Vert(\hat{k}^2 - \hat{k}^2_{\delta})w_n\Vert_{L^2(S(b,a))} < \chi,
$$
whilst $\Vert(\hat{k}^2 - \hat{k}^2_{\delta})(w-w_n)\Vert_{L^2(S(b,a))}< 2k_{\infty}^4\chi$. In addition, by using Lebesque's monotone convergence theorem, one sees that we can arrange that 
$$
\Vert (k^2 -k_0^2) w \Vert_{L^2(U_{\beta - \delta} \backslash U_{\beta + \delta})} <\chi,
$$ provided $\delta$ is chosen small enough. The result now follows by arbitrariness of $\chi$.

\end{proof}
Theorem \ref{th_main1_trans} now follows by combining lemmas \ref{arb_k_3}, \ref{special_case_enough_trans} and
\ref{ihl_rem_3} with Corollary \ref{cor_infsup_3}.
\part{Integral Equation Methods}
\chapter{The Dirichlet problem for the Helmholtz equation}\label{BIE}
\section{Introduction and literature review}
This section concerns Boundary integral equation (BIE) techniques for solving an acoustic scattering problem, namely the Dirichlet problem for the Helmholtz equation. An excellent introduction to BIE techniques can be found in Kress's Linear Integral equations \cite{kress89}, chapter 6. We wish to begin by informally looking at how BIE techniques can be applied to resolve the problem of scattering by smooth bounded obstacles. So, let $\Omega \subseteq \mathbb{R}^n$, $n=2,3$ be a smooth, bounded domain (obstacle). The essential problem can be stated as follows: Given $g \in BC(\partial \Omega)$ find $u:\mathbb{R}^n \backslash \Omega \to \mathbb{C}$ such that
\begin{equation}
\Delta u +k^2u =0, \mbox{ in } \mathbb{R}^n\backslash \Omega, \label{Helm_ext}
\end{equation}    
and 
\begin{equation}
u=g \mbox{ on } \partial \Omega. \label{Dirich}
\end{equation}

A solution to this problem can be constructed by making use of the fundamental solutions, ($\Phi(x,y)$ $x,y \in \mathbb{R}^n$) to the Helmholtz equations given in chapter 1. We suppose the solution $u$ to be represented as 
\begin{equation}\label{ansatz}
u(x) = \int_{\partial \Omega} \frac{\partial \Phi(x,y)}{\partial \nu(y)} \phi (y)ds(y) -i\eta\int_{\partial \Omega} \Phi(x,y) \phi (y)ds(y),
\end{equation}
for some, as yet unknown, $\phi \in BC(\partial \Omega)$. Here $\eta>0$ and the normal $\nu(y)$ is directed out of $\Omega$. Making this ansatz (\ref{ansatz}), that $u$ is a ``combined single- and double-layer potential'' leads to a \textit{Brakhage-Werner} type integral equation formulation of the problem. Indeed it was Brakhage and Werner \cite{brakham65} who first suggested making this ansatz, although so too did Leis \cite{Leis} and Panich \cite{panich} independently.   

That $u$, given by (\ref{ansatz}) satisfies (\ref{Helm_ext}) then follows immediately provided one can justify an interchange of order of differentiation and integration, which in fact one can. Thus it remains to show our solution satisfies (\ref{Dirich}). It's at this point that we fix $\phi$. If $u$ is given by (\ref{ansatz}) then it may be continuously extended to the boundary $\partial \Omega$ with a limiting value given by (see for example \cite{Col83a} theorem 2.13)
\begin{equation}    
\lim_{x_n \to x} u(x_n) =  \frac{1}{2}\phi (x) + \int_{\partial \Omega} \frac{\partial \Phi(x,y)}{\partial \nu(y)} \phi (y)ds(y) -i\eta\int_{\partial \Omega} \Phi(x,y) \phi (y)ds(y), \quad x \in \partial \Omega.
\end{equation}
Since we must prescribe that $\lim_{x_n \to x} u(x_n) = g(x)$ for $x \in \partial \Omega$,  it follows that 
\begin{equation}    
 g(x) =  \frac{1}{2}\phi (x) + \int_{\partial \Omega} \frac{\partial \Phi(x,y)}{\partial \nu(y)} \phi (y)ds(y) -i\eta\int_{\partial \Omega} \Phi(x,y) \phi (y)ds(y), \quad x\in\partial\Omega,
\end{equation}
which we may write, in operator notation, as 
\[
2g= (I+K_B-i\eta S_B)\phi
\]
where we define, for $\phi \in BC(\Gamma)$, 
\begin{equation}
(S_B\phi)(x):=2\int_{\partial\Omega}\Phi(x,y)\phi(y)ds(y), \quad x \in \partial\Omega,
\end{equation}
and
\begin{equation}\label{K_B_def}
(K_B\phi)(x):=2\int_{\partial \Omega}\frac{\partial \Phi(x,y)}{\partial \nu(y)}\phi(y)ds(y), \quad x \in \partial\Omega.
\end{equation}
Supposing that $A_B := I+K_B-i\eta S_B$ is an invertible operator on $ BC(\partial \Omega)$, it holds that $\phi$ given by
\[
\phi = (A_B)^{-1}g
\]
will, when substituted into (\ref{ansatz}), yield a solution to (\ref{Helm_ext})-(\ref{Dirich}).
Thus the problem is solved once we have established the invertibility of $A_B$.
Classically due to the compactness and smoothness of the boundary $\partial \Omega$, the operators $K_B$ and $S_B$, defined above, are compact so that $I+K_B-i\eta S_B$ is a Fredholm operator of index zero. Thus, in order to prove it is invertible, it is sufficient to show that it is injective.

In studying scattering by rough surfaces, Chandler-Wilde et al (\cite{Cha99a}, \cite{chandmmas96}, \cite{chandsjam98} wished to apply precisely this same approach to their field. However problems arise in doing this,
the first of these being that, due to the slow decay at
infinity of the standard fundamental solution, $\Phi(x,y)$, of the
Helmholtz equation (like $|x-y|^{-(n-1)/2}$ in $n$ dimensions), the
standard boundary integral operators i.e.\ the analogue of $S_B$ and $K_B$ above, are not bounded on any of the
standard function spaces when the surface is unbounded.  
In order to
get a faster decaying kernel they replaced, in 
\cite{zhangworking}, and again in \cite{chandpottheim}, $\Phi(x,y)$ by an appropriate half-space
Green's function for the Helmholtz equation. Specifically, they worked with the function
\begin{equation}\label{GDelta}
  G(x,y) \; := \; \Phi(x,y) - \Phi(x,y'),
\end{equation}
with $y' = (\tilde y,-y_n)$,  which is the Dirichlet Green's function
for the half space $\{x:x_n>0\}$. They then defined the
\emph{single-layer potential operator} by
\begin{equation}\label{SDelta}
  (S\varphi)(x) \; := \;
  2 \int_{\Gamma} G(x,y) \phi(y) \; ds(y),
  \quad x \in \Gamma,
\end{equation}
and the \emph{double-layer potential operator} by
\begin{equation}\label{KDelta}
  (K\varphi)(x) \; := \;
  2 \int_{\Gamma} \frac{\partial G(x,y)}
  {\partial \nu(y)} \phi(y) \; ds(y),
  \quad x \in \Gamma,
\end{equation}
where the normal $\nu(y)$ is directed out of $D$. 

Nevertheless, the boundedness, of the operators $S$ and $K$ on different function spaces, for example $L^2(\Gamma)$ or $BC(\Gamma)$ is not obvious or maybe even true. Indeed the question is somewhat different in 2 and 3 dimensions, which is one of the reasons why the integral equation approach has been applied, in contrast to the variational approach seen in previous chapters, to the two dimensional case first: \cite{chandmmas96}, \cite{chandsjam98}, \cite{Cha99a}, and \cite{zhangworking} (c.f.\ the literature review at the start of chapter 2); and then more recently \cite{chandpottheim} and \cite{chandpottheim2}, to the three dimensional case. Indeed, in three dimensions, a major difficulty is to establish the boundededness of the operators $S$ and $K$ on $L^2(\Gamma)$. However in \cite{chandpottheim}, by employing Fourier techniques, Chandler-Wilde, Heinemeyer and Potthast were able to establish this, in the case that $\Gamma$ is Lyapunov. 
 
The second major problem concerns the invertibility of the operator $A:=I + K -i\eta S$. Due to the fact that the boundary $\Gamma$, the rough surface of the domain, is infinite, $K$ and $S$ fail to be compact operators so that the classical method of inversion of $A$ can no longer be used. To establish the invertibility in the 2D case
generalisations of part of the Riesz theory of compact operators
have been developed \cite{Ros96,chandjmaa00,chandunfinished} which
require only local compactness rather than compactness and enable
existence of solution in $BC(\Gamma)$ to be deduced from uniqueness
of solution. In fact, injectivity of the second kind BIE in
$BC(\Gamma)$ implies well-posedness in $BC(\Gamma)$ and in the space
$L^p(\Gamma)$, $1\le p\le \infty$ \cite{arenshcwI}. But this theory does
not seem relevant for 3D rough surface scattering problems given
that the corresponding boundary integral operators are not
well-defined as operators on $BC(\Gamma)$. In the absence of these tools existence of solution to the BIE was shown in \cite{chandpottheim} and \cite{chandpottheim2}
by first proving the invertibility of the operator $A$ in the case when the underlying surface is flat; and then extending this result to the general case by perturbation arguments, with the help of an a priori bound in a manner inspired by the somewhat similar arguments used to prove invertibility for second kind boundary integral equations for potential problems in Lipschitz domains (Verchota \cite{Verchota}; Jerison and Kenig \cite{Jerison}-- see below).

In the case when the underlying surface is also Lipschitz, the case we consider here, yet more problems arise. Indeed, even in the case of potential theoretic problems on a bounded domain, complications arise in the Lipschitz case: the double-layer operator is not even well-defined and must be replaced by it's principal-value generalisation; the question of its boundedness was resolved -- by Coifman, Mcintosh and  Meyer \cite{Meyer} -- only by the use of such deep techniques as the method of rotations; and the invertibilty of the boundary integral operator is problematic, since even though the boundary is compact, the fact that it is Lipschitz means that the double-layer operator fails to be compact. Nevertheless Verchota was able to show that the boundary integral operator still remained invertible \cite{Verchota} by making use of the Jerison and Kenig identities \cite{Jerison}. However it should be pointed out that, even when all of these difficulties had been overcome, the solution constructed via the BIE technique satisfied the Boundary value problem -- the Dirichlet problem for Laplace's equation on a bounded domain -- in  a weaker sense than might have been hoped for. Similarly, our problem here -- a Dirichlet problem for the Helmholtz equation in a perturbed half plane -- must be posed in a slightly weaker sense than the one posed in \cite{chandpottheim} when the underlying surface was Lyapunov, in order to be able to apply the BIE technique. See the next section where we precisely state our boundary value problem. 

Our intention in this chapter is simple: take up the advances made by Chandler-Wilde, Heinemeyer and Potthast in \cite{chandpottheim} and \cite{chandpottheim2}, and try and extend their results to the case when the rough surface $\Gamma$ is Lipschitz; and, in order to do this, make use of the results obtained by Verchota et al as summarized in \cite{Wavelets}.  

We should briefly pass some remarks on some other papers in this area:
Recently as part of \cite{chandlangdon}, Chandler-Wilde and Langdon were able to make use of the results of Verchota \cite{Verchota} to apply the Brakhage-Werner approach to the problem of scattering by a bounded Lipschitz obstacle.

 Willers \cite{willers87}, and Kress and Tran \cite{Kre00a}, used BIE methods for three dimensional rough surface scattering but only in the case that the rough surface is flat outside a compact set, allowing the authors to reduce their problem to a boundary integral equation on a finite domain; and Nedelec and Starling \cite{nedstar91} and Dobson and Friedman \cite{dobfried92} also considered the same problem but with assumptions of periodicity on the surface, so that they too obtained an integral equation on a bounded domain.      
 
We once more point out that the results we obtain will not be applicable to the case of plane wave scattering (we assume boundary data in $L^2(\Gamma)$.) For a partial theoretical justification for BIE methods for three dimensional rough surface scattering with plane wave incidence, namely a justification, with some provisos, of Green's representation formula, see DeSanto and Martin \cite{desanto98}.

\section{The rough surface scattering problem}    
We begin this section by recalling the notation that we have used throughout. Note that we are only concerned with a 3D setting in this chapter. Thus for
$x=(x_1,x_2,x_3)\in\real^3$ let
$\tilde x=(x_1,x_2)$ so that $x=(\tilde x,x_3)$.  For $H\in \real$
let $U_H:=\left\{x\;:\; x_3>H\right\}$ and
$\Gamma_H:=\left\{x\;:\;x_3=H\right\}$. 

Throughout this chapter we assume that the rough surface is the graph of a bounded and positive Lipschitz function: Let $f: \mathbb{R}^2 \to \mathbb{R}$ be Lipschitz with Lipschitz constant $L>0$, i.e. 
\[
|f(\tilde x) -f(\tilde y)| \leq L|\tilde x- \tilde y| \quad \tilde x,\tilde y \in \mathbb{R}^2.
\]
We then define 
\begin{equation} \label{Gamma_def_chap5}
\Gamma:=\{(\tilde x,f(\tilde x)):\tilde x \in \mathbb{R}^2\},
\end{equation}
$$
D=:\{(\tilde x,x_3):\tilde x \in \mathbb{R}^2,\mbox{  } x_3 >f(\tilde x)\}.
$$ The assumption that $f$ is bounded and positive means that for some constants $0<f_-<f_+$ it holds
that
\begin{equation}
U_{f_+}\subset D \subset U_{f_-}.\label{Weq1}
\end{equation}
Further we set $J_f(\tilde x) = \sqrt{1+|\nabla_{\tilde x}f(\tilde x)|^2}$, $\tilde x \in \mathbb{R}^2$ and we define $L'=\sqrt{1+L^2}$ so that $J_f \leq L'$.

As usual we have
$S_H:=D\setminus\overline{U}_H$, for any $H\ge f_+$, and we denote
the unit outward normal to $D$ by $\nu$.

\bigskip 

In chapter 1 we made mention of the fact that we are interested in the scattering of incident waves from a source of compact support. We wish therefore to develop an analysis
that is applicable whenever the incident wave is due to sources of
the acoustic field located in some compact set $M\subset D$. Since
waves with sources in a bounded set $M \subset D$ can be
represented as superpositions of point sources located in the same
set, we will concentrate on the case when the incident field is that
due to a point source located at some point $z\in D$.

Thus we seek to find $u \in C^2(D)$ satisfying the Helmholtz equation
\[
\Delta u +k^2u= \delta_z, \quad \mbox{ in } D,
\]
satisfying the Dirichlet boundary
condition
\begin{equation}\label{BVP2}
  u(x) \; = \; 0, \quad x \in \Gamma,
\end{equation}
and satisfying an appropriate radiation condition. More precisely, writing the \emph{total field} $u$ as 
\begin{equation}
  u \; := \; u^i + u^s,
\end{equation}
where $u^s$ is the \emph{scattered
field} and $u^i$ the \emph{incident acoustic wave} due to the point source so that $u^i=\Phi(\cdot,z)$, 
we see that we seek to find $u^s \in C^2(D)$ such that
\[
\Delta u^s +k^2 u^s =0,
\]
and such that
\begin{equation}
  u^s(x) \; = -u^i(x), \quad x \in \Gamma.
\end{equation}


We will convert this scattering problem to a boundary value problem.
To do this we will seek the scattered field as the sum of a mirrored
point-source $\Phi'(\cdot,z):=-\Phi(\cdot,z')$, where $z'$ is the
reflection of $z$ in the flat plane $\Gamma_0$,  plus some unknown
remainder $v$, i.e.\ $u^s = v + \Phi'(\cdot,z)$. Note that
$\Phi'(\cdot,z)$ is a solution to the scattering problem in the
special case that $\Gamma=\Gamma_0$. Using the boundary condition
$u^s + \Phi(\cdot,z) = 0$ on $\Gamma=\partial D$, we obtain the
boundary condition on $v$ that
\begin{equation}\label{eq:gdef}
  v(x) = -\{\Phi(x,z) - \Phi(x,z')\} = -G(x,z) =: g(x), \quad x \in \Gamma.
\end{equation}
Clearly $g\in BC(\Gamma)$; moreover it follows from the bound (\ref{Gbound}) below that we establish on $G(x,y)$ in the next section that
$g\in L^2(\Gamma)$ as well. 
Thus $u^s$ satisfies the above scattering problem if and only if $v$
satisfies the following Dirichlet problem with $g$ given by
(\ref{eq:gdef}):

  Given $g \in L^2(\Gamma)\cap BC(\Gamma)$, find $v \in C^2(D)$ which satisfies the Helmholtz equation
 \[
  \Delta v +k^2v = 0  \mbox{ in } D,
  \]
and the Dirichlet boundary condition $
    v = g$ on $\Gamma$.

We intend to take this problem and state it in a more precise fashion. 
As stated earlier we will look for a solution to this boundary value
problem as the \emph{combined single- and double-layer potential}
\begin{equation}\label{udef}
  v(x) \; := \; u_2(x) - i\eta \; u_1(x),
  \quad x \in D,
\end{equation}
with some parameter $\eta \geq 0$, where for a given function
$\phi \in L^2(\Gamma)$ we define the \emph{single-layer potential}
\begin{equation}\label{SLP}
  u_1(x) \; := \; \int_{\Gamma} G(x,y) \phi(y) \; ds(y),
  \quad x \in \real^3,
\end{equation}
and the \emph{double-layer potential}
\begin{equation}\label{DLP}
  u_2(x) \; := \; \int_{\Gamma} \frac{\partial G(x,y)}
  {\partial \nu(y)} \phi(y) \; ds(y),
  \quad x \in \real^3.
\end{equation}
Using the ansatz (\ref{udef}) we will covert the boundary value problem to an equivalent BIE which will involve the operator $S$ as defined by (\ref{SDelta}), and also the principal-value generalisation of the operator $K$ defined by (\ref{KDelta}), which is 
\begin{eqnarray}\label{KDelta_2}
  (K\varphi)(x) \; &:=& \;
2 PV  \int_{\Gamma} \frac{\partial G(x,y)}
  {\partial \nu(y)} \phi(y) \; ds(y) \\& = & 2 \nonumber \lim_{\epsilon \to 0} \int_{\mathbb{R}^2 \backslash B_{\epsilon}(\tilde x)} \frac{\partial G(x,y)}
  {\partial \nu(y)} \phi(y) \; J_f(\tilde y)d\tilde y,
  \quad x \in \Gamma.
\end{eqnarray}
From now on when we refer to the operator $K$ we mean this principal-value version; of course in the case when $\Gamma$ is merely Lyapunov (\ref{KDelta_2}) is the same as (\ref{KDelta}).

Using this approach we will however, in the Lipschitz case, run into problems. The first problem arises because the operator $K$ is not a bounded operator on $BC(\Gamma)$ when the underlying surface is Lipschitz; indeed one can show that even if $\phi \in L^{\infty}(\Gamma)$, $K\phi$ may fail to be so. Thus there is little point in assuming that $g\in BC(\Gamma)$; rather we will only assume that $g\in L^2(\Gamma)$. It then follows that we cannot expect $v$ given by (\ref{udef}) to be continuous up to the boundary. Moreover the limiting values of
$v$ up to the boundary can only be computed in a non-tangential sense: In this chapter we define $\Theta(x)\subset D$ for $x \in\Gamma$ to be the cone of `non-tangential' approach to the point $x=(\tilde x,f(\tilde x))$; precisely, we fix $L^*>L$, and then define
\[
\Theta(x):= \{ y \in D \mbox{ such that } y_n-f(\tilde x) \geq L^*|\tilde y-\tilde x|\}.  
\]
The geometrical significance of these `non-tangential approach cones' is that there exists a constant $\alpha>0$ such that for all $x \in \Gamma$ and for all $y \in \Gamma$ and all $z \in \Theta(x)$ it holds that 
\begin{equation} \label{Geom_sig}
|z-x| \leq \alpha |z-y|.
\end{equation}

Writing $\mbox{n.t.}\lim_{x_n \to x}v(x_n)$ to indicate the limit of $v(x_n)$ as $x_n \in \Theta(x)$ approaches  $x\in \Gamma$, we'll
be able to show that
\begin{equation} \label{weak_boundary_cond}
\mbox{n.t.}\lim_{x_n \to x}v(x_n) = g(x) \mbox{ for almost all } x \in \Gamma.
\end{equation}
Thus in order to apply the BIE technique to the scattering problem in the case that the rough surface is Lipschitz, it's necessary to weaken the problem: we can't expect the solution to be continuous up to the boundary and we will have to impose the boundary condition in the weak sense of (\ref{weak_boundary_cond}). 

Thus we will pose our scattering problem with the boundary condition (\ref{weak_boundary_cond}) and also we will impose a radiation condition on our solution -- our usual radiation condition, see the boundary value problem below. However it will be necessary to impose a further boundedness condition on our solution, without which the problem will have a non-unique solution; we require that it satisfy the following: for $x=(\tilde x,f(\tilde x))\in\Gamma$ and $T\geq f_+$ define 
$$
v_T'(x)= \sup_{T \geq t> f(\tilde x)}|v(\tilde x,t)|.
$$
We'll then impose that $v'_T \in L^2(\Gamma)$ for all $T \geq f_+$. 
Thus the following is the exact boundary value problem we wish to consider: 

\bigskip

{\sc The Bondary Value Problem.} {\em Given $g \in L^2(\Gamma)$, find $v \in C^2(D)$ such that 
\[
\Delta v + k^2 v = 0, \quad  \mbox{ in } D,
\]
$v_T' \in L^2(\Gamma)$, for all $T\geq f_+$, the radiation condition (\ref{uprcstar}) holds with $F_H=v|_{\Gamma_H}$ (with $k_+$ replaced by $k$) for all $H\geq f_+$ and such that
\[
\mbox{n.t.}\lim_{y \to x} v(y) = g(x), \mbox{ for almost all } x \in \Gamma.
\]}  
\newline \begin{remark}
Note that $v|_{\Gamma_H} \in L^2(\Gamma_H)$ for all $H\geq f_+$ by the restriction on $v'_T$ for $T\geq f_+$. This means that the radiation condition (\ref{uprcstar}) makes sense.
\end{remark}
\begin{remark}
In his study of the similar problem, the Potential problem on a bounded Lipschitz domain, in order to obtain a unique solution to his problem, Verchota insisted that his solution $v$ be such that $v^* \in L^2(\Gamma)$, where  
$$v^*(x):= \sup_{y \in \Theta(x)}|v(y)|, \mbox{  for almost all }  x\in\Gamma.
$$ 
In our work we opt to impose the weaker condition on $v'$ -- note $\Vert v_T'\Vert_{L^2(\Gamma)}\leq \Vert v^*\Vert_{L^2(\Gamma)}$ for all $T \geq f_+$ -- as this will be sufficient to prove uniqueness and is easier to show than a condition on $v^*$; indeed it is not clear that $v^* \in L^2(\Gamma)$ in our case.
\end{remark}


%
%

We conclude this section by summarizing the results of Chandler-Wilde, Heinemeyer and Potthast \cite{chandpottheim} and \cite{chandpottheim2}; and also those of Verchota et al \cite{Wavelets}, that we will make use of throughout this chapter.

We start by writing down the boundary value problem of \cite{chandpottheim}:
\newline Given $g \in X:=BC(\Gamma)\cap L^2(\Gamma)$ and $g_{\epsilon} \in X$, for $\epsilon>0$, with $\Vert g_{\epsilon} \to g\Vert_{L^2(\Gamma)} \to 0$ as $\epsilon \to 0$, find $v \in C^2(D) \cap C(\overline{D})$ which satisfies the Helmholtz equation in $D$, the Dirichlet boundary condition $v=g$ on $\Gamma$, the bound
\begin{equation}\label{bound_on_v}
|v(x)| \leq C,  \quad x\in D,
\end{equation}
for $C>0$ and the following limiting absorption principle: that for all sufficiently small $\epsilon>0$, there exists $v_{\epsilon} \in C^2(D)\cap C(\overline{D})$ satisfying $v_{\epsilon} = g_{\epsilon}$ on $\Gamma$, the Helmholtz equation in $D$ with $k$ replaced by $k+i\epsilon$ and the bound (\ref{bound_on_v}), such that for all $x \in D$, $v_{\epsilon}(x) \to v(x)$ as $\epsilon \to 0$.

We then have theorem 2.3 from \cite{chandpottheim}
\begin{theorem}\label{chandpottheim_result1}
If $\Gamma$ is given by (\ref{Gamma_def_chap5}) with $f$ Lyapunov, then the boundary value problem above has at most one solution.
\end{theorem}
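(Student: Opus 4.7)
The plan is to reduce uniqueness for the limiting-absorption solution to the easier uniqueness statement for the dissipative Dirichlet problem, and then pass to the limit. Suppose $v^{(1)}, v^{(2)}$ are two solutions of the BVP sharing the same data $g \in X$ and the same approximating sequence $\{g_{\epsilon}\}_{\epsilon>0}$ (which is part of the data as formulated). By the limiting absorption principle, for each sufficiently small $\epsilon > 0$ there exist $v^{(j)}_{\epsilon} \in C^2(D) \cap C(\overline{D})$, $j=1,2$, each satisfying $v^{(j)}_{\epsilon} = g_{\epsilon}$ on $\Gamma$, the Helmholtz equation $\Delta v^{(j)}_{\epsilon} + (k+i\epsilon)^2 v^{(j)}_{\epsilon} = 0$ in $D$, and the uniform bound (\ref{bound_on_v}), with $v^{(j)}_{\epsilon}(x) \to v^{(j)}(x)$ pointwise in $D$ as $\epsilon \to 0^+$. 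The strategy is to show $v^{(1)}_{\epsilon} \equiv v^{(2)}_{\epsilon}$ for every admissible $\epsilon > 0$ and then pass to the pointwise limit.

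Setting $w_{\epsilon} := v^{(1)}_{\epsilon} - v^{(2)}_{\epsilon}$, we obtain a bounded classical solution of the Helmholtz equation with wavenumber $k+i\epsilon$ in $D$, vanishing on $\Gamma$. Since $\Im((k+i\epsilon)^2) = 2k\epsilon > 0$, this falls squarely within the framework of Chandler-Wilde and Ross \cite{chandip95}, who prove uniqueness for bounded solutions of the Dirichlet problem for the Helmholtz equation in an arbitrary domain whenever the wavenumber has strictly positive imaginary part. Applying that theorem yields $w_{\epsilon} \equiv 0$, so that $v^{(1)}_{\epsilon} \equiv v^{(2)}_{\epsilon}$; passing to the pointwise limit $\epsilon \to 0^+$ then gives $v^{(1)} \equiv v^{(2)}$ in $D$. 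The role of the hypothesis that $\Gamma$ is Lyapunov is simply to guarantee that $D$ possesses the mild smoothness needed to match the hypotheses of \cite{chandip95}.

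The substantive content of the argument is outsourced to the dissipative uniqueness theorem, whose own proof combines Green's second identity on a bounded region $D \cap B_R$ with the exponential decay of bounded solutions of the dissipative Helmholtz equation, so that the spherical cap contribution vanishes as $R \to \infty$ and one is left with the identity $2k\epsilon \int_D |w_{\epsilon}|^2 \, dx = 0$, forcing $w_\epsilon \equiv 0$. Given this input, the remaining step of our proof is a book-keeping exercise in unpacking the definition of the limiting absorption principle, and I do not foresee a serious technical obstacle. The only subtle point requiring care is that both candidate solutions $v^{(1)}, v^{(2)}$ must generate approximating sequences attached to the \emph{same} $\{g_\epsilon\}$, which is automatic from the formulation of the BVP with $(g, \{g_{\epsilon}\})$ jointly as data.
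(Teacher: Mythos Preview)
The paper does not supply its own proof of this statement: it is quoted verbatim as ``theorem 2.3 from \cite{chandpottheim}'' and used as a black box. So there is no in-paper argument to compare against.

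Your proposed argument is correct and is essentially the intended one. The formulation of the BVP deliberately builds the limiting absorption principle into the \emph{definition} of a solution, precisely so that uniqueness reduces to the dissipative case: given two solutions $v^{(1)},v^{(2)}$ attached to the same data $(g,\{g_\epsilon\})$, the differences $w_\epsilon=v^{(1)}_\epsilon-v^{(2)}_\epsilon$ are bounded classical solutions of $(\Delta+(k+i\epsilon)^2)w_\epsilon=0$ in $D$ with $w_\epsilon=0$ on $\Gamma$, and the uniqueness theorem of \cite{chandip95} forces $w_\epsilon\equiv 0$; the pointwise limit then gives $v^{(1)}=v^{(2)}$. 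One small correction: the result of \cite{chandip95} is stated in this thesis (see the literature review in Chapter~2) for an \emph{arbitrary} domain, so the Lyapunov hypothesis is not actually consumed by the uniqueness step itself. Its role is upstream, in making the BVP formulation (solutions in $C^2(D)\cap C(\overline D)$ with continuous boundary values) and the associated integral operators well behaved in \cite{chandpottheim}; your remark about ``mild smoothness needed to match the hypotheses of \cite{chandip95}'' slightly misattributes where the regularity is used.
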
   

Let $A: L^2(\Gamma)\cap BC(\Gamma) \to L^2(\Gamma)\cap BC(\Gamma)$ be given by $A= I +K -i\eta S$. Note that it is shown in \cite{chandpottheim} that $K$ and $S$ defined by (\ref{KDelta}) and (\ref{SDelta}) are well-defined and bounded operators on $L^2(\Gamma)\cap BC(\Gamma)$ in the case when $\Gamma$ is Lyapunov. We introduce the operator $A'$, the adjoint of $A$, with respect to the bilinear form $(\cdot,\cdot)$ on $L^2(\Gamma)\times L^2(\Gamma)$ defined by
\[
(\phi,\psi) = \int_{\Gamma}\phi(y)\psi(y)ds(y), \quad \phi,\psi \in L^2(\Gamma).
\]
We then have the following theorem concerning the invertibility of $A$ and $A'$: 
 
\begin{theorem}\label{chandpottheim_result2} Suppose $\Gamma$ is given by (\ref{Gamma_def_chap5}) with $f$ Lyapunov. Then $A$ and $A'$ are invertible on $L^2(\Gamma)\cap BC(\Gamma)$ with 
\[
\Vert A^{-1}\Vert_{L^2(\Gamma) \to L^2(\Gamma)}=  \Vert A'^{-1}\Vert_{L^2(\Gamma) \to L^2(\Gamma)}\leq B,
\]
where
\begin{equation} \label{B_label}
B= \frac{1}{2}\left(1+\left(\frac{3k^2 L'}{\eta}[5L' +6L^2] + 6( L' +3L^2)^2\right)^{\frac{1}{2}}\right).
\end{equation}
\end{theorem}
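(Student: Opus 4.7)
The plan is to follow the three-stage strategy outlined in the introduction: (i) establish invertibility directly in the case of a flat boundary $\Gamma = \Gamma_0$; (ii) derive a uniform a priori estimate of the form $\|\phi\|_{L^2(\Gamma)} \leq B\|A\phi\|_{L^2(\Gamma)}$ for all Lyapunov $\Gamma$ (and the analogous estimate for $A'$); and (iii) use a continuation argument in the surface $\Gamma$ to pass from the flat case to the general Lyapunov case. Invertibility then yields $\|A^{-1}\|_{L^2(\Gamma)\to L^2(\Gamma)}\leq B$ with $B$ as in (\ref{B_label}).

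For stage (i), the crucial observation is that when $\Gamma = \Gamma_0$, the half-space Green's function $G(x,y) = \Phi(x,y)-\Phi(x,y')$ vanishes identically for $x,y\in\Gamma_0$, so $S \equiv 0$ and $A = I + K$. By translation invariance, $K$ becomes a convolution operator on $\mathbb{R}^2$ whose Fourier symbol can be computed explicitly; showing that $I + K$ is invertible reduces to checking that this symbol stays away from zero. Analogous considerations yield invertibility for $A'$ in this flat case.

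Stage (ii) is the technical core and the main obstacle. Following the Verchota/Jerison-Kenig philosophy one applies a Rellich-type identity to the potential $v$ defined by the ansatz (\ref{udef}), both in $D$ and in the complementary domain, with the vector field $e_3$ (or a suitable modification). Using the jump relations for single- and double-layer potentials across $\Gamma$ and exploiting $\nu\cdot e_3\leq -1/L'$ and $|e_3\cdot\nabla_\Gamma w|\leq (L/L')|\nabla_\Gamma w|$ (the geometric estimates (\ref{nu_Lprime}), (\ref{grad_gamma}) already used in Chapter \ref{imp}), one converts the tangential-derivative terms into a quantity controlled by $\|A\phi\|_{L^2(\Gamma)}$. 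The bound $\|\phi\|_{L^2(\Gamma)}\leq B\|A\phi\|_{L^2(\Gamma)}$ emerges after absorbing the non-compact pieces by the coercivity provided by the imaginary coupling $-i\eta S$; the explicit dependence on $k^2 L'/\eta$ and on $L'$, $L^2$ in (\ref{B_label}) traces precisely to these geometric factors and the two boundary-jump terms. The same identity, with the roles of $S$ and $K$ interchanged, supplies the estimate for $A'$.

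For stage (iii), consider a continuous homotopy $\Gamma_t$, $t\in[0,1]$, of Lyapunov graphs with $\Gamma_0$ flat and $\Gamma_1 = \Gamma$, chosen so that the Lipschitz constant is controlled uniformly in $t$. The operators $A_t$ depend norm-continuously on $t$, so the set $\{t: A_t \text{ invertible}\}$ is open. The uniform a priori bound from stage (ii) shows it is also closed: along any sequence $t_n \to t_*$, $\|A_{t_n}^{-1}\|$ is uniformly bounded by $B$, so $A_{t_n}^{-1}$ has a strong limit that inverts $A_{t_*}$. Since this set contains $t=0$ by stage (i), it equals $[0,1]$. The same argument applies to $A_t'$. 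The hard part is stage (ii): pinning down the Rellich identity in the complex-valued Helmholtz setting on an unbounded Lipschitz boundary, and tracking constants carefully enough to obtain the explicit bound (\ref{B_label}).
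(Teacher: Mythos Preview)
This theorem is not proved in the thesis; it is quoted without proof from \cite{chandpottheim} and \cite{chandpottheim2} as part of the summary of known results at the end of Section~2 of Chapter~\ref{BIE}. There is therefore no ``paper's own proof'' to compare your proposal against. That said, your three-stage outline (flat case, uniform a~priori bound via Rellich-type identities, homotopy) is precisely the strategy that the thesis attributes to those references in its literature review, so in spirit you have correctly reconstructed the intended argument.

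One concrete error in your stage~(i): the admissible surfaces satisfy $0<f_-\le f$, so a flat surface lies at some height $h>0$, not on $\Gamma_0=\{x_3=0\}$. For $x,y\in\Gamma_h$ one has $y'=(\tilde y,-h)\neq y$, hence $G(x,y)=\Phi(x,y)-\Phi(x,y')$ does \emph{not} vanish and $S\not\equiv 0$. What is true is that on any flat $\Gamma_h$ both $S$ and $K$ are convolution operators in $\tilde x-\tilde y$, so $A$ becomes a Fourier multiplier and invertibility reduces to checking that its symbol is bounded away from zero. Your conclusion for stage~(i) survives, but the mechanism you invoked does not.
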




In order to state the results of Verchota et al let us introduce the double-layer operator for the Laplacian, $T$, defined for $\phi \in L^2(\mathbb{R}^2)$ and where $x=(\tilde x,f(\tilde x)), y=(\tilde y,f(\tilde y))$ by 
\begin{eqnarray} \label{Ver_int}
(T\phi)(x)&=& 
\lim_{\epsilon \to 0}\int_{\mathbb{R}^2\backslash B_{\epsilon}(x)}\frac{(x-y)\cdot \nu(y)}{|x-y|^3}\phi(y)d\tilde y.
\end{eqnarray}

Then, see \cite{Wavelets} page 262, chapter 15, Theorem 4 and Lemma 2, we have:
\begin{theorem}\label{Wavelets_them}
Suppose $\Gamma$ is given by (\ref{Gamma_def_chap5}) with $f$ Lipschitz. For $\phi \in L^2(\mathbb{R}^2)$, $T \phi(x)$ given by (\ref{Ver_int}) exists almost everywhere and $T$ is 
a bounded operator on $L^2(\mathbb{R}^2)$. Moreover in the case that $\phi \in C^{\infty}_0(\mathbb{R}^2)$ then at every point $x \in \Gamma$ at which $f$ is differentiable we have
\[
(T\phi)(x)= -\int_{\mathbb{R}^2}\frac{(\tilde x- \tilde y).\nabla_{\tilde y} \phi (\tilde y)}{|\tilde x-\tilde y|^2}\lambda \left(\frac{f(\tilde x)-f(\tilde y)}{|\tilde x-\tilde y|}\right)d\tilde y
\]
where 
\begin{equation}\label{lambda_def}
\lambda(t) := -\int_0^t\frac{1}{(1+s^2)^{\frac{3}{2}}}ds. 
\end{equation}
\end{theorem}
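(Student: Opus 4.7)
The plan is to parametrize the graph by $\tilde y \mapsto (\tilde y, f(\tilde y))$ and reduce everything to an operator on $\mathbb{R}^2$. Using the outward-from-$D$ unit normal $\nu(y) = (\nabla f(\tilde y), -1)/J_f(\tilde y)$, the Jacobian cancels and one gets
\[
\frac{(x-y)\cdot \nu(y)}{|x-y|^3}\,J_f(\tilde y) = \frac{(\tilde x - \tilde y)\cdot \nabla f(\tilde y) - (f(\tilde x)-f(\tilde y))}{|x-y|^3} =: K_f(\tilde x, \tilde y),
\]
with $|x-y|^2 = |\tilde x - \tilde y|^2 + (f(\tilde x)-f(\tilde y))^2$. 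The entire question is therefore about the principal-value operator on $\mathbb{R}^2$ with kernel $K_f$.

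For boundedness and almost-everywhere existence of the principal value, I would use the fundamental theorem of calculus,
\[
f(\tilde x) - f(\tilde y) = \int_0^1 \nabla f(\tilde y + s(\tilde x -\tilde y))\cdot(\tilde x - \tilde y)\,ds,
\]
so that the numerator of $K_f$ becomes $(\tilde x - \tilde y)\cdot \int_0^1 [\nabla f(\tilde y) - \nabla f(\tilde y + s(\tilde x - \tilde y))]\,ds$, exhibiting $K_f$ as a first-order Calder\'on commutator kernel associated to $\nabla f \in L^\infty$. Uniform $L^2$ bounds on the truncated operators, with operator norm controlled polynomially in $L$, are then exactly the content of the Coifman-McIntosh-Meyer theorem (applied after a method-of-rotations reduction to the one-dimensional Cauchy integral on a Lipschitz graph). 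Almost-everywhere existence of the principal value for arbitrary $\phi \in L^2(\mathbb{R}^2)$ follows from these uniform bounds together with Cotlar's inequality and standard Calder\'on-Zygmund maximal-function arguments.

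For the alternative representation, the strategy is to guess a primitive and integrate by parts. Define
\[
\mathbf{F}(\tilde x, \tilde y) := \frac{\tilde x - \tilde y}{|\tilde x - \tilde y|^2}\,\lambda\!\left(\frac{f(\tilde x)-f(\tilde y)}{|\tilde x - \tilde y|}\right).
\]
Because $(\tilde x - \tilde y)/|\tilde x - \tilde y|^2$ is divergence-free in $\tilde y$ in two dimensions, only the derivative of $\lambda$ contributes, and using $\lambda'(s) = -(1+s^2)^{-3/2}$ a short calculation yields
\[
\nabla_{\tilde y}\cdot \mathbf{F}(\tilde x, \tilde y) = K_f(\tilde x, \tilde y) \quad \text{for } \tilde y \neq \tilde x.
\]
Given $\phi \in C_0^\infty(\mathbb{R}^2)$, I would apply the divergence theorem on $\mathbb{R}^2\setminus B_\epsilon(\tilde x)$; the boundary integral over $\partial B_\epsilon$ tends to zero as $\epsilon \to 0$ because $\lambda$ is odd and $f$ is differentiable at $\tilde x$, so the limiting integrand on the unit circle is the odd function $\omega \mapsto \lambda(-\nabla f(\tilde x)\cdot \omega)\,\phi(\tilde x)$, which integrates to zero. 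Passing to the limit produces
\[
(T\phi)(x) = -\int_{\mathbb{R}^2}\mathbf{F}(\tilde x, \tilde y)\cdot \nabla_{\tilde y}\phi(\tilde y)\,d\tilde y,
\]
which is the stated identity.

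The main obstacle is the first part: the $L^2$-boundedness of the principal-value operator with kernel $K_f$ on a general Lipschitz graph is not a soft result but the Coifman-McIntosh-Meyer theorem, a cornerstone of modern harmonic analysis that one genuinely has to invoke (this is why the appeal to \cite{Wavelets} is essential). The second part is algebraic: once one has spotted the vector field $\mathbf{F}$ (the form of $\lambda$ is forced by the need to cancel the factor $|x-y|^{-3} = |\tilde x-\tilde y|^{-3}(1+t^2)^{-3/2}$), the verification is routine, with the only subtle point being the vanishing of the boundary term on small circles via the odd symmetry of $\lambda$.
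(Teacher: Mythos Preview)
The paper does not supply its own proof of this statement: it is quoted from \cite{Wavelets} (chapter~15, Theorem~4 and Lemma~2) and used as a black box. Your sketch is correct and is essentially the argument one finds in that reference---the $L^2$ boundedness is precisely the Coifman--McIntosh--Meyer theorem reached via the method of rotations, and the alternative formula is obtained by integration by parts using the primitive $\mathbf{F}$ you wrote down---so there is nothing to compare against within the thesis itself.
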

For $\phi \in L^2(\Gamma)$ and $z \in D$ let 
\[
(\mathcal{F}\phi)(z):= \int_{\Gamma}\frac{(z-y)\cdot\nu(y)}{|z-y|^3}\phi(y)ds(y).
\]
Then we will also use the following results (\cite{Wavelets} chapter 15, Theorem 1, pages 259,  264, 265) to establish the jump relations and the $v'_T$ boundedness condition.
\begin{theorem} \label{Wavelets_them_2} Suppose $\Gamma$ is given by (\ref{Gamma_def_chap5}) with $f$ Lipschitz. For each $\phi \in L^2(\Gamma)$ 
\[
\mathrm{n.t.}\lim_{z\to x}(\mathcal{F}\phi)(z)= \left(\frac{4\pi}{2}I + T\right)\phi(x)
\]
for almost all $x \in \Gamma$.
Further $F^*$ is a bounded operator on $L^2(\Gamma)$,
where, for $\phi \in L^2(\Gamma)$, $x \in \Gamma$, 
\[
(F^*\phi)(x) : = \sup_{y\in \Theta(x)}|(\mathcal{F}\phi)(y)|.
\]
\end{theorem}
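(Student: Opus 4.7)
The statement is a classical boundary-behaviour result for the double-layer potential of Laplace's equation on a Lipschitz graph, and the proof naturally splits into two pieces: the $L^2$ boundedness of the maximal operator $F^*$, and the computation of the non-tangential jump, the first being needed as a \emph{tool} to prove the second via density. My plan is to work with the parametrised kernel on $\mathbb{R}^2$: writing $x=(\tx,f(\tx))$, $y=(\ty,f(\ty))$ and $z=(\tx,f(\tx)+t)$ for $t>0$ (approach from above along the vertical), one computes
\[
\frac{(z-y)\cdot\nu(y)}{|z-y|^3}\,J_f(\ty)=\frac{f(\tx)+t-f(\ty)-\nabla f(\ty)\cdot(\tx-\ty)}{\bigl(|\tx-\ty|^2+(f(\tx)+t-f(\ty))^2\bigr)^{3/2}},
\]
together with analogous expressions for non-vertical $z\in\Theta(x)$. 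The numerator is the key Calder\'on-commutator-type expression, reflecting the Lipschitz cancellation of $f$.

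The hard part is the $L^2$ boundedness of $F^*$. My plan would be to deduce it from the $L^2$ boundedness of the truncated double-layer operators $T_\epsilon$ (uniformly in $\epsilon>0$) via a Cotlar-type inequality: for $\phi\in L^2(\Gamma)$ and $x\in\Gamma$ one controls $|(\mathcal F\phi)(z)|$, for $z\in\Theta(x)$ with $|z-x|\approx\epsilon$, by $M(T_\epsilon\phi)(x)+CM(\phi)(x)$, where $M$ is the Hardy-Littlewood maximal operator on $\Gamma$. The first term uses the geometric condition \eqref{Geom_sig} together with smoothness of the kernel away from the diagonal; the second term handles the tail. Given this Cotlar inequality, $L^2$ boundedness of $F^*$ reduces to uniform $L^2$ boundedness of $T_\epsilon$, which is the celebrated Coifman--McIntosh--Meyer theorem (in its higher-dimensional form for Lipschitz graphs), proved via the $T(b)$ theorem with $b=1+i\nabla f\cdot\mathbf{v}$ for suitable test vector fields, or alternatively by the method of rotations together with $L^2$ boundedness of the Cauchy integral on Lipschitz curves; I would cite this theorem rather than reprove it.

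Given the $L^2$ bound on $F^*$, the non-tangential limit follows by a standard density argument. First, for $\phi\in C_0^\infty(\mathbb{R}^2)$, I split $(\mathcal F\phi)(z)=I_1(z)+I_2(z)$, where $I_1$ integrates over $\Gamma\cap B_\delta(x)$ and $I_2$ over its complement. As $z\to x$ non-tangentially, $I_2(z)\to\int_{\Gamma\setminus B_\delta(x)}\frac{(x-y)\cdot\nu(y)}{|x-y|^3}\phi(y)\,ds(y)$ by dominated convergence. For $I_1$, I write $\phi(y)=\phi(x)+(\phi(y)-\phi(x))$; the remainder piece vanishes as $\delta\to0$ because the kernel is integrable against a Lipschitz factor (using the Lipschitz cancellation in the numerator), while the constant piece $\phi(x)\int_{\Gamma\cap B_\delta(x)}\frac{(z-y)\cdot\nu(y)}{|z-y|^3}ds(y)$ tends to $2\pi\,\phi(x)$ by the solid-angle computation (the full solid angle being $4\pi$ in $\mathbb R^3$, so a half from approaching from $D$). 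Combining and letting $\delta\to0$ yields the desired formula $2\pi\phi(x)+T\phi(x)$ for a.e.\ $x\in\Gamma$ when $\phi$ is smooth. Finally, pick $\phi_n\in C_0^\infty$ with $\phi_n\to\phi$ in $L^2(\Gamma)$; the $L^2(\Gamma)$-bound on $F^*$ combined with the $L^2$-bound on $T$ from Theorem \ref{Wavelets_them} implies, by a standard Banach-space argument (controlling $\limsup_{z\to x}|(\mathcal F(\phi-\phi_n))(z)|$ by $F^*(\phi-\phi_n)(x)$), that the set of $x$ for which the non-tangential limit equals $(2\pi I+T)\phi(x)$ has full measure in $\Gamma$.
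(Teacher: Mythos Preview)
The paper does not prove this theorem at all: it is quoted directly from Meyer--Coifman \cite{Wavelets}, Chapter~15 (Theorem~1 and pages 259, 264, 265), and used as a black box. Your sketch is essentially the standard argument one finds in that reference and in the surrounding literature (Coifman--McIntosh--Meyer for the uniform $L^2$ bound on the truncated operators, a Cotlar-type pointwise inequality to pass to the non-tangential maximal operator, and then a density argument for the jump relation using the maximal bound together with the solid-angle computation for smooth densities). So there is nothing to compare against in the paper itself; your outline matches the classical route and is correct in spirit, with the heavy lifting deferred to the cited CMM theorem exactly as the paper does by citing \cite{Wavelets}.
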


We now precisely state the main results of this chapter all of which hold of course in the case when $\Gamma$ is given by (\ref{Gamma_def_chap5}) with $f$ Lipschitz:

\begin{theorem}\label{btheo}
The operators $S$ and $K$ are bounded on $L^2(\Gamma)$.
\end{theorem}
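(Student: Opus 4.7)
The plan is to split the kernels of $S$ and $K$ into a principal singular part, which is handled by invoking the deep Coifman--McIntosh--Meyer result in the form of Theorem \ref{Wavelets_them}, and a mild remainder whose $L^2$--boundedness follows from Schur-type estimates exploiting the extra decay coming from the reflection trick built into the half-space Green's function $G(x,y)=\Phi(x,y)-\Phi(x,y')$.

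First I would treat $S$, the easier of the two. The pointwise bound $|G(x,y)|\le C(1+x_3)(1+y_3)/|x-y|^2$ (equation (\ref{Gbound})), combined with the fact that $\Gamma$ is the graph of a bounded Lipschitz function so that for $x=(\tilde x,f(\tilde x))$ and $y=(\tilde y,f(\tilde y))$ we have $f_-\le x_3,y_3\le f_+$ and $|x-y|\ge|\tilde x-\tilde y|$, gives $|G(x,y)|\le C'(1+|\tilde x-\tilde y|)^{-2}$. Since $(1+|\tilde z|)^{-2}\in L^1(\mathbb{R}^2)$, Schur's test with the symmetric weight $w\equiv 1$ immediately yields $\|S\|_{L^2(\Gamma)\to L^2(\Gamma)}\le 2L'\int_{\mathbb{R}^2}C'(1+|\tilde z|)^{-2}d\tilde z <\infty$, where the $L'$ comes from bounding $J_f\le L'$ in the surface measure.

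The operator $K$ is the main obstacle, and here the Lipschitz (as opposed to Lyapunov) nature of $\Gamma$ really bites, since the Laplace double-layer is no longer weakly singular. My plan is to write
\[
\frac{\partial G(x,y)}{\partial \nu(y)} \;=\; \frac{\partial \Phi(x,y)}{\partial \nu(y)} - \frac{\partial \Phi(x,y')}{\partial \nu(y)},
\]
and handle the reflected term first: since $x\in\Gamma\subset U_{f_-}$ and $y'$ lies in the mirrored half-space below $\Gamma_0$, we have $|x-y'|\ge x_3+|y_3|\ge 2f_-$ and a straightforward bound of the form $|\partial \Phi(x,y')/\partial\nu(y)|\le C''(1+|\tilde x-\tilde y|)^{-2}$ follows from the formula for $\Phi$, so this piece is $L^2(\Gamma)$-bounded by Schur's test exactly as for $S$. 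For the main term $\partial\Phi(x,y)/\partial\nu(y)$ I will expand $\Phi$ as
\[
\Phi(x,y)=\frac{e^{ik|x-y|}}{4\pi|x-y|}=\frac{1}{4\pi|x-y|}+R(x,y),
\]
where $R(x,y)=(e^{ik|x-y|}-1)/(4\pi|x-y|)$ is bounded pointwise by $k/(4\pi)$. The derivative $\partial R/\partial\nu(y)$ involves factors of $(x-y)\cdot\nu(y)/|x-y|^2$ times bounded functions of $k|x-y|$; the crucial observation is that for a Lipschitz graph the standard estimate $|(x-y)\cdot\nu(y)|\le C|\tilde x-\tilde y|^{2}/(J_f(\tilde y)|\tilde x-\tilde y|)$ combined with the exponential factor produces a remainder kernel that is integrable both in $\tilde x$ and $\tilde y$ uniformly, and therefore Schur-bounded once we insert the extra decay of the $e^{ik|x-y|}-1$ factor at short range.

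What is left is exactly the principal value of the Laplace double-layer kernel $(x-y)\cdot\nu(y)/(4\pi|x-y|^3)$ acting on $\phi\,J_f$, pulled back to $\mathbb{R}^2$ via the graph parametrisation. This is precisely the operator $T$ of Theorem \ref{Wavelets_them}, which is bounded on $L^2(\mathbb{R}^2)$ by the Coifman--McIntosh--Meyer theorem. Since $J_f$ is bounded above and below by constants depending only on $L$, and the graph parametrisation is a bi-Lipschitz map between $(\mathbb{R}^2,d\tilde x)$ and $(\Gamma,ds)$, boundedness of $T$ on $L^2(\mathbb{R}^2)$ transfers to boundedness on $L^2(\Gamma)$. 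Collecting the three pieces proves the theorem; the hardest step is the algebraic identification of the leading singularity with the kernel covered by Theorem \ref{Wavelets_them}, after which everything else is routine Schur-type bookkeeping.
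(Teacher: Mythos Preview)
Your argument has a genuine gap in the far-field analysis that undermines both the $S$ and $K$ parts. The core error is the claim that $(1+|\tilde z|)^{-2}\in L^1(\mathbb{R}^2)$: in polar coordinates $\int_0^\infty r(1+r)^{-2}\,dr$ diverges logarithmically, so Schur's test with this majorant proves nothing. The bound (\ref{Gbound}) gives at best $|G(x,y)|\lesssim |\tilde x-\tilde y|^{-2}$ for $|\tilde x-\tilde y|$ large, which is exactly the borderline non-integrable decay in two horizontal variables. (Your stated inequality $|G|\le C'(1+|\tilde x-\tilde y|)^{-2}$ is also false near the diagonal, where $G\sim |x-y|^{-1}$, but that is a fixable local issue; the far-field failure is fatal.) The paper's proof gets around this not by absolute estimates but by exploiting the oscillation $e^{ik|\tilde x-\tilde y|}$: the global kernel is written via (\ref{Gdelta exp}) as $m_1(\tilde x)\ell(\tilde x-\tilde y)m_2(\tilde y)$ with $\ell(\tilde y)=e^{ik|\tilde y|}/(1+|\tilde y|^2)$ times a spherical harmonic, and Lemma~\ref{lem:bound} shows $\mathcal F\ell\in L^\infty$, so the global part is a bounded Fourier multiplier.

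For $K$ there is a second problem: splitting $\partial G/\partial\nu$ into $\partial\Phi(x,y)/\partial\nu$ and $\partial\Phi(x,y')/\partial\nu$ destroys the reflection cancellation that produced the $|x-y|^{-2}$ decay in the first place. Each piece separately has $|\nabla_y\Phi|\sim k/|x-y|$ at infinity, so your reflected term is not even $O(|\tilde x-\tilde y|^{-2})$, let alone Schur-bounded. Similarly, your remainder $\partial R/\partial\nu$ with $R=(e^{ik|x-y|}-1)/(4\pi|x-y|)$ behaves like $k/|x-y|$ at infinity. The paper keeps $G$ intact, uses the expansion (\ref{Gdeltanu exp}) for the global part (again reduced to the Fourier multiplier lemma), and for the local part isolates the Laplace double-layer $K_4$ handled by Theorem~\ref{Wavelets_them}; the leftover compactly-cut-off piece $K_5$ still requires a non-trivial partition-of-unity argument (see Lemma~\ref{lem:local}) rather than a direct Schur bound. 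Your identification of the principal singularity with $T$ is correct and is indeed the heart of the local analysis, but the ``routine Schur-type bookkeeping'' you describe for everything else does not go through.
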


\begin{theorem}\label{A_inverse}
The integral operator $A$ is invertible on $L^2(\Gamma)$ with 
\begin{equation}\label{operator_B}
\Vert A^{-1}\Vert_{L^2(\Gamma)\to L^2(\Gamma)} \leq B,
\end{equation}
with $B$ given by (\ref{B_label}). 
\end{theorem}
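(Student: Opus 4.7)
My plan is to deduce Theorem \ref{A_inverse} from its Lyapunov analogue, Theorem \ref{chandpottheim_result2}, by approximating the Lipschitz graph of $f$ with smooth Lyapunov graphs, in the spirit of the approximation arguments used in Chapter 3 (compare Lemma \ref{rough} and Lemma \ref{betaext}). The first step is to select, via Lemma \ref{smooth_Lip}, a sequence $f_m \in C^\infty(\mathbb{R}^2)$ with common Lipschitz constant $L$, with $\nabla f_m$ uniformly H\"older continuous (so that each $f_m$ is Lyapunov, by part vi)), with $\|f_m - f\|_{L^\infty} \to 0$, and with $\nabla f_m \to \nabla f$ in $L^p_{\mathrm{loc}}(\mathbb{R}^2)$ for every $p \in (1,\infty)$. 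Denoting by $\Gamma_m$ the graph of $f_m$ and by $A_m = I + K_m - i\eta S_m$ the corresponding combined-layer operator, Theorem \ref{chandpottheim_result2} gives invertibility of $A_m$ on $L^2(\Gamma_m) \cap BC(\Gamma_m)$ with $\|A_m^{-1}\|_{L^2 \to L^2} \leq B$, where $B$ is the same constant (\ref{B_label}) because the Lipschitz constant $L$ is common to all $f_m$. By Theorem \ref{btheo} each $A_m$ is bounded on the whole of $L^2(\Gamma_m)$; since $L^2 \cap BC$ is dense in $L^2$, the inverse extends uniquely by continuity to a bounded operator on $L^2(\Gamma_m)$ with the same bound $B$.

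I would next transfer everything to a common reference space. Define unitary identifications $U_m : L^2(\Gamma_m) \to L^2(\mathbb{R}^2)$ by $(U_m \phi)(\tilde x) := \phi(\tilde x, f_m(\tilde x))\, J_{f_m}(\tilde x)^{1/2}$, and similarly $U : L^2(\Gamma) \to L^2(\mathbb{R}^2)$; set $\tilde A_m := U_m A_m U_m^{-1}$ and $\tilde A := U A U^{-1}$. Given $g \in L^2(\Gamma)$, put $\tilde g := U g$ and $\tilde\phi_m := \tilde A_m^{-1} \tilde g$, giving a family uniformly bounded in $L^2(\mathbb{R}^2)$ by $B \|\tilde g\|_{L^2}$. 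Weak compactness of the Hilbert ball produces a subsequence $\tilde\phi_{m_j} \rightharpoonup \tilde\phi$, and lower semi-continuity of the norm gives $\|\tilde\phi\|_{L^2(\mathbb{R}^2)} \leq B \|\tilde g\|_{L^2(\mathbb{R}^2)}$. Setting $\phi := U^{-1} \tilde\phi \in L^2(\Gamma)$, it then remains only to verify that $A \phi = g$.

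The hardest part of the plan is this limit passage, amounting to showing that $\tilde A_m \to \tilde A$ in a sense strong enough to combine with the weak convergence $\tilde\phi_{m_j} \rightharpoonup \tilde\phi$. My approach would be to test $\tilde A_m \tilde\phi_m = \tilde g$ against smooth $\psi \in C_0^\infty(\mathbb{R}^2)$ via the bilinear pairing $(\cdot,\cdot)$ already used in the excerpt to define the transpose $A'$, reducing matters to showing that $\tilde A_m' \psi \to \tilde A' \psi$ in $L^2(\mathbb{R}^2)$. For the single-layer contribution the kernel $G(x,y)$ is only weakly singular, so dominated convergence driven by uniform convergence of $f_m$ handles the limit. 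For the double-layer piece I would use the Verchota--Calder\'on representation from Theorem \ref{Wavelets_them}, which expresses the principal-value double-layer kernel on a Lipschitz graph as $(\tilde x - \tilde y) \cdot \nabla_{\tilde y} \psi(\tilde y)/|\tilde x - \tilde y|^2$ weighted by $\lambda\bigl((f_m(\tilde x) - f_m(\tilde y))/|\tilde x - \tilde y|\bigr)$; since $\lambda$ from (\ref{lambda_def}) is Lipschitz and the surface dependence is thus explicit, the convergence $\nabla f_m \to \nabla f$ in $L^p_{\mathrm{loc}}$ together with the uniform $L^2$-bound on $\{K_m\}$ from Theorem \ref{btheo} furnishes the strong limit on the fixed smooth test function $\psi$. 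The smoother Helmholtz correction $G - G_{\mathrm{Laplace}}$ causes no new difficulty.

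Surjectivity of $A$ together with the a priori bound $\|\phi\|_{L^2(\Gamma)} \leq B \|g\|_{L^2(\Gamma)}$ then gives the invertibility (\ref{operator_B}) provided $A$ is injective. Injectivity I would obtain from uniqueness for the boundary value problem: if $\phi \in \ker A$, then $v$ defined by the combined-layer ansatz (\ref{udef}) has vanishing non-tangential boundary values on $\Gamma$ and satisfies the $v_T' \in L^2(\Gamma)$ constraint (the latter via the non-tangential maximal function estimate in Theorem \ref{Wavelets_them_2}), so $v$ is a solution of the Boundary Value Problem with data $g = 0$ and hence vanishes in $D$ by uniqueness. A parallel argument for the reflected potential in the lower half-space, combined with the standard jump relations for the normal derivative of the single- and double-layer potentials across $\Gamma$, then forces $\phi = 0$, completing the proof.
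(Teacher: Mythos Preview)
Your surjectivity argument --- extract a weak limit of $\tilde A_m^{-1}\tilde g$ and pass to the limit in the equation by showing $\tilde A_m'\psi\to\tilde A'\psi$ strongly on smooth test functions $\psi$ --- is essentially the mechanism the paper itself uses (packaged as Lemma~\ref{contthe}(i)) to show that the adjoint $A'$ is bounded below. So that half of your plan is sound and parallel to the paper.

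The injectivity half, however, has a real gap. Granting Lemma~\ref{unique}, you correctly obtain $v\equiv 0$ in $D$ when $A\phi=0$; but your passage from $v\equiv 0$ in $D$ to $\phi=0$ invokes jump relations for the \emph{normal derivative} of the layer potentials across the Lipschitz surface $\Gamma$, together with an energy argument for $v$ in the unbounded strip between $\Gamma$ and $\Gamma_0$. Neither ingredient is available here: Theorem~\ref{jr thm}(iii) supplies only the jump relation for the potential itself, not for its normal derivative, and no analysis of the combined potential below $\Gamma$ is carried out anywhere in the chapter. Establishing normal-derivative jump relations for the modified half-space kernel $G$ on an unbounded Lipschitz graph, and justifying Green's identity on the infinite strip, would be substantial extra work that you have not supplied. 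The paper avoids all of this. Lemma~\ref{contthe}(ii) gives not just strong convergence but convergence of \emph{norms},
\[
\|A_fI_f^{-1}\phi\|_{L^2(\Gamma)}=\lim_{m\to\infty}\|A_{f_m}I_{f_m}^{-1}\phi\|_{L^2(\Gamma_m)},
\]
so the uniform Lyapunov lower bound $\|A_{f_m}\psi\|\geq B^{-1}\|\psi\|$ from Theorem~\ref{chandpottheim_result2} passes directly to $A_f$, giving injectivity and the constant in \eqref{operator_B} at once. A separate weak-$*$ argument (close in spirit to your surjectivity step) then shows $A'$ is bounded below, and the two lower bounds together with boundedness (Theorem~\ref{btheo}) give invertibility. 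The paper's route is thus purely operator-theoretic and never touches BVP uniqueness or normal-derivative jumps.
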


\begin{theorem}\label{chap5_them}
The Boundary value problem has a unique solution.
\end{theorem}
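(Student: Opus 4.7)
The plan is to treat existence and uniqueness separately, with existence based on the Brakhage--Werner ansatz (\ref{udef}) and uniqueness reduced to the injectivity of $A'$ via a Green's representation argument.

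For existence, given $g\in L^2(\Gamma)$, Theorem \ref{A_inverse} produces $\phi:=2A^{-1}g\in L^2(\Gamma)$, and we define $v$ in $D$ by (\ref{udef}). Since $G(x,\cdot)$ and all its derivatives are smooth on $\Gamma$ for $x\in D$, and decay like $|x-y|^{-2}$ or faster (a consequence of the bound (\ref{Gbound}) stated in the excerpt), differentiation under the integral sign gives $v\in C^2(D)$ and $\Delta v+k^2v=0$ in $D$. The radiation condition (\ref{uprcstar}) is inherited from the corresponding radiation condition satisfied by $G(\cdot,y)$ and $\partial G(\cdot,y)/\partial\nu(y)$ for each $y\in\Gamma$, via Fubini and dominated convergence. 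The bound $v_T'\in L^2(\Gamma)$ for each $T\geq f_+$ will be established by writing $G=\Phi-\Phi'$ and estimating the non-tangential maximal functions of the single- and double-layer potentials: the Laplace double-layer part is controlled by the $L^2$-boundedness of $F^*$ from Theorem \ref{Wavelets_them_2}, and the remaining contributions (including the single-layer and the smoother $k$-dependent remainders of the Helmholtz kernel) are handled by direct pointwise bounds using (\ref{Gbound}). Finally, the boundary condition is obtained by splitting the kernel $\partial G(x,y)/\partial\nu(y)$ into the leading Laplacian singularity $(x-y)\cdot\nu(y)/(4\pi|x-y|^3)$ plus a remainder that is weakly singular; the jump of the leading part is given by Theorem \ref{Wavelets_them_2}, while the weakly-singular remainder admits a continuous extension. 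Together with the standard (weakly-singular) jump for the single-layer part, this yields
\begin{equation*}
\mbox{n.t.}\lim_{z\to x}v(z)=\tfrac{1}{2}\phi(x)+\tfrac{1}{2}(K\phi)(x)-\tfrac{i\eta}{2}(S\phi)(x)=\tfrac{1}{2}(A\phi)(x)=g(x)
\end{equation*}
for almost every $x\in\Gamma$.

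For uniqueness, suppose $v$ satisfies the boundary value problem with $g=0$. The aim is to derive, from the data of $v$ on $\Gamma$, a single-layer representation of $v$ whose density $\psi\in L^2(\Gamma)$ satisfies $A'\psi=0$; invertibility of $A'$ given by Theorem \ref{A_inverse} then forces $\psi=0$ and hence $v\equiv 0$. To carry this out we approximate $\Gamma$ from inside $D$ by a sequence of smoother surfaces $\Gamma_m=\{(\tilde x,f_m(\tilde x))\}$ with $f_m\in C^\infty(\mathbb{R}^{n-1})$, $f_m\downarrow f$, constructed as in Lemma \ref{smooth_Lip}. In each truncated domain $S_H^m:=D_m\setminus\overline{U_H}$ we apply Green's representation using the half-space Green's function $G$. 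On $\Gamma_H$ the contributions collapse into $v$ itself by virtue of the radiation condition (this is exactly the content of Lemma \ref{lemma3p2} combined with $G(x,y)\to 0$ as $x_n\to\infty$). Passing to the limit $m\to\infty$, the hypothesis $v_T'\in L^2(\Gamma)$ provides the $L^2$ dominating function needed to invoke Lebesgue's dominated convergence theorem on the boundary integrals over $\Gamma_m$, while the vanishing non-tangential boundary value of $v$ forces the double-layer contribution to disappear. What remains is a single-layer representation $v(x)=\int_\Gamma G(x,y)\psi(y)\,ds(y)$ for $x\in D$, where $\psi\in L^2(\Gamma)$ is the a.e.\ non-tangential limit of $\partial v/\partial\nu_m$ along $\Gamma_m$. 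Taking the non-tangential limit as $x\to\Gamma$ and using the jump relations for the normal derivative of the single-layer (the adjoint of the jump used in the existence part) yields $\tfrac{1}{2}(I+K'-i\eta S)\psi=0$ after incorporating the combined representation; equivalently $A'\psi=0$, where the $-i\eta S$ term is inserted with the freedom we have in choosing the representation as a combined rather than pure single layer. Theorem \ref{A_inverse} gives $\psi=0$ and hence $v=0$ in $D$.

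The hard part will be the uniqueness argument, specifically justifying Green's representation when $\Gamma$ is only Lipschitz and $v$ attains its boundary values only in the non-tangential sense. The approximation by smooth surfaces $\Gamma_m$ from inside $D$ is standard in geometry, but the delicate point is the convergence of the boundary integrals: one must show that the Cauchy data of $v$ on $\Gamma_m$ converge (as distributions paired with suitably approximating test kernels) to the appropriate traces on $\Gamma$. This is where the hypothesis $v_T'\in L^2(\Gamma)$ is essential, together with a non-tangential maximal-function bound for $\nabla v$ that can be extracted from interior $H^1$ estimates and the known $L^2$-behaviour of $v$ on horizontal slices $\Gamma_H$. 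Once these convergences are established, the remainder of the argument is a routine combination of the jump relations (Theorem \ref{Wavelets_them_2}), the $L^2$-boundedness of $S$ and $K$ (Theorem \ref{btheo}), and the invertibility of $A'$ (Theorem \ref{A_inverse}).
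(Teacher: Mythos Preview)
Your existence argument is correct and matches the paper's: set $\phi=2A^{-1}g$ (the factor of $2$ is consistent with the jump relations (\ref{jump rel 1})--(\ref{jump rel 2}) as stated), and verify the required properties of $v=u_2-i\eta u_1$ by invoking Theorem~\ref{jr thm} (Helmholtz equation, jump relations), Lemma~\ref{u_prime_lemma} ($v_T'\in L^2$), and Lemma~\ref{rad_lemma} (radiation condition). This is exactly what the paper does.

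Your uniqueness argument, however, has two genuine gaps, and the paper proceeds quite differently.

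First, the boundary value problem gives you no control on $\nabla v$ near $\Gamma$: only $v_T'\in L^2(\Gamma)$ is assumed. To write Green's representation over $\Gamma_m$ and pass to the limit you need $\partial v/\partial\nu$ on $\Gamma_m$ to converge in $L^2$ to some $\psi\in L^2(\Gamma)$. Your sentence about extracting ``a non-tangential maximal-function bound for $\nabla v$\dots from interior $H^1$ estimates'' is where the argument breaks: interior regularity gives local $H^1$ control, not $L^2$ non-tangential control of the gradient up to the boundary. In the bounded-domain Lipschitz theory this is obtained via Rellich identities and is itself a major theorem; nothing in the stated hypotheses supplies it here.

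Second, even granting a single-layer representation $v(x)=\int_\Gamma G(x,y)\psi(y)\,ds(y)$, you do not obtain $A'\psi=0$. The boundary value of a single layer with density $\psi$ is $\tfrac12 S\psi$, and the jump of its normal derivative involves $K'$, not the combined operator $I+K'-i\eta S$. Your remark that ``the $-i\eta S$ term is inserted with the freedom we have in choosing the representation'' is not valid: once Green's formula has produced a specific representation, you cannot modify it.

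The paper's uniqueness proof (Lemma~\ref{unique}) sidesteps both issues. It approximates $\Gamma$ from inside $D$ by Lyapunov surfaces $\Gamma_n$ and, on each $\Gamma_n$, solves a Dirichlet problem with data $v|_{\Gamma_n}$ using the already-established smooth-surface theory (Theorem~\ref{chandpottheim_result2}); the resulting layer potential $u$ is identified with $v$ in $D_n$ not via Green's representation but via the \emph{variational} uniqueness result of Chapter~2 (Theorem~\ref{th_main1}), which requires only that $u-v\in V_H$. One then has the pointwise bound
\[
|v(x)|\le C_\epsilon\,\|\phi_n\|_{L^2(\Gamma_n)}\le C_\epsilon\,B\,\|v|_{\Gamma_n}\|_{L^2(\Gamma_n)},
\]
and dominated convergence (with dominating function $v_T'$) sends the right-hand side to zero. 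No Green's representation on the Lipschitz boundary, and no control on $\nabla v$, is ever needed.
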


%
%
\section{Properties of the three-dimensional fundamental solution}

We start with an investigation of properties of the fundamental
solution $\Phi(x,y)$ and its derivatives. The key results are the
expansions (\ref{Gdelta exp}) and (\ref{Gdeltanu exp}) needed to
prove mapping properties of the boundary integral operators $S$ and
$K$ in the next section. Note that this section has been copied verbatim from \cite{chandpottheim}.

For the first derivative of $\Phi(x,y)$ with respect to $y_3$ we
calculate
\begin{equation}
  \frac{\partial \Phi(x,y)}{\partial y_3}
  = - \frac{ik}{4\pi} \frac{(x_3-y_3)}{|x-y|^2}
    e^{i\kappa |x-y|}
    \; + \;\frac{1}{4\pi}
    \frac{(x_3-y_3)}{|x-y|^3} e^{i\kappa |x-y|}.
\end{equation}
The second derivative is given by
\begin{eqnarray}
  && \frac{\partial^2 \Phi(x,y)}{\partial y_3^2} \quad
  = \quad \frac{1}{4\pi}
  \bigg\{
    i k \frac{e^{ik |x-y|}}{|x-y|^2}
    \; - \; k^2 \frac{(x_3-y_3)^2}{|x-y|^3}
    e^{i k |x-y|}
    - 2 i k \frac{(x_3-y_3)^2}{|x-y|^4}
    e^{ik |x-y|}  \nonumber \\
    && - \frac{e^{i k |x-y|}}{|x-y|^3}
    \; - \; i k \frac{(x_3-y_3)^2}{|x-y|^4}
    e^{i k |x-y|}
    + 3 \frac{(x_3-y_3)^2}{|x-y|^5} e^{i k |x-y|}
  \bigg\}. \label{secondder}
\end{eqnarray}
For the third derivative with respect to $y_3$ we obtain
\begin{equation} \label{third}
  \frac{\partial^3 \Phi(x,y)}{\partial y_3^{3}} \quad
  = \quad \frac{3k^2}{4\pi}
   \frac{(x_3-y_3)}{|x-y|^3} e^{i k |x-y|}
  \;\; + \;\;
  O \left(
      \frac{1}{|x-y|^4}
    \right).
\end{equation}
This  holds in the sense that, given $c>0$, there exists a constant $C>0$ such that
$$
  \left| \frac{\partial^3 \Phi(x,y)}{\partial y_3^{3}} \quad
  - \quad \frac{3\kappa^2}{4\pi}
   \frac{(x_3-y_3)}{|x-y|^3} e^{i k |x-y|}
  \right|
\le
      \frac{C}{|x-y|^4},
$$
for all $x$, $y\in \real^3$, $x\neq y$, with $x_3$, $y_3\in [0,c]$. The similar equations below, in particular
(\ref{Gdelta exp}) and (\ref{Gdeltanu exp}), are to be understood in
an analogous fashion.

We use Taylor's expansion for the fundamental solution $\Phi(x,y)$
with respect to variations of $x_3$ and $y_3$. From Taylor's
theorem, if $g\in C^3[0,\infty)$, then
\begin{equation}\label{taylor}
  g(s) = g(0) \; + \; g'(0)s \; + \;  \frac{1}{2}g^{(2)}(0) s^2
  \; + \frac{1}{3!}\int_0^s (s-t)^2 g^{(3)}(t)\; dt, \quad s > 0.
\end{equation}
Applying (\ref{taylor}) to $g(s) := \Phi(x,\tilde y + s e_3)$, where
$e_3$ is the unit vector in the $x_3$ direction, with $\tilde y  =
(y_1,y_2,0) \in \Gamma_0$ and $s\in [0,c]$ with some constant $c$,
we obtain
\begin{eqnarray}
  \Phi(x,\tilde y  + s e_3)
  & = &
  \frac{1}{4\pi} \frac{e^{i k |x-\tilde y |}}{|x-\tilde y |}
  \; - \;
  \frac{ik}{4\pi} \frac{ x_3 \; e^{ik |x-\tilde y |}}{|x-\tilde y |^2}
  \; s  \label{centraldec1} \\
  && \; + \;
  \frac{ik }{4\pi}\frac{e^{ik|x-\tilde y |}}{|x-\tilde y |^2}
  \; \frac{s^2}{2} \; + \;
  O \left(
      \frac{1}{|x-\tilde y |^3}
    \right). \nonumber
\end{eqnarray}
To estimate the properties of single- and double-layer potentials on
$L^2(\Gamma)$ we need to use Taylor's expansion also with respect to
$x_3$. We treat all the terms of (\ref{centraldec1}) separately and
obtain, after some calculations,
\begin{eqnarray}
  \Phi(\tilde x + h e_3,\tilde y  + s e_3)
  & = &
  \frac{1}{4\pi} \frac{e^{i k |\tilde x-\tilde y |}}{|\tilde x-\tilde y |}
  \label{centraldec3} \\
  && \; + \; \frac{1}{4\pi}
  \frac{ik \; e^{i k|\tilde x-\tilde y |}}{|\tilde x-\tilde y |^2}
  \; \frac{(h-s)^2}{2}
  \; + \;
  O \left(
      \frac{1}{|\tilde x-\tilde y |^3}
    \right). \nonumber
\end{eqnarray}
Altogether we obtain
\begin{equation}\label{Gdelta exp}
  G(\tilde x + h e_3,\tilde y  + s e_3) =
 - \frac{1}{4\pi}
  \frac{i k \; e^{i k |\tilde x-\tilde y |}}{|\tilde x-\tilde y |^2}
  \; 2 h s \; + \;
  O \left(
      \frac{1}{|\tilde x-\tilde y |^3}
    \right),
\end{equation}
in the sense that, given $c>0$,
there exists a constant $C>0$ such that
$$
\left|  G(\tilde x + h e_3,\tilde y  + s e_3) + \frac{2hs}{4\pi}
  \frac{i k \; e^{i k |\tilde x-\tilde y |}}{|\tilde x-\tilde y |^2}\right| \leq
    \frac{C}{|\tilde x-\tilde y |^3},
$$
for all $\tilde x,\tilde y \in\real^2$ with $\tilde x\neq\tilde y$, 
and all $h,s\in[0,c]$. Arguing precisely as in \cite{chandmmas96} in the
case $|x-y|>1$, we can also show the bound that (cf.\
\cite[equations (3.6), (3.8)]{chandmmas96}) there exists a constant $C>0$ such that
\begin{equation} \label{Gbound}
\left|  G(x,y) \right| \leq
    \frac{C(1+x_3)(1+y_3)}{|x-y|^2},
\end{equation}
for all $x,y\in\real^3$ with $x,y\neq 0$ and $x_3,y_3\geq 0$.

For the normal derivative of $G$, noting that $\partial
\Phi(x,y')/\partial \nu(y)= \partial \Phi(x',y)/\partial \nu(y)$ and
introducing the notation $\bnu(y) := (\nu_1(y),\nu_2(y))$, we derive
\begin{eqnarray}
  4\pi \; \frac{\partial G(x,y)}{\partial \nu(y)}
  & = &
   - i k \; \bnu(y) \cdot (\tilde x-\tilde y )
  \left\{
    \frac{e^{i k |x-y|}}{|x-y|^2}
    \; - \;
    \frac{e^{i k |x-y'|}}{|x-y'|^2}
  \right\} \label{Gdeltapartialnu} \\
  &&  + \quad \bnu(y) \cdot (\tilde x-\tilde y )
  \left\{
    \frac{e^{i k |x-y|}}{|x-y|^3}
    \; - \;
    \frac{e^{i k |x-y'|}}{|x-y'|^3}
  \right\}  \nonumber \\
  && - \quad i k \; \frac{\nu_3(y)(x_3-y_3)}{|x-y|^2} \; e^{i k |x-y|}
  \; + \;
  \frac{\nu_3(y)(x_3-y_3)}{|x-y|^3} \; e^{i k |x-y|} \nonumber \\
  && - \quad i k \; \frac{\nu_3(y)(x_3+y_3)}{|x-y'|^2} \; e^{i k |x-y'|}
  \; + \;
  \frac{\nu_3(y)(x_3+y_3)}{|x-y'|^3} \; e^{i k |x-y'|}.
  \nonumber
\end{eqnarray}
We proceed as in (\ref{centraldec3}) and calculate
\begin{eqnarray}
  \frac{e^{i k |x-y|}}{|x-y|^2}
  & = & \frac{e^{i k |\tilde x-\tilde y |}}{|\tilde x-\tilde y |^2}
   \;+\;
  \frac{i k e^{i k |\tilde x-\tilde y |}}{|\tilde x-\tilde y |^3}
  \frac{(x_3 - y_3)^2}{2}
  \; + \;
  O \left(
      \frac{1}{|\tilde x-\tilde y |^4}
    \right).
  \label{Gdeltanudiff}
\end{eqnarray}
We use this to transform (\ref{Gdeltapartialnu}) into
\begin{eqnarray}
  4\pi \; \frac{\partial G(\tilde x + h e_3,\tilde y  + s e_3)}{\partial \nu(y)}
  & = & - k^2 \bnu(y) \cdot \frac{(\tilde x-\tilde y )}{|\tilde x-\tilde y |}
  \; \frac{ e^{i k |\tilde x-\tilde y |}}{|\tilde x-\tilde y |^2}
  \; 2 h s \label{Gdeltanu exp} \\
  && - \quad i k \nu_3(y)
  \frac{ e^{i k |\tilde x-\tilde y |}}{|\tilde x-\tilde y |^2}
  \; 2 h \; + \;
  O \left(
      \frac{1}{|\tilde x-\tilde y |^3}
    \right), \nonumber
\end{eqnarray}
this equation holding in the same sense as (\ref{Gdelta exp}).

\section{Boundedness of the single- and double-layer potential operators}

In this section we shall establish that $S$ and $K$ are bounded operators on $L^2(\Gamma)$. In order to do this, we split the operators into a local and a
global part, with the help of an appropriate
\emph{cut-off function}.
To this end let $\chi:[0,\infty)\to \real$ be the indicator function 
such that 
\begin{equation}\label{eq:chi}
  \chi(t) :=
    \begin{cases}
      0, & t < 1\\
      1, & t \geq 1.
    \end{cases}
\end{equation}
Let $A$ with kernel $a$ denote one of
the operators $S$ or $K$, respectively. We define the \emph{global part}
\begin{equation} \label{A1op}
  (A_1\varphi)(x) \; := \; \int_{\Gamma} \chi(|\tilde x-\tilde y|)
  a(x,y) \phi(y) \; ds(y), \quad x \in \Gamma,
\end{equation}
and the \emph{local part}
\begin{equation} \label{A2op}
  (A_2\varphi)(x) \; := \; \int_{\Gamma} \Big(1-\chi(|\tilde x-\tilde y|)\Big)
  a(x,y) \phi(y) \; ds(y), \quad x \in \Gamma.
\end{equation}
This yields the decomposition $A = A_1 + A_2$ and we can study the
mapping properties of $A_1$ and $A_2$ as operators on $L^2(\Gamma)$
separately. We denote by $a_1$ the kernel of $A_1$ and by $a_2$ the kernel of $A_2$. 
Before however looking at the boundedness of $S$ and $K$ let us first make sure that they are well defined operators. 

We first of all remark that the global operators are well-defined for $\phi \in L^2(\Gamma)$ and for all $x \in \Gamma$ as can be seen by applying the Cauchy-Schwarz inequality and using the expansions (\ref{Gdelta exp}) and (\ref{Gdeltanu exp}). For the local part of the operator things are a bit more subtle. For the single layer operator we note that
\[
a_2(x,y) \leq s(\tilde x-\tilde y),
\]
where 
\begin{equation} \label{s_def}
s(\tilde y)= \begin{cases}
      0, & |\tilde y|\geq  1\\
      C/|\tilde y|, & |\tilde y|<1,  
    \end{cases}
\end{equation}
 for some $C>0$. Note that $s \in L^1(\mathbb{R}^2)$. Now for $\phi, \psi \in L^2(\mathbb{R}^2)$ 
\begin{eqnarray*}
&&\left|\int_{\mathbb{R}^2 \times \mathbb{R}^2}a_2(x,y)\phi(\tilde y)\psi(\tilde x)d\tilde xd\tilde y \right|\\& \leq &   \int_{\mathbb{R}^2 \times \mathbb{R}^2}s(\tilde x-\tilde y)|\phi(\tilde y)||\psi(\tilde x)|d\tilde xd\tilde y \\& \leq & \left\{\int_{\mathbb{R}^2 \times \mathbb{R}^2}s(\tilde x-\tilde y)|\phi(\tilde y)|^2d\tilde xd\tilde y \int_{\mathbb{R}^2 \times \mathbb{R}^2}s(\tilde x-\tilde y)|\psi(\tilde x)|^2d\tilde xd\tilde y\right\}^{\frac{1}{2}}\\& = & \Vert s\Vert^{\frac{1}{2}}_{L^1(\mathbb{R}^2)}\Vert\phi\Vert_{L^2(\mathbb{R}^2)} \Vert s\Vert^{\frac{1}{2}}_{L^1(\mathbb{R}^2)}\Vert\psi\Vert_{L^2(\mathbb{R}^2)}. 
\end{eqnarray*}
It follows by Fubini's theorem (see for example \cite{HandS} Theorem 21.13) that as a function of $\tilde y$, $a_2(x,y)\phi(\tilde y)\psi(\tilde x)$ is in $L^1(\mathbb{R}^2)$  for almost all $\tilde x \in \mathbb{R}^2$ so that the local part of the single layer operator is well-defined for almost all $\tilde x \in \mathbb{R}^2$.

We now turn to the local part of the double-layer operator. Making use of (\ref{Gdeltapartialnu}), we see that
\[
4\pi \frac{\partial G(x,y)}{\partial \nu(y)} = \nu(y)\cdot (x-y)\frac{e^{ik|x-y|}}{|x-y|^3} + r(x,y),
\]
where, the local part of $r(x,y)$, $[1- \chi(|\tilde x-\tilde y|)]r(x,y) := r_2(x,y)$ is such that
\begin{equation} \label{a2bound!_prime??}
|r_2(x,y)| \le s(\tilde x-\tilde y ), \quad x,y\in\Gamma, \quad x\neq y
\end{equation}
where $s$ is given by (\ref{s_def}). It follows as above, that the operator with kernel $r_2(x,y)$ is defined for almost all $x \in \Gamma$. This means we may rewrite the operator $K$ as 
\[(K\phi)(x)= (K_1\phi)(x)
+ \int_{\mathbb{R}^2}r_2(x,y)\phi(\tilde y) J_f(\tilde y)d\tilde y
\]
where $K_1$ is defined for $\phi \in L^2(\Gamma)$, $x \in \Gamma$ by  
\begin{equation}\label{K_1}
(K_1\phi)( x) := \lim_{\epsilon \searrow 0} \int_{\mathbb{R}^2 \backslash B_{\epsilon}(\tilde x)}[1-\chi(\tilde x-\tilde y)] \nu(y)\cdot (x-y)\frac{e^{ik|x-y|}}{|x-y|^3}\phi(\tilde y) J_f({\tilde y}) d \tilde y.
\end{equation}
Let us show that $K_1$ is well-defined. Firstly we note that 
\begin{equation} \label{e_exp}
e^{ik|x-y|} = 1 + ik|x-y| + \frac{(ik|x-y|)^2}{2!} + \dots. 
\end{equation} 
We have that
\begin{equation}
 (K_1\phi)(x) = (K_2\phi)(x) +  (K_3\phi)(x), \quad x \in \Gamma,
\end{equation}
where 
\begin{equation}\label{K_2}
(K_2\phi)(x)=\lim_{\epsilon \searrow 0} \int_{\mathbb{R}^2 \backslash B_{\epsilon}(\tilde x)}[1-\chi(\tilde x-\tilde y)] \nu(y)\cdot \frac{(x-y)}{|x-y|^3} \phi(\tilde y) J_f(\tilde y) d \tilde y,
\end{equation}
and 
\begin{equation}\label{K_3}
(K_3\phi)(x) = \lim_{\epsilon \searrow 0} \int_{\mathbb{R}^2 \backslash B_{\epsilon}(\tilde x)}[1-\chi(\tilde x-\tilde y)] \nu(y)\cdot (x-y)\frac{e^{ik|x-y|}-1}{|x-y|^3} \phi(\tilde y) J_f(\tilde y) d \tilde y. 
\end{equation}
From (\ref{e_exp}) it follows that the kernel of $K_3$ is bounded by $s(\tilde x - \tilde y)$, given by (\ref{s_def}), so that $K_3$ is well-defined for almost all $ x \in \Gamma$. 

We rewrite $K_2$ as 
\begin{equation}
(K_2\phi)(x)=(K_4\phi)(x) - (K_5\phi)(x),
\end{equation}
where
\begin{equation}\label{K_4}
(K_4\phi)(x)= \lim_{\epsilon \searrow 0} \int_{\mathbb{R}^{2} \backslash B_{\epsilon}(\tilde x)} \nu(y)\cdot \frac{(x-y)}{|x-y|^3} \phi(\tilde y) J_f(\tilde y) d \tilde y, 
\end{equation}
and where  
\begin{equation} \label{K_5}
(K_5\phi)(x)
= \int_{\mathbb{R}^{2}}\chi(|x-y|) \nu(y)\cdot \frac{(x-y)}{|x-y|^3} \phi(\tilde y) J_f(\tilde y) d \tilde y.
\end{equation}
That $K_4$ is well defined for almost all $x \in \Gamma$ follows from Theorem \ref{Wavelets_them}. Note that $K_5$ is well-defined for all $x \in \Gamma $ by a simple application of the Cauchy-Schwarz inequality.

We now turn to the issue of boundedness of the operators. We shall need in our arguments to make use of Young's inequality.
 Suppose that
$l:\real^2\times\real^2\to\complex$ is such that $l(\tilde x,\cdot)$ is
measurable for all $\tilde x\in \real^2$, and
let $L$ be the integral operator with kernel $l$, so that for $\psi \in L^2(\mathbb{R}^2)$, 
\begin{equation} \label{genio}
(L\psi)(\tilde x) = \int_{\real^2}l(\tilde x,\tilde y )\psi(\tilde y )\, d\tilde y, \quad
\tilde x\in\real^2.
\end{equation}
When
\begin{equation} \label{lllbound?}
|l(\tilde x,\tilde y )|\le \ell(\tilde x-\tilde y ),
\end{equation}
with $\ell\in L^p(\real^2)$, for some $p\in[1,\infty)$, 
then from Young's
inequality \cite{Ree75}, it follows that for $s\geq 1$
\begin{equation} \label{youngs}
||L\psi||_{L^s(\real^2)} \le
||\ell||_{L^p(\real^2)}\,||\psi||_{L^r(\real^2)},
\end{equation}
where $ r^{-1} = 1 + s^{-1}-p^{-1}$. 

We will use the bound (\ref{youngs}) particularly often in the case
$\ell\in L^1(\real^2)$, in which case it implies that
\begin{equation} \label{Lnormbound2}
||L||_{L^2(\real^2)\to L^2(\real^2)} \leq ||\ell||_{L^1(\real^2)}.
\end{equation}

%

We now show that the local operators are bounded on $L^2(\Gamma)$.
 
 \begin{lemma} \label{lem:local}
$A_2$ is a bounded operator on $L^2(\Gamma)$. 
\end{lemma}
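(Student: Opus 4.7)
\emph{Plan of proof.} The plan is to split the analysis into the two cases $A = S$ and $A = K$. In both cases, $A_2$ has kernel supported in $\{|\tilde x - \tilde y| < 1\}$, so we are dealing with a genuinely local operator. The overall strategy is to identify the model singular integral operator that governs the near-diagonal behavior, and to bound the remainder by absolutely convergent (Young-type) estimates. Throughout, we will freely pass between $L^2(\Gamma)$ and $L^2(\mathbb{R}^2)$, since the change-of-variables factor $J_f$ satisfies $1 \le J_f \le L'$, so these two norms are equivalent.

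\emph{The single-layer case.} When $a = G$, the estimate $|G(x,y)| \le C/|\tilde x - \tilde y|$ valid in the local regime (a consequence of the expansion (\ref{Gdelta exp}) together with the classical singularity of $\Phi$) shows that $|a_2(x,y)| \le s(\tilde x - \tilde y)$ with $s$ as in (\ref{s_def}). Since $s \in L^1(\mathbb{R}^2)$, Young's inequality in the form (\ref{Lnormbound2}) yields boundedness on $L^2(\mathbb{R}^2)$ of the convolution-type operator with kernel $s$, and the pointwise domination transfers this bound to $A_2$ on $L^2(\Gamma)$.

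\emph{The double-layer case.} Starting from the splitting set up just before the lemma, I would write $4\pi A_2 = K_1 + R_2$, where $R_2$ is the local operator with kernel $[1-\chi(|\tilde x-\tilde y|)]\,r(x,y)$; by (\ref{a2bound!_prime??}) its kernel is dominated by $s(\tilde x-\tilde y) \in L^1(\mathbb{R}^2)$ and so $R_2$ is bounded on $L^2$ by Young's inequality. Next, $K_1 = K_2 + K_3$, and using $|e^{ik|x-y|}-1| \le k|x-y|$ together with $|\nu(y)\cdot(x-y)| \le (1+L)|\tilde x - \tilde y|$ on $\Gamma$, the kernel of $K_3$ is likewise dominated by a multiple of $s$, giving boundedness by the same token. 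All that remains is $K_2$, the local part of the principal-value operator with kernel $\nu(y)\cdot(x-y)/|x-y|^3$. Expressing $K_2 = K_4 - K_5$ (modulo a correction whose kernel is supported away from the diagonal and admits an $L^1$ majorant on the region where both $\chi(|\tilde x-\tilde y|) = 0$ and $\chi(|x-y|) = 1$ impose $|\tilde x-\tilde y|$ bounded both above and below), the boundedness of $K_2$ on $L^2(\Gamma)$ reduces to that of $K_4$, which is precisely the content of Theorem \ref{Wavelets_them} applied via the identification $L^2(\Gamma) \cong L^2(\mathbb{R}^2)$.

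\emph{Main obstacle.} The essential difficulty is the treatment of $K_4$. For a merely Lipschitz graph, the geometric cancellation $\nu(y)\cdot(x-y)$ is only of order $|\tilde x - \tilde y|$, rather than $|\tilde x - \tilde y|^{1+\alpha}$ as in the Lyapunov setting. Consequently the kernel of $K_4$ has near-diagonal size of order $1/|\tilde x - \tilde y|^2$, which just fails to be locally integrable in two dimensions. No absolute-value estimate (Young, Schur test, or similar) can therefore close the argument, and one must genuinely invoke the deep Calder\'on--Coifman--McIntosh--Meyer $L^2$-boundedness theorem for singular integrals with Cauchy-type kernels on Lipschitz graphs, which is exactly what Theorem \ref{Wavelets_them} provides. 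It is the availability of this result that makes the extension from Lyapunov to Lipschitz surfaces feasible at all.
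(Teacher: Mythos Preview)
Your overall decomposition matches the paper's exactly: the single-layer case is dispatched by the $L^1$ majorant $s$ and Young's inequality; in the double-layer case you peel off $R_2$, then $K_3$, then write $K_2=K_4-K_5$, and invoke Theorem~\ref{Wavelets_them} for $K_4$. Up to this point the argument is correct and identical to the paper's.

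The gap is in your treatment of $K_5$. You dismiss it in a parenthetical about a ``correction'' supported where $|\tilde x-\tilde y|$ is bounded above and below, but that parenthetical only accounts for the discrepancy between the two cut-offs $\chi(|\tilde x-\tilde y|)$ and $\chi(|x-y|)$; it does not dispose of $K_5$ itself. The kernel of $K_5$ is supported on $|\tilde x-\tilde y|\ge 1$ and has size of order $|\tilde x-\tilde y|^{-2}$, which is \emph{not} in $L^1(\mathbb{R}^2)$ (the radial integral diverges logarithmically), so no Young-type estimate can close the argument. Nor is $K_5$ covered directly by Theorem~\ref{Wavelets_them}, which concerns the full principal-value operator $K_4$, not its sharp truncation at unit scale.

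The paper handles $K_5$ by a separate, non-trivial argument: it first observes (via Cauchy--Schwarz, since $|\tilde x-\tilde y|^{-2}\in L^2$ on $\{|\tilde x-\tilde y|\ge 1\}$) that $K_5$ is bounded from $L^2(\Gamma)$ to $L^\infty(\Gamma)$; it then decomposes $\phi=\sum_{n\in\mathbb{Z}^2}\phi_n$ along a unit lattice and exploits the key identity $K_5\phi_n(x)=K_4\phi_n(x)$ whenever $\mathrm{dist}(\tilde x,\mathrm{supp}\,\phi_n)\ge 1$, so that on each unit square only finitely many ``near'' terms use the $L^2\to L^\infty$ bound, while the ``far'' contribution is controlled by the already-established $L^2$ bound on $K_4$. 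You need either to supply this localisation argument, or alternatively to invoke the Cotlar-type maximal truncation estimate for Calder\'on--Zygmund operators, which would give uniform $L^2$ bounds for the operators with kernel $1_{\{|\tilde x-\tilde y|>\epsilon\}}\cdot(\text{kernel of }K_4)$; but you have invoked neither.
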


\begin{proof}
In the single-layer case, the kernel $a_2$ of $A_2$ has compact support and is weakly
singular. Indeed, 
for some constant
$C>0$,
\begin{equation} \label{a2bound?}
|a_2(x,y)| \le C\ell(\tilde x-\tilde y ), \quad x,y\in\Gamma, \; x\neq y,
\end{equation}
where
\begin{equation}
\ell(\tilde y ) := \left\{\begin{array}{cc}
               |\tilde y |^{-1}, & |\tilde y |\leq 1, \\
               0, & |\tilde y |>1. \\
             \end{array}\right.
\label{kernel A2}
\end{equation}
Since $\ell\in L^1(\real^2)$, we see
from (\ref{Lnormbound2}) that $A_2$ is a bounded operator on
$L^2(\Gamma)$ in the single-layer case.

We now move on to the double layer case. Making use of (\ref{Gdeltapartialnu}), we see that
\[
4\pi \frac{\partial G(x,y)}{\partial \nu(y)} = \nu(y)\cdot (x-y)\frac{e^{ik|x-y|}}{|x-y|^3} + r(x,y),
\]
where, the local part of $r(x,y)$, $[1- \chi(|\tilde x-\tilde y|)]r(x,y) := r_2(x,y)$ is such that, for some constant $C>0$
\begin{equation} \label{a2bound!_prime}
|r_2(x,y)| \le C\ell(\tilde x-\tilde y ), \quad x,y\in\Gamma, \; x\neq y,
\end{equation}
where $\ell$ is given by (\ref{kernel A2}). It follows as above, that the operator with kernel $r_2(x,y)$ is bounded on $L^2(\Gamma)$. 

We now focus on the operator 
 $K_1$, defined by (\ref{K_1}). 
Again, we have that
\begin{equation}
 (K_1\phi)(x) = (K_2\phi)(x) +  (K_3\phi)(x),
\end{equation}
where $K_2$ and $K_3$ are given by (\ref{K_2}) and (\ref{K_3}) respectively. 
Since the kernel of $K_3$ is bounded by $\ell(\tilde x - \tilde y)$, with $\ell$ given by (\ref{kernel A2}), it follows again, that $K_3$ is bounded on $L^2(\Gamma)$. 

We rewrite $K_2$ as 
\begin{equation}
(K_2\phi)(x)=(K_4\phi)(x) - (K_5\phi)(x)
\end{equation}
where $K_4$ and $K_5$ are given by (\ref{K_4}) and (\ref{K_5}) respectively.
That $K_4$ is a bounded operator on $L^2(\Gamma)$ follows from Theorem \ref{Wavelets_them}. Thus, to complete the proof we need to show that $K_5$ is also a bounded operator on $L^2(\Gamma)$.

We begin by noting that $K_5$ is bounded as an operator from $L^2(\Gamma)$ into $L^{\infty}(\Gamma)$. Indeed, for all $x \in \Gamma$, a simple application of the Cauchy-Schwarz inequality shows that 
\begin{equation}
|(K_5\phi)(x)| \leq \mathcal{C} \Vert \phi\Vert_{L^2(\Gamma)},
\end{equation}
with $\mathcal{C}$ given by 
\begin{equation}
\mathcal{C}= L'\left\{\int_{G}\frac{1}{|\tilde x -\tilde y|^4}    d \tilde y \right\}^{\frac{1}{2}},
\end{equation}
where 
$$
G = \mathbb{R}^2 \backslash B_{1}(\tilde x),
$$
so that $\mathcal{C}$ is finite and bounded independently of $\tilde x$, as one sees by changing the last integral to polar coordinates and evaluating it.   

Now, for each $n= (n_1,n_2) \in \mathbb{Z}^2$ we let $\Lambda_n$ be the indicator function such that if $\tilde x \in \mathbb{R}^2$ is such that $n_1\leq x_1<n_1 +1$ and such that $ n_2 \leq x_2 <n_2 +1$ then $\Lambda_n(\tilde x)=1 $ and which is $0$ otherwise. Then, letting $\phi_n :=\phi \Lambda_n$ for $\phi \in L^2(\Gamma)$ we have that 
\[
\phi= \sum_{n \in \mathbb{Z}^2} \phi_n. 
\]
Now, for $\tilde x \in \mathbb{R}^2$ we let $\mathcal{N}(\tilde x)$ be the set of those $n\in \mathbb{Z}^2$ such that 
$$\mbox{dist} (\tilde x,\supp(\phi_n)) <1.
$$ Note that $ \mathcal{N}(\tilde x)$ contains no more than $9$ elements, and also, that if $x=(\tilde x,f(\tilde x)) \in \Gamma$ is such that  
$$
\mbox{dist} (\tilde x,\supp(\phi_m))>1 
$$ then 
\[
(K_5 \phi_m)(x) = (K_4\phi_m)(x).
\]
Thus, for almost all $x \in \Gamma$, 
\begin{eqnarray*}
(K_5\phi)(x)  
& = &\sum_{n \in\mathcal{N}(\tilde x)}(K_5\phi_n)(x)+ K_5\left(\sum_{n \not\in\mathcal{N}(\tilde x)}\phi_n\right)(x)\\&=&\sum_{n \in\mathcal{N}(\tilde x)}(K_5\phi_n)(x)+ K_4\left(\sum_{n \not\in\mathcal{N}(\tilde x)}\phi_n\right)(x)\\&=& \sum_{n \in\mathcal{N}(\tilde x)}(K_5\phi_n)(x)+ (K_4\phi)(x)
\\&-& \sum_{n \in\mathcal{N}(\tilde x)}(K_4\phi_n)(x).
\end{eqnarray*}
We define, for $m \in\mathbb{Z}^2$,  $T(m):= \{n \in \mathbb{Z}^2: 
\mbox{dist}(\supp \Lambda_m ,\supp\Lambda_n)< 1\}$.  
In what follows we use that for $a_j\geq 0, j=1,\dots, n$
\[
(a_1 + \dots  + a_n)^2 \leq n(a_1^2 + \dots +a_n^2),
\]
and that $J_f \leq L'$.

So
\begin{eqnarray*}
&&\int_{\mathbb{R}^2}|(K_5\phi)(x)|^2 J_f(\tilde x) d \tilde x \\& \leq & 3\left\{  \int_{\mathbb{R}^2}\left|\sum_{n \in\mathcal{N}(\tilde x)}(K_5\phi_n)(x)\right|^2 J_f(\tilde x) d \tilde x + \int_{\mathbb{R}^2}\left|
(K_4\phi)(x)\right|^2 J_f(\tilde x) d \tilde x\right.\\  &+&\left. \int_{\mathbb{R}^2}\left|\sum_{n \in\mathcal{N}(\tilde x)}(K_4\phi_n)(x)\right|^2 J_f(\tilde x) d \tilde x\right\} \\ & \leq & 3\left\{ \sum_{m \in \mathbb{Z}^2} \int_{\supp(\phi_m)}9\sum_{n \in\mathcal{N}(\tilde x)}|(K_5\phi_n)(x)|^2 J_f(\tilde x) d \tilde x + \Vert K_4\phi\Vert_{L^2(\Gamma)}^2\right.\\&&\left. +\sum_{m \in \mathbb{Z}^2} \int_{\supp(\phi_m)}9\sum_{n \in\mathcal{N}(\tilde x)}|(K_4\phi_n)(x)|^2 J_f(\tilde x) d \tilde x\right\}\\& \leq & 3\left\{ \sum_{m \in \mathbb{Z}^2} \int_{\supp(\phi_m)}9\sum_{n \in T(m)}|(K_5\phi_n)(x)|^2 J_f(\tilde x) d \tilde x + \Vert K_4\phi\Vert_{L^2(\Gamma)}^2\right.\\&&\left. +\sum_{m \in \mathbb{Z}^2} \int_{\supp(\phi_m)}9\sum_{n \in T(m)}|(K_4\phi_n)(x)|^2 J_f(\tilde x) d \tilde x\right\}\\& \leq & 3\left\{ 9\sum_{m \in \mathbb{Z}^2} \mathcal{C}^2\sum_{n \in T(m)}\Vert \phi_n\Vert_{L^2(\Gamma)}^2L'+ \Vert K_4\phi\Vert_{L^2(\Gamma)}^2 +9\sum_{m \in \mathbb{Z}^2} \sum_{n \in T(m)}\Vert K_4 \phi_n \Vert_{L^2(\Gamma)}^2\right\} \\ & \leq & 3\left\{ 9^2 \mathcal{C}^2L'\sum_{m \in \mathbb{Z}^2}\Vert \phi_m\Vert_{L^2(\Gamma)}^2+ \Vert K_4\Vert^2\Vert\phi\Vert_{L^2(\Gamma)}^2 +9^2\sum_{m \in \mathbb{Z}^2}\Vert K_4\Vert^2 \Vert \phi_m \Vert_{L^2(\Gamma)}^2\right\} \\
& \leq & 3[ 81\mathcal{C}^2L' + \Vert K_4 \Vert^2 + 81 \Vert K_4\Vert^2] \Vert \phi\Vert^2_{L^2(\Gamma)}.
\end{eqnarray*}
The proof is complete.
\end{proof}%

We now turn to the global operators. To show that they are bounded on $L^2(\Gamma)$ we simply  use the proof employed in \cite{chandpottheim} to prove the same result but in the case when $\Gamma$ was Lyapunov. The necessary changes are trivial, but for completeness we'll go over the argument again here. Looking back to (\ref{genio}), one case of relevance to the argument is that in which
\begin{equation} \label{prodconvmult}
l(\tilde x,\tilde y )=m_1(\tilde x)\ell(\tilde x-\tilde y )m_2(\tilde y ),
\end{equation}
 with $m_1,m_2\in
BC(\real^2)$, $\ell\in L^2(\real^2)$, $\cF\ell\in
L^\infty(\real^2)$. In this case, if $\psi\in L^2(\real^2)$, $L$ is a bounded operator on $L^2(\real^2)$
with norm
\begin{equation} \label{Lnormbound}
||L||_{L^2(\real^2)\to L^2(\real^2)} \le 2\pi
||m_1||_{BC(\real^2)}\, ||\cF \ell||_{L^\infty(\real^2)}
\,||m_2||_{BC(\real^2)},
\end{equation}
see for example \cite{Ree75}. Examining (\ref{Gdelta exp}) and (\ref{Gdeltanu exp}) we see that
large parts of the kernels of the operators $S$ and $K$ have the
form (\ref{prodconvmult}), where moreover $\ell$ has  certain
symmetries that simplify the calculation of its Fourier transform.
For $\tilde y \in\real^2$ let
$r:=|\tilde y |$ and $\hat y:=\tilde y /|\tilde y |$. The specific symmetries that arise
are those where $\ell$ has the form
\begin{equation}\label{eq:radial2}
  \ell(\tilde y ) = F(r)Y^j_n(\hat y),
\end{equation}
where
\begin{equation} \label{Fdef}
  F(r) \; := \; \frac{e^{ikr}}{\beta + r^2},
  \quad r \geq 0,
\end{equation}
for some $\beta>0$ and with $n=0$ or $1$, and $j=0,...,n$, where the functions $Y^j_n$ are
spherical harmonics of order $n$ defined on the unit circle
$\Omega\subset\real^2$ by
\begin{equation} \label{Ydef?}
Y^0_0(\hat y) := 1, \quad Y^0_1(\hat y) := \cos \theta, \quad Y^1_1(\hat y)
:= \sin \theta,\quad \hat y=(\cos\theta,\sin\theta)\in\Omega.
\end{equation}


We now recall a result from \cite{chandpottheim} needed for the proof.

 \begin{lemma} \label{lem:bound}
 If $\ell$ is given by
 (\ref{eq:radial2}) and (\ref{Fdef}) with
$\beta>0$ and $n=j=0$ or $n=1$ and $j=0$ or $1$, then $\cF \ell\in
L^\infty(\real^2)$. Further in the case that we replace $k$ by $k+i\epsilon$ in the definition of $\ell$ to get a new function $\ell_{\epsilon}$ say, then the result still holds and moreover $\Vert \cF \ell -\cF \ell_{\epsilon}\Vert_{L^\infty(\mathbb{R}^2)} \to 0$ as $\epsilon \to 0$.  
\end{lemma}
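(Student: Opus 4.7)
The strategy is to reduce $\cF\ell$ to an explicit one-dimensional oscillatory integral via a Hankel-type representation, and then to bound that integral uniformly in the transform variable. Writing $\tilde y = r(\cos\theta,\sin\theta)$ and $\xi = |\xi|(\cos\psi,\sin\psi)$ and using the Jacobi--Anger expansion
\[
e^{-ir|\xi|\cos(\theta-\psi)} = \sum_{m\in\Z}(-\ri)^m J_m(r|\xi|)\,e^{im(\theta-\psi)},
\]
the fact that each $Y^j_n$ for $(n,j)\in\{(0,0),(1,0),(1,1)\}$ is a trigonometric polynomial of degree $\le 1$ in $\theta$ means that angular integration picks out only the $m=0$ or $m=\pm1$ terms. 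After a constant rearrangement this gives
\[
\cF\ell(\xi) \;=\; Z^j_n(\hat\xi)\,I_n(|\xi|), \qquad I_n(\rho) := \int_0^\infty \frac{e^{\ri kr}\,J_n(r\rho)\,r}{\beta+r^2}\,dr,
\]
where $|Z^j_n(\hat\xi)|\le 1$. Hence everything reduces to proving that $I_0,I_1$ are bounded on $[0,\infty)$.

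Second, I would split $I_n(\rho)=\int_0^1+\int_1^\infty$. The piece on $[0,1]$ is bounded by $\int_0^1 r/(\beta+r^2)\,dr$, using $|J_n|\le 1$. For the tail I would substitute the large-argument asymptotics $J_n(t)=\sqrt{2/(\pi t)}\cos(t-n\pi/2-\pi/4)+O(t^{-3/2})$; the $O(t^{-3/2})$ remainder gives an integrand bounded by $C/(r^{3/2}\sqrt{\rho})$ on $[1,\infty)$, contributing $O(1/\sqrt{\rho})$ (and bounded for $\rho\le 1$ by redoing the estimate with $|J_n(r\rho)|\le 1$). The principal term reduces the problem to uniformly bounding
\[
\int_1^\infty \frac{e^{\ri(k\pm\rho)r}\sqrt{r}}{\beta+r^2}\,dr, \qquad \rho\ge 0.
\]
For $\rho$ bounded away from $k$ this is immediate by one integration by parts against the non-stationary phase. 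For $\rho$ near $k$ (the only delicate case) a single integration by parts still works: the boundary term is $O(1)$ uniformly, while the new integrand contains the derivative of $\sqrt{r}/(\beta+r^2)$, which is $O(r^{-5/2})$ at infinity and hence absolutely integrable \emph{irrespective} of the phase. Putting these pieces together yields a constant bound on $|I_n(\rho)|$ and hence $\cF\ell\in L^\infty(\real^2)$.

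Third, for the limiting absorption part I would repeat the same splitting with $k$ replaced by $k+\ri\epsilon$; writing $F_\epsilon(r)=e^{(\ri k-\epsilon)r}/(\beta+r^2)$, the $[0,1]$-piece converges uniformly in $\rho$ by dominated convergence since $|F_\epsilon-F|\to 0$ boundedly there. On $[1,\infty)$ I repeat the integration-by-parts estimate with the new phase $(k+\ri\epsilon\pm\rho)r$; the damping factor $e^{-\epsilon r}$ only improves the bounds, and the post-IBP integrand is dominated by an $\epsilon$-independent $L^1$ function. Dominated convergence then delivers $I_n^\epsilon(\rho)\to I_n(\rho)$ uniformly in $\rho$, which translates directly to $\|\cF\ell_\epsilon-\cF\ell\|_{L^\infty(\real^2)}\to 0$.

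The main obstacle is the uniform estimate near the critical frequency $\rho=k$, where the oscillatory factor $e^{\ri(k-\rho)r}$ becomes stationary and the naive absolute bound on $\int_1^\infty \sqrt{r}/(\beta+r^2)\,dr$ diverges logarithmically. Handling this requires at least one integration by parts, relying on the fact that although the amplitude $\sqrt{r}/(\beta+r^2)$ is not absolutely integrable, its derivative \emph{is}; this is what turns the borderline logarithmic divergence into a bound that is uniform in $\rho$ and also in the absorption parameter $\epsilon\ge 0$.
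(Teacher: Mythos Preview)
The paper does not itself prove this lemma; it is quoted from \cite{chandpottheim}. Your Hankel-transform reduction to the one-dimensional integrals $I_n(\rho)=\int_0^\infty e^{\ri kr}J_n(r\rho)\,r/(\beta+r^2)\,dr$ is the natural approach and matches what is done there.

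There are, however, two issues in your analysis. First, a miscalculation: $\sqrt{r}/(\beta+r^2)\sim r^{-3/2}$ at infinity, so $\int_1^\infty \sqrt{r}/(\beta+r^2)\,dr$ \emph{converges} absolutely; the region $\rho\approx k$ that you flag as the ``main obstacle'' is therefore harmless, and no integration by parts is needed once the Bessel asymptotics are in force. Second, and this is the genuine gap: for $n=0$ and small $\rho$ the large-argument asymptotic of $J_0(r\rho)$ is not valid on $[1,\infty)$ (it requires $r\rho\gg 1$), and your fallback ``$|J_0|\le 1$'' leads to $\int_1^\infty r/(\beta+r^2)\,dr=\infty$. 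Here one must exploit the oscillation of $e^{\ri kr}$ itself: a single integration by parts on $[1,\infty)$ against $e^{\ri kr}$ leaves $(\ri k)^{-1}\int_1^\infty e^{\ri kr}\,\tfrac{d}{dr}\bigl(J_0(r\rho)\,r/(\beta+r^2)\bigr)\,dr$ plus an $O(1)$ boundary term, and both pieces of the derivative are absolutely integrable uniformly in $\rho\in[0,1]$ (use $J_0'=-J_1$ together with $|J_1(t)|\le C\min(t,t^{-1/2})$). The same device, with $k$ replaced by $k+\ri\epsilon$, then yields the uniform convergence needed for the limiting-absorption claim. For $n=1$ the small-$\rho$ regime is unproblematic since $|J_1(t)|\le Ct$ near $0$ already supplies enough decay.
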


With these preliminaries in place,             
we now prove a lemma on the mapping
properties of $A_1$. Together, lemmas \ref{lem:local} and
\ref{lem:global} provide a proof of Theorem \ref{btheo}.
\begin{lemma} \label{lem:global}
$A_1$ is a bounded operator on $L^2(\Gamma)$.
\end{lemma}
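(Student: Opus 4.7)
The plan is to exploit the fact that the global part integrates away from the singularity of the kernel, so the only remaining difficulty is the slow decay of $G(x,y)$ at infinity, for which the expansions (\ref{Gdelta exp}) and (\ref{Gdeltanu exp}) give a precise description. Concretely, writing $x=(\tilde x,f(\tilde x))$, $y=(\tilde y,f(\tilde y))$, and using that $\nu(y)J_f(\tilde y)=(-\nabla f(\tilde y),1)$ is bounded componentwise, (\ref{Gdelta exp}) (respectively (\ref{Gdeltanu exp})) lets us split the kernel of $A_1$ on $\mathbb{R}^2\times\mathbb{R}^2$ as a finite sum of terms of the product--convolution form
$$
m_1(\tilde x)\,\ell(\tilde x-\tilde y)\,m_2(\tilde y),
$$
plus a remainder $r(\tilde x,\tilde y)$ with $|r(\tilde x,\tilde y)|\le C\chi(|\tilde x-\tilde y|)\,|\tilde x-\tilde y|^{-3}$. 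In each term the multipliers $m_1,m_2\in L^\infty(\mathbb{R}^2)$ because $f$ is bounded and $\nabla f\in L^\infty$ (Lipschitz), and the function $\ell$ has the shape $\chi(|\tilde y|)\,|\tilde y|^{-2}e^{ik|\tilde y|}Y^j_n(\hat y)$, with $n=j=0$ for the single-layer case and $n=1$, $j\in\{0,1\}$ for the principal part of the double-layer case, where the angular factor $(\tilde x-\tilde y)_i/|\tilde x-\tilde y|$ is exactly a first-order spherical harmonic evaluated at $\widehat{\tilde x-\tilde y}$.

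To control each product--convolution term I would compare $\ell$ to the smooth model $F(|\tilde y|)Y^j_n(\hat y)$ from (\ref{Fdef}), writing
$$
\ell(\tilde y)=F(|\tilde y|)Y^j_n(\hat y)+R(\tilde y).
$$
A direct calculation gives $|R(\tilde y)|\le \beta\,|\tilde y|^{-4}$ for $|\tilde y|\ge 1$ and $|R(\tilde y)|\le \beta^{-1}$ for $|\tilde y|<1$, hence $R\in L^1(\mathbb{R}^2)$ (in polar coordinates, $\int_1^\infty r^{-4}\cdot r\,dr<\infty$), and Young's inequality (\ref{Lnormbound2}) shows that the convolution with $R$ defines a bounded operator on $L^2(\mathbb{R}^2)$. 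For the $FY^j_n$ part, Lemma \ref{lem:bound} ensures $\mathcal{F}(FY^j_n)\in L^\infty(\mathbb{R}^2)$, whence boundedness on $L^2(\mathbb{R}^2)$ of the associated product--convolution operator follows from (\ref{Lnormbound}) with norm at most $2\pi\|m_1\|_\infty\|\mathcal{F}(FY^j_n)\|_\infty\|m_2\|_\infty$ (an inspection of the proof of (\ref{Lnormbound}) shows only the $L^\infty$ bounds on $m_1,m_2$ are needed, not continuity, which is the point that lets the Lipschitz case go through exactly as the Lyapunov case in \cite{chandpottheim}).

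For the remainder $r(\tilde x,\tilde y)$, its modulus is dominated by $s(\tilde x-\tilde y):=C\chi(|\cdot|)|\cdot|^{-3}\in L^1(\mathbb{R}^2)$, so (\ref{Lnormbound2}) yields $L^2(\mathbb{R}^2)$-boundedness of the corresponding integral operator. Finally, the passage from $L^2(\mathbb{R}^2)$ bounds to $L^2(\Gamma)$ bounds is routine because $1\le J_f\le L'$: pulling back $\phi\in L^2(\Gamma)$ to the function $\tilde y\mapsto \phi(\tilde y,f(\tilde y))$ on $\mathbb{R}^2$ gives an equivalent norm, and the factors of $J_f$ picked up from the surface measure simply modify $m_2$ by a bounded function. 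The hard part is largely bookkeeping: keeping track of the several terms in the $K$-expansion (\ref{Gdeltanu exp}) and checking that each angular factor matches one of the harmonics $Y^0_0,Y^0_1,Y^1_1$ covered by Lemma \ref{lem:bound}; the genuine analytic input reduces to that lemma together with Young's inequality, and the Lipschitz (as opposed to Lyapunov) regularity of $\Gamma$ plays no role in the global estimate because all the geometric dependence enters only through the bounded multipliers $f$, $\nabla f$ and $J_f$.
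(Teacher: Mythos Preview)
Your argument is correct and is essentially the same as the paper's: both split the global kernel into product--convolution terms of the form $m_1(\tilde x)\ell(\tilde x-\tilde y)m_2(\tilde y)$ handled via Lemma~\ref{lem:bound} and (\ref{Lnormbound}), plus an $L^1$-dominated remainder handled by Young's inequality (\ref{Lnormbound2}). The only cosmetic difference is that the paper writes $l^*$ directly with the smooth weight $(1+|\tilde x-\tilde y|^2)^{-1}$ (so Lemma~\ref{lem:bound} applies immediately) and absorbs both the cutoff and the $|\tilde y|^{-2}$ versus $(1+|\tilde y|^2)^{-1}$ discrepancy into a single remainder $l$ bounded by $(1+|\tilde y|)^{-3}$, whereas you keep the cutoff in $\ell$ and peel off the difference as a separate $R\in L^1$; your observation that (\ref{Lnormbound}) needs only $m_1,m_2\in L^\infty$ rather than $BC$ is exactly the ``trivial change'' the paper alludes to for the Lipschitz case.
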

\begin{proof}
 From the decompositions
(\ref{Gdelta exp}) and (\ref{Gdeltanu exp}) it follows that the
kernel $a_1$ of $A_1$ can be written, in both the cases $A=S$ and
$A=K$, in the form
\begin{equation} \label{a1split}
a_1(x,y) = l^*(\tilde x,\tilde y ) + l(\tilde x,\tilde y ),
\end{equation}
where $l^*$ is a sum of terms each of the form (\ref{prodconvmult}),
with $m_1,m_2\in BC(\real^2)$ and $\ell$ given by
 (\ref{eq:radial2}) and (\ref{Fdef}) with
$\beta=1$, and with $n=0$ or $1$. Further, $l^*$ can be chosen so
that $l$ satisfies the bound, for some constant $C>0$,
\begin{equation} \label{lbbound}
|l(\tilde x,\tilde y )| \le C \tilde\ell(\tilde x-\tilde y ), \quad \tilde x,\tilde y \in\real^2,
\end{equation}
where $\tilde\ell(\tilde y ):= (1+|\tilde y |)^{-3}$, so that $\tilde\ell\in
L^1(\real^2)$. In detail, in the case $A=S$ we see from (\ref{Gdelta
exp}) that an appropriate choice is to take
\begin{equation} \label{la}
l^*(\tilde x,\tilde y ) = -\frac{ik f(\tilde x)f(\tilde y )}{2\pi}
\frac{e^{ik|\tilde x-\tilde y |}}{1+|\tilde x-\tilde y |^2},
\end{equation}
while, in the case $A=K$ we see from (\ref{Gdeltanu exp}) that we
can take
\begin{equation} \label{lab}
l^*(\tilde x,\tilde y ) = -\frac{k^2 f(\tilde x)f(\tilde y )}{2\pi}\bnu(y)\cdot
\frac{\tilde x-\tilde y }{|\tilde x-\tilde y |} \frac{e^{ik|\tilde x-\tilde y |}}{1+|\tilde x-\tilde y |^2}
-\frac{ik f(\tilde x)\nu_3(y)}{2\pi}
\frac{e^{ik|\tilde x-\tilde y |}}{1+|\tilde x-\tilde y |^2}.
\end{equation}
 It follows from (\ref{Lnormbound}) and Lemma
\ref{lem:bound} applied to the integral operator with kernel $l^*$,
and (\ref{Lnormbound2}) applied to the integral operator with kernel
$l$, that $A_1$ is a bounded operator on $L^2(\Gamma)$.
\end{proof}
%
At this point we introduce some notation that will help us to emphasize the underlying surface on which the operators $S$ and $K$ given by (\ref{SDelta}) and (\ref{KDelta_2}) respectively are defined: We will write $S$ and $K$ as $S_f$ and $K_f$ respectively if the integrals in (\ref{SDelta}) and (\ref{KDelta_2}) are defined over the surface $\Gamma$ given by (\ref{Gamma_def_chap5}).

\begin{remark}\label{uniform_bound}
Examining the proofs of lemmas \ref{lem:global} and \ref{lem:local} and also the proof of theorem \ref{Wavelets_them} we see that the norms of the operators $S$ and $K$ depend, in terms of the underlying surface $\Gamma$, only on the constants $L$ and $L'$ and also on the maximum height of the Lipschitz function. This means that, given constants $0<C_1<C_2$, the operators $S_h$ and $K_h$ are uniformly bounded on $L^2(\Gamma)$ for all $h \in  B$ where 
\begin{eqnarray*}
B(C_1,C_2):=\{ f:\mathbb{R}^2 \to \mathbb{R}   : C_1\leq f \leq C_2 \mbox{ and } f \mbox{ is Lipschitz with constant L }\}.
\end{eqnarray*} 
We will make use of this fact in lemma \ref{contthe} below. 
\end{remark}


%

%


%
%


\section{Properties of the layer-potentials}

As part of the proof of Theorem \ref{chap5_them} we need to show that our
modified single- and double-layer potentials $u_1$ and $u_2$, over
the unbounded surface $\Gamma$, satisfy certain properties that we wish our solution to have.
We begin with the following lemma.
\begin{theorem}
\label{jr thm} Let $u_1$ and $u_2$ denote the single- and
double-layer potentials with density
  $\phi\in L^2(\Gamma)$, defined by (\ref{SLP}) and (\ref{DLP}), respectively.
  It holds that:

  (i) For $n=1,2$, $u_n\in C^2(D)$ and $\Delta u_n + k^2 u_n = 0$ in $D$;
  
  (ii) Given constants $C_2>C_1>0$ and $\epsilon>0$, 
  there exists a constant $C_{\epsilon}>0$ such that
    \begin{equation} \label{layerbound2}
    |u_n(x)| \leq C_{\epsilon}\norm{\phi}_{L^2(\Gamma)}  , \quad n=1,2,
  \end{equation}
  for all $x\in D$ with $|x_3-f(x_1,x_2)| >\epsilon$, all $\phi\in L^2(\Gamma)$, and all $f \in B(C_1,C_2)$.

  (iii) For $u_1$ and $u_2$ we have the non-tangential jump relations:
  \begin{equation}\label{jump rel 1}
    \mathrm{n.t.}\lim_{y \to x}{u_{1}(y)} \; = \;  \int_{\Gamma} G(x,y) \phi(y) \; ds(y),
    \quad x \in \Gamma,
  \end{equation}
  and
  \begin{equation}\label{jump rel 2}
   \mathrm{n.t.}\lim_{y\to x} u_{2}(y) \; = PV \int_{\Gamma} \frac{\partial G(x,y)}
    {\partial \nu(y)} \phi(y) \; ds(y) \; + \;
    \frac{1}{2} \phi(x), \quad x \in \Gamma,
  \end{equation}



\end{theorem}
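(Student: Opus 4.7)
The plan is to handle the three parts in increasing order of difficulty. Parts (i) and (ii) will follow from the bound (\ref{Gbound}) on $G$, analogous pointwise bounds for $\partial G/\partial\nu(y)$ (readable off the expressions in Section 5.3, in particular (\ref{Gdeltapartialnu})), and applications of the Cauchy--Schwarz inequality. The real work is in part (iii), where I will reduce the jump relations to Theorem \ref{Wavelets_them_2} of Verchota by peeling off smooth and weakly singular remainders.

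For part (i), I will fix $x\in D$ and observe that $G(x,\cdot)$ and $\partial G(x,\cdot)/\partial\nu$ are smooth on $\Gamma$ (since $x\neq y$ and $x\neq y'$ for $y\in\Gamma$) and that, by (\ref{Gbound}) and analogous bounds on $x$-derivatives, the kernels and all their $x$-derivatives up to order two lie in $L^2(\Gamma)$ as functions of $y$, locally uniformly in $x\in D$. This legitimises differentiation under the integral sign, giving $u_n\in C^2(D)$, and since $\Phi(\cdot,y)$ and $\Phi(\cdot,y')$ both solve the Helmholtz equation off their singular points, so does $G(\cdot,y)$, whence $(\Delta+k^2)u_n=0$ in $D$.

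For part (ii), Cauchy--Schwarz reduces the bound to estimating $\|G(x,\cdot)\|_{L^2(\Gamma)}$ (and the analogous norm for the normal derivative kernel). Using (\ref{Gbound}),
\[
\int_\Gamma |G(x,y)|^2\,ds(y)\le C^2(1+x_3)^2\int_{\mathbb{R}^2}\frac{(1+f(\tilde y))^2\,J_f(\tilde y)}{|x-y|^4}\,d\tilde y.
\]
The hypothesis $|x_3-f(\tilde x)|>\epsilon$ together with the Lipschitz constant $L$ of $f$ forces $|x-y|\ge c(L)\epsilon$ whenever $|\tilde x-\tilde y|\le\epsilon/(2L)$, while $|x-y|\ge|\tilde x-\tilde y|$ otherwise; splitting the integral at this radius and using $f\le C_2$, $J_f\le L'$ bounds it by a constant depending only on $\epsilon,L,C_1,C_2$. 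The double-layer kernel is treated in exactly the same fashion using the analogous pointwise estimate obtained from (\ref{Gdeltanu exp}) and (\ref{Gdeltapartialnu}).

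Part (iii) is the substantive step and I expect it to be the main obstacle. The single-layer case (\ref{jump rel 1}) is the easier of the two: the kernel $G(x,y)$ differs from $\Phi(x,y)$ by $-\Phi(x,y')$, whose singularity as $x\to\Gamma$ is removed by the reflection $y\mapsto y'$ since $y'_3=-f(\tilde y)<0\le f_-\le x_3$; meanwhile $\Phi(x,y)$ has only the weak singularity $O(|x-y|^{-1})$, so dominated convergence combined with the local $L^2$-estimate underlying Lemma \ref{lem:local} yields the non-tangential limit. For the double-layer case (\ref{jump rel 2}), I will write
\[
\frac{\partial G(x,y)}{\partial\nu(y)}=\frac{\partial\Phi_0(x,y)}{\partial\nu(y)}+\left[\frac{\partial\Phi(x,y)}{\partial\nu(y)}-\frac{\partial\Phi_0(x,y)}{\partial\nu(y)}\right]-\frac{\partial\Phi(x,y')}{\partial\nu(y)},
\]
where $\Phi_0(x,y)=1/(4\pi|x-y|)$ is the Laplace fundamental solution. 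The last term is smooth across $\Gamma$ as in part (i). The middle bracket has kernel $O(|x-y|^{-1})$ (the $k|x-y|e^{ik|x-y|}$ expansion cancels the $|x-y|^{-3}$ singularity down by one order), so the corresponding operator extends continuously to $\Gamma$ by the same weakly singular argument used for $u_1$. The remaining term is exactly $(4\pi)^{-1}\mathcal{F}\phi$ evaluated at $x$, whose non-tangential boundary behaviour is given precisely by Theorem \ref{Wavelets_them_2} as $\tfrac12\phi(x)+(4\pi)^{-1}(T\phi)(x)$; assembling the pieces recovers (\ref{jump rel 2}).

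The main obstacle will be justifying the continuity-across-$\Gamma$ claims for the weakly singular pieces in the non-tangential sense (rather than merely in an $L^2$ or tangential sense), since we cannot appeal to uniform continuity of $u_n$ up to $\Gamma$. I expect to handle this by a truncation/approximation argument: approximating $\phi\in L^2(\Gamma)$ by bounded, compactly supported densities $\phi_n$, for which the corresponding potentials are continuous up to $\Gamma$ by classical estimates, and then controlling the tail via a maximal-function bound (obtained from Theorem \ref{Wavelets_them_2} for the principal-value piece and from $L^2\to L^2$ boundedness of the weakly singular remainder together with the geometric constant $\alpha$ from (\ref{Geom_sig}) for the non-tangential approach).
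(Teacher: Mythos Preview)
Your treatment of parts (i) and (ii) is fine and essentially parallel to the paper's argument; the paper actually proves (ii) first and then deduces (i) by a density argument (compactly supported $\phi$ plus uniform convergence from (ii)), but your direct differentiation-under-the-integral route is equally valid.

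There is, however, a genuine gap in your plan for part (iii). Your decomposition
\[
\frac{\partial G(x,y)}{\partial\nu(y)}
=\frac{\partial\Phi_0(x,y)}{\partial\nu(y)}
+\Big[\frac{\partial\Phi(x,y)}{\partial\nu(y)}-\frac{\partial\Phi_0(x,y)}{\partial\nu(y)}\Big]
-\frac{\partial\Phi(x,y')}{\partial\nu(y)}
\]
correctly isolates the singularity at $x=y$, but it destroys the cancellation at infinity that makes $G$ usable in the first place. Each of the last two pieces decays only like $|x-y|^{-1}$ as $|\tilde y|\to\infty$ (the leading term of $\nabla\Phi$ is $ik\,e^{ik|x-y|}(x-y)/(4\pi|x-y|^{2})$), and $|\,\cdot\,|^{-1}\notin L^2(\real^2)$; hence neither of those integrals is even defined for a general density $\phi\in L^2(\Gamma)$. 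The same objection applies to your single-layer splitting $G=\Phi-\Phi(\cdot,y')$: each summand is only $O(|x-y|^{-1})$ at infinity. It is precisely the combination in $G$ that produces the $|x-y|^{-2}$ decay of (\ref{Gbound}) and (\ref{Gdeltanu exp}).

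The paper avoids this by inserting a cut-off $\chi_c(|x-y|)$ before decomposing: the far-field piece carries the full kernel $G$ (or $\partial G/\partial\nu$) and is handled using the $|x-y|^{-2}$ estimates, while only the compactly-supported near-field piece is split into the Laplace principal-value term plus weakly singular remainders. On that local piece your program goes through exactly as you describe, and the maximal-function argument you sketch (Theorem \ref{Wavelets_them_2} for the PV part, (\ref{Geom_sig}) for the rest) is indeed what is needed. So the fix is not to change your strategy but to precede it with a local/global splitting; without that, the individual terms in your decomposition are not bounded operators on $L^2(\Gamma)$ and the argument cannot start.
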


\begin{proof}

To prove (ii), we recall that $G(x,y)$ satisfies the bound (\ref{Gbound}) and point out
that, by interior elliptic regularity estimates for solutions of the
Helmholtz equation (e.g.\ \cite[Lemma 2.7]{chandprs98}), it follows
that $\nabla_yG(x,y)$ satisfies the same bound 
with a different constant $C$. Precisely, if $ a(x,y)$ denotes the kernel of $u_1$ or $u_2$, then for every $\epsilon>0$ there exists $C_{\epsilon}>0$  such that  
\begin{equation} \label{bound_on_a2}
|a(x,y)| \leq C_{\epsilon} \frac{(1+x_3)(1+y_3)}{1+|x-y|^2},
\end{equation}
for all $x,y\in\real^3$ with $x_3,y_3\geq 0$ and $|x-y|\geq
\epsilon$. 
Applying Cauchy-Schwarz, 
we see that it holds that
$$
|u_n(x)| \leq C_\epsilon (1+C_2)I(x)
\norm{\varphi}_{L^2(\Gamma)},\quad n=1,2,
$$
for all $x\in D$ with $|x_3-f(x_1,x_2)|\geq \epsilon$ and all
$f\in B(C_1,C_2)$, where
\begin{eqnarray*}
[I(x)]^2 &=& (1+x_3)^2\int_{\Gamma}\frac{ds(y)}{(1+|x-y|^2)^2}\\
&\leq &(1+x_3)^2L'
\int_{\real^2}\frac{d\tilde y}{(1+|\tilde x-\tilde y|^2+ (x_3-f(\tilde y))^2)^2}.
\end{eqnarray*}
Thus, for some constant $c>0$ it holds, for all $x\in\{y:y_3>0\}$
and all $f\in B(C_1,C_2)$, that $ [I(x)]^2 \leq cL'F(x_3) $ where
$$
F(x_3) := (1+x_3)^2\int_0^\infty \frac{r\;dr}{(1 + x_3^2+r^2)^2} =
\frac{(1+x_3)^2}{x_3^2}\int_0^\infty \frac{s\;ds}{(x_3^{-2} +
1+s^2)^2} \,.
$$
Clearly $F$ is bounded on $[0,\infty)$. Thus the first term
satisfies the bound (\ref{layerbound2}). 

We now establish (i). This is clear when
$\phi$ is compactly supported. The general case follows from the
density in $L^2(\Gamma)$ of the set of those elements that
are compactly supported, from the bound (\ref{layerbound2}), and
from the fact that limits of uniformly convergent sequences of
solutions of the Helmholtz equation satisfy the Helmholtz equation
(e.g. \cite[Remark 2.8]{chandprs98}).

To prove (iii), we make use of the following continuous \emph{cut-off function}. Let $\chi_c:[0,\infty) \to \mathbb{R}$ be a continuous function with 
\begin{equation}\label{eq:chi_c}
  \chi_c(t) :=
    \begin{cases}
      0, & t < 1/2\\
      1, & t \geq 1.
    \end{cases}
 \qquad \text{and} \qquad
  0 \leq \chi_c(t) \leq 1,\quad \forall t \geq 0.
\end{equation}
Let $u$ denote one of $u_1$ and $u_2$, and let $a$ denote the kernel
of $u$ so that $a(x,y):=G(x,y)$ and $a(x,y):=\partial
G(x,y)/\partial \nu(y)$ in the respective cases. We have, for $x\in
D$, that
\begin{eqnarray}
\hspace*{-2ex} u(x)
  & = & \int_{\Gamma}
  \chi_c(|x-y|)a(x,y)\phi(y)\;ds(y)  + \int_\Gamma\big[1-\chi_c(|x-y|)\big]
    a(x,y) \phi(y) \; ds(y). \nonumber
\end{eqnarray}
The first term has a continuous kernel that is bounded at infinity
by the estimate (\ref{Gdelta exp}) or (\ref{Gdeltanu exp}), and,
since $\phi\in L^2(\Gamma)$, is continuous in $\{x:x_3>0\}$. Thus there is no problem in computing its value on $\Gamma$.
To deal with the second term suppose initially we are in the single-layer case. The kernel splits into two parts; since the term 
$$
[1-\chi_c(x-y)]\frac{e^{ik|x-y'|}}{|x-y'|}
$$
is continuous and has compact support it follows that
\[
\int_{\Gamma}[1-\chi_c(x-y)]\frac{e^{ik|x-y'|}}{|x-y'|}\phi(y)ds(y)
\]
is continuous in $\{x:x_3>0\}$.  We are thus left to deal with 
\[
 \mathrm{n.t.}\lim_{x^n\to x}\int_{\Gamma}[1-\chi_c(x^n-y)]\frac{e^{ik|x^n-y|}}{|x^n-y|}\phi(y)ds(y).
\]
In the case that $\phi$ is a smooth function with compact support, so that it belongs to $L^{\infty}(\Gamma)$, the dominated convergence theorem implies that
\begin{equation}\label{ntjr_1}
\mathrm{n.t.}\lim_{x^n\to x}\int_{\Gamma}[1-\chi_c(x^n-y)]\frac{e^{ik|x^n-y|}}{|x^n-y|}\phi(y)ds(y)= \int_{\Gamma}[1-\chi_c(x-y)]\frac{e^{ik|x-y|}}{|x-y|}\phi(y)ds(y).   
\end{equation}
The dominated convergence theorem is applicable due to the bound, that follows from (\ref{Geom_sig}) and the triangle inequality, that for $x \in \Gamma$ and for all $x^n \in \Theta(x)$,
and all $y \in \Gamma$ that 
\begin{equation}\label{Geom_sig_+1}
|x-y|\leq (\alpha+1)|x^n-y|. 
\end{equation}
To prove (\ref{ntjr_1}) for general $\phi \in L^2(\Gamma)$ it is sufficient, that the maximal operator $H^*(x):= \sup_{y \in \Theta(x)}|H(y)|$ is bounded on $L^2(\Gamma)$, where  $H(z)$, $z \in \mathbb{R}^3$ is given by 
\[
H(z) = \int_{\Gamma} [1-\chi_c(z-y)]\frac{e^{ik|z-y|}}{|z-y|}\phi(y)ds(y).
\]
But this is straightforward; as is seen by using again the bound (\ref{Geom_sig_+1}) so that the kernel of $H$, $h(z,y)$ say, satisfies that
\begin{equation}
|h(z,y)| \le [\alpha+1]\ell(\tilde x-\tilde y), \quad x,y\in\Gamma, \; x\neq y,
\end{equation}
where
\begin{equation}\label{well}
\ell(\tilde y) := \left\{\begin{array}{cc}
               |\tilde y|^{-1}, & |\tilde y|\leq 1, \\
               0, & |\tilde y|>1. \\
             \end{array}\right.
\end{equation}
Since $\ell\in L^1(\real^2)$, we see
from (\ref{Lnormbound2}) that $H^*$ is a bounded operator on
$L^2(\Gamma)$.

For the double-layer case we see from (\ref{Gdeltapartialnu}) that the kernel is composed of parts that are continuous with compact support and other singular parts. We need only examine the singular parts. In the first place, for the layer potential with kernel 
\[
-[1-\chi_c(x-y)]ik\nu(y)\cdot(x-y)\frac{e^{ik|x-y|}}{|x-y|}
\]
a similar argument as above shows that 
\begin{eqnarray}\label{ntjr_2}
&&\mathrm{n.t.}\lim_{x^n\to x}\int_{\Gamma}[1-\chi_c(x^n-y)]ik\nu(y)\cdot(x^n-y)\frac{e^{ik|x^n-y|}}{|x^n-y|}\phi(y)ds(y)\\&&= \int_{\Gamma}[1-\chi_c(x-y)]ik\nu(y)\cdot(x-y)\frac{e^{ik|x-y|}}{|x-y|}\phi(y)ds(y).   
\end{eqnarray}
We now examine $$
\mathrm{n.t.}\lim_{x^n \to x} (\mathcal{K}_1\phi)(x^n)
$$ where 
\[
(\mathcal{K}_1\phi)(x^n)=\int_{\Gamma}[1-\chi_c(|x^n-y|)]\nu(y)\cdot(x^n-y)\frac{e^{ik|x^n-y|}}{|x^n-y|^3}ds(y).
\]
Firstly,  since 
$$
 e^{ik|x^n-y|} = 1 + ik|x^n-y| + \frac{(ik|x^n-y|)^2}{2!} + \dots, 
 $$ 
we have that
\begin{equation}
 (\mathcal{K}_1\phi)(x^n) = (\mathcal{K}_2\phi)(x^n) +  (\mathcal{K}_3\phi)(x^n),
\end{equation}
where 
\begin{equation}
(\mathcal{K}_2\phi)(x^n)= \int_{\mathbb{R}^2}[1-\chi_c(|x^n-y|)] \nu(y)\cdot \frac{(x^n-y)}{|x^n-y|^3}\phi(\tilde y) J_f(\tilde y) d \tilde y,
\end{equation}
and 
\begin{equation}
(\mathcal{K}_3\phi)(x^n) = \int_{\mathbb{R}^2}[1-\chi_c(|x^n-y|)] \nu(y)\cdot (x^n-y)\frac{
e^{ik|x^n-y|} -1}{|x^n-y|^3} \phi(\tilde y) J_f(\tilde y)d \tilde y. 
\end{equation}
Since the kernel of $\mathcal{K}_3$ is bounded by $C \ell(\tilde x - \tilde y)$, given by (\ref{well}), for some $C>0$, it follows again, that $\mathrm{n.t.}\lim_{x^n \to x}(\mathcal{K}_3\phi)(x^n) = (\mathcal{K}_3\phi)(x)$. 

We rewrite $\mathcal{K}_2$ as 
\begin{equation}
(\mathcal{K}_2\phi)(x^n)=(\mathcal{K}_4\phi)(x^n) - (\mathcal{K}_5\phi)(x^n)
\end{equation}
where
\begin{equation}
(\mathcal{K}_4\phi)(x^n)=  \int_{\mathbb{R}^{2} } \nu(y)\cdot \frac{(x^n-y)}{|x^n-y|^3} \phi(\tilde y) J_f(\tilde y) d \tilde y, 
\end{equation}
and where  
\begin{equation} 
(\mathcal{K}_5\phi)(x^n)
= \int_{\mathbb{R}^{2} } \chi_c(|x^n-y|)\nu(y)\cdot \frac{(x^n-y)}{|x^n-y|^3} \phi(\tilde y) J_f(\tilde y) d \tilde y.
\end{equation}
That 
\[
\lim_{x^n \to x}\mathcal{K}_4(x^n)= \frac{4\pi}{2}\phi(x) + PV\int_{\Gamma}\frac{(x-y)\cdot\nu(y)}{|x-y|^3}\phi(y)ds(y)
\]
is just the statement of theorem \ref{Wavelets_them_2}. Whereas $(\mathcal{K}_5\phi)$ is continuous on $\{x:x_3>0\}$. The proof is complete.

\end{proof}

%
We next establish that the solution we construct via the combined layer potential satisfies the `$v_T'$' boundedness condition. 
\begin{lemma}\label{u_prime_lemma}
For $x \in D$, let $v(x)= u_2(x) - i\eta u_1(x)$ where $u_1$ and $u_2$ denote the single- and
double-layer potentials with density
  $\phi\in L^2(\Gamma)$ defined by (\ref{SLP}) and (\ref{DLP}) respectively.
  Then for all $T\geq f_+$ the function $v_T' \in L^2(\Gamma).$
\end{lemma}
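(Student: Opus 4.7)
The plan is to decompose $v = v^g + v^l$ via the horizontal cutoff $\chi(|\tilde x - \tilde y|)$ of (\ref{eq:chi}), and bound $v_T'$ by two operators that are bounded on $L^2(\Gamma)$. The crucial geometric observation is that for any $x = (\tilde x, f(\tilde x))\in\Gamma$ and any $t > f(\tilde x)$, the vertical point $y = (\tilde x, t)$ lies in the non-tangential cone $\Theta(x)$, since $y_3 - f(\tilde x) = t - f(\tilde x) > 0 = L^*|\tilde y - \tilde x|$. Consequently
\[
v_T'(x) \;\leq\; \sup_{y\in\Theta(x),\; f_-\le y_3\le T}\bigl(|v^g(y)| + |v^l(y)|\bigr),
\]
and it suffices to bound the non-tangential maximal functions of $v^g$ and $v^l$ (restricted to $y_3\le T$) in $L^2(\Gamma)$.

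For the global part $v^g$, the expansions (\ref{Gdelta exp}) and (\ref{Gdeltanu exp}), together with (\ref{Gbound}) and the corresponding bound on $\nabla_y G$ used in the proof of Theorem \ref{jr thm}(ii), show that on $\{y : |\tilde x-\tilde y|\ge 1,\; f_-\le y_3\le T\}$ the kernels of $u_1$ and $u_2$ are dominated pointwise by $C_T\,\ell(\tilde x-\tilde y)$, where $\ell$ is a sum of a term of the form $F(r)Y_n^j(\hat y)$ of Lemma \ref{lem:bound} and a term in $L^1(\real^2)$ with bound $C(1+|\tilde x-\tilde y|)^{-3}$. The key point is that these dominating kernels are independent of the vertical coordinate $y_3\in[f_-,T]$. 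Hence by (\ref{Lnormbound}), (\ref{Lnormbound2}) and Lemma \ref{lem:bound} (precisely as in Lemma \ref{lem:global}), the map $\phi \mapsto \sup_{y\in\Theta(x),\,y_3\le T}|v^g(y)|$ is bounded on $L^2(\Gamma)$.

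For the local part $v^l$, I would further split $v^l = u_2^l - i\eta u_1^l$. For the single-layer local part $u_1^l$, the kernel is dominated by $\ell(\tilde x - \tilde y)$ as in (\ref{kernel A2}), and using the triangle inequality and (\ref{Geom_sig}) the non-tangential maximal function of $u_1^l$ is dominated by the convolution operator with kernel $(\alpha+1)\ell(\tilde x-\tilde y)$, bounded on $L^2$ by (\ref{Lnormbound2}). For the double-layer local part $u_2^l$ one uses the decomposition of $4\pi\,\partial G/\partial\nu(y)$ in (\ref{Gdeltapartialnu}) that was already exploited in the proof of Lemma \ref{lem:local}: the leading term yields the Verchota-type kernel $\nu(y)\cdot(x-y)/|x-y|^3$, whose non-tangential maximal function $F^*$ is bounded on $L^2(\Gamma)$ by Theorem \ref{Wavelets_them_2}; all remaining pieces (the reflected $\Phi(x,y')$ contribution, the $e^{ik|x-y|}-1$ factor, and the tail where $|\tilde x-\tilde y|\ge 1$) have kernels dominated by $\ell$ plus an $L^1(\real^2)$ convolution kernel, so their non-tangential maxima are again bounded on $L^2(\Gamma)$ by (\ref{Lnormbound2}).

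The main obstacle is the local part of the double-layer, because the principal-value kernel is not absolutely integrable; this is handled precisely by invoking Theorem \ref{Wavelets_them_2} (Verchota's non-tangential maximal estimate for the Cauchy-type operator on a Lipschitz graph), which replaces the compactness-based arguments available in the Lyapunov case. Combining the three bounds gives $\|v_T'\|_{L^2(\Gamma)} \le C_T \|\phi\|_{L^2(\Gamma)}$, completing the proof.
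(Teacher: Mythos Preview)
Your overall strategy matches the paper's: split via $\chi(|\tilde x-\tilde y|)$, handle the global pieces through the expansions (\ref{Gdelta exp})--(\ref{Gdeltanu exp}) and Lemma~\ref{lem:bound}, and for the local double-layer reduce to the Laplace kernel $\nu(y)\cdot(z-y)/|z-y|^3$ so that Theorem~\ref{Wavelets_them_2} controls its non-tangential maximal function. That part is fine.

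The gap is in your treatment of the ``tail where $|\tilde x-\tilde y|\ge 1$''. After writing $(1-\chi)\cdot(\text{Verchota kernel}) = (\text{full Verchota}) - \chi\cdot(\text{Verchota kernel})$ and invoking $F^*$ for the first term, you must still control the subtracted piece
\[
m_5(\tilde x,t)\;=\;\int_{\Gamma}\chi(|\tilde x-\tilde y|)\,\frac{\nu(y)\cdot((\tilde x,t)-y)}{|(\tilde x,t)-y|^3}\,\phi(y)\,ds(y).
\]
Its kernel is only $O(|\tilde x-\tilde y|^{-2})$ for $|\tilde x-\tilde y|\ge 1$, which lies in $L^2(\real^2)$ but \emph{not} in $L^1(\real^2)$; indeed the putative dominating convolution kernel has $\int_1^\infty r^{-2}\cdot r\,dr=\infty$, and its Fourier transform blows up at the origin. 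So neither (\ref{Lnormbound2}) nor (\ref{Lnormbound}) applies, and pointwise domination cannot give an $L^2(\Gamma)\to L^2(\Gamma)$ bound for $(m_5)'_T$. This is precisely the obstruction that forced the elaborate argument for $K_5$ in Lemma~\ref{lem:local}, and the same device is needed here.

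The paper closes this gap as follows. Since the kernel of $m_5$ is in $L^2$ in $\tilde y$, Cauchy--Schwarz gives $(m_5)'_T:L^2(\Gamma)\to L^\infty(\Gamma)$ with a bound independent of $t\le T$. One then partitions $\phi=\sum_{n\in\Z^2}\phi_n$ into unit-square pieces and uses that whenever $\dist(\tilde x,\supp\phi_n)\ge 1$ one has $(m_5\phi_n)(\tilde x,t)=(m_4\phi_n)(\tilde x,t)$. This yields
\[
(m_5\phi)(\tilde x,t)=\sum_{n\in\mathcal N(\tilde x)}(m_5\phi_n)(\tilde x,t)+(m_4\phi)(\tilde x,t)-\sum_{n\in\mathcal N(\tilde x)}(m_4\phi_n)(\tilde x,t),
\]
with $|\mathcal N(\tilde x)|\le 9$. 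The middle term is controlled by Theorem~\ref{Wavelets_them_2}, the last by Theorem~\ref{Wavelets_them_2} applied to each $\phi_n$ and summing squares, and the first by the $L^2\to L^\infty$ bound on each unit square. This is the missing ingredient in your sketch.
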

\begin{proof}
We first of all consider the global part of the layer potentials (c.f.\ the proof of lemma \ref{lem:global}). Fix $T\geq f_+$. For the single-layer potential, 
we define, for $\phi \in L^2(\Gamma)$, $x =(\tilde x,f(\tilde x))\in \Gamma$ and $T\geq t> f(\tilde x)$, \begin{eqnarray*}
p(\tilde x, t):& = & \int_{\Gamma} \chi(|(\tilde x- \tilde y|)G((\tilde x, t),y)\phi(y)ds(y). \end{eqnarray*}
Using the expansion (\ref{Gdelta exp}), we may write $p(\tilde x,t)$ as 
\begin{eqnarray*}
p(\tilde x, t)=t\int_{\Gamma} -\frac{1}{2\pi}\frac{ik e^{ik|\tilde x-\tilde y|}}{1 + |\tilde x-\tilde y|^2}f(\tilde y)\phi(y)ds(y)
+ \int_{\Gamma} l(\tilde x-\tilde y)\phi(y)ds(y),
\end{eqnarray*}
where $l$ satisfies the bound, for some constant $C>0$ independent of $t$,
\begin{equation} \label{lbbound_prime}
|l(\tilde x,\tilde y)| \le C \tilde\ell(\tilde x-\tilde y), \quad \tilde x,\tilde y\in\real^2,
\end{equation}
where $\tilde\ell(\tilde y):= (1+|\tilde y|)^{-3}$, so that $\tilde\ell\in
L^1(\real^2)$. It now follows as in the proof of lemma \ref{lem:global} that $p'_T \in L^2(\Gamma)$: (\ref{Lnormbound}) and Lemma
\ref{lem:bound} applied to the integral operator
\[
t\int_{\Gamma} -\frac{1}{2\pi}\frac{ik e^{ik|\tilde x-\tilde y|}}{1+|\tilde x-\tilde y|^2}f(\tilde y)\phi(y)ds(y),
\]
and (\ref{Lnormbound2}) applied to the integral operator with kernel
$l$, show this to be true.

We argue in a similar vain for the global part of the double-layer potential. 
We define, for $\phi \in L^2(\Gamma)$, $x=(\tilde x,f(\tilde x)) \in \Gamma$ and $T\geq t> f(\tilde x),$
\begin{eqnarray*}
q(\tilde x, t):& = &\int_{\Gamma} \chi(|\tilde x-\tilde y|)\frac{\partial G((\tilde x,t),y)}{\partial \nu(y)}ds(y),
\end{eqnarray*}
and then again, using the expansion (\ref{Gdeltanu exp}) we rewrite this as 
\begin{eqnarray*}
q(\tilde x, t)
&& =  t\int_{\Gamma} \left[-\frac{1}{2\pi}k^2 \bnu(y)\cdot\frac{(\tilde x-\tilde y)}{|\tilde x-\tilde y|}\frac{e^{ik|\tilde x-\tilde y|}}{1+|\tilde x-\tilde y|^2}f(\tilde y) \right.\\&&\left.- \frac{ik\nu_3(y)}{2\pi}\frac{e^{ik|\tilde x-\tilde y|}}{1+ |\tilde x-\tilde y|^2}\right]\phi(y)ds(y)\\
&&+ \int_{\Gamma}l(\tilde x-\tilde y)ds(y),
\end{eqnarray*}
where $l$ satisfies (\ref{lbbound_prime}) for some constant $C>0$ independent of $ t  $. 
Again, it now follows as in the proof of lemma \ref{lem:global} that $q'_T \in L^2(\Gamma)$.

We now turn our attention to the local part of the layer potentials (c.f.\ the proof of lemma \ref{lem:local}). For the single layer potential we wish to consider, for $\phi \in L^2(\Gamma)$, $x=(\tilde x,f(\tilde x)) \in \Gamma$ and $T\geq t> f(\tilde x)$,
\begin{eqnarray*}
s(\tilde x, t):= \int_{\Gamma}[1- \chi(|\tilde x-\tilde y|)]G((\tilde x, t), y)ds(y).
\end{eqnarray*}
For $x \in \Gamma$, making use of the inequality (\ref{Geom_sig}), a simple application of the triangle inequality shows that for any $z \in \Theta(x)$ and for all $y \in \Gamma$ 
\begin{equation} \label{Cone_Geom}
|x-y|\leq [\alpha +1] |z-y|.
\end{equation} 
It follows,
exactly as in the proof of lemma \ref{lem:local}, that we have that for some constant
$C>0$,
\begin{equation} \label{a2bound_prime}
|[1-\chi(|\tilde x-\tilde y|)]G((\tilde x, t),y)| \le C\ell(\tilde x-\tilde y), \quad x,y\in\Gamma, \; x\neq y,
\end{equation}
where
\begin{equation}
\ell(\tilde y) := \left\{\begin{array}{cc}
               |\tilde y|^{-1}, & |\tilde y|\leq 1, \\
               0, & |\tilde y|>1. \\
             \end{array}\right.
\label{kernel A2_prime}
\end{equation}
Since $\ell\in L^1(\real^2)$, we see
from (\ref{Lnormbound2}) that $s'_T \in L^2(\Gamma)$.

For the local part of the double layer operator, we define for $\phi \in L^2(\Gamma)$, $x=(\tilde x,f(\tilde x)) \in \Gamma$ and $T\geq t>f(\tilde x)$,
\[
w(\tilde x,t):= \int_{\Gamma} [1-\chi(|\tilde x-\tilde y|)]\frac{\partial G((\tilde x,t),y)}{\partial \nu(y)} \phi(y) ds(y). 
\]
Making use of (\ref{Gdeltapartialnu}), we see that
\[
4\pi \frac{\partial G((\tilde x,t),y)}{\partial \nu(y)} = \nu(y)\cdot ((\tilde x,t)-y)\frac{e^{ik|(\tilde x,t)-y|}}{|(\tilde x,t)-y|^3} + r((\tilde x,t),y),
\]
where, the local part of $r((\tilde x,t),y)$, $[1- \chi(|\tilde x-\tilde y|)]r((\tilde x,t),y) := r_2((\tilde x,t),y)$ is such that, for some constant $C>0$
\begin{equation} \label{a2bound!}
|r_2((\tilde x,t),y)| \le C\ell(\tilde x-\tilde y), \quad x,y\in\Gamma, \; x\neq y,
\end{equation}
where $\ell$ is given by (\ref{kernel A2_prime}). Here again we have made use of the inequality (\ref{Cone_Geom}). It follows as above, that if $R_2(\tilde x,t)$ denotes the layer potential with kernel $r_2((\tilde x,t),y)$ then ${R_2}'_T$ is bounded on $L^2(\Gamma)$. 

We now focus on 
the quantity $m(\tilde x,t)$, defined for $\phi \in L^2(\Gamma)$, $x=(\tilde x,f(\tilde x)) \in \Gamma$ and $T\geq t> f(\tilde x)$ by  
\begin{equation}
m(\tilde x,t) :=  \int_{\Gamma}[1- \chi(|\tilde x-\tilde y|)] \nu(y)\cdot ((\tilde x,t)-y)\frac{e^{ik|(\tilde x,t)-y|}}{|(\tilde x,t)-y|^3} \phi(y)ds(y).
\end{equation}
We proceed as in the proof of lemma \ref{lem:local}.
Firstly recall that 
$$ e^{ik|(\tilde x,t)-y|} = 1 + ik|(\tilde x,t)-y| + \frac{(ik|(\tilde x,t)-y|)^2}{2!} + \dots. 
$$ 
Then if
\begin{equation}
 m(\tilde x,t)= m_2(\tilde x,t) +  m_3(\tilde x,t),
\end{equation}
where 
\begin{equation}
m_2(\tilde x,t)= \int_{\Gamma}[1- \chi(|\tilde x-\tilde y|)] \frac{\nu(y)\cdot ((\tilde x,t)-y)}{|(\tilde x,t)-y|^3} \phi( y) ds(y),
\end{equation}
and where 
\begin{equation}
m_3(\tilde x,t) =  \int_{\Gamma}[1- \chi(|\tilde x-\tilde y|)] \nu(y)\cdot ((\tilde x,t)-y) \frac{e^{ik|x-y|}-1}{|(\tilde x,t)-y|^3} \phi( y) ds(y),
\end{equation}
then since the kernel of $m_3$ is bounded by $C\ell(\tilde x - \tilde y)$, for some $C>0$ and with $\ell$ given by (\ref{kernel A2_prime}), it follows again, that ${m_3}_T'$ is bounded on $L^2(\Gamma)$. 

We rewrite $m_2$ as 
\begin{equation}
m_2(\tilde x,t)=m_4(\tilde x,t) - m_5(\tilde x,t)
\end{equation}
where
\begin{equation}
m_4(\tilde x,t)=  \int_{\Gamma} \frac{\nu(y)\cdot ((\tilde x,t)-y)}{|(\tilde x,t)-y|^3} \phi(y)ds(y),
\end{equation}
and where  
\begin{equation} 
m_5(\tilde x,t)
= \int_{\Gamma}\chi(|\tilde x-\tilde y|) \frac{\nu(y)\cdot ((\tilde x,t)-y)}{|(\tilde x,t)-y|^3} \phi(y)ds(y).
\end{equation}
That ${m_4}'_T$ is in $L^2(\Gamma)$ follows from theorem \ref{Wavelets_them_2} which states that in fact $m_4^*$ is in $L^2(\Gamma)$. Thus, to complete the proof we need to show that ${m_5}_T'$ is also in $L^2(\Gamma)$.

To emphasize its dependence on $\phi$ we write $(m_5\phi)(\tilde x,t)$ instead of just $m_5(\tilde x,t)$. We begin by noting that ${m_5}'_T$, viewed as an operator acting on $\phi$, is bounded from $L^2(\Gamma)$ into $L^{\infty}(\Gamma)$. We use the bound (\ref{Cone_Geom}), and then, just as in the proof of lemma \ref{lem:local} we see that for all $x \in \Gamma$, a simple application of the Cauchy-Schwarz inequality yields that 
\begin{equation}
|{m_5}'_T(x)| \leq [\alpha +1]^{2}\mathcal{C} \Vert \phi\Vert_{L^2(\Gamma)},
\end{equation}
with $\mathcal{C}$ given by 
\begin{equation}
\mathcal{C}= L'\left\{\int_{G}\frac{1}{|\tilde x -\tilde y|^4}  d \tilde y \right\}^{\frac{1}{2}},
\end{equation}
where 
\[
G=\mathbb{R}^2 \backslash B_{1}(\tilde x)
\]
so that $C$ is finite and bounded independently of $\tilde x$, as one sees by changing the last integral to polar coordinates and evaluating it.   

Now, for each $n= (n_1,n_2) \in \mathbb{Z}^2$ we let $\Lambda_n$ be the indicator function such that if $\tilde x \in \mathbb{R}^2$ is such that $n_1\leq x_1<n_1 +1$ and such that $ n_2 \leq x_2 <n_2 +1$ then $\Lambda_n(\tilde x)=1 $ and which is $0$ otherwise. Then, letting $\phi_n :=\phi \Lambda_n$ for $\phi \in L^2(\Gamma)$ we have that 
\[
\phi= \sum_{n \in \mathbb{Z}^2} \phi_n. 
\]
Now, for $\tilde x \in \mathbb{R}^2$ we let $\mathcal{N}(\tilde x)$ be the set of those $n\in \mathbb{Z}^2$ such that 
$$
\mathrm{dist} (\tilde x,\supp(\phi_n)) <1.
$$
Note that $ \mathcal{N}(\tilde x)$ contains no more than $9$ elements, and also, that 
\[
\left(m_5 \sum_{m \notin \mathcal{N}(\tilde x)}\phi_m\right)(\tilde x,t) = \left(m_4\sum_{m \notin \mathcal{N}(\tilde x)}\phi_m\right)(\tilde x,t).
\]
Thus, for $\tilde x \in \mathbb{R}^2$, $T\geq t > f(\tilde x)$, 
\begin{eqnarray*}
(m_5\phi)(\tilde x,t) 
& = &\sum_{n \in\mathcal{N}(\tilde x)}(m_5\phi_n)(\tilde x,t)+ \left(m_5\sum_{n \not\in\mathcal{N}(\tilde x)}\phi_n\right)(\tilde x,t)\\&=&\sum_{n \in\mathcal{N}(\tilde x)}(m_5\phi_n)(\tilde x,t)+ \left(m_4\sum_{n \not\in\mathcal{N}(\tilde x)}\phi_n\right)(\tilde x,t)\\&=& \sum_{n \in\mathcal{N}(\tilde x)}(m_5\phi_n)(\tilde x,t)+ (m_4\phi)(\tilde x,t)\\& -& \sum_{n \in\mathcal{N}(\tilde x)}(m_4\phi_n)(\tilde x,t).
\end{eqnarray*}
In what follows we use that for $a_j\geq 0, j=1,\dots, n$
\[
(a_1 + \dots  + a_n)^2 \leq n(a_1^2 + \dots +a_n^2),
\]
and that $J_f \leq L'$,
and we define for $m \in \mathbb{Z}^2$, 
$$
T(m):=\{n \in \mathbb{Z}^2:\dist(\supp\Lambda_m,\supp\Lambda_n)<1\}.
$$
So
\begin{eqnarray*}
&&\int_{\mathbb{R}^2}|({m_5}'_T\phi)(x)|^2 J_f(\tilde x) d \tilde x \\&&\leq 3\left\{  \int_{\mathbb{R}^2}\sup_{T\geq t>f(\tilde x)}\left|\sum_{n \in\mathcal{N}(\tilde x)}(m_5\phi_n)(\tilde x,t)\right|^2 J_f(\tilde x) d \tilde x + \Vert {m_4}'_T\phi\Vert^2_{L^2(\Gamma)}
\right.\\&& 
+ \left. \int_{\mathbb{R}^2}\sup_{T\geq t> f(\tilde x)}\left |\sum_{n \in\mathcal{N}(\tilde x)}(m_4\phi_n)(\tilde x,t)\right|^2 J_f(\tilde x) d \tilde x\right\} \\&&= 3\left\{ \sum_{m \in \mathbb{Z}^2} \int_{\supp(\phi_m)}\sup_{T\geq t> f(\tilde x)}\left|\sum_{n \in\mathcal{N}(\tilde x)}(m_5\phi_n)(\tilde x,t)\right|^2 J_f(\tilde x) d \tilde x + \Vert {m_4}'_T\phi\Vert_{L^2(\Gamma)}^2 \right.\\&&\left.+\sum_{m \in \mathbb{Z}^2} \int_{\supp(\phi_m)}\sup_{T\geq t> f(\tilde x)}\left|\sum_{n \in\mathcal{N}(\tilde x)}(m_4\phi_n)(\tilde x,t)\right|^2 J_f(\tilde x) d \tilde x\right\}\\&& \leq 3\left\{ 9\sum_{m \in \mathbb{Z}^2} \int_{\supp(\phi_m)}\sum_{n \in T(m)}\sup_{T\geq t\geq f(\tilde x)}|(m_5\phi_n)(\tilde x,t)|^2 J_f(\tilde x) d \tilde x + \Vert {m_4}'_T\phi\Vert_{L^2(\Gamma)}^2 \right.\\&&\left.+9\sum_{m \in \mathbb{Z}^2} \int_{\supp(\phi_m)}\sum_{n \in T(m)}\sup_{T\geq t> f(\tilde x)}|(m_4\phi_n)(\tilde x,t)|^2 J_f(\tilde x) d \tilde x\right\}\\ &&\leq 3\left\{ 9\sum_{m \in \mathbb{Z}^2} [\alpha +1]^{4}\mathcal{C}^2L'\sum_{n \in T(m)}\Vert \phi_n\Vert_{L^2(\Gamma)}^2+ \Vert {m_4}'_T\phi\Vert_{L^2(\Gamma)}^2\right.\\ &&+ \left.9\sum_{m \in \mathbb{Z}^2}\sum_{n \in T(m)} \Vert{m_4}'_T\phi_n\Vert^2_{L^2(\Gamma)}\right\} \\&& \leq 3\left\{ 81[\alpha+1]^{4}\mathcal{C}^2L'\sum_{m \in \mathbb{Z}^2}\Vert \phi_m\Vert_{L^2(\Gamma)}^2+ \Vert {m_4}'_T\Vert^2\Vert\phi\Vert_{L^2(\Gamma)}^2 \right.\\&+& \left.9\sum_{m \in \mathbb{Z}^2}\sum_{n \in T(m)}\Vert {m_4}'_T\Vert^2 \Vert\phi_n\Vert_{L^2(\Gamma)}^2\right\} \\
&&\leq3[ 81[\alpha +1]^{4}\mathcal{C}^2L' + \Vert {m_4'}_T \Vert^2 + 81 \Vert {m_4'}_T\Vert^2] \Vert \phi\Vert^2_{L^2(\Gamma)}.
\end{eqnarray*}
The proof is complete.
\end{proof}
\begin{remark}
In the above proof we showed that for all $T\geq f_+$, there exists $C>0$ such that 
\begin{equation}\label{u'_bd}
\Vert v_T' \Vert_{L^2(\Gamma)} \leq C \Vert \phi\Vert_{L^2(\Gamma)},
\end{equation}
whenever $v(x)$ is given by 
\begin{equation} \label{the_ansatz}
v(x)= u_2(x) - i\eta u_1(x)
\end{equation}
 where $u_1$ and $u_2$ denote the single- and
double-layer potentials with density
  $\phi\in L^2(\Gamma)$, defined by (\ref{SLP}) and (\ref{DLP}), respectively . In fact if $v$ is given by (\ref{the_ansatz}) but with $k$ replaced by $k+ i\epsilon$ for $\epsilon \in [0,1]$ in (\ref{SLP}) and (\ref{DLP}), then, examining the proof of lemma \ref{u_prime_lemma} we see that the same bound holds with the same constant $C>0$. We'll make use of this fact in the next lemma. 
\end{remark}

We next show, by mimicking part of the proof of lemma 3.3 of \cite{chandpottheim2}, that the solution we construct via the combined layer-potential satisfies the radiation condition.
\begin{lemma}\label{rad_lemma}
Let $v(x)$ be given by (\ref{the_ansatz}) with $u_1(x)$ defined by (\ref{SLP}) and $u_2(x)$ defined by (\ref{DLP}). Then for all $H>f_+$, $v$ satisfies the radiation condition (\ref{uprcstar}) with $F_H=v|_{\Gamma_H}$ (and with $k_+$ replaced by $k$).
\end{lemma}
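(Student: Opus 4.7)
The plan is to reduce the statement to a pointwise verification of the Fourier form of the UPRC for the kernels $G(\cdot,y)$ and $\partial G(\cdot,y)/\partial\nu(y)$, and then interchange the order of integration. Fix $H>f_+$ throughout; since $\Gamma\subset\{x:f_-\le x_n\le f_+\}$, for every $y\in\Gamma$ both $y$ and its reflection $y'$ satisfy $y_n,y'_n<H$, so the two point sources lie strictly below $\Gamma_H$.

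First I would establish the ingredients: $v|_{\Gamma_H}\in L^2(\Gamma_H)$ (this is immediate from Lemma~\ref{u_prime_lemma} applied at any $T>H$, or directly from the bound (\ref{Gbound}) and Cauchy--Schwarz), and for each fixed $y\in\Gamma$, the functions $x\mapsto G(x,y)$ and $x\mapsto \partial G(x,y)/\partial\nu(y)$ restricted to $\Gamma_H$ lie in $L^2(\Gamma_H)$. Next, for each such fixed $y$, I would verify the Weyl--type plane-wave representation
\begin{equation*}
G(x,y)=\frac{1}{(2\pi)^{(n-1)/2}}\int_{\real^{n-1}}\exp\!\bigl(\ri[(x_n-H)\sqrt{k^2-\xi^2}+\tx\cdot\xi]\bigr)\widehat{G(\cdot,y)|_{\Gamma_H}}(\xi)\,d\xi,\quad x\in U_H,
\end{equation*}
and the analogous identity for $\partial G(\cdot,y)/\partial\nu(y)$. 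These are standard: writing $G=\Phi(\cdot,y)-\Phi(\cdot,y')$ and using the classical Weyl identity for $\Phi$ (valid because $y_n,y'_n<H$, so the source is below the evaluation plane) gives an explicit expression for $\widehat{G(\cdot,y)|_{\Gamma_H}}(\xi)$ involving $\exp(\ri(H-y_n)\sqrt{k^2-\xi^2}\,)$ and $\exp(\ri(H+y_n)\sqrt{k^2-\xi^2}\,)$, which decay for $|\xi|>k$ and stay bounded for $|\xi|<k$.

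Then I would substitute these representations into the definition (\ref{udef})--(\ref{DLP}) of $v(x)$ for $x\in U_H$ and apply Fubini's theorem to interchange the $y$-integration over $\Gamma$ with the $\xi$-integration over $\real^{n-1}$. This yields
\begin{equation*}
v(x)=\frac{1}{(2\pi)^{(n-1)/2}}\int_{\real^{n-1}}\exp\!\bigl(\ri[(x_n-H)\sqrt{k^2-\xi^2}+\tx\cdot\xi]\bigr)\,\hat F_H(\xi)\,d\xi,
\end{equation*}
with $\hat F_H(\xi)$ identified as $\mathcal{F}(v|_{\Gamma_H})(\xi)$ by uniqueness of the Fourier transform together with the $L^2(\Gamma_H)$-convergence of the layer potentials to their trace on $\Gamma_H$ (which on a flat plane is the classical $L^2$-trace). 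This is precisely (\ref{uprcstar}) with $F_H=v|_{\Gamma_H}$.

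The main obstacle I anticipate is justifying the Fubini interchange rigorously. The integrand $\exp(\ri(x_n-H)\sqrt{k^2-\xi^2})\widehat{G(\cdot,y)|_{\Gamma_H}}(\xi)$ decays rapidly in $\xi$ for $x_n>H$ (exponentially once $|\xi|>k$, with rate $(x_n-H)\sqrt{\xi^2-k^2}\to\infty$) and, as a function of $y\in\Gamma$, is controlled by the decay in (\ref{Gbound}); combining these with $\phi\in L^2(\Gamma)$ and Cauchy--Schwarz should give absolute integrability on $\Gamma\times\real^{n-1}$ for each fixed $x\in U_H$. If the direct Fubini estimate is awkward for the double-layer term (owing to the extra factor of $|\xi|$ from the $y$-derivative of the Weyl kernel), I would fall back on a density argument: first prove the identity for $\phi\in C_0^\infty(\Gamma)$, where all manipulations are classical, then take an $L^2(\Gamma)$-limit using the continuity of the maps $\phi\mapsto v|_{\Gamma_H}$ (via Lemma~\ref{u_prime_lemma}) and $\phi\mapsto v(x)$ for each fixed $x\in U_H$ (via the bound (\ref{bound_on_a2})), together with the $L^2(\Gamma_H)$-continuity of $\mathcal{F}$ and of the propagator $\hat F_H\mapsto \hat{\mathcal{R}}(x,\cdot,k)$ (whose kernel is an $L^\infty$ multiplier for each $x_n>H$).
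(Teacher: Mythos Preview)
Your approach is correct but genuinely different from the paper's. The paper does not compute the Weyl representation of the kernel at all; instead it argues by uniqueness on the flat half-space $U_H$. Concretely, it defines $u(x):=\hat{\mathcal R}(x,v|_{\Gamma_H},k)$ via (\ref{uprcstar}) and shows that both $u$ and $v$, restricted to $\overline{U_H}$, satisfy the boundary value problem of \cite{chandpottheim} posed on the flat surface $\Gamma_H$ (with boundary data $g=v|_{\Gamma_H}\in X=L^2\cap BC(\Gamma_H)$ and, crucially, the limiting absorption principle verified by replacing $k$ by $k+i\epsilon$ in the Green's function). The uniqueness Theorem~\ref{chandpottheim_result1} then forces $u=v$ in $U_H$. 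What the paper's route buys is that it avoids any Fubini justification and any explicit Fourier computation with the fundamental solution; the price is that one must check the full list of hypotheses of \cite{chandpottheim}, including the limiting absorption principle and that $\hat\psi_H\in L^1(\real^2)$ so that $u$ is bounded. Your route is more elementary and self-contained: once you know the Weyl identity for $\Phi$ (hence for $G$ and $\partial G/\partial\nu$) above the source, the density argument you outline goes through cleanly, since for $\phi\in C_0^\infty(\Gamma)$ the exponential decay $e^{-(x_n-H)\sqrt{\xi^2-k^2}}$ for $|\xi|>k$ and the integrable $1/\sqrt{k^2-\xi^2}$ singularity on $|\xi|=k$ make the Fubini interchange rigorous, and the passage to general $\phi\in L^2(\Gamma)$ uses exactly the continuity facts you list.
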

\begin{proof}
Fix $H > f_+$. Let $v(x)$ be given by (\ref{the_ansatz}) and define, for $x \in {U_H}$  
\begin{equation}\label{u_rad}
u(x):= \frac{1}{2\pi}\int_{\mathbb{R}^2}\exp(i[(x_3 - H) \sqrt{k^2 -\xi^2} + \tilde x\cdot \xi])\hat{\psi_H}(\xi)d\xi,
\end{equation}
where $\psi_H:=v|_{\Gamma_H}$, $\hat{\psi_H}$ denotes the Fourier transform of $\psi_H$ and where $\sqrt{k^2-\xi^2}=i\sqrt{\xi^2-k^2}$ for $|\xi|>k$. Note that, since $v_T'\in L^2(\Gamma)$ for any $T\geq f_+$ by lemma \ref{u_prime_lemma}, it follows that $\psi_H \in L^2(\Gamma_H)$ so that its Fourier transform is well-defined. 
We'll show that $u=v$ in $U_H$.

 We note first that $v$ restricted to $\overline{U_H}$ satisfies the boundary value problem of \cite{chandpottheim} -- that we wrote down toward the end of section 2 of this chapter -- in the case that we set $\Gamma= \Gamma_H$, $g = \psi_H$, and define $g_{\epsilon}$ to be the restriction of $v$ to $\Gamma_H$ when $v$ is defined by (\ref{the_ansatz}) but with $k$ replaced by $k+i\epsilon$ in the definition of the Dirichlet Green's function $G$. (Note that $g_{\epsilon} \to g$ in $L^2(\Gamma_H)$ as $\epsilon \to 0$ because it is easy to show that $g_{\epsilon} \to g$ pointwise and then we can apply the dominated convergence theorem, both $g$ and $g_{\epsilon}$ being dominated by $v_T'$ for any $T\geq H$, and with $v_T'$ being bounded by (\ref{u'_bd}).) Indeed, that $g \in L^2(\Gamma_H)$ follows from lemma \ref{u_prime_lemma}. Also $g$ is continuous. In addition using the bounds 
\[
|G(x,y)| ,|\nabla_y G(x,y)| \leq C \frac{(1+x_3)(1+y_3)}{|x-y|^2}
\]
that follow from (\ref{Gbound}) and interior elliptic regularity estimates (c.f.\ the proof of lemma \ref{jr thm} (ii)), and noting that since $H-f_+:= \delta > 0$, $|x-y| >\delta$, for $x \in \overline{U_H}$ and $y \in \Gamma$, a simple application of the Cauchy-Schwarz inequality shows that $g$ is bounded, and so belongs to the space $X: =L^2(\Gamma)\cap BC(\Gamma)$ of \cite{chandpottheim}.  Moreover $v \in C^2(U_H) \cap C(\overline{U_H})$, satisfies the Helmholtz equation in $U_H$ and the Dirichlet boundary condition. That $v|_{\overline{U_H}}$ satisfies the bound (\ref{bound_on_v}) follows from lemma \ref{jr thm}(ii). Finally one can show that the limiting absorption principle holds; by applying the dominated convergence theorem for example, with $v_{\epsilon}$ being given by (\ref{the_ansatz}) but with $k$ replaced by $k+i\epsilon$ in the definition of the Dirichlet Green's function $G$. 

We next show that $u$ also satisfies this same boundary value problem. That $u \in C^2(U_H)\cap C(\overline{U_H})$ satisfies the Helmholtz equation and the Dirichlet boundary condition can be seen by harking back to lemma \ref{lemma3p2}. To show that $u$ satisfies the bound (\ref{bound_on_v}) it is sufficient to show that $\hat{\psi_H} \in L^1(\mathbb{R}^2)$. For this we note that $\psi_H \in C^2(\Gamma_H)$ and that by interior elliptic regularity estimates for solutions of the Helmholtz equation (e.g.\ \cite[Lemma 2.7]{chandprs98}) (c.f.\ the proof of lemma \ref{jr thm}(ii)) the second order partial derivatives of $v$ decay at least as rapidly as $|x|^{-2}$. Thus $\psi_H \in H^2(\Gamma_H)$, so that, by the Cauchy-Schwarz inequality
\begin{eqnarray*}
\left\{\int_{\mathbb{R}^2}|\hat{\psi_H}(\xi)|d\xi\right\}^2 &&\leq \int_{\mathbb{R}^2}(1+\xi^2)^{-2}d\xi\int_{\mathbb{R}^2}|\hat{\psi_H}(\xi )|^2(1+\xi^2)^2d\xi\\&& = \left[\int_{\mathbb{R}^2}(1+\xi^2)^{-2}d\xi\right]\Vert\psi_H\Vert_{H^2(\Gamma_H)}^2.
\end{eqnarray*}    
Finally it's easy to show that $u$ satisfies the limiting absorption principle with $u_{\epsilon}$ being defined by (\ref{u_rad}) but with $\psi_{H}= v_{\epsilon}|_{\Gamma_H}.$ 

It follows now by theorem \ref{chandpottheim_result1} that $u=v$ in $U_H$.
\end{proof}

\section{Uniqueness and existence results} \label{sec:exist}

In this section we prove uniqueness and existence for our integral equation
formulation and for the boundary value problem. 

We begin by proving uniqueness of solution for the boundary value problem. 

\begin{lemma}\label{unique}
The Boundary Value problem has at most one solution.
\end{lemma}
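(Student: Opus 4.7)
The plan is to reduce Lemma \ref{unique} to the variational uniqueness of chapter 2 (Theorem \ref{th_main1}) via a vertical-translation approximation that smooths the boundary behavior of $v$. The overarching idea is that, although $v$ need not have a Sobolev trace on $\Gamma$, its upward translates $V_\epsilon(x) := v(\tilde x, x_3 + \epsilon)$ do, with trace vanishing in $L^2(\Gamma)$ as $\epsilon \to 0$.

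First, assume $v$ solves the BVP with $g=0$ and set $V_\epsilon$ as above on $D^{-\epsilon} := \{x : x_3 > f(\tilde x)-\epsilon\}$, a domain strictly containing $\overline D$. Then $V_\epsilon \in C^2(\overline D)$, satisfies $\Delta V_\epsilon + k^2 V_\epsilon = 0$ in $D$, and inherits the UPRC on $\Gamma_H$ for each $H \ge f_+$ by Lemma \ref{lemma3p2}. Its trace $\psi_\epsilon := V_\epsilon|_\Gamma \in C(\Gamma)$ satisfies the pointwise bound $|\psi_\epsilon(\tilde x, f(\tilde x))| \le v'_T(\tilde x, f(\tilde x))$ for any $T \ge f_+ + \epsilon$, because the vertical approach point $(\tilde x, f(\tilde x)+\epsilon)$ lies in every non-tangential cone $\Theta((\tilde x, f(\tilde x)))$. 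Combined with the a.e.\ non-tangential vanishing of $v$ on $\Gamma$ and dominated convergence (using $v'_T \in L^2(\Gamma)$), this forces $\|\psi_\epsilon\|_{L^2(\Gamma)} \to 0$ as $\epsilon \to 0$.

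Next, I would verify $V_\epsilon|_{S_H} \in H^1(S_H)$ for each $H \ge f_+$: integrating $|V_\epsilon(\tilde x, x_3)|^2 \le |v'_{H+\epsilon}(\tilde x, f(\tilde x))|^2$ over $(f(\tilde x), H]$ and using $J_f \ge 1$ gives $V_\epsilon \in L^2(S_H)$, while interior Cauchy estimates for Helmholtz solutions on balls contained in $D^{-\epsilon}$ (where $V_\epsilon$ is $C^2$) yield pointwise bounds on $\nabla V_\epsilon$ in terms of $v'$, hence $\nabla V_\epsilon \in L^2(S_H)$. Then construct a lift $E_\epsilon \in H^1(S_H)$ of $\psi_\epsilon$, i.e., $E_\epsilon|_\Gamma = \psi_\epsilon$ and $E_\epsilon|_{\Gamma_H} = 0$, with
\[
\|E_\epsilon\|_{V_H} \le C\,\|\psi_\epsilon\|_{L^2(\Gamma)}.
\]
One workable construction uses the Dirichlet single-layer potential for the flat half-space: the map $\phi \mapsto u_1[\phi]|_{S_H}$ sends $L^2(\Gamma)$ into $H^1(S_H)$ boundedly by Theorem \ref{btheo} and Lemma \ref{jr thm}, and this, combined with the $L^2$-invertibility of the associated boundary operator (Theorem \ref{A_inverse}), produces the desired $L^2$-bounded right inverse of the trace map.

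Finally, set $U_\epsilon := V_\epsilon - E_\epsilon \in V_H$. Green's identity on $S_H$, together with the UPRC converting the $\Gamma_H$ boundary integral into the Dirichlet-to-Neumann operator $T$ via (\ref{exact_rc}), gives
\[
b(U_\epsilon,\phi) = -b(E_\epsilon,\phi) \quad \text{for all } \phi \in V_H,
\]
where $b$ is the sesquilinear form (\ref{sesqui}) of chapter 2. Theorem \ref{th_main1} of chapter 2 applies (since $k$ is a positive constant, Assumption 1 is vacuous, and the graph condition (\ref{dom_cond}) holds), so $\|U_\epsilon\|_{V_H} \le C\|E_\epsilon\|_{V_H} \le C'\|\psi_\epsilon\|_{L^2(\Gamma)} \to 0$, hence $\|V_\epsilon\|_{V_H} \to 0$. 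Since $V_\epsilon(x) \to v(x)$ pointwise for each fixed $x \in D$, this forces $v \equiv 0$.

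The main obstacle is the construction of the lift $E_\epsilon$: a standard $H^{1/2}\to H^1$ trace inverse would require $\|\psi_\epsilon\|_{H^{1/2}}$, whereas only the $L^2$-norm of $\psi_\epsilon$ is small. The single-layer-potential route sketched above is the natural remedy but requires the full mapping properties established in Sections 4--6 of this chapter. An alternative, avoiding the lift entirely, is a Rellich-identity argument on $D^\epsilon \cap U_H$ for an approximating smooth domain $D^\epsilon$ from Lemma \ref{smooth_Lip}: taking imaginary parts of Green's identity and invoking Lemma \ref{lemma3p2} bounds the propagating part $\int_{|\xi|<k}\sqrt{k^2-\xi^2}|\hat F_H(\xi)|^2 d\xi$ by $\|\psi_\epsilon\|_{L^2(\Gamma^\epsilon)}\|\partial_{\nu^\epsilon} v\|_{L^2(\Gamma^\epsilon)}$; a Verchota-type square-function estimate would then control $\|\partial_{\nu^\epsilon} v\|_{L^2(\Gamma^\epsilon)}$ uniformly in $\epsilon$, giving the same conclusion after Rellich's lemma and unique continuation.
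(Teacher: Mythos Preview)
Your strategy---approximating $v$ by regularized functions whose traces vanish in $L^2(\Gamma)$, then invoking variational uniqueness---is close in spirit to the paper's, but the lift step contains a genuine gap. To apply Theorem~\ref{th_main1} on the Lipschitz domain you need $E_\epsilon \in H^1(S_H)$ with $\|E_\epsilon\|_{V_H} \le C\|\psi_\epsilon\|_{L^2(\Gamma)}$, and your layer-potential construction does not deliver this. The operator $A$ of Theorem~\ref{A_inverse} is the boundary operator for the \emph{combined} potential $u_2 - i\eta u_1$, and the double-layer part $u_2$ with density in $L^2(\Gamma)$ is not in $H^1(S_H)$ in general: already for the Laplacian on a bounded Lipschitz domain, the harmonic extension of $L^2$ boundary data has $L^2$ non-tangential maximal function but lies outside $H^1$. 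Using only the single-layer potential $u_1$ would instead require $L^2$-invertibility of $S$, which the paper does not establish. There is no bounded right inverse $L^2(\Gamma)\to H^1(S_H)$ of the trace map; the natural pairing is $H^{1/2}(\Gamma)\to H^1(S_H)$, and you have not shown $\|\psi_\epsilon\|_{H^{1/2}(\Gamma)}\to 0$. Your alternative Rellich-identity route would need a uniform-in-$\epsilon$ bound on $\|\partial_{\nu}v\|_{L^2(\Gamma^\epsilon)}$, which is precisely the kind of estimate the chapter is building towards and cannot be assumed here.

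The paper sidesteps the lift problem by moving to smooth approximating domains rather than working on $D$. It takes Lyapunov surfaces $\Gamma_n$ (graphs of $f_n>f$ from Lemma~\ref{smooth_Lip}), uses the Lyapunov-case invertibility of $A_{f_n}$ (Theorem~\ref{chandpottheim_result2}) to represent $v|_{D_n}$ as a combined layer potential with density $\|\phi_n\|_{L^2(\Gamma_n)}\le B\|v_n\|_{L^2(\Gamma_n)}$, and verifies this representation via chapter~2 uniqueness on the \emph{smooth} domain $D_n$, where $C^1(\overline{D_n})$ regularity (obtained by a local Colton--Kress argument) makes membership in $V_{H_n}$ tractable. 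Finally, the uniform \emph{pointwise} bound (\ref{layerbound2}) of Theorem~\ref{jr thm}(ii) yields $|v(x)|\le C_\epsilon B\|v_n\|_{L^2(\Gamma_n)}\to 0$ for each fixed $x\in D$, with $\|v_n\|_{L^2(\Gamma_n)}\to 0$ by dominated convergence exactly as you argue. The essential point is that the BIE machinery on the smooth approximants converts $L^2$ boundary smallness directly into pointwise smallness, so no $L^2\to H^1$ lift on the Lipschitz domain is ever needed.
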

\begin{proof}
Let $v$ satisfy the boundary value problem and let $f:\mathbb{R}^2 \to \mathbb{R}$ denote the Lipschitz function with Lipschitz constant $L$ that defines $\Gamma$ (see (\ref{Gamma_def_chap5})). By lemma \ref{smooth_Lip} there exists a sequence of functions, $f_n:\mathbb{R}^2 \to \mathbb{R}$, such that each $f_n$ is Lyapunov, such that each $f_n$ is Lipschitz with Lipschitz constant $L$, such that the $f_n$ converge to $f$ in the norm $\Vert \cdot \Vert_{L^{\infty}(\mathbb{R}^2)}$, such that $f_n\geq f + \epsilon_n$ for some $\epsilon_n >0$, and such that the $f_n$ are decreasing. 
Let $\Gamma_n:=\{(\tilde x,f_n(\tilde x)):\tilde x \in \mathbb{R}^2\}$ and let $D_n:=\{(\tilde x,x_3):\tilde x \in \mathbb{R}^2, x_3>f_n(\tilde x)\}\subset D$. Let $v_n:= v|_{\Gamma_n}$, so that $v_n \in L^2(\Gamma_n)$ because
\[
\int_{\mathbb{R}^2}|v_n(\tilde x,f_n(\tilde x))|^2 \sqrt{1+|\nabla f_n(\tilde x)|^2}d\tilde x \leq  L' \int_{\mathbb{R}^2}|v_T'(\tilde x,f(\tilde x))|^2 \sqrt{1+|\nabla f(\tilde x)|^2}d\tilde x,
\]
for any $T>f_n$. Note that $v_n$ is also continuous and (c.f. the proof of lemma \ref{rad_lemma}) is bounded. Since $\Gamma_n$ is a Lyapunov curve, it follows by theorem \ref{chandpottheim_result2}, that there exists $\phi_n \in L^2(\Gamma_n)\cap BC(\Gamma_n)$ such that
\[
\phi_n = (I +K_{f_n} - i\eta S_{f_n})^{-1}v_n.
\]
Now, for $x \in D_n$ let 
\[
u(x)= \int_{\Gamma_n}\frac{\partial G(x,y)}{\partial \nu(y)}\phi_n(y)ds(y) -i\eta\int_{\Gamma_n} G(x,y)\phi_n(y)ds(y).
\]
Note that by theorem 5.5 of \cite{chandpottheim} $u \in C^2(D_n)\cap C(\overline{D_n})$, 
\[
\Delta u +k^2u= 0, \quad \mbox{ in } D_n
\]
and that $u|_{\Gamma_n}= v_n$. Further, by lemma \ref{rad_lemma} $u$ satisfies the radiation condition (\ref{uprcstar}) for $H>\Vert f_n\Vert_{L^{\infty}(\mathbb{R}^2)}$ and by lemma \ref{u_prime_lemma} $u_T' \in L^2(\Gamma_n)$ for all $T\geq \Vert f_n\Vert_{L^{\infty}(\mathbb{R}^2)}$. We now show that $u=v$ in $D_n$. To do this we'll show that in $D_n$, $w:=u-v$ satisfies the boundary value problem of chapter 2 in the case that $g=0$ and $k$ is constant; theorem \ref{th_main1} will then imply that $w=0$ in $D_n$. 

It's clear that $w \in C^2(D_n)\cap C(\overline{D_n})$ satisfies the homogeneous Helmholtz equation in $D_n$, that $w=0$ on $\Gamma_n$ and also that $w$ satisfies the radiation condition (\ref{uprcstar}) for $H\geq \Vert f_n\Vert_{L^{\infty}(\mathbb{R}^2)}$. Moreover for $T>H>\Vert f_n\Vert_{L^{\infty}(\mathbb{R}^2)}$ and where $S_{H_n}= D_n\backslash \overline{U_H}$ we have that
\begin{eqnarray*}
\int_{S_{H_n}}|v(x)|^2dx &=& \int_{\mathbb{R}^2}\int_{f_n(\tilde x)}^H|v(\tilde x,x_3)|^2dx_3d\tilde x
\\&\leq&\int_{\mathbb{R}^2}(H-f_n(\tilde x))|v_T'(\tilde x)|^2d\tilde x\leq (H-f_-)\Vert v_T'\Vert^2_{L^2(\Gamma)}.
\end{eqnarray*}
Thus $v \in L^2(S_{H_n})$ for all $H> \Vert f_n \Vert_{L^{\infty}(\mathbb{R}^2)}$, and a similar calculation shows that the same is true of $u$; thus it's also true of $w.$ We now show that since $w=0$ on $\Gamma_n$ it follows that $w$ is in the space $V_{H_n}$ as defined in chapter 2. 

We first of all observe that $w \in C^1(\overline{D_n})$ by adapting the proof of theorem 3.27.\ of \cite{Col83a}. We fix an arbitrary point $x$ of the boundary $\Gamma_n$, and we let $\Omega \subset D_n$ be a small neighbourhood of the boundary about $x$ such that it's boundary, $\partial \Omega$, is smooth and coincides with $\Gamma_n$ in such a way that $x$ is an interior point of $\partial \Omega$. 

We now solve the problem find $w^*:\Omega \to \mathbb{C}$ such that 
$w^* \in C^2(\Omega)$ 
\[
\Delta w^*+k^2w^*=0, \mbox{ in } \Omega,
\]
and such that $w^*=w$ on $\partial \Omega$
by letting $w^*$ take the form 
\[
w^*(x) = \int_{\partial \Omega}\frac{\partial \Phi(x,y)}{\partial \nu(y)}\phi(y), \quad x \in \Omega 
\]
where $\phi = (I+K_B)^{-1}w$, where $K_B$ is given by (\ref{K_B_def}) and where $I+K_B$ is invertible as a map on $C(\partial \Omega)$ because $\partial \Omega$ is smooth. Provided we choose $\Omega$ small enough then it follows by the uniqueness lemma, lemma 3.26.\ of \cite{Col83a}, for solutions of the Helmholtz equation in domains with small diameter that $w=w^*$ in $\Omega$. Further we note that the boundary data $w$ of the above problem can be decomposed as the sum $w=w_1 + w_2$ where $w_1$ is smooth, where $w_1 =w$ in a compact subset of $\Gamma_n \cap \partial \Omega$ that contains $x$ and where $w_1$ vanishes outside a yet larger compact subset of $\Gamma_n \cap \partial \Omega$ containing $x$. If we then set
\[
\phi_1 =(I+K_B)^{-1}w_1
\]
and 
\[
\phi_2 =(I+K_B)^{-1}w_2
\]
then it follows by theorem 2.30 of \cite{Col83a} that
$\phi_1 \in C^{1,\alpha}(\partial \Omega)$ for $\alpha \in (0,1)$ and that
$\phi_2$ is zero in a compact neighbourhood of $x.$ 
These facts are then enough to conclude, with the help of theorem 2.23.\ of \cite{Col83a}, that  $w \in C^1(\overline{\Omega})$. 

We now show that $w \in H^1(S_{H_n})$ for $H>\Vert f_n\Vert_{L^{\infty}(\mathbb{R}^2)}$.
For $R>0$ we let $\theta_R \in C^{\infty}(\mathbb{R}^2)$ be such that:
$\theta_R(\tilde x) = 1$ if $\tilde x \in B_R(0)$, $\theta_R(\tilde x)=0 $ if $\tilde x \notin B_{R+1}(0)$ and such that $0\leq \theta_R\leq 1$. We let $\alpha \in C^{\infty}(\mathbb{R})$ be such that for $x_3 \in \mathbb{R}$, $\alpha(x_3)=1$, if $x_3<H$ such that $\alpha(x_3)= 0$ if $x_3>H+1$ and such that $0\leq \alpha \leq 1$. We also choose $\theta_R$ and $\alpha$ so that $|\nabla\theta_R \alpha|\leq C$, for some $C>0$ independent of $R$. We then apply Green's theorem to the functions $w, \theta_R\alpha w \in C^1(\overline{D_n})$ to get that 
\begin{eqnarray*}
\int_{D_n\backslash\overline{U_{H+1}}}\theta_R\alpha \bar w\Delta w+ \nabla({\theta_R\alpha \bar w})\cdot\nabla w \;dx =0, 
\end{eqnarray*}
so that 
\begin{eqnarray*}
\int_{D_n\backslash\overline{U_{H+1}}}-\theta_R\alpha k^2|w|^2+ \theta_R\alpha|\nabla w|^2 + \bar{w}\nabla({\theta_R\alpha })\cdot\nabla w \;dx =0. 
\end{eqnarray*}
Hence 
\begin{eqnarray*}
\int_{D_n\backslash\overline{U_{H+1}}}\theta_R\alpha|\nabla w|^2 \;dx\leq \int_{D_n\backslash\overline{U_{H+1}}}\theta_R\alpha k^2|w|^2 \;dx + \int_{D_n\backslash\overline{U_{H+1}}}|w||\nabla w|Cdx,
\end{eqnarray*}
so that
\begin{eqnarray*}
\int_{D_n\backslash\overline{U_{H+1}}}\theta_R\alpha|\nabla w|^2 \;dx &-&\int_{D_n\backslash\overline{U_{H+1}}}\frac{|\nabla w|^2}{2} \;dx\\&\leq& \int_{D_n\backslash\overline{U_{H+1}}}\left[\theta_R\alpha k^2+\frac{C^2}{2}\right]|w|^2 \;dx.
\end{eqnarray*}
Now let $R\to \infty$. Using the monotone convergence theorem we get that 
\begin{eqnarray*} 
\frac{1}{2}\int_{D_n \backslash \overline{U_H}}|\nabla w|^2 dx\leq \int_{D_n\backslash\overline{U_{H+1}}}\left[k^2+\frac{C^2}{2}\right]|w|^2 dx.
\end{eqnarray*}
Thus $w \in H^1(S_{H_n})$ and since $w=0$ on $\Gamma_n$, we see that $w \in V_{H_n}$. 
 
Having shown that $w$ is a solution of the boundary value problem of chapter 2 in the case that $g=0$ and $k=0$ we conclude that $w=0$, i.e.\ $u=v$ in $D_n$. Thus for $x \in D_n$ we have obtained the representation 
\begin{eqnarray*}     
v(x)= \int_{\Gamma_n}\frac{\partial G(x,y)}{\partial \nu(y)}\phi_n(y)ds(y) -i\eta\int_{\Gamma_n} G(x,y)\phi_n(y)ds(y).
\end{eqnarray*} 
Now fix $x \in D$. There exists $N \in \mathbb{N}$ such that for all $n\geq N$ the above representation is valid and also $|x_3 -f_n(\tilde x)| >\epsilon$, for some $\epsilon >0$. Thus applying theorem \ref{jr thm}(ii) and then theorem \ref{chandpottheim_result2} 
we have that
\[
|v(x)| \leq C_{\epsilon}\Vert \phi_n\Vert_{L^2(\Gamma_n)} \leq C_{\epsilon}\Vert (I + K_{f_n} -i\eta S_{f_n})^{-1}\Vert\Vert v_n\Vert_{L^2(\Gamma_n)}\leq C_{\epsilon}B\Vert v_n\Vert_{L^2(\Gamma_n)}.
\]
Since $v_n \to 0$ pointwise for almost all $\tilde x \in \mathbb{R}^2$ as $n \to \infty$ and $|v_n(\tilde x,f_n(\tilde x))|\leq |v_T'(\tilde x,f(\tilde x))|$ for $\tilde x \in \mathbb{R}^2$ and $T \geq\Vert f_N\Vert_{L^{\infty}(\mathbb{R}^2)}$ with $v_T' \in L^2(\Gamma)$, we use the dominated convergence theorem to deduce that $\Vert v_n \Vert_{L^2(\Gamma_n)} \to 0$ as $n \to\infty$. Thus $v(x)=0$ for all $x \in D$.
\end{proof}
 

Next we turn to establishing
existence of solution. We will need the following lemma whose proof is fairly routine but lengthy. To facilitate the proof we introduce, for a given bounded Lipschitz function $f$, the isomorphism
\[
I_f:L^2(\Gamma) \to L^2(\mathbb{R}^2), \quad (I_f \phi)(\tilde y) = \phi((\tilde y,f(\tilde y))), \quad \tilde y \in \mathbb{R}^2. 
\]
We then associate $S_f$ with the element $\tilde {S_f}=I_fS_fI_f^{-1}$ of the set of bounded linear operators on $L^2(\mathbb{R}^2)$. Denoting the kernel of $\tilde {S_f}$ by $s_f$ we see that, where $x=(\tilde x,f(\tilde x))$ and $y=(\tilde y,f(\tilde y))$, it holds that 
\[
s_f(\tilde x, \tilde y)= G(x,y)J_f(\tilde y).
\]
An analogous statement is true of $\tilde {K_f}=I_f K_f I_{f}^{-1}$.
\begin{lemma}\label{contthe}
Let $f:\mathbb{R}^2 \to \mathbb{R}$ be a bounded Lipschitz function with Lipschitz constant $L$ and for $n \in \mathbb{N}$ let $f_n:\mathbb{R}^2 \to\mathbb{R}$ be a sequence of smooth Lipschitz functions also with Lipschitz constant $L$, such that $f_n \to f$ in $L^{\infty}(\mathbb{R}^2)$ and such that $\nabla_{\tilde x}f_n \to  \nabla_{\tilde x}f$ in $L^p(K)$ for compact $K \subseteq \mathbb{R}^2$, with $1<p<\infty$. 
Let $\Gamma_n:=\{(\tilde x,f_n(\tilde x)): \tilde x \in \mathbb{R}^2\}$. Then for all $\phi \in L^2(\mathbb{R}^2)$

i)\begin{equation} \label{strong_conv}
\lim_{n \to \infty}\Vert \tilde A_{f_n}\phi - \tilde A_f\phi\Vert_{L^2(\mathbb{R}^2)}= 0,
\end{equation}
and
ii)\[
\Vert A_{f}I_{f}^{-1}\phi\Vert_{L^2(\Gamma)}= \lim_{n \to \infty}\Vert A_{f_{n}}I_{f_n}^{-1}\phi\Vert_{L^2(\Gamma_n)}.
\]
\end{lemma}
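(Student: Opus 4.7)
My plan is to reduce (ii) to (i), and then establish (i) by splitting $\tilde A_f$ into its global and local parts as in Lemmas \ref{lem:global} and \ref{lem:local}, handling each piece by combining pointwise kernel convergence, uniform operator bounds, and density. Since $\tilde A_f=I+\tilde K_f-i\eta\tilde S_f$, it suffices to show $\tilde S_{f_n}\phi\to\tilde S_f\phi$ and $\tilde K_{f_n}\phi\to\tilde K_f\phi$ in $L^2(\real^2)$ for every $\phi\in L^2(\real^2)$. For the reduction, the change of variables $x=(\tilde x,f(\tilde x))$ gives
\begin{equation*}
\Vert A_fI_f^{-1}\phi\Vert_{L^2(\Gamma)}^2=\int_{\real^2}|\tilde A_f\phi(\tilde x)|^2J_f(\tilde x)\,d\tilde x,
\end{equation*}
and similarly for $f_n$, so that the squared-norm difference splits into $\int(|\tilde A_{f_n}\phi|^2-|\tilde A_f\phi|^2)J_{f_n}\,d\tilde x+\int|\tilde A_f\phi|^2(J_{f_n}-J_f)\,d\tilde x$. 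By Cauchy--Schwarz and $J_{f_n}\le L'$, the first term is dominated by $L'\Vert\tilde A_{f_n}\phi-\tilde A_f\phi\Vert_{L^2}(\Vert\tilde A_{f_n}\phi\Vert_{L^2}+\Vert\tilde A_f\phi\Vert_{L^2})$, which vanishes by (i) together with the uniform bound of Remark \ref{uniform_bound}; the second vanishes since $|\tilde A_f\phi|^2\in L^1(\real^2)$, $|J_{f_n}-J_f|\le 2L'$, and $\nabla f_n\to\nabla f$ in $L^p_{\mathrm{loc}}$ forces $J_{f_n}\to J_f$ in $L^1_{\mathrm{loc}}(\real^2)$, together with a standard tail estimate.

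For the global pieces $\tilde S_{f,1}$ and $\tilde K_{f,1}$ I use the decomposition (\ref{a1split}) of the kernel as $l_f^*+l_f$, where $l_f^*$ is given by (\ref{la}) or (\ref{lab}) with $\nu(y)J_f(\tilde y)$ replaced by $(-\nabla_{\tilde y}f(\tilde y),1)$ after absorbing the Jacobian, and $l_f$ is uniformly dominated by the fixed $L^1(\real^2)$ function $(1+|\cdot|)^{-3}$. The remainder yields an operator-norm-convergent piece via (\ref{Lnormbound2}) and dominated convergence of kernels. The parts of $l_f^*$ not involving $\nabla f$ are of product form $m_{1,n}(\tilde x)\ell(\tilde x-\tilde y)m_{2,n}(\tilde y)$ with $m_{j,n}\to m_j$ in $L^\infty(\real^2)$, so (\ref{Lnormbound}) and Lemma \ref{lem:bound} give operator-norm convergence. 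The remaining piece of $l_f^*$ carrying $\nabla_{\tilde y}f_n$ is treated, for $\phi\in C_0^\infty(\real^2)$, by writing its image as $f_n(\tilde x)(\ell_{\mathrm{vec}}*g_n)(\tilde x)$ with $g_n:=\nabla f_n\cdot\phi\to\nabla f\cdot\phi$ in $L^2(\real^2)$ (since $|\nabla f_n|\le L$ and $\nabla f_n\to\nabla f$ in $L^p$ on $\supp\phi$), and $\ell_{\mathrm{vec}}*\,\cdot$ bounded on $L^2$ by Lemma \ref{lem:bound}; density of $C_0^\infty(\real^2)$ in $L^2(\real^2)$ and the uniform bound of Remark \ref{uniform_bound} extend this to arbitrary $\phi\in L^2(\real^2)$. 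The single-layer local operator $\tilde S_{f,2}$ is handled identically, its kernel being pointwise continuous in $f$ and uniformly dominated by the $L^1$ function $\ell$ of (\ref{kernel A2}).

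The main obstacle is the local double-layer operator $\tilde K_{f,2}$, and in particular the principal-value piece $K_4$ of (\ref{K_4}), since its dependence on $f$ is not pointwise continuous in the classical sense. The other pieces of the decomposition of Lemma \ref{lem:local}---the remainder $R_2$, together with $K_3$ and $K_5$---all have kernels pointwise continuous in $f$ and uniformly dominated by fixed $L^1(\real^2)$ functions, so they are handled as in the single-layer local case. For the Jacobian-weighted operator, note that $\tilde K_{4,f}\phi=2T_f(\phi J_f)$ with $T_f$ as in (\ref{Ver_int}), so I decompose
\begin{equation*}
\tilde K_{4,f_n}\phi-\tilde K_{4,f}\phi=2T_{f_n}\bigl(\phi(J_{f_n}-J_f)\bigr)+2(T_{f_n}-T_f)(\phi J_f).
\end{equation*}
The first summand tends to zero in $L^2(\real^2)$ because $\Vert T_{f_n}\Vert_{L^2\to L^2}$ is uniformly bounded (Theorem \ref{Wavelets_them} combined with Remark \ref{uniform_bound}) and $\phi(J_{f_n}-J_f)\to 0$ in $L^2(\real^2)$ by dominated convergence, using pointwise a.e.\ convergence of $J_{f_n}\to J_f$ (from $L^p_{\mathrm{loc}}$-convergence of $\nabla f_n$, along a subsequence) together with the uniform bound $|J_{f_n}-J_f|\le 2L'$; a standard subsequence argument then upgrades this to convergence of the full sequence. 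For the second summand, it suffices by density and uniform boundedness to show $T_{f_n}\psi\to T_f\psi$ in $L^2(\real^2)$ for every $\psi\in C_0^\infty(\real^2)$. For such $\psi$, the second identity of Theorem \ref{Wavelets_them} yields the absolutely convergent representation
\begin{equation*}
(T_f\psi)(\tilde x)=-\int_{\real^2}\frac{(\tilde x-\tilde y)\cdot\nabla_{\tilde y}\psi(\tilde y)}{|\tilde x-\tilde y|^2}\lambda\!\left(\frac{f(\tilde x)-f(\tilde y)}{|\tilde x-\tilde y|}\right)d\tilde y,
\end{equation*}
with $|\lambda|\le\pi/2$ by (\ref{lambda_def}). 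Uniform convergence $f_n\to f$ and continuity of $\lambda$ give pointwise convergence $(T_{f_n}\psi)(\tilde x)\to(T_f\psi)(\tilde x)$ almost everywhere by dominated convergence, with integrable envelope $(\pi/2)|\nabla\psi(\tilde y)|/|\tilde x-\tilde y|$ supported in $\supp\psi$; this envelope is uniformly bounded in $\tilde x$ on any fixed ball, giving $L^2$-convergence on compacts, while for large $|\tilde x|$ the original form (\ref{K_4}) together with $|\tilde x|\gg|\tilde y|$ on $\supp\psi$ gives a uniform decay $|T_{f_n}\psi(\tilde x)|\le C|\tilde x|^{-2}$, providing uniform $L^2$-control of the tail. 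Thus $T_{f_n}\psi\to T_f\psi$ in $L^2(\real^2)$, completing the proof of (i).
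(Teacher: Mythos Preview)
Your overall strategy matches the paper's: reduce to $\phi\in C_0^\infty(\real^2)$ by uniform boundedness, split into global and local parts, and for the principal-value piece invoke the $\lambda$-representation of Theorem~\ref{Wavelets_them}. Your reduction of (ii) to (i) is valid and slightly cleaner than the paper's (which instead inserts an auxiliary $\psi\in C_0^\infty$ close to $\tilde A_f\phi$ and compares $\sqrt{J_f}$ with $\sqrt{J_{f_n}}$).

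There is, however, a recurring gap in (i). Almost every kernel you consider carries a factor $J_f(\tilde y)$ or $\nu(y)J_f(\tilde y)=(-\nabla_{\tilde y}f(\tilde y),1)$, and $\nabla f_n\to\nabla f$ only in $L^p_{\mathrm{loc}}$, not pointwise or uniformly. You handle this correctly for $K_4$ (splitting off $J_{f_n}-J_f$ as a multiplier on $\phi$) and for the explicit $\nabla f$-term in $l_f^*$, but for the global remainder $l_f$, for $\tilde S_{f,2}$, and for the local double-layer pieces $R_2,K_3,K_5$ you assert ``pointwise kernel continuity in $f$'' or ``operator-norm convergence via dominated convergence of kernels''. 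These fail as stated: for instance the double-layer remainder has the form $l_f=-\nabla f(\tilde y)\cdot R_{12}+R_3$ with each $R_i$ depending on $f$ only through $f(\tilde x),f(\tilde y)$, so $l_{f_n}-l_f$ contains the term $(\nabla f-\nabla f_n)(\tilde y)\cdot R_{12}$, which is \emph{not} dominated by any small $\ell_n(\tilde x-\tilde y)$ and hence gives no operator-norm convergence; and $K_5$'s kernel is bounded only by $\chi(|\tilde x-\tilde y|\ge 1)|\tilde x-\tilde y|^{-2}$, which is in $L^2(\real^2)$ but not $L^1(\real^2)$. The paper sidesteps these issues by working directly with $a_f-a_{f_n}$ and decomposing it, in both the global and local regimes, into three pieces as in (\ref{double_global}), (\ref{SL_dep_exp}), (\ref{Texp}): one piece carries $\nu_f-\nu_{f_n}$ (or $J_f-J_{f_n}$), viewed as a multiplier on the compactly supported $\phi$ and controlled by $\|\partial_if_n-\partial_if\|_{L^p(\supp\phi)}$ together with Young's inequality (using $L^2$ kernels where needed); a second carries only the variation $x_{f_n},y_{f_n}\to x_f,y_f$, handled by an explicit $\ell_\eta$ construction exploiting uniform convergence $f_n\to f$; the third is symmetric to the first. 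Applying this same three-way split to each of your remaining pieces would close the gap.
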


\begin{proof}
To prove i) we first of all remark that, since the operators $\tilde A_{f_n}$ and $\tilde A_f$ are uniformly bounded by a constant $C$ say, see remark \ref{uniform_bound}, it holds for $\phi_k \in C^{\infty}_0(\mathbb{R}^2)$ that
\begin{eqnarray*}
\Vert \tilde A_{f_n}\phi - \tilde A_f\phi\Vert_{L^2(\mathbb{R}^2)}& \leq& \Vert (\tilde A_{f_n} - \tilde A_f)\phi_k\Vert_{L^2(\mathbb{R}^2)}+ \Vert (\tilde A_{f_n} - \tilde A_f)(\phi -\phi_k)\Vert_{L^2(\mathbb{R}^2)}\\&& \Vert (\tilde A_{f_n} - \tilde A_f)\phi_k\Vert_{L^2(\mathbb{R}^2)}+ 2C\Vert\phi -\phi_k\Vert_{L^2(\mathbb{R}^2)}.
\end{eqnarray*}
This calculation shows that we need only establish (\ref{strong_conv}) in the case that $\phi$ is a smooth function with compact support.

Similarly to how we proceeded when proving Theorem \ref{btheo}, we
decompose the operator $\tilde A_{f}-\tilde A_{f_n}$ into a global and a
local part, i.e.\ $\tilde A_f- \tilde A_{f_n} = A_1 + A_2$ with
$A_1,A_2$ defined similarly to (\ref{A1op}) and (\ref{A2op}) except that here we will employ a smooth cut of function, $\chi_c:[0,\infty) \to \mathbb{R}$ defined by 
\begin{equation}\label{eq:chi_c_2}
  \chi_c(t) :=
    \begin{cases}
      0, & t < 1/2\\
      1, & t \geq 1.
    \end{cases}
 \qquad \text{and} \qquad
  0 \leq \chi_c(t) \leq 1,\quad \forall t \geq 0.
\end{equation}


\textbf{The global operator}. The kernel of the global operator $A_1$
is given by
\begin{equation}\label{eq:dependence boundary}
  a_1(\tilde x,\tilde y):=\chi_c(|\tilde x-\tilde y|)[a_f(\tilde x,\tilde y)-a_{f_n}(\tilde x,\tilde y)].
\end{equation}
We look at the double-layer case only, the single-layer case is simpler. We let $x_f= (\tilde x, f(\tilde x))$, $x_{f_n}=(\tilde x,f_n(\tilde x))$ and write $\nu$ as $\nu_{f}$ or $\nu_{f_n}$ to indicate its dependence on $f$ or $f_n$ respectively.
We need to examine the integral operator with kernel
\begin{eqnarray}\nonumber
&&\chi_c(|\tilde x-\tilde y|)\left\{\nu_{f}(\tilde y)\cdot \nabla_{y}G(x_f,y_f)J_f(\tilde y) - 
\nu_{f_n}(\tilde y)\cdot \nabla_{y}G(x_{f_n},y_{f_n})J_{f_n}(\tilde y)\right\}\\\nonumber&&=
\chi_c(|\tilde x-\tilde y|)(\nu_{f}(\tilde y)-\nu_{f_n}(\tilde y))\cdot \nabla_{y}G(x_f,y_f)J_f(\tilde y)\\ \nonumber&&+
\chi_c(|\tilde x-\tilde y|)\nu_{f_n}(\tilde y)\cdot [\nabla_yG(x_f,y_f)-\nabla_{y}G(x_{f_n},y_{f_n})]J_{f}(\tilde y) \\&&+\chi_c(|\tilde x-\tilde y|)\nu_{f_n}(\tilde y)\cdot \nabla_{y}G(x_{f_n},y_{f_n})[J_{f}(\tilde y)-J_{f_n}(\tilde y)].\label{double_global}
\end{eqnarray}
To deal with the first term of (\ref{double_global}) we note that from (\ref{Gdeltanu exp}) there exists $C>0$ such that 
\[
\chi_c(|\tilde x -\tilde y|)|\nabla_yG(x_f,y_f)| \leq C|\tilde x-\tilde y|^{-2}
\]
for $\tilde x,\tilde y \in \mathbb{R}^2$ and then we use (\ref{youngs}) with $s=2$, $p=2$ and $r=1$, and then finally use that 
\[
\left\Vert \left(\frac{\partial f}{\partial y_i}- \frac{\partial f_n}{\partial y_i}\right)J_f\phi\right\Vert_{L^1(\mathbb{R}^2)}\leq \left\Vert \frac{\partial f}{\partial y_i}- \frac{\partial f_n}{\partial y_i}\right\Vert_{L^2(\supp \phi)}\Vert J_f\phi\Vert_{L^2(\mathbb{R}^2)},
\]
for $i=1,2$.
The third term of (\ref{double_global}) is dealt with in a similar manner.



To bound the integral
operator whose kernel is the second term of (\ref{double_global}), we
construct, for every $\eta\in (0,1)$, a function $\ell_\eta\in
L^2(\real^2)$ such that
\begin{equation} \label{bbb}
\left|\chi_c(\tilde x-\tilde y)[\nabla_yG(x_f,y_f)-\nabla_{y}G(x_{f_n},y_{f_n})]\right|\leq
\ell_\eta(\tilde x-\tilde y), \quad \tilde x,\tilde y\in\real^2,
\end{equation}
whenever $\norm{f-f_n}_{L^{\infty}(\real^2)}$ is
sufficiently small, and such that $||\ell_\eta||_{L^2(\real^2)}\to
0$ as $\eta\to 0$, and then we use the estimate (\ref{youngs}) with $s=2$, $p=2$ and $r=1$.

The construction of $\ell_\eta$ is as follows: 
we set
$$ \ell_\eta(\tilde y) := \left\{\begin{array}{cc}
                    \eta & 1/2 < |\tilde y| < \eta^{-1}, \\
                    0     &      |\tilde y| < 1/2,           \\
                                        2C|\tilde y|^{-2} & \mbox{otherwise.} \\
                  \end{array}\right.
$$
Clearly this satisfies that $||\ell_\eta||_{L^2(\real^2)}\to 0$ as
$\eta\to 0$. Since, for every $\eta\in (0,1)$, $
\left|\nabla_yG(x_f,y_f)-\nabla_{y}G(x_{f_n},y_{f_n})\right|\to 0 $ as
$\norm{f-{f_n}}_{L^{\infty}(\real^2)}\to 0$, and uniformly so in $\tilde x$ and $\tilde y$ for $ 1/2 \leq
|\tilde x-\tilde y| \leq \eta^{-1}$, the bound (\ref{bbb}) holds.

\textbf{The local operator.}
The kernel of the local operator $A_2$
is given by
\begin{equation}\label{eq:dependence boundary?}
  a_2(\tilde x,\tilde y):=(1-\chi_c(|\tilde x-\tilde y|))[a_f(\tilde x,\tilde y)-a_{f_n}(\tilde x,\tilde y)].
\end{equation}
In the single-layer case we see that
\begin{eqnarray}\label{SL_dep_exp}
\nonumber a_2(\tilde x,\tilde y)&=& [1-\chi_c(|\tilde x-\tilde y|)]\frac{1}{4\pi}\left\{\frac{e^{ik|x_f-y_f|}}{|x_f- y_f|}- \frac{e^{ik|x_{f_n}-y_{f_n}|}}{|x_{f_n}- y_{f_n})}\right\}J_f(\tilde y)\\\nonumber & -&   [1-\chi_c(|\tilde x-\tilde y|)]\frac{1}{4\pi}\left\{\frac{e^{ik|x_{f_n}-y_{f_n}|}}{|x_{f_n}- y_{f_n}|}\right\}[J_{f_n}(\tilde y) - J_f(\tilde y)]\\\nonumber &-& [1-\chi_c(|\tilde x-\tilde y|)]\frac{1}{4\pi}\left\{\frac{e^{ik|x_f-y'_f|}}{|x_f- y'_f|}- \frac{e^{ik|x_{f_n}-y'_{f_n}|}}{|x_{f_n}- y'_{f_n})}\right\}J_f(\tilde y)\\ & +&   [1-\chi_c(|\tilde x-\tilde y|)]\frac{1}{4\pi}\left\{\frac{e^{ik|x_{f_n}-y'_{f_n}|}}{|x_{f_n}- y'_{f_n}|}\right\}[J_{f_n}(\tilde y) - J_f(\tilde y)].
\end{eqnarray}
For the integral operator whose kernel is given by the first (and similarly third) term of (\ref{SL_dep_exp}) we construct for every $\eta \in (0,1)$ a function $l_{\eta}\in L^1(\mathbb{R}^2)$ such that 
\begin{eqnarray}\nonumber\label{bbb_2}
&&\left|[1-\chi_c(|\tilde x-\tilde y|)]\frac{1}{4\pi}\left\{\frac{e^{ik|x_f-y_f|}}{|x_f- y_f|}- \frac{e^{ik|x_{f_n}-y_{f_n}|}}{|x_{f_n}- y_{f_n})}\right\}J_f(\tilde y)\right|\leq l_\eta(\tilde x- \tilde y),\\
\end{eqnarray}
whenever $\Vert f-f_n\Vert_{L^{\infty}(\mathbb{R}^2)}$ is sufficiently small and such that $\Vert l_{\eta}\Vert_{L^1(\mathbb{R}^2)}\to 0$ as $\eta \to 0$, and then we use the estimate (\ref{Lnormbound2}).

We define $l_{\eta}$ by 
$$ \ell_\eta(\tilde y) := \left\{\begin{array}{cc}
                    0     & 1     < |\tilde y|,   \\
                    \eta & \eta < |\tilde y| < 1, \\
                                        {2L'}/{|\tilde y|} & \mbox{otherwise.} \\
                  \end{array}\right.
$$
Clearly this satisfies that $||\ell_\eta||_{L^1(\real^2)}\to 0$ as
$\eta\to 0$. Since, for every $\eta\in (0,1)$, 
$$
\left| \frac{e^{ik|x_f-y_f|}}{|x_f- y_f|}- \frac{e^{ik|x_{f_n}-y_{f_n}|}}{|x_{f_n}- y_{f_n}|}\right|\to 0 
$$ 
as
$\norm{f-{f_n}}_{L^{\infty}(\real^2)}\to 0$, and uniformly so in $\tilde x$ and $\tilde y$ for $ \eta\leq
|\tilde x-\tilde y| \leq 1$, the bound (\ref{bbb_2}) holds.

For the integral operators whose kernels are the second and fourth terms of (\ref{SL_dep_exp}) we again review them as integral operators with kernels 
\begin{eqnarray}\nonumber 
&&- [1-\chi_c(|\tilde x-\tilde y|)]\frac{1}{4\pi}\left\{\frac{e^{ik|x_{f_n}-y_{f_n}|}}{|x_{f_n}- y_{f_n}|}\right\}
\\& &+   [1-\chi_c(|\tilde x-\tilde y|)]\frac{1}{4\pi}\left\{\frac{e^{ik|x_{f_n}-y'_{f_n}|}}{|x_{f_n}- y'_{f_n}|}\right\}
\end{eqnarray}
acting on the function $ [J_{f_n}(\tilde y) - J_f(\tilde y)]\phi(y).$ We then make use of (\ref{Lnormbound2}) and the inequality
\[
\Vert [J_{f_n} - J_f]\phi \Vert_{L^2(\mathbb{R}^2)}\leq\Vert  J_{f_n} - J_f\Vert_{L^4(\supp \phi)} \Vert \phi \Vert_{L^4(\mathbb{R}^2)}.
\]

We finally examine the local part of the double-layer operator. 
For $x, y \in \mathbb{R}^3$, $ x\neq y$ we define 
\[
T(x,y) = \nabla_yG(x,y) - \frac{(x-y)e^{ik|x-y|}}{|x-y|^3},
\]
and note that $T(x,y)$ is uniformly continuous in $x$ and $y$ provided $|x-y|>\epsilon$, for some $\epsilon >0$, and also that $|T(x_f,y_f)| \leq C|\tilde x- \tilde y|^{-1}$, for some $C>0$ and $\tilde x, \tilde y \in \mathbb{R}^2, \tilde x\neq \tilde y$, (this can be seen from (\ref{Gdeltapartialnu}) for example). We need to examine the integral operator with kernel   
\begin{eqnarray}\nonumber
&&[1-\chi_c(|\tilde x-\tilde y|)]\left\{\nu_{f}(\tilde y)\cdot \nabla_{y}G(x_f,y_f)J_f(\tilde y) - 
\nu_{f_n}(\tilde y)\cdot \nabla_{y}G(x_{f_n},y_{f_n})J_{f_n}(\tilde y)\right\}\\\nonumber&&=
[1-\chi_c(|\tilde x-\tilde y|)]\left\{\nu_{f}(\tilde y)\cdot T(x_f,y_f)J_f(\tilde y) - 
\nu_{f_n}(\tilde y)\cdot T(x_{f_n},y_{f_n})J_{f_n}(\tilde y)\right\}\\\nonumber&&+
[1-\chi_c(|\tilde x-\tilde y|)]\left\{\nu_{f}(\tilde y)\cdot \frac{(x_f-y_f)e^{ik|x_f-y_f|}}{|x_f-y_f|^3}J_f(\tilde y)  
\right.\\&&\left. - \nu_{f_n}(\tilde y)\cdot\frac{(x_{f_n}-y_{f_n})e^{ik|x_{f_n}-y_{f_n}|}}{|x_{f_n}-y_{f_n}|^3} J_{f_n}(\tilde y)\right\}.\label{everyone}
\end{eqnarray}
We rewrite the first term on the right hand side of (\ref{everyone}) as 
\begin{eqnarray}\nonumber
&&[1-\chi_c(|\tilde x-\tilde y|)]\left\{(\nu_{f}(\tilde y)-\nu_{f_n}(\tilde y))\cdot T(x_f,y_f)J_f(\tilde y)\right.\\\nonumber &&\left. 
+\nu_{f_n}(\tilde y)\cdot [T(x_f,y_f)-T(x_{f_n},y_{f_n})]J_f(\tilde y) \right.\\&&\left.+  
\nu_{f_n}(\tilde y)\cdot T(x_{f_n},y_{f_n})(J_f(\tilde y) - J_{f_n}(\tilde y))\right\}. \label{Texp}
\end{eqnarray}

To handle the terms in (\ref{Texp}) we argue similarly to how we did in the single-layer case above, noting that we may once again construct an analogous function $\ell_\eta \in L^1(\mathbb{R}^2)$ for $\eta \in (0,1)$ by exploiting the properties of $T$ described above.

Finally, from (\ref{everyone}), we need to look at the integral operator 
\begin{eqnarray*}
&&PV_{\epsilon \to 0}\int_{\mathbb{R}^2\backslash B_{\epsilon}(\tilde x)}(1-\chi_c(|\tilde x-\tilde y|))\frac{\nu_{f}(\tilde y) \cdot (x_f-y_f)}{|x_f-y_f|^3}e^{ik|x_f-y_f|}\phi(y)J_f(\tilde y) d\tilde y \\&-& \int_{\mathbb{R}^2}(1-\chi_c(|\tilde x-\tilde y|))\frac{\nu_{f_n}(\tilde y) \cdot (x_{f_n}-y_{f_n})}{|x_{f_n}-y_{f_n}|^3}e^{ik|x_{f_n}-y_{f_n}|}\phi(y)J_{f_n}(\tilde y) d\tilde y \\& =& \int_{\mathbb{R}^2}(1-\chi_c(|\tilde x-\tilde y|))\frac{\nu_{f}(\tilde y) \cdot (x_f-y_f)}{|x_f-y_f|^3}(e^{ik|x_f-y_f|}-1)\phi(y)J_f(\tilde y) d\tilde y \\&+&PV_{\epsilon \to 0}\int_{\mathbb{R}^2\backslash B_{\epsilon}(\tilde x)}\frac{\nu_{f}(\tilde y) \cdot (x_f-y_f)}{|x_f-y_f|^3}\phi(y)J_f(\tilde y) d\tilde y \\ &-& \int_{\mathbb{R}^2}\chi_c(|\tilde x-\tilde y|)\frac{\nu_{f}(\tilde y) \cdot (x_{f}-y_{f})}{|x_f-y_f|^3}\phi(y)J_f(\tilde y) d\tilde y\\
& -& \int_{\mathbb{R}^2}(1-\chi_c(|\tilde x-\tilde y|))\frac{\nu_{f_n}(\tilde y) \cdot (x_{f_n}-y_{f_n})}{|x_{f_n}-y_{f_n}|^3}(e^{ik|x_{f_n}-y_{f_n}|}-1)\phi(y)J_{f_n}(\tilde y) d\tilde y \\&-&\int_{\mathbb{R}^2}\frac{\nu_{f_n}(\tilde y) \cdot (x_{f_n}-y_{f_n})}{|x_{f_n}-y_{f_n}|^3}\phi(y)J_{f_n}(\tilde y) d\tilde y \\ &+& \int_{\mathbb{R}^2}\chi_c(|\tilde x-\tilde y|)\frac{\nu_{f_n}(\tilde y) \cdot (x_{f_n}-y_{f_n})}{|x_{f_n}-y_{f_n}|^3}\phi(y)J_{f_n}(\tilde y) d\tilde y.
\end{eqnarray*} 
For the integral operator
\begin{eqnarray*}
 &&\int_{\mathbb{R}^2}(1-\chi_c(|\tilde x-\tilde y|))\frac{\nu_{f}(\tilde y) \cdot ((x_f-y_f)}{|x_f-y_f|^3}(e^{ik|x_f-y_f|}-1)\phi(y)J_f(\tilde y) d\tilde y
\\ & - &\int_{\mathbb{R}^2}(1-\chi_c(|\tilde x-\tilde y|))\frac{\nu_{f_n}(\tilde y) \cdot (x_{f_n}-y_{f_n})}{|x_{f_n}-y_{f_n}|^3}(e^{ik|x_{f_n}-y_{f_n}|}-1)\phi(y)J_{f_n}(\tilde y) d\tilde y
\end{eqnarray*}
we use the expansion 
$$ 
e^{ik|x-y|} = 1 + ik|x-y| + \frac{(ik|x-y|)^2}{2!} + \dots, 
$$ 
and then deal with this integral operator similarly to how we dealt with the single-layer local operator. 

For the integral operator given by  
\begin{eqnarray*}
&&\int_{\mathbb{R}^2}\chi_c(|\tilde x-\tilde y|)\frac{\nu_{f}(\tilde y) \cdot (x_{f}-y_{f})}{|x_f-y_f|^3}\phi(y)J_f(\tilde y) d\tilde y
\\&&-\int_{\mathbb{R}^2}\chi_c(|\tilde x-\tilde y|)\frac{\nu_{f_n}(\tilde y) \cdot (x_{f_n}-y_{f_n})}{|x_{f_n}-y_{f_n}|^3}\phi(y)J_{f_n}(\tilde y) d\tilde y,
\end{eqnarray*}
we notice that both of the kernels are bounded by 
\[
t(\tilde x,\tilde y)= \frac{\chi_c(|\tilde x -\tilde y|)}{|\tilde x - \tilde y|^2}
\]
and that since $t(\tilde y) \in L^2(\mathbb{R}^2)$, the integral operator with kernel $t(\tilde x,\tilde y)$ is bounded, by Young's inequality (\ref{youngs}), from $L^2(\mathbb{R}^2)$ to $L^1(\mathbb{R}^2).$ We then make similar arguments to those we made in the double-layer global case.

We turn to looking at 
\begin{eqnarray*}
&&PV_{\epsilon \to 0}\int_{\mathbb{R}^2\backslash B_{\epsilon}(\tilde x)}\frac{\nu_{f}(\tilde y) \cdot (x_f-y_f)}{|x_f-y_f|^3}\phi(y)J_f(\tilde y) d\tilde y \\&- &\int_{\mathbb{R}^2}\frac{\nu_{f_n}(\tilde y) \cdot (x_{f_n}-y_{f_n})}{|x_{f_n}-y_{f_n}|^3}\phi(y)J_{f_n}(\tilde y) d\tilde y.
\end{eqnarray*}
But, as usual we need only consider
\begin{eqnarray*}
&&PV_{\epsilon \to 0}\int_{\mathbb{R}^2\backslash B_{\epsilon}(\tilde x)}\frac{\nu_{f}(\tilde y) \cdot (x_f-y_f)}{|x_f-y_f|^3}\phi(y)J_{f_n}(\tilde y) d\tilde y \\&- &\int_{\mathbb{R}^2}\frac{\nu_{f_n}(\tilde y) \cdot (x_{f_n}-y_{f_n})}{|x_{f_n}-y_{f_n}|^3}\phi(y)J_{f_n}(\tilde y) d\tilde y.
\end{eqnarray*}
Now since $\phi(y)J_{f_n}(\tilde y) \in C^{\infty}_0(\mathbb{R}^2)$, we apply theorem \ref{Wavelets_them} to get that
\begin{eqnarray*}
&&PV_{\epsilon \to 0}\int_{\mathbb{R}^2\backslash B_{\epsilon}(\tilde x)}\frac{\nu_{f}(\tilde y) \cdot (x_f-y_f)}{|x_f-y_f|^3}\phi(y)J_{f_n}(\tilde y) d\tilde y \\& = &-\int_{\mathbb{R}^2}\frac{(\tilde x-\tilde y).\nabla_{\tilde y}(\phi J_{f_n})(\tilde y)}{|\tilde x- \tilde y|^2}\lambda \left(\frac{f(\tilde x)-f(\tilde y)}{|\tilde x-\tilde y|}\right)d\tilde y
\end{eqnarray*}
where $\lambda$ is given by (\ref{lambda_def}). 

Thus we need to examine, for $i=1,2$, and where $\hat x_i$ denotes the unit vector in the $i$th direction, 
\begin{eqnarray*}
 &&\int_{\mathbb{R}^2}\frac{(\hat x_i-\hat y_i)}{|\tilde x- \tilde y|}\frac{\partial}{\partial y_i} (\phi J_{f_n})(\tilde y)\left[\lambda \left(\frac{f(\tilde x)-f(\tilde y)}{|\tilde x-\tilde y|}\right) -\lambda \left(\frac{f_n(\tilde x)-f_n(\tilde y)}{|\tilde x-\tilde y|}\right)\right]d \tilde y\\&&:=\int_{\mathbb{R}^2}v(\tilde x,\tilde y,f,f_n,\phi J_{f_n})d \tilde y.
\end{eqnarray*}

We split this up into a local and global part and first of all examine
\begin{eqnarray*}
V(\tilde x):= \int_{\mathbb{R}^2}(1-\chi_c(|\tilde x-\tilde y|))v(\tilde x,\tilde y,f,f_n, \phi J_{f_n})
d \tilde y.
\end{eqnarray*}

Noting that $\lambda$ is a uniformly continuous function we can construct for every $\eta \in (0,1)$ a function $l_\eta \in L^1(\mathbb{R}^2)$ such that
\begin{eqnarray*} 
\left|(1-\chi_c(|\tilde x-\tilde y|)\frac{(\hat x_i-\hat y_i)}{|\tilde x-\tilde y|}\left[\lambda \left(\frac{f(\tilde x)-f(\tilde y)}{|\tilde x-\tilde y|}\right) -\lambda \left(\frac{f_n(\tilde x)-f_n(\tilde y)}{|\tilde x-\tilde y|}\right)\right]\right| \leq l_n(\tilde x-\tilde y)
\end{eqnarray*}

whenever $\Vert f-f_n\Vert_{L^2(\mathbb{R}^2)}$ is sufficiently small and such that $\Vert l_\eta \Vert_{L^1(\mathbb{R}^2)} \to 0$ as $\eta \to 0$.
The construction of $l_\eta$ is as follows
$$\ell_\eta(\tilde y) := \left\{\begin{array}{cc}
                    \eta & \eta < |\tilde y| < 1, \\
                                        2{\Vert \lambda \Vert_{L^{\infty}(\mathbb{R}^2)}}/{|\tilde y|} & |\tilde y | <\eta, \\
                    0                   & \mbox{otherwise.} \\
                  \end{array}\right.
$$
Thus for the integral operator $V$ we have from (\ref{youngs}) that 
\[
\Vert V\Vert_{L^2(\mathbb{R}^2)} \leq \Vert l_\eta\Vert_{L^1(\mathbb{R}^2)}\left \Vert\frac{\partial}{\partial y_i}(\phi J_{f_{n}})\right\Vert_{L^2(\supp \phi)}. 
\]

We next look at
\begin{eqnarray*}
W(\tilde x):= \int_{\mathbb{R}^2}\chi_c(|\tilde x-\tilde y|)v(\tilde x,\tilde y,f,f_n,\phi J_{f_n})
d \tilde y.
\end{eqnarray*}

Once again, we construct for every $\eta \in (0,1)$ a function $l_\eta \in L^3(\mathbb{R}^2)$ such that
\begin{eqnarray*} 
\left|\chi_c(|\tilde x-\tilde y|)\frac{(\hat x_i-\hat y_i)}{|\tilde x-\tilde y|}\left[\lambda \left(\frac{f(\tilde x)-f(\tilde y)}{|\tilde x-\tilde y|}\right) -\lambda \left(\frac{f_n(\tilde x)-f_n(\tilde y)}{|\tilde x-\tilde y|}\right)\right]\right| \leq l_n(\tilde x-\tilde y)
\end{eqnarray*}

whenever $\Vert f-f_n\Vert_{L^2(\mathbb{R}^2)}$ is sufficiently small and such that $\Vert l_\eta \Vert_{L^3(\mathbb{R}^2)} \to 0$ as $\eta \to 0$.
The construction of $l_\eta$ is as follows:
$$\ell_\eta(\tilde y) := \left\{\begin{array}{cc}
                    \eta & 1/2 < |\tilde y| < \eta^{-1}, \\
                                        2{\Vert \lambda \Vert_{L^{\infty}(\mathbb{R}^2)}}/{|\tilde y|} & \eta^{-1}<|\tilde y |, \\
                    0                   & \mbox{otherwise.} \\
                  \end{array}\right.
$$
Thus for the integral operator $W$ we have from (\ref{youngs}) that 
\[
\Vert W\Vert_{L^2(\mathbb{R}^2)} \leq \Vert l_\eta\Vert_{L^3(\mathbb{R}^2)}\left \Vert\frac{\partial}{\partial y_i}(\phi J_{f_{n}})\right \Vert_{L^{6/7}(\supp \phi)}. 
\]
ii)We fix $\epsilon>0$ and then fix $\psi \in C^{\infty}_0(\mathbb{R}^2)$ such that 
\[
\Vert \tilde A_f \phi - \psi\Vert_{L^2(\mathbb{R}^2)}<\epsilon.
\] 
Then for $n \in \mathbb{N}$ we have that 
\begin{eqnarray*}
\Vert A_fI_f^{-1}\phi\Vert_{L^2(\Gamma)}&&= \Vert \tilde A_f \phi \sqrt{J_f}\Vert_{L^2(\mathbb{R}^2)}\\&&\leq \Vert (\tilde A_f \phi -\tilde A_{f_n} \phi )\sqrt{J_f}\Vert_{L^2(\mathbb{R}^2)}+\Vert \tilde A_{f_n} \phi \sqrt{J_{f_n}}\Vert_{L^2(\mathbb{R}^2)}\\&&+\Vert \tilde A_{f_n} \phi [\sqrt{J_{f_n}}-\sqrt{J_f}]\Vert_{L^2(\mathbb{R}^2)}\\&&\leq
\sqrt{L'} \Vert (\tilde A_f \phi -\tilde A_{f_n} \phi )\Vert_{L^2(\mathbb{R}^2)}+\Vert  A_{f_n}I_{f_n}^{-1} \phi \Vert_{L^2(\Gamma_n)}\\&&+\Vert (\tilde A_{f_n} \phi -\psi) [\sqrt{J_{f_n}}-\sqrt{J_f}]\Vert_{L^2(\mathbb{R}^2)}+\Vert \psi [\sqrt{J_{f_n}}-\sqrt{J_f}]\Vert_{L^2(\mathbb{R}^2)}.
\end{eqnarray*}
Thus provided $n$ is chosen large enough we use part i) to get that
\begin{eqnarray*}
\Vert A_fI_f^{-1}\phi\Vert_{L^2(\Gamma)}&&\leq \sqrt{L'}\epsilon +\Vert A_{f_n}I_{f_n}^{-1}\phi\Vert_{L^2(\Gamma_n)}\\&&+ 2\sqrt{L'}\Vert\tilde A_{f_n}\phi -\psi\Vert_{L^2(\mathbb{R}^2)} + \Vert\psi\Vert_{L^4(\mathbb{R}^2)}\Vert \sqrt{J_{f_n}}-\sqrt{J_f}\Vert_{L^4(\supp\psi)}\\&&\leq
\sqrt{L'}\epsilon +\Vert A_{f_n}I_{f_n}^{-1}\phi\Vert_{L^2(\Gamma_n)}\\&&+2\sqrt{L'}\Vert\tilde A_{f}\phi -\psi\Vert_{L^2(\mathbb{R}^2)}+2\sqrt{L'}\Vert\tilde A_{f_n}\phi -\tilde A_f \phi\Vert_{L^2(\mathbb{R}^2)}\\&&+\epsilon\Vert\psi\Vert_{L^4(\mathbb{R}^2)}\\&&\leq
\sqrt{L'}\epsilon +\Vert A_{f_n}I_{f_n}^{-1}\phi\Vert_{L^2(\Gamma_n)}+2\sqrt{L'}\epsilon+2\sqrt{L'}\epsilon+\epsilon\Vert\psi\Vert_{L^4(\mathbb{R}^2)}.
\end{eqnarray*}
From the arbitrariness of $\epsilon>0$ we now conclude that
\[
\Vert A_fI_f^{-1}\phi\Vert_{L^2(\Gamma)}\leq \lim_{n \to \infty}\Vert A_{f_n}I_{f_n}^{-1}\phi\Vert_{L^2(\Gamma_n)}.
\]
The reverse inequality is proved analogously. The proof is complete.
\end{proof}

We are now in a position to prove theorem \ref{A_inverse} on the invertibility of $A$.

\begin{proof}
We choose by lemma \ref{smooth_Lip} a sequence of Lipschitz functions $f_n \in C^{\infty}(\mathbb{R}^2)$, $n \in \mathbb{N}$, such that each $f_n$ has Lipschitz constant $L$, such that each $f_n$ is Lyapunov, such that $\Vert f_n - f\Vert_{L^{\infty}(\mathbb{R}^2)} \to 0$ and such that $\nabla_{\tilde x}f_n \to  \nabla_{\tilde x}f$ in $L^p(K)$ for compact $K \subseteq \mathbb{R}^2$, with $1<p<\infty$. For brevity we denote by $A_n$ the integral operator $A_{f_n}$ and by $A$ the integral operator $A_f$.

By lemma \ref{contthe} we know that for $\phi \in L^2(\mathbb{R}^2)$
\[
\Vert AI_f^{-1}\phi \Vert_{L^2(\Gamma)} = \lim_{n\to \infty}\Vert A_n I_{f_n}^{-1}\phi \Vert_{L^2(\Gamma_n)}.
\]
Since by theorem \ref{chandpottheim_result2} we have that for all $n \in \mathbb{N}$
\[
\Vert A_n I_{f_n}^{-1}\phi \Vert_{L^2(\Gamma_n)} \geq B^{-1} \Vert I_{f_n}^{-1}\phi\Vert_{L^2(\Gamma_n)},     
\]
it follows that for all $\phi \in L^2(\mathbb{R}^2)$,
\begin{equation} \label{bounded_below}
\Vert AI_{f}^{-1}\phi \Vert_{L^2(\Gamma)}\geq B^{-1} \Vert I_{f}^{-1}\phi\Vert_{L^2(\Gamma)}.     
\end{equation}
This shows that $A$ is bounded below. We now establish that the adjoint of $A$, $A'$ is also bounded below. Together, the two bounds along with theorem \ref{btheo} ensure the invertibility of $A$, whilst the bound (\ref{operator_B}) follows from (\ref{bounded_below}).

Fix $\phi \in L^2(\mathbb{R}^2)$. We know that by theorem  \ref{chandpottheim_result2} each of the $A_n$ is invertible on $L^2(\Gamma_n)$ and moreover that  
\[
\Vert \tilde A_{n}^{-1}\phi\Vert_{L^2(\mathbb{R}^2)}\leq \Vert A_n^{-1} I_{f_n}^{-1}\phi \Vert_{L^2(\Gamma_n)} \leq B \Vert I_{f_n}^{-1}\phi \Vert_{L^2(\Gamma_n)} \leq B\sqrt{L'}\Vert \phi\Vert_{L^2(\mathbb{R}^2)}.
\]
This shows that that sequence $\tilde A_{n}^{-1} \phi$ is bounded in $L^2(\mathbb{R}^2)$. Identifying $L^2(\mathbb{R}^2)$ with it's dual, which we may do by the Riesz representation theorem, we see that by the Banach-Alaoglu theorem (\cite{Pederson} theorem 2.52), we may extract a $w^*$-limit $t \in L^2(\mathbb{R}^2)$ from this sequence. Thus we have that 
\[
\lim_{n \to \infty} (\tilde A_n^{-1}\phi, \psi )_2 = (t, \psi)_2, \quad \psi \in L^2(\mathbb{R}^2),
\]
where 
\[
(u,v)_2= \int_{\mathbb{R}^2}u \bar v \;dx,
\] 
and where $\Vert t \Vert_{L^2(\mathbb{R}^2)} \leq  C\Vert \phi \Vert_{L^2(\mathbb{R}^2)}$
where $C=B \sqrt{L'}$.
Now,
\begin{eqnarray*}
\Vert A'I_f^{-1}\phi \Vert_{L^2(\Gamma)}\geq \Vert \tilde A' \phi\Vert_{L^2(\mathbb{R}^2)}& = & \sup_{\{v : \Vert v \Vert \leq 1\}}|(\tilde A'\phi,v)_2|\\
& \geq & |(\tilde A'\phi,t{C^{-1}}\Vert \phi \Vert^{-1}_{L^2(\mathbb{R}^2)})_2|\\
& = & {C^{-1}}\Vert \phi \Vert^{-1}_{L^2(\mathbb{R}^2)}|(\phi,\tilde At)_2|\\ 
& = & {C^{-1}}\Vert \phi \Vert^{-1}_{L^2(\mathbb{R}^2)}\lim_{n \to \infty}|(\phi, \tilde A_nt)_2|\\ 
& = & {C^{-1}}\Vert \phi \Vert^{-1}_{L^2(\mathbb{R}^2)}\lim_{n \to \infty}|(\tilde A_n'\phi, t)_2|\\ 
& = & {C^{-1}}\Vert \phi \Vert^{-1}_{L^2(\mathbb{R}^2)}\lim_{n \to \infty}\lim_{m \to \infty}|(\tilde A_n'\phi, \tilde A_m^{-1}\phi)_2|\\ 
& = & {C^{-1}}\Vert \phi \Vert^{-1}_{L^2(\mathbb{R}^2)}\lim_{n \to \infty}\lim_{m \to \infty}|(\tilde A_m'^{-1}\tilde A_n'\phi, \phi)_2|\\ &=& C^{-1}\Vert \phi \Vert_{L^2(\mathbb{R}^2)}\\&\geq& C^{-1}\sqrt{L'}^{-1}\Vert I_f^{-1}\phi \Vert_{L^2(\Gamma)}.
\end{eqnarray*}
The proof is complete. 
\end{proof}



We conclude this chapter by proving our main result, theorem \ref{chap5_them} concerning existence and uniqueness of solution to the boundary value problem.

{\em proof of theorem \ref{chap5_them}.}
By lemma \ref{unique} we know that the boundary value problem has at most one solution. We construct a solution $v$ to the boundary value problem by supposing that for $x \in D$, $v(x)$ is given by (\ref{the_ansatz}) with $u_1(x)$ defined by (\ref{SLP}) and $u_2(x)$ defined by (\ref{DLP}) and with the density $\phi$ such that
\[
\phi = A^{-1}g
\]
possible by theorem \ref{A_inverse}. We then see that by theorem \ref{jr thm} $v \in C^2(D)$, satisfies the Helmholtz equation in $D$, and also the non-tangential boundary condition on $\Gamma$. Further by lemmas \ref{rad_lemma} and \ref{u_prime_lemma} $v$ satisfies the radiation condition (\ref{uprcstar}) for all $H>f_+$ and satisfies that $v_T' \in L^2(\Gamma)$ for all $T\geq f_+$.

\begin{appendix}
\chapter{Trace results}
\begin{lemma} 

Let $D$ be an $(L,\mu,N)$ Lipschitz domain, and let $S_H= D \backslash \overline{U_H}$ for $H \geq f_+ + \mu$.
For $u \in H^1(S_H)$,
\begin{eqnarray*}
\Vert \gamma_- u\Vert_{H^{\frac{1}{2}}(\Gamma_H )} \leq \sqrt{\left(1+ \frac{1}{k\mu}\right)}\Vert u \Vert_{H^1(S_H)},
\end{eqnarray*} 
and, the map $\gamma^*: \mathcal{D}({S_H}) \to L^2(\Gamma)$ such that $\gamma^*u$ is $u$ restricted to $\Gamma$, for $u \in \mathcal{D}({S_H})$, extends to a bounded linear operator $\gamma^*: H^1(S_H) \to L^2(\Gamma)$ with  
\begin{eqnarray*}
k\Vert \gamma^* u \Vert^2_{L^2(\Gamma)} \leq N\sqrt{1+L^2}\left(1+\frac{1}{ k\mu}\right)\Vert u \Vert^2_{H^1(S_H)}.
\end{eqnarray*}
\end{lemma}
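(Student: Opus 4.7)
The plan is to establish both inequalities first for $u \in \mathcal{D}(S_H)$, which is dense in $H^1(S_H)$, and then extend by continuity. Both proofs reduce, via a one-dimensional ``cutoff'' trick, to the elementary identity $|g(0)|^2 = -\int_0^\mu \tfrac{d}{dt}[\phi(t)|g(t)|^2]\,dt$ with $\phi(t) = 1 - t/\mu$, combined with $2|ab| \leq \alpha a^2 + b^2/\alpha$. The crucial observation ensuring things work is that $H \geq f_+ + \mu$, which guarantees that a vertical strip of thickness $\mu$ measured upward from $\Gamma$ (or downward from $\Gamma_H$) still lies inside $S_H$.

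For the first inequality, I would argue in Fourier variables. Fixing $\xi \in \mathbb{R}^{n-1}$ and setting $g(t) := \mathcal{F}u(\xi, H-t)$, the cutoff identity followed by $2|g||g'| \leq \alpha|g'|^2 + |g|^2/\alpha$ with the choice $\alpha = 1/\sqrt{k^2+\xi^2}$ gives, after multiplying through by $\sqrt{k^2+\xi^2}$ and using the elementary bound $\sqrt{k^2+\xi^2}/\mu \leq (k^2+\xi^2)/(k\mu)$ (which holds because $k \leq \sqrt{k^2+\xi^2}$),
\[
\sqrt{k^2+\xi^2}\,|g(0)|^2 \leq \bigl(1 + 1/(k\mu)\bigr)\int_0^\mu \bigl[|g'(t)|^2 + (k^2+\xi^2)|g(t)|^2\bigr] dt.
\]
Integrating in $\xi$ and applying Plancherel in $\tilde{x}$ for each fixed $x_n$ converts this into the bound $\|\gamma_- u\|_{H^{1/2}(\Gamma_H)}^2 \leq (1+1/(k\mu))\|u\|_{H^1(U_{H-\mu}\setminus U_H)}^2$, and since $U_{H-\mu}\setminus U_H \subseteq S_H$ by the assumption $H \geq f_+ + \mu$, the first claim follows.

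For the second inequality, localize to the cover $\{O_j\}_{j \in J}$ supplied by the $(L,\mu,N)$ Lipschitz property. Within each $O_j$, after rotation, $\Gamma \cap O_j$ is the graph of a Lipschitz function $f_j$ with constant $L$ and $D \cap O_j$ is its epigraph. For $u \in \mathcal{D}(S_H)$ and $\tilde{y}$ such that $B_\mu((\tilde y, f_j(\tilde y))) \subset O_j$, the spatial (not Fourier) analogue of the argument above, using $\alpha = 1/k$, yields
\[
k|u(\tilde{y}, f_j(\tilde{y}))|^2 \leq \bigl(1 + 1/(k\mu)\bigr)\int_0^\mu \bigl[k^2|u|^2 + |\partial_n u|^2\bigr](\tilde{y}, f_j(\tilde{y})+s)\, ds.
\]
Multiplying by $J_{f_j}(\tilde{y}) \leq L' = \sqrt{1+L^2}$ and integrating in $\tilde{y}$, one recognises that because $f \leq f_+ \leq H-\mu$, the vertical ``tube'' $\{(\tilde{y}, f_j(\tilde{y})+s) : s \in (0,\mu)\}$ over $\Gamma \cap O_j$ sits inside $S_H$. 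Summing the localised inequalities over $j$ and invoking the finite-multiplicity condition (no $N+1$ of the $O_j$ have common intersection, so the sum of characteristic functions is bounded by $N$) supplies the factor $N$ and produces the stated global bound.

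The main technical obstacle is the Part 2 localisation: one must verify that the ``tube'' construction used for each graph piece genuinely lies inside $D$, and this is precisely why the Lipschitz condition is formulated so that each ball $B_\mu(y)$ centred on $\Gamma$ is contained in some $O_i$ -- this ensures the tube over a full $\mu$-ball of boundary points fits in the chart where $D$ is a Lipschitz epigraph. Once the localised bounds are in hand, the $N$-fold overlap control provides a clean global estimate, and density of $\mathcal{D}(S_H)$ in $H^1(S_H)$ promotes the pointwise-for-smooth-$u$ estimate to the required operator bound defining $\gamma^*$.
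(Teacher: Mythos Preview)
Your approach is correct and essentially identical to the paper's. Both parts use the same cutoff identity $|g(0)|^2=-\int_0^\mu \partial_t[\phi|g|^2]\,dt$ with $\phi(t)=1-t/\mu$: for the $\Gamma_H$ trace the paper also works in the Fourier variable and uses Parseval on the horizontal slab $\mathbb R^{n-1}\times[H-\mu,H]$, while for the $\Gamma$ trace it localises via the Lipschitz charts, applies the cutoff trick along the rotated normal, and sums using the $N$-fold overlap bound.

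Two cosmetic differences worth noting. First, the paper makes the boundary decomposition disjoint by setting $U_i=\{y\in\Gamma:B_\mu(y)\subset O_i,\ B_\mu(y)\not\subset O_j\text{ for }j<i\}$, so no double-counting occurs on the left-hand side; your covering-without-disjointness version also works since overcounting on the left only helps. Second, in your tube-in-$S_H$ justification the inequality $f\le f_+\le H-\mu$ refers to the global vertical coordinate, whereas the tube direction $e_n$ is the \emph{rotated} normal; the correct (and equally easy) argument, which the paper uses, is that the tube lies in $B_\mu(y)\subset O_j$, hence in $O_j\cap D$, and every point of $B_\mu(y)$ has global height at most $f_++\mu\le H$.
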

\begin{proof}
For $u \in \mathcal{D}({S_H})$, define, for $x_n \in [H-\mu,H]$, $\hat u (\xi,x_n) = (\mathcal{F} u(\cdot ,x_n))(\xi)$. Let $S= \mathbb{R}^{n-1} \times [H-\mu,H]$, and let 
$\phi :  S \to \mathbb{R}$, be such that $\phi(\xi,x_n) = [{(x_n -H) + \mu}]/{\mu}$.
We have
\begin{eqnarray*}
\vert \hat{u}(\xi,H)\vert^2=\int_{H-\mu}^H\frac{\partial}{\partial
x_n}\phi| \hat{u}(\xi,x_n)|^2\,dx_n & = &
2\Re\int_{H-\mu}^H\phi\overline{\hat{u}(\xi,x_n)}\frac{\partial}{\partial
x_n} (\hat{u}(\xi,x_n))\,dx_n.\\
& + &\int_{H-\mu}^H\frac{\partial \phi}{\partial x_n}|\hat{u}(\xi,x_n)|^2
 dx_n.\end{eqnarray*}
Thus,
\begin{eqnarray*}
\|u\|_{H^{1/2}(\Gamma_H)}^2&=&\int_{\real^{n-1}}|\sqrt{\xi^2+k^2}|\,|\hat{u}(\xi,H)|^2\,d\xi\\
&\leq&2\int_{S}|\sqrt{\xi^2+k^2}|\,|\hat{u}(\xi,x_n)|\,
\left\vert\frac{\partial}{\partial x_n}\hat{u}(\xi,x_n)
\right\vert\,d\xi\,dx_n\\ & + & \int_{S}|\sqrt{\xi^2+k^2}|\,|\hat{u}(\xi,x_n)|^2\,
\frac{1}{\mu}d\xi dx_n\\
&\leq&2\left\{\int_{S}|\xi^2+k^2|\,|\hat{u}(\xi,x_n)|^2\,
d\xi\,dx_n\right\}^{1/2}\left\{\int_S
\left\vert\frac{\partial}{\partial x_n}\hat{u}(\xi,x_n)
\right\vert^2\,d\xi\,dx_n\right\}^{1/2}  \\
& + &  \left\{\frac{1}{k\mu} \int_{S}|\xi^2 + k^2||\hat{u}(\xi,x_n)|^2 d \xi dx_n\right\}.
\end{eqnarray*}
Now, by Parseval's theorem,
\begin{eqnarray*}
\int_{S}\xi^2\,|\hat{u}(\xi,x_n)|^2\,d\xi\,dx_n&=&
\int_{S}\left\vert\cF(\nabla_{\tilde{x}}(u)(\cdot,x_n))(\xi)\right\vert^2
\,d\xi\,dx_n\\
&=&\int_{S}\left\vert\nabla_{\tilde{x}}u(x)\right\vert^2\,dx.
\end{eqnarray*}
Applying Parseval's theorem again, and using $2ab\leq a^2 + b^2$, for $a,b\geq 0,$  
\begin{eqnarray*}
\|u\|_{H^{1/2}(\Gamma_H)}^2 & \leq & 2\left\{\int_S\left\{k^2
|u(x)|^2+|\nabla_{\tilde{x}}u(x)|^2\right\}\,dx\right\}^{\frac{1}{2}}\left\{\int_S\left\vert
\frac{\partial}{\partial x_n}
u(x)\right\vert^2\,dx\right\}^{1/2} \\
& + &  \left\{\frac{1}{k\mu}\int_S k^2|u(x)|^2 + |\nabla_{\tilde x}u(x)|^2 dx\right\}
\\ & \le & \left(1+\frac{1}{k\mu}\right)\|u\|_{H^1(S_H)}^2.
\end{eqnarray*}
The first result now follows because of the density of $\mathcal{D}({S_H})$ in $H^1(S_H).$
 
For the second part, note that $S_H$ is an $(L,\mu,N+1)$ Lipschitz domain; let ${S_H}_j$ be the $\Omega_j$ of definition \ref{LipDef}. Define $U_i:=\{y \in \Gamma: B_{\mu}(y) \subseteq O_i, \mbox{ and } B_{\mu}(y) \not\subseteq O_j \mbox{ if } j<i\}\subseteq O_i.$ Note that $\Gamma$ is the disjoint union of the $\{U_i\}_{i \in J}$, and that, by definition,
\[
\int_{\Gamma}|u(s)|^2 ds= \sum_{i=1}^{\infty} \int_{U_i}|u(s)|^2ds.
\]
Fix $j \in J$. Rotate ${S_H}_j$ into the epigraph, of a Lipschitz function $f_j$, and let $e_n$ denote the vertical unit vector, after this rotation. 
For $y \in U_j$, $y + te_n \in O_j \cap  S_H= O_j \cap  {S_H}_j$, provided $0<t<\mu$. Let $S:= \{(\tilde y, y_n+te_n): y \in U_j,  0\leq t \leq\mu\}$. Denote by $\phi :S \rightarrow \mathbb{R}$, the function such that $\phi(\tilde y,y_n+te_n):=1- t /\mu$, for $(\tilde y,y_n+te_n) \in S$. Note that, after a suitable change of coordinates, and where $K=\{\tilde x \in \mbox{supp}f_j|(\tilde x,f_j(\tilde x)) \in U_j \}\subseteq \mathbb{R}^{n-1}$,
\[
\int_{ U_j} |u(s)|^2 ds = \int_{K }|u(\tilde x,f_j(\tilde x)|^2\sqrt{1+|\nabla f_j(\tilde x)|^2} d\tilde x.
\]
Then 
\begin{eqnarray*}
&&\int_{K}k|u(\tilde x,f_j(\tilde x)|^2\sqrt{1+|\nabla f_j(\tilde x)|^2} d\tilde x\\& = & \int_{K}\sqrt{1+|\nabla f_j(\tilde x)|^2}\int_{t=\mu}^{t=0}k\frac{\partial}{\partial x_n }(\phi|u(\tilde x,f_j(\tilde x)+te_n)|^2) dx_n d\tilde x  \\ 
& \leq & \sqrt{1+L^2}\left\{\int_{S} 2k|u(x)|\left|\frac{\partial u(x)}{\partial x_n}\right|dx + \int_{S} \frac{k}{\mu}|u(x)|^2 dx\right\}.
\end{eqnarray*}
Use of the Cauchy-Schwarz inequality and $2ab\leq a^2 +b^2$, $a,b>0$, gives 
\begin{eqnarray}\nonumber
\int_{ U_j}k|u(s)|^2 ds & \leq &\sqrt{1+L^2}\left\{ \int_{S}k^2|u(x)|^2 dx +\int_{S}\left|\frac{\partial u(x)}{\partial x_n}\right|^2dx \right.\\ &+&\left. \frac{1}{\mu k}\int_{S}k^2|u(x)|^2 dx\right\} \\\label{inequality}  
& = & \sqrt{1+L^2}\left( 1+ \frac{1}{\mu k}\right) \Vert u\Vert^2_{H^1(O_j \cap S_H)},
\end{eqnarray} 
since $S \subseteq O_j \cap S_H$. 
Repeat this argument for all $j \in J$. 
Note that property (iii) of definition \ref{LipDef} implies that 
\[
\sum_{j \in J}\Vert u\Vert^2_{H^1(O_j\cap S_H)} \leq N \Vert u \Vert^2_{H^1(S_H)}.
\]
Then summing inequality (\ref{inequality}) over finite $j$ and letting $j \to \infty$, implies 
\begin{eqnarray*}
\int_{\Gamma} k|u(s)|^2ds \leq N\sqrt{1+L^2}\left(1+ \frac{1}{\mu k}\right) \Vert u\Vert^2_{H^1(S_H)}.
\end{eqnarray*}
The density of $\mathcal{D}({S_H})$ in $H^1(S_H)$ gives the bound in the general case.
\end{proof}

\begin{lemma}
Let $f: \mathbb{R}^{n-1} \to \mathbb{R}$ be a bounded Lipschitz function with Lipschitz constant $L$ and let $\mathcal{C}:= \{(\tilde x,x_n)|x_n \in [f(\tilde x)-\epsilon,f(\tilde x) +\epsilon]\}$.
Then for $w \in H^1(\mathcal{C})$ it holds that
\[
\epsilon \int_{\Gamma} |w|^2ds \leq \sqrt{1+L^2}\left\{\epsilon^2\left\Vert \frac{\partial w}{\partial x_n}\right\Vert^2_{L^2(\mathcal{C})} + \Vert w\Vert^2_{L^2(\mathcal{C})}\right\}.
\]
\end{lemma}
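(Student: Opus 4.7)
The proof will mimic the approach used in the second part of the preceding Lemma \ref{traces}, namely, a fundamental-theorem-of-calculus argument applied vertically between $\Gamma$ and the top/bottom of the slab $\mathcal{C}$, combined with the standard estimate $J_f(\tilde x) := \sqrt{1+|\nabla f(\tilde x)|^2} \leq \sqrt{1+L^2}$ for the surface area element. By density of smooth functions in $H^1(\mathcal{C})$, it suffices to establish the inequality for $w$ smooth (say $w \in C^\infty(\overline{\mathcal{C}})$ with $w$ compactly supported in the $\tilde x$ direction); the general case then follows by continuity of both sides as functionals of $w\in H^1(\mathcal{C})$.

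First I would introduce two linear cutoff functions,
\[
\phi_+(\tilde x,x_n) := \frac{f(\tilde x)+\epsilon - x_n}{\epsilon}, \quad \phi_-(\tilde x,x_n) := \frac{x_n - f(\tilde x)+\epsilon}{\epsilon},
\]
satisfying $\phi_\pm(\tilde x,f(\tilde x)) = 1$, $\phi_+ $ vanishing at $x_n = f(\tilde x)+\epsilon$, $\phi_-$ vanishing at $x_n = f(\tilde x)-\epsilon$, and $|\partial \phi_\pm/\partial x_n| = 1/\epsilon$. Applying the fundamental theorem of calculus to $x_n \mapsto \phi_\pm(\tilde x,x_n)|w(\tilde x,x_n)|^2$ on the upper and lower halves of the vertical slab through $\tilde x$, and averaging the two resulting identities, I expect to obtain
\[
|w(\tilde x,f(\tilde x))|^2 = \frac{1}{2\epsilon}\int_{f(\tilde x)-\epsilon}^{f(\tilde x)+\epsilon}|w|^2\,dx_n + \int_{f(\tilde x)-\epsilon}^{f(\tilde x)+\epsilon}\Theta(\tilde x,x_n)\,\Re\!\left(\bar w\,\frac{\partial w}{\partial x_n}\right)dx_n,
\]
where $\Theta$ is a bounded function (the symmetric combination of $\phi_\pm$) with $|\Theta|\le 1$.

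Next I would multiply through by $\epsilon$, integrate over $\tilde x\in\mathbb R^{n-1}$ against $J_f(\tilde x)$ so that the left-hand side becomes $\epsilon\int_\Gamma |w|^2\,ds$, and use $J_f\le \sqrt{1+L^2}$ to pull out the constant. Applying the Cauchy–Schwarz inequality in the $x_n$ variable and then once more in $\tilde x$, followed by the elementary Young inequality $2ab\le a^2+b^2$ in the form $2\epsilon\|w\|_{L^2(\mathcal{C})}\|\partial w/\partial x_n\|_{L^2(\mathcal{C})} \le \|w\|_{L^2(\mathcal{C})}^2 + \epsilon^2\|\partial w/\partial x_n\|_{L^2(\mathcal{C})}^2$, will give an inequality of the desired shape — in fact with constant $\epsilon^2/2$ in front of the derivative norm, which is stronger than the stated $\epsilon^2$ and so implies the lemma.

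The calculation is essentially routine; the only mild subtlety is that $f$ is merely Lipschitz, so $\nabla f$ is defined only almost everywhere and the slab $\mathcal{C}$ is not smooth. However, this causes no difficulty because $J_f\in L^\infty(\mathbb R^{n-1})$ by Rademacher's theorem with the uniform bound $J_f\le \sqrt{1+L^2}$, and the vertical integration is performed for each fixed $\tilde x$ where $f(\tilde x)$ is simply a real number. The main place to be careful is therefore handling the symmetric (upward + downward) combination so as to obtain a bound involving $L^2(\mathcal{C})$ rather than just $L^2(\mathcal{C}^\pm)$, which is what yields a clean coefficient in front of $\|w\|_{L^2(\mathcal{C})}^2$.

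\end{appendix}
\end{document}
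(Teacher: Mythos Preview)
Your proposal is correct and close in spirit to the paper's proof, but the implementation differs slightly. The paper does not use cutoff functions here: it writes
\[
w(\tilde x,f(\tilde x)) = w(\tilde x,x_n) - \int_{f(\tilde x)}^{x_n}\frac{\partial w}{\partial y_n}\,dy_n,
\]
applies $|a+b|^2\le 2(|a|^2+|b|^2)$ together with Cauchy--Schwarz, integrates in $x_n$ over $[f(\tilde x),f(\tilde x)+\epsilon]$ to get a bound in terms of $\mathcal C_+$ with an overall factor~$2$, repeats the argument on $\mathcal C_-$, and then \emph{averages} the two one-sided bounds to remove the factor~$2$ and obtain the stated estimate on the full slab~$\mathcal C$. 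Your route instead differentiates $\phi_\pm|w|^2$ (exactly as in the second part of Lemma~\ref{traces}) and averages the resulting identities before estimating; this is equally valid and, as you note, actually yields the sharper coefficient $\epsilon^2/2$ on the derivative term. Both arguments rely on the same two ingredients --- the vertical fundamental theorem of calculus and the symmetric use of the upper and lower half-slabs --- so the difference is purely technical.
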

\begin{proof}
For $w \in \mathcal{D}(\mathcal{C})=\{v|_{\mathcal{C}}: v \in C^{\infty}_0(\mathbb{R}^n)\}$ and $x_n >f(\tilde x)$
\[
w(\tilde x, f(\tilde x)) = -\int_{f(\tilde x)}^{x_n} \frac{\partial w(\tilde x, y_n)}{\partial y_n}dy_n + w(\tilde x,x_n).
\]
Thus
\begin{eqnarray*}
|w(\tilde x, f(\tilde x))|^2 &&\leq 2\left\{\left|\int_{f(\tilde x)}^{x_n}\frac{\partial w(\tilde x, y_n)}{\partial y_n}dy_n\right|^2 + |w(\tilde x,x_n)|^2\right\}\\&& 
\leq  2\left\{[x_n - f(\tilde x)]\int_{f(\tilde x)}^{x_n}\left|\frac{\partial w(\tilde x, y_n)}{\partial y_n}\right|^2dy_n + |w(\tilde x,x_n)|^2\right\},
\end{eqnarray*}
so that
\begin{eqnarray*}
&&\int_{\mathbb{R}^{n-1}}\int_{f(\tilde x)}^{f(\tilde x) + \epsilon} |w(\tilde x, f(\tilde x))|^2 \sqrt{ 1+ | \nabla_{\tilde x} f(\tilde x)}|^2 dx_n d\tilde x \\&&\leq 2\sqrt{1+L^2}\left\{ \epsilon\int_{\mathbb{R}^{n-1}}\int_{f(\tilde x)}^{f(\tilde x) + \epsilon}\int_{f(\tilde x)}^{x_n}\left|\frac{\partial w(\tilde x, y_n)}{\partial y_n}\right|^2dy_n dx_nd\tilde x\right.\\&&\left.+\int_{\mathbb{R}^{n-1}}\int_{f(\tilde x)}^{f(\tilde x) + \epsilon} |w(\tilde x,x_n)|^2dx_nd\tilde x\right\},
\end{eqnarray*}
and finally so that 
\[
\epsilon \int_{\Gamma} |w|^2ds \leq 2\sqrt{1+L^2}\left\{\epsilon^2\left\Vert \frac{\partial w}{\partial x_n}\right\Vert^2_{L^2(\mathcal{C}_+)} + \Vert w\Vert^2_{L^2(\mathcal{C}_+)}\right\},
\]
where $\mathcal{C}_+:= \{(\tilde x,x_n)|x_n \in [f(\tilde x),f(\tilde x) +\epsilon]\}$.

By arguing identically in the region below $\Gamma$, 
$\mathcal{C}_-: =\{(\tilde x,x_n)|x_n \in [f(\tilde x)-\epsilon,f(\tilde x) ]\}$, one obtains the necessary bound, for all $w \in \mathcal{D}(\mathcal{C})$: Since this space is dense in $H^1(\mathcal{C})$ the result holds for all $w$ in this space.
\end{proof}

\end{appendix}

\end{document}